\newcommand \Ker {{\rm Ker}}
\newcommand \R {{ \mathbb R}}
\newcommand \C {{ \mathbb C}}
\newcommand \Z {{ \mathbb Z}}
\newcommand \N {{ \mathbb N}}
\newcommand \T {{ \mathbb T}}
\newcommand \re {{ \operatorname{Re} }}
\newcommand \im {{ \operatorname{Im} }}
\DeclareMathOperator{\Spec}{Spec}
\newtheorem{theorem}{Theorem}[section]
\newtheorem {lemma} {Lemma}[section]
\newtheorem{corollary}{Corollary}[section]
\newtheorem{remark}{Remark}[section]
\newtheorem{definition}{Definition}[section]
\title[Limit Theorems for Horocycle Flows]%
{Limit Theorems for Horocycle Flows}
\author{Alexander Bufetov} \author{Giovanni Forni}
\address{
Department of Differential Equations\\
Steklov Institute of Mathematics\\
Moscow, Russian Federation\\
Department of Mathematics\\
Rice University\\
Houston, TX USA}
\address{Department  of Mathematics\\
  University of Maryland \\
  College Park, MD USA}
\begin{document}

  \begin{abstract}

The main results of this paper are limit theorems for horocycle flows on compact surfaces
of constant negative curvature. 

One of the main objects of the paper is a special family of horocycle-invariant
finitely additive H{\"o}lder measures on rectifiable arcs. An asymptotic formula for ergodic integrals for horocycle flows
is obtained in terms of the finitely-additive measures, and limit theorems follow as a corollary of the asymptotic formula.

The objects and results of this paper are similar to those in \cite{Fone}, \cite{Ftwo},  \cite{Bufetov1} and \cite{Bufetov2} for translation flows on flat surfaces. The arguments are based on the classification of invariant distributions for horocycle flows established in \cite{FlaFo}.

  \end{abstract}
  \maketitle

\tableofcontents
\section{Introduction}
\subsection{Outline of the main results.}

The aim of this paper is to obtain limit theorems for horocycle flows on compact surfaces of constant negative curvature.

Our limit theorems admit the simplest formulation in the case when the smallest positive eigenvalue 
$\mu_0$ of the Laplace operator on the surface of curvature $-1$ is strictly less than $1/4$ (equivalently, when the spectral decomposition of the space of square-integrable functions on our surface into irreducible unitary representations of the modular group contains representations of the complementary series). 

In this case, the variance of the ergodic integrals  (up to time $T>0$) of a generic smooth function 
grows at the rate $T^{\frac{1+\nu_0}{2}}$, where  $\nu_0:=\sqrt{1-4\mu_0}$, and its ergodic integrals, normalized to have variance $1$, converge in distribution to a nondegenerate compactly supported measure on the real line.

The situation is more complicated for surfaces whose spectral decomposition only contains representations of the principal series (or more generally for functions supported on irreducible
representations of the principal series).

In this case, the variance of  ergodic integrals  (up to time $T>0$) of any smooth function which is
not a coboundary grows at the rate $T^{\frac{1}{2}}$, but its ergodic integrals, normalized to have variance $1$, converge in distribution to an orbit of an infinite-dimensional \emph{quasi-periodic }flow in the space of random variables with compactly supported distributions. The frequencies of this quasi-periodic motion are determined by the eigenvalues larger than $1/4$ of the Laplace-Beltrami operator on the hyperbolic surface. We are not able to determine whether the limit distribution exists in this case; we conjecture that it does not. In fact, the limit distribution will exist for all smooth functions which are not coboundaries if and only if all random variables in each of the invariant subtori of our infinite dimensional torus have exactly the same probability distribution (see \S\S~\ref{rotsym}).

Our argument relies on the classification, due to Flaminio and Forni \cite{FlaFo},
of distributions (in the sense of S.~L.~Sobolev and L.~Schwartz\footnote{The term ``distribution'' is used in two very different senses in our paper: first, probability distributions of random variables and, second,
distributions of Sobolev and Schwartz. For instance, ``limit distributions'' refer to the first meaning,
while ``invariant distributions'' to the second. We hope that our precise meaning is always clear from the context.}) invariant under a given horocycle flow. One of the main objects of our paper is a closely related space of finitely-additive H{\"o}lder measures on rectifiable arcs on our surface, invariant under the complementary horocycle flow. We classify these measures and establish an explicit bijecitve correspondence between them and the subspace of the Flaminio-Forni space given by invariant distributions corresponding to positive eigenvalues of the Casimir operator. This isomorphism yields a natural duality between the spaces of invariant distributions for the two horocycle flows on a surface. We further establish an asymptotic formula for ergodic integrals in terms of the finitely-additive measures.
Our limit theorems are obtained as corollaries of the asymptotic formula. Informally, the limit theorems claim that the normalized ergodic integrals of horocycle flows converge in distribution to the probability distributions of finitely-additive measures of horocycle arcs.

The objects and results of this paper are similar to those in \cite{Fone}, \cite{Ftwo} and especially \cite{Bufetov1}, \cite{Bufetov2}, \cite{Bufetov3} for translation flows on flat surfaces. The methods here are completely different, however, and are based on those in \cite{FlaFo}.

The remainder of this section is organized as follows. In \S\S~\ref{histremarks} we make some brief historical remarks. In \S\S~\ref{Definitions} we establish our notation and
recall the main properties of invariant distributions and basic currents for horocycle flows. In \S\S~\ref{HolderCurrents} we state our main theorems on  finitely additive H\"older measures on rectifiable arcs, invariant with respect to the unstable (stable) horocycle (Theorem~\ref{thm:hatbetaprops}) and on the related additive cocycle for the  stable (unstable) horocycle (Theorem~\ref{thm:cocycleproperties}). We also state several important corollaries of the above mentioned theorems (Corollary~\ref{cor:wuvanishing} and Corollary~\ref{cor:basiccurrents}). In \S\S~\ref{Ergodic} we state our results on the asymptotics of ergodic integrals, in particular we state an approximation theorem for ergodic integrals of sufficiently smooth zero-average functions in terms of additive cocycles (Theorem~\ref{thm:approximation}) and our results on limit distributions of normalized ergodic integrals
(Theorem~\ref{thm:limitdistcompl}  and Theorem~\ref{thm:limit_torus}). We then state our conditional results about the existence of limit distributions for functions supported on irreducible components of the principal series follow (Corollary~\ref{cor:limitdistprincipal1} and Corollary~\ref{cor:limitdistprincipal1}).
In \S\S~\ref{Duality} we introduce currents of dimension $2$ (and degree $1$) associated
to our finitely additive measures on rectifiable arcs. We then state a duality theorem which affirms
that such currents can be written in terms of invariant distributions for the unstable (stable) horocycle flow (Theorem~\ref{thm:duality}). The duality theorem leads to a complete classification of the
class of finitely additive H\"older measures axiomatically defined in \S\S~\ref{HolderCurrents} (see
Definition~\ref{def:hatbetaprops}), in the sense that our construction gives the space of all
finitely additive H\"older measures with the listed properties (Theorem~\ref{thm:classification}).
It also allows us to establish a direct relations between the lifts of  our additive cocyles to the universal cover and $\Gamma$-conformal invariant distributions on the boundary of the Poincar\'e disk
(Theorem~\ref{thm:gammaconformal}).

\subsection{Historical remarks.}
\label{histremarks}
The classical horocycle flow on a compact surface of constant negative curvature is a main example of a unipotent, parabolic flow. Its ergodic theory has been extensively studied. It is known that the flow is minimal \cite{He}, uniquely ergodic~\cite{Fu}, has Lebesgue spectrum and is therefore strongly mixing~\cite{Pa}, in fact mixing of all orders \cite{Ma}, and has zero entropy~\cite{Gu}. 
Its finer ergodic and rigidity properties, as well as the rate of mixing, were investigated by M.~Ratner
is a series of papers \cite{Ra1}, \cite{Ra2}, \cite{Ra3}, \cite{Ra4}. In  joint work with L.~Flaminio \cite{FlaFo}, the second author has proved precise bounds on ergodic integrals of smooth functions.
Those bounds already imply, as proved in \cite{FlaFo}, that all weak limits of probability distributions of normalized ergodic integrals of generic smooth functions have (non degenerate) compact support.

In the case of finite-volume surfaces, the classification of invariant measures is due to Dani \cite{dani}.
The asymptotic behaviour of averages along closed horocycles in the finite-volume case has been studied  by D.~Zagier~\cite{Za}, P.~Sarnak~\cite{Sa}, D.~Hejal \cite{Hj} and more recently in \cite{FlaFo}
and by A.~Str{\"o}mbergsson \cite{St}. The flows on general geometrically finite surfaces have been studied by M.~Burger~\cite{Bu}.

Invariant distributions, and, more generally, eigendistributions for smooth dynamical systems were 
already  considered in 1955 by S.~V.~Fomin \cite{Fomin}, who constructed a full system of
eigendistributions for a linear toral automorphism. 

In the case of horospherical foliations of symmetric spaces $X=G/K$ of non-compact type of connected semi-simple Lie groups $G$ with finite center, invariant distributions are related to  {\it conical distributions} on the space of horocycles introduced in the work of S.~Helgason \cite{Helgason}.

Invariant distributions for the horocycle flow appear in the asymptotics for the equidistribution of long closed horocycle on finite-volume non-compact hyperbolic surfaces in work of P.~Sarnak \cite{Sa}. To the authors' best knowledge this is the first appearance of invariant distributions in the context of 
quantitative equidistribution. Sarnak's work was later generalized to arbitrary horocycle arcs and
to the horocyle  flow also in the compact case in \cite{FlaFo} (see also~\cite{Hj}, ~\cite{St}).

Other similar  (parabolic, uniquely ergodic) systems for which the asymptotics and the limit distributions of ergodic integrals have been studied include translation flows on surfaces of higher genus and interval exchange transformations, substitution dynamical systems and Vershik's automorphisms, and nilflows on homogeneous spaces of the Heisenberg group. The latter are related to the asymptotic behaviour of theta sums. For translation flows and interval exchange transformations, results on the growth of ergodic integrals were proved conditionally in the work of A.~Zorich \cite{Zorich1}, \cite{Zorich2}, \cite{Zorich3} and
M.~Kontsevich \cite{Kontsevich} and later fully proved in \cite{Ftwo} and by A.~Avila and M.~Viana~\cite{AV}. An asymptotic 
formula for ergodic integrals and limit theorems for translation flows were obtained  in~\cite{Bufetov1}, ~\cite{Bufetov2} and~\cite{Bufetov3}. Similar results for suspension flows over Vershik's automorphisms were obtained in \cite{Bufetov1}. 
Limit theorems for theta sums were proved by W.~B.~Jurkat 
and  J.~W.~Van Horne \cite{JvH1}, \cite{JvH2}, by J.~Marklof~\cite{Marklof} and more recently in stronger form by F.~Cellarosi~\cite{Cellarosi}.  Invariant distributions and asymptotics of ergodic integrals for Heisenberg nilflows were studied in \cite{FlaFonil}, which generalizes the asymptotics
for theta sums proved by  H.~Fiedler, W.~B.~Jurkat and O.~K{\"o}rner \cite{FJK} .

\subsection{Definitions and notation.}
\label{Definitions}
Let $\Gamma$ be a co-compact lattice in $PSL(2,\R)$ and let $M:=
\Gamma \backslash D$ be the corresponding hyperbolic surface
obtained as a quotient of the Poncar\'e disk $D$ under standard
action of $\Gamma$ by linear fractional transformations. Since
$PSL(2,\R)$ acts freely and transitively on the unit tangent bundle
of the Poncar\'e disk, the unit tangent bundle $SM$ of the
hyperbolic surface $M$ can be identified with the homogeneous space
$\Gamma \backslash PSL(2,\R)$. Let $\{X, U, V\}$ be the basis of the
Lie algebra $\mathfrak sl (2,\R)$ of $PSL(2,\R)$ given by the
infinitesimal generators of the geodesic flow and of the stable and
unstable horocycle flows. The following commutation relations hold:
\begin{equation}
\label{eq:commutLie} [X,U]=U \,, \quad  [X,V] =-V  \,,  \quad
[U,V]= 2 X \,.
\end{equation}

\smallskip
 Let $\{\hat X, \hat U, \hat V\}$ be the frame of  the cotangent bundle dual to the frame $\{X,U,V\}$
 of the tangent bundle, that is,
 \begin{equation}
 \begin{aligned}
 \hat X (X) &= 1\,, \quad   \hat X (U) = 0 \, \quad  \hat X (V) = 0 \,; \\
  \hat U (X) &= 0\,, \quad   \hat U (U) = 1 \, \quad  \hat U (V) = 0 \,; \\
   \hat V (X) &= 0\,, \quad   \hat V (U) = 0 \, \quad  \hat V (V) = 1\,.
 \end{aligned}
 \end{equation}

Let $\vert \hat X\vert$, $\vert \hat U\vert$ and $\vert \hat V\vert$  denote the $1$-dimensional
measures on $SM$ tranverse to the $2$-dimensional foliations $\{\hat X=0\}$, $\{\hat U=0\}$
and $\{\hat V=0\}$ given respectively by the $1$-forms $\hat X$, $\hat U$ and  $\hat V $.
In other terms, if $\gamma$ is any rectifiable path transverse to the foliation $\{\hat X=0\}$, (
$\{\hat U=0\}$,  $\{\hat V=0\}$), then respectively,
$$
\int_\gamma \vert \hat X\vert  = \vert \int_\gamma \hat X \vert \quad
\left (\int_\gamma \vert \hat U\vert  = \vert \int_\gamma \hat U \vert\,, \quad
\int_\gamma \vert \hat V\vert  = \vert \int_\gamma \hat V \vert  \right).
$$

\smallskip
Let $g_t :=\exp(tX)$, $h^U_t =\exp (tU)$ and $h^V_t =\exp (tV)$ be
the corresponding one-paramer groups. Since $PSL(2,\R)$ acts on $SM$
on the right, the following commutation relations hold for the
flows:
\begin{equation}
\label{eq:commutflows} g_t \circ h^U_s = h^U_{e^{-t} s}\circ g_t
\quad  \text{ and } \quad g_t \circ h^V_s = h^U_{e^{t} s} \circ
g_t\,, \quad \text{ for all }\, s,t \in \R\,.
\end{equation}
Thus  the flows $\{h^U_t\}$ and $\{h^V_t\}$ are respectively the
stable and unstable horocycle flows for the hyperbolic geodesic flow
$\{g_t\}$  on $SM$.

Let $L^2(SM)$ be the Hilbert space of
square-integrable complex-valued functions on $SM$, endowed with its 
usual Hilbert space norm 
$\Vert\cdot\Vert$.
By the theory of unitary representations of the unimodular group (see for instance
\cite{Ba}, \cite{GF} \cite{GN}), the Hilbert space $L^2(SM)$ of
splits as an orthogonal sum
\begin{equation}
\label{eq:L2splitting}
L^2(SM) = \bigoplus_{\mu \in \Spec(\square)} H_\mu
\end{equation}
of irreducible unitary representations of  $PSL(2,\R)$ which are
parametrized by the value $\mu \in \R^+ \cup \{-n^2 +n \vert n\in
\Z^+\}$ of the \emph{Casimir operator}
$$
\square :=  X^2 +X +VU = X^2 -X +UV = X^2 + UV + VU \,.
$$
The Casimir operator is a second order differential operator which
generates the center of the enveloping algebra of the Lie algebra
$\mathfrak sl(2, \R)$, hence its restriction to every irreducible
unitary representation is a scalar multiple of the identity. The
unitary type of irreducible unitary representations of $PSL(2,\R)$
is uniquely determined by the value of the Casimir parameter.
Irreducible unitary representations are divided into three series:
the \emph{principal series} consists of all representations with
Casimir parameter $\mu \geq 1/4$, the \emph{complementary series} of
all representations with Casimir parameter $0<\mu <1/4$ and the
\emph{discrete series} of representations with Casimir parameter
$\mu=-n^2+n$.

{\bf Remark.} In formula~(\ref{eq:L2splitting}) and everywhere below the eigenvalues of the Casimir operator are always understood with multiplicities.

 \medskip
Let us consider the stable horocycle flow $\{h^U_t\}$. Similar
statements hold for the unstable horocycle flow. It was proved in
\cite{FlaFo} that for every Casimir parameter $\mu \in \R^+$ the
space of \emph{invariant distributions} for the horocycle flow which
are non-trivial on the space $C^\infty(H_\mu):= C^{\infty}(SM) \cap
H_\mu$ has dimension equal to $2$ and (if $\mu \neq 1/4$) it is
generated by eigenvector for the action of the geodesic flow.

We recall that an invariant distribution for the stable horocycle  flow
is a distribution $D\in \mathcal D'(SM)$ such that $\mathcal L_U
D=0$ in the sense of distributions. Let $\mathcal I_U(SM) \subset \mathcal D'(SM)$
denote the space of all invariant distributions for the stable horocycle flow.

For every Casimir parameter $\mu \in \R^+$ let $\nu := \sqrt{
1-4\mu}$. We remark that $\nu \in \C$ is purely imaginary if
$\mu\geq 1/4$ (principal series) and $\nu \in (0,1)$ if $0<\mu <1/4$
(complementary series).  By \cite{FlaFo}, Theorem 3.2 and Lemma 3.5,
the space of invariant distributions for the stable horocycle flow
which are non-trivial on $C^\infty(H_\mu)$ has a basis $\{D^+_\mu,
D^-_\mu\}$ such that (in the sense of distributions)
\begin{equation}
\label{eq:eigenv} \mathcal L_X  D^\pm_\mu = - \frac{1\pm \nu}{2}
D^\pm_\mu\,, \quad \text{\rm for all }\, \mu\in
\R^+\setminus\{1/4\}\,,
\end{equation}
while for the for the special case $\mu=1/4$ ($\nu=0$),
\begin{equation}
\label{eq:Jordanb} \mathcal L_X  \begin{pmatrix} D^+_{\mu} \\
D^-_{\mu}\end{pmatrix} = -\frac{1}{2}  \begin{pmatrix} 1 & 0 \\ 1 &
1 \end{pmatrix} \,
\begin{pmatrix}
D^+_{\mu} \\ D^-_{\mu}\end{pmatrix} \,.
\end{equation}

Invariant distributions for the horocycle flow are naturally associated (by a general
construction which holds for any volume preserving flow) with \emph{basic currents} for
the horocycle foliation. A current $C$ of degree $2$ (and dimension $1$) is called
{\it basic }for the orbit foliation of the stable horocycle  flow if and only if
\begin{equation}
 \mathcal L_U C  = \imath_U C =0 \,\, \text{ in }\,\, \mathcal D'(SM)\,.
\end{equation}
(The operators $ \mathcal L_U$ and $\imath_U$ are respectively the
Lie derivative and the contraction with respect to the horocycle
generator $U$ acting on currents according to the standard
definition).  Let $\mathcal B_U(SM)$ denote the space of all basic currents
for the stable horocycle foliation.

For any $s>0$, let $W^s(SM)$ and $\Omega_1^s(SM)$ be respectively the $L^2$ Sobolev spaces
of functions and of $1$-forms on $SM$ and let $W^{-s}(SM)$ and $\Omega_1^{-s}(SM)$ denote
the dual Sobolev spaces of distributions. Let then
$$
\begin{aligned}
\mathcal I^{-s}_U(SM) :=  \mathcal I_U(SM) \cap W^{-s}(SM) \,,  \\
\mathcal B^{-s}_U(SM) :=  \mathcal B_U(SM) \cap \Omega_1^{-s}(SM) \,.
\end{aligned}
$$

Let $\omega$ denote the volume form on $SM$ and let $\eta_U:= \imath_U\omega$ denote the
contraction of the volume form along the stable horocycle. The $2$-form $\eta_U$ is closed
since the horocycle flow is volume preserving. We will show in Lemma~\ref{lemma:Isom} below that  the space  $\mathcal I_U(SM)$
of all invariant distributions and the space $\mathcal B_U(SM)$ are identified
by the isomorphism
\begin{equation}
\label{eq:star5}
D \to   D \eta_U
\end{equation}
which maps the Sobolev space $\mathcal I^{-s}_U(SM)$  isometrically
onto the Sobolev space $\mathcal B^{-s}_U(SM)$, for all $s>0$, hence it maps
$\mathcal I_U(SM)$ onto $\mathcal B_U(SM)$.

It was proved in \cite{FlaFo}, \S 3.2, that the invariant distributions $D^\pm_\mu$ have
Sobolev orders equal to $(1\pm \re\,\nu)/2$ (that is, $D^\pm_\mu\in W^{-s}(SM)$ for
all $s>(1\pm \re\,\nu)/2$). It was later proved by S.~Cosentino in~\cite{Co} that $D^\pm_\mu$
are in fact H\"older of the same orders (that is, they can be written as first derivatives of
H\"older continuous functions of  exponent $1\mp \re\,\nu)/2$, except for the distribution $D^-_{1/4}$
which can be written as a first derivative of a H\"older continuous function of any exponent
$\alpha < 1/2$).

\smallskip
\emph{Notation}: The Lie derivative $\mathcal L_W$ of a distribution
or a current with respect to a smooth vector field  $W$ is defined
in the standard weak sense (based on the formula of integration by
parts). For consistence, the action of a smooth flow $\{\phi_t\}$ on
a current $C$ is defined by pull-back as follows:
$$
(\phi^\ast_t C) (\lambda) = C( \phi_{-t}^\ast \lambda) \,, \quad
\text{ for any smooth form } \lambda\,.
$$
In particular, with the above convention the following identity
holds. Let $W$ be the infinitesimal generator of the smooth flow
$\{\phi_t\}$. For all $t\in \R$,
$$
\frac{d}{d t} \phi^\ast_t C   =   \phi_t^\ast \mathcal L_W C\,.
$$

\subsection{H{\"o}lder currents and H{\"o}lder cocycles.}
\label{HolderCurrents}

One of the main objects of this paper is a  space of
finitely-additive H{\"o}lder measures defined on the semi-ring of all rectifiable arcs in $SM$.
These measures are invariant under the {\it unstable} horocycle flow $h_t^V$ and will be seen to govern the  asymptotics of  ergodic integrals for the {\it stable} horocycle flow $h_t^U$.

\begin{definition}  \label{def:hatbetaprops} Let $\hat {\mathfrak  B}_V(SM)$  be the space of all functionals $\hat \beta$ which to every rectifiable arc $\gamma\subset SM$ assign a complex number
$\hat \beta(\gamma)\in {\mathbb C}$ so that the following holds:
\begin{enumerate}
\item (Additive property)   For any decomposition $\gamma= \gamma_1 +\gamma_2$ into subarcs,
\begin{equation*}
\hat \beta(\gamma)= \hat \beta(\gamma_1) + \hat \beta(\gamma_2) \,;
\end{equation*}
\item (Weak unstable vanishing) For all $\gamma$ tangent to the weak unstable
foliation,
$$
\hat \beta (\gamma) =0\,.
$$
\item (Unstable horocycle invariance)  For all $t \in \R$,
\begin{equation*}
\hat \beta(h^V_{t}\gamma ) =  \beta(\gamma)
\,.
\end{equation*}
\item (H\"older property) There exists an exponent  $\alpha \in (0,1)$ and  a constant $C>0$  such that
for all rectifiable arcs $\gamma$
satisfying
$$
\int_{\gamma}
\vert \hat U \vert\leq 1
$$
we have
\begin{equation*}
\vert  \hat \beta(\gamma) \vert \leq C \left( 1+
\int_{\gamma} \vert \hat X\vert + \int_{\gamma} \vert \hat U \vert
\int_{\gamma} \vert \hat V \vert \right) \left(  \int_{\gamma}
\vert \hat U \vert\right)^{\alpha}.
\end{equation*}
\end{enumerate}
\end{definition}

The space $\hat {\mathfrak  B}_V(SM)$ contains a sequence of special
elements $$\{\hat \beta_{\mu}^{\pm} \vert \mu \in \Spec(\square) \cap \R^+\}$$ described in the following Theorem. We prove below that the above set is in fact a basis of $\hat {\mathfrak  B}_V(SM)$ endowed with a natural Sobolev-type Hilbert space structure.

\begin{theorem}  \label{thm:hatbetaprops}
For any positive Casimir parameter $\mu >0$  there exist two independent (normalized) finitely-additive measures
$\hat \beta^\pm_\mu$ such that the following holds.

 For all rectifiable arcs $\gamma$ in $SM$ the following properties hold:
\begin{enumerate}
\item (Additive property)   For any decomposition $\gamma= \gamma_1 +\gamma_2$ into subarcs,
\begin{equation*}
\hat\beta^{\pm}_\mu(\gamma)= \hat \beta^{\pm}_\mu(\gamma_1) +
\hat\beta^{\pm}_\mu(\gamma_2) \,;
\end{equation*}
\item (Geodesic scaling)  For all $t \in \R$ and for $\mu\neq 1/4$,
\begin{equation*}
\hat\beta^{\pm}_\mu (g_{-t}\gamma ) =   \exp( \frac{1\mp \nu}{2} t)
\hat\beta^{\pm}_\mu(\gamma) \,,
\end{equation*}
while for $\mu =1/4$ ($\nu=0$),
\begin{equation*}
\begin{pmatrix} \hat\beta^+_{1/4}(g_{-t}\gamma ) \\ \hat\beta^-_{1/4} (g_{-t}\gamma ) \end{pmatrix}  =
\exp( \frac{t}{2}) \begin{pmatrix} 1 & -\frac{t}{2} \\ 0 & 1 \end{pmatrix}
\begin{pmatrix}\hat\beta^+_{1/4}(\gamma ) \\ \hat\beta^-_{1/4} (\gamma ) \end{pmatrix} \,;
\end{equation*}
\item (Unstable horocycle invariance)  For all $t \in \R$,
\begin{equation*}
\hat\beta^{\pm}_\mu (h^V_{t}\gamma ) =  \hat \beta^{\pm}_\mu(\gamma)\,.
\end{equation*}
\item (H\"older property) There exists a constant $C>0$ such that,
for all rectifiable arc $\gamma \subset SM$,  for all $\mu \neq 1/4$,
\begin{equation}
\label{eq:Hoelder}
\vert  \hat \beta^{\pm}_\mu(\gamma) \vert \leq C \left( 1+
\int_{\gamma} \vert \hat X\vert + \int_{\gamma} \vert \hat U \vert
\int_{\gamma} \vert \hat V \vert \right) \left( \int_{\gamma}
\vert \hat U \vert\right)^{\frac{ 1\mp \re\,\nu}{2}}
\end{equation}
and, for $\mu =1/4$ ($\nu=0$),
\begin{equation}
\label{eq:Hoelderbis}
\begin{aligned}
\vert  \hat \beta^+_{1/4}(\gamma) \vert &\leq C  \left( 1+
\int_{\gamma} \vert \hat X\vert + \int_{\gamma} \vert \hat U \vert
\int_{\gamma} \vert \hat V \vert \right) \left( \int_{\gamma}
\vert \hat U \vert\right)^{\frac{ 1}{2}+} \,, \\
\vert  \hat \beta^-_{1/4}(\gamma) \vert &\leq C \left( 1+
\int_{\gamma} \vert \hat X\vert + \int_{\gamma} \vert \hat U \vert
\int_{\gamma} \vert \hat V \vert \right) \left( \int_{\gamma}
\vert \hat U \vert\right)^{\frac{ 1}{2}} \,.
\end{aligned}
\end{equation}
\end{enumerate}
\end{theorem}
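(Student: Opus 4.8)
The plan is to construct the finitely-additive measures $\hat\beta^{\pm}_\mu$ from the invariant distributions $D^{\pm}_\mu$ for the \emph{unstable} horocycle flow $\{h^V_t\}$ (the roles of $U$ and $V$ being interchanged relative to the exposition above, since $\hat\beta$ must be $h^V$-invariant and govern $h^U$-ergodic integrals). By the isomorphism $D\mapsto D\,\eta_V$ of Lemma~\ref{lemma:Isom}, each $D^{\pm}_\mu$ corresponds to a basic current $C^{\pm}_\mu:=D^{\pm}_\mu\,\eta_V$ of degree $2$ for the unstable horocycle foliation, satisfying $\mathcal L_V C^{\pm}_\mu=\imath_V C^{\pm}_\mu=0$. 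The idea is that such a current, being closed (by volume-preservation) and basic, can be integrated along $2$-chains; given a rectifiable arc $\gamma$, one forms a $2$-chain by flowing $\gamma$ for a short time under $\{h^V_t\}$ and sets, heuristically, $\hat\beta^{\pm}_\mu(\gamma):=\int_{\gamma}\imath_V^{-1}$-primitive, i.e.\ one pairs the current with the $1$-current of integration over $\gamma$ after a suitable regularization. Concretely I would pass to the universal cover: lift $D^{\pm}_\mu$ to $\Gamma$-invariant distributions, and use that $D^{\pm}_\mu$ is (by Cosentino's theorem, quoted in the excerpt) a first derivative along $V$ of a H\"older function $B^{\pm}_\mu$ of exponent $(1\mp\re\nu)/2$ in the $V$-direction, with controlled behavior in the transverse ($X$ and $U$) directions. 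Then define $\hat\beta^{\pm}_\mu(\gamma)$ as a boundary term of $B^{\pm}_\mu$ along $\gamma$.

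The key steps, in order: \emph{(1)} Realize $D^{\pm}_\mu$ as $\mathcal L_V$ of a H\"older transfer-type function on $SM$ (or its cover), making precise the regularity in all three frame directions; this is where \cite{FlaFo} (the structure of $C^\infty(H_\mu)$, the coboundary equation $\mathcal L_V u = \phi$ with Sobolev/H\"older loss) and \cite{Co} enter. \emph{(2)} Define $\hat\beta^{\pm}_\mu(\gamma)$ for $\gamma$ a rectifiable arc by an approximation argument: write $\gamma$ as a limit of polygonal arcs made of segments tangent to $X$, $U$, $V$; on $V$-segments $\hat\beta^{\pm}_\mu$ is forced to vanish by invariance together with additivity (property (3) and (2) of the Definition), on $X$- and $U$-segments it is given by the increment of the H\"older primitive; then show the resulting cocycle is well-defined (independent of the polygonal approximation) using that the current $C^{\pm}_\mu$ is basic and closed, so that it pairs trivially with boundaries of $2$-chains lying inside weak-unstable leaves. \emph{(3)} Verify the additive property (immediate from the definition as a boundary term), the weak-unstable vanishing (immediate, since along weak-unstable arcs the primitive is constant — this is exactly $\imath_V C^{\pm}_\mu=0$ combined with $\mathcal L_V C^{\pm}_\mu =0$), and the unstable-horocycle invariance (which follows because $\hat\beta^{\pm}_\mu$ only sees the transverse direction and $h^V$ preserves the foliation and the current). \emph{(4)} Derive the geodesic scaling: since $\mathcal L_X D^{\pm}_\mu = -\frac{1\pm\nu}{2}D^{\pm}_\mu$ and $g_t^\ast\eta_V = e^{t}\eta_V$ (from the commutation relation $g_t h^V_s = h^V_{e^t s} g_t$ and $\eta_V=\imath_V\omega$), the current $C^{\pm}_\mu=D^{\pm}_\mu\eta_V$ is an eigform for $g_t^\ast$ with eigenvalue $\exp(\frac{1\mp\nu}{2}t)$ (signs flip because $g_{-t}$ appears), and $\hat\beta^{\pm}_\mu(g_{-t}\gamma)=\langle C^{\pm}_\mu, (g_{-t})_\ast[\gamma]\rangle = \exp(\frac{1\mp\nu}{2}t)\,\hat\beta^{\pm}_\mu(\gamma)$; the $\mu=1/4$ case is the same computation with the Jordan block \eqref{eq:Jordanb} replacing the scalar, producing the unipotent factor $\begin{pmatrix}1 & -t/2\\ 0 & 1\end{pmatrix}$. \emph{(5)} Finally, prove the H\"older bound \eqref{eq:Hoelder}: decompose a general rectifiable arc into $O(1+\int_\gamma|\hat X| + \int_\gamma|\hat U|\int_\gamma|\hat V|)$ pieces each of which, after flowing by a bounded amount under $g_t$ and $h^V_t$, has $\int|\hat U|\le 1$ and small $X$-length; on each such piece the H\"older regularity of the primitive $B^{\pm}_\mu$ gives the factor $(\int_\gamma|\hat U|)^{(1\mp\re\nu)/2}$, and the geodesic scaling relation of step (4) is used to renormalize (this is the standard ``self-similarity'' estimate, analogous to the renormalization estimates in \cite{Ftwo}, \cite{Bufetov1}); the $\mu=1/4$ logarithmic corrections account for the $\frac12+$ and $\frac12$ exponents in \eqref{eq:Hoelderbis}.

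The main obstacle I expect is step \emph{(2)}: making the cocycle $\hat\beta^{\pm}_\mu$ genuinely well-defined on \emph{all} rectifiable arcs, not merely on smooth or piecewise-smooth ones, and with a modulus of continuity strong enough to yield the precise H\"older exponents. One must show that the boundary-term definition is insensitive to how a rough arc is approximated by polygonal ones tangent to the frame — this requires a uniform estimate controlling $\hat\beta^{\pm}_\mu$ of thin ``error'' parallelograms, which in turn rests on the joint H\"older regularity of the primitive in the $U$- and $X$-directions (transverse to $V$) and not just along $V$; the bound in \cite{Co} must be upgraded/repackaged to this effect, or one must argue directly via the pairing of the basic current $C^{\pm}_\mu\in\Omega_1^{-s}(SM)$ with currents of integration over rectifiable arcs, controlling the latter's Sobolev norm. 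The interplay between the two normalizations (the fixed bound in \eqref{eq:Hoelder} versus the scaling in property (2), which together pin down the exponent via the renormalization cocycle) is the technical heart, exactly as in the flat-surface analogues \cite{Ftwo}, \cite{Bufetov1}, \cite{Bufetov2}.
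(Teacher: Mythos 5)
The paper's construction is quite different from yours. You propose to start from the $V$-invariant distributions, realize them via Cosentino's regularity result as derivatives of H\"older primitives, and then define $\hat\beta^\pm_\mu(\gamma)$ as a boundary term of that primitive along $\gamma$. The paper instead works with the \emph{stable} horocycle from the start: it views a rectifiable arc $\gamma$ as a current in $\Omega_1^{-s}(SM)$ (Sobolev embedding, $s>3/2$), projects orthogonally onto the space $\mathcal B^{-s}_U(H_\mu)$ of basic currents for the \emph{stable} horocycle foliation, and sets
\begin{equation*}
\hat\beta^\pm_\mu(\gamma):=\lim_{t\to+\infty}\ e^{-\frac{1\mp\nu}{2}t}\,\hat\alpha^\pm_{\mu,-s}(g_t^\ast\gamma)\,,
\end{equation*}
where $\hat\alpha^\pm_{\mu,-s}$ are the coefficients of the projection in the basis $\{B^+_\mu,B^-_\mu\}$. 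The heart of the matter is an ODE for these coefficients along the geodesic flow with a forcing term $\rho^\pm_{\mu,-s}$ that is uniformly integrable; the integrability is exactly Lemma~\ref{lemma:remainderextension}, which rests on solving the cohomological equation $Uf=\lambda_U$ with Sobolev loss as in \cite{FlaFo}. Additivity, scaling, $h^V$-invariance and the H\"older estimate are then read off from this explicit construction; in particular, unstable-horocycle invariance is a nontrivial \emph{theorem} in this approach (Lemma~\ref{lemma:wsproj}), proved by a bounded-area argument comparing $g_t^\ast\gamma$ with its $h^V$-projection onto a weak stable leaf. The duality $\hat\beta^\pm_\mu=\hat D^\pm_\mu\wedge\hat U$ --- the starting point of your proposal --- is established only afterwards (Theorem~\ref{thm:duality}) and plays no role in constructing the measures.

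The concrete gap, which you correctly flag, is your step (2). A finitely-additive H\"older measure on \emph{all} rectifiable arcs cannot be obtained by pairing the current $\hat D^\pm_\mu\wedge\hat U\in\Omega^{-s}_2(SM)$ with $\gamma$: an arc is a current of dimension $1$ (degree $2$), and there is no canonical pairing between currents of complementary degree type. To read off a value on a given arc you would need a genuine H\"older primitive with joint control in all three frame directions $X$, $U$, $V$, and a proof that your boundary-term definition is insensitive to the choice of polygonal approximation for a truly rough rectifiable arc. Cosentino's result gives H\"older regularity only along the distinguished horocycle direction; it does not by itself deliver the transverse regularity or a canonical primitive on $SM$, and making it do so would essentially require re-deriving the estimates the paper obtains via the cohomological equation and Lemma~\ref{lemma:remainderextension}. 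Your steps (4) and (5) --- the eigenvalue/Jordan-block computation for geodesic scaling, and the renormalize-to-unit-$U$-length trick for the H\"older bound --- are morally the same as the paper's once a construction is in hand; but without a precise and provably consistent definition in step (2) the argument does not close. The Sobolev-projection construction avoids all of this: well-definedness on rectifiable arcs is automatic from the Sobolev embedding, and the H\"older exponent is extracted a posteriori by projecting onto a weak stable leaf and rescaling under $g_t$.
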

\emph{Notation}: In the above formulas~(\ref{eq:Hoelder}) and~(\ref{eq:Hoelderbis}), the symbols
 $\vert \hat X\vert$, $\vert \hat U \vert$ and $\vert \hat V \vert$ stand for the transverse
 measures given by the forms $\hat X$, $\hat U$ and $\hat V$ respectively, and, for any $L>0$, we set
$$
L^{\frac{ 1}{2}+} =  L^{\frac{ 1}{2}} (1 +\vert \log L\vert).
$$

Recall that by definition the {\it weak unstable} manifolds of the geodesic flow 
are the $2$-dimensional manifolds tangent to the integrable distribution 
$\{X,V\}$ in the tangent bundle of $SM$. If follows immediately from the  H\"older
property that the restrictions of the finitely-additive measures $\hat
\beta^\pm_\mu$ to the weak unstable manifolds of the geodesic flow
vanish. 

\begin{corollary}
\label{cor:wuvanishing} For all Casimir parameters $\mu \in \R^+$
and for any rectifiable arc  $\gamma_{wu}$ contained in a (single)
weak unstable manifold of the geodesic flow, we have
$$
\hat \beta^\pm_\mu( \gamma_{wu}) =0\,.
$$
In particular, all the finitely-additive measures $\hat
\beta^\pm_\mu$ belong to the space $\hat {\mathfrak  B}_V(SM)$.
\end{corollary}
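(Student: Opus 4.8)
The plan is to read off the vanishing directly from the H\"older bound of Theorem~\ref{thm:hatbetaprops}(4), the point being that an arc inside a weak unstable manifold carries zero $\hat U$-length. First I would record that, by definition, the weak unstable manifolds of the geodesic flow are the integral leaves of the integrable distribution spanned by $\{X,V\}$, and that the $1$-form $\hat U$ annihilates both $X$ and $V$; hence $\hat U$ restricts to zero on every weak unstable leaf. Consequently, for any rectifiable arc $\gamma_{wu}$ lying in a single such leaf,
\[
\int_{\gamma_{wu}} |\hat U| = 0,
\]
while $\int_{\gamma_{wu}} |\hat X|$ and $\int_{\gamma_{wu}} |\hat V|$ are finite, since $\gamma_{wu}$ is rectifiable and $\hat X,\hat V$ are bounded forms.

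Next I would substitute this into the estimates of Theorem~\ref{thm:hatbetaprops}(4). For $\mu \ne 1/4$, inequality~(\ref{eq:Hoelder}) yields
\[
|\hat\beta^\pm_\mu(\gamma_{wu})| \;\le\; C\Bigl( 1 + \int_{\gamma_{wu}} |\hat X| + \int_{\gamma_{wu}} |\hat U|\,\int_{\gamma_{wu}} |\hat V| \Bigr)\, 0^{\frac{1\mp\re\,\nu}{2}} \;=\; 0,
\]
because the exponent $\tfrac{1\mp\re\,\nu}{2}$ is strictly positive: since $\re\,\nu\in[0,1)$ one has $0<\tfrac{1-\re\,\nu}{2}\le\tfrac12\le\tfrac{1+\re\,\nu}{2}<1$. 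For $\mu=1/4$ the same conclusion follows from~(\ref{eq:Hoelderbis}), using that $L^{1/2+}=L^{1/2}(1+|\log L|)\to 0$ as $L\to 0^+$, so the prefactor $0^{1/2+}$ equals $0$ as well. This proves $\hat\beta^\pm_\mu(\gamma_{wu})=0$.

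Finally, for the membership $\hat\beta^\pm_\mu\in\hat{\mathfrak B}_V(SM)$ I would verify the four axioms of Definition~\ref{def:hatbetaprops} one by one. Axioms (1) (additivity) and (3) (unstable horocycle invariance) are verbatim items (1) and (3) of Theorem~\ref{thm:hatbetaprops}; axiom (2) (weak unstable vanishing) is precisely what was just proved, since the weak unstable foliation is the foliation whose leaves are the weak unstable manifolds of the geodesic flow, i.e.\ the $\{X,V\}$-foliation. For the H\"older axiom (4), which asks for a single exponent $\alpha\in(0,1)$ and only on arcs with $\int_\gamma|\hat U|\le1$, one takes $\alpha=\tfrac{1-\re\,\nu}{2}$ for $\hat\beta^+_\mu$ and $\alpha=\tfrac{1+\re\,\nu}{2}$ for $\hat\beta^-_\mu$ when $\mu\ne1/4$, both in $(0,1)$ by the inequality above; for $\mu=1/4$ one absorbs the logarithm via $L^{1/2}(1+|\log L|)\le C_\alpha L^{\alpha}$ (valid for $0<L\le1$, any $\alpha<1/2$), so that $\hat\beta^+_{1/4}$ satisfies (4) with, say, $\alpha=1/4$ and $\hat\beta^-_{1/4}$ with $\alpha=1/2$. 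There is no genuine obstacle here; the only points demanding attention are confirming that the H\"older exponents lie in the open interval $(0,1)$ and neutralizing the logarithmic factor in the critical case $\mu=1/4$.
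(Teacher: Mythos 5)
Your proof is correct. For the vanishing statement your argument is slightly more direct than the paper's: you observe that $\hat U$ annihilates the $\{X,V\}$ distribution, so $\int_{\gamma_{wu}}|\hat U|=0$, and then read the vanishing straight off the H\"older bound of Theorem~\ref{thm:hatbetaprops}(4), taking care that $0^{\alpha}=0$ for $\alpha>0$ and that $L^{1/2}(1+|\log L|)\to0$ as $L\to0^+$. The paper instead couples the H\"older bound with the geodesic-scaling property: writing $\hat\beta^\pm_\mu(\gamma_{wu})=e^{-\frac{1\mp\nu}{2}t}\hat\beta^\pm_\mu(g_{-t}\gamma_{wu})$, noting that $g_{-t}\gamma_{wu}$ stays in a weak unstable leaf (so its $\hat U$-length stays zero while its $\hat X$-length is constant), obtaining a $t$-uniform bound from the H\"older property, and letting $t\to+\infty$. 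The extra scaling step buys nothing logically here beyond your direct substitution, though it sidesteps any hesitation about evaluating $0^{\alpha}$ or the $L^{1/2+}$ expression at $L=0$; both routes hinge on the same two ingredients (the H\"older estimate plus $\hat U|_{\{X,V\}}=0$), and both are sound. For the membership assertion, the paper gives no explicit verification, whereas you check the four axioms of Definition~\ref{def:hatbetaprops} one by one, including the correct choice of a single exponent $\alpha\in(0,1)$ in each case and the absorption of the logarithm at $\mu=1/4$ via $L^{1/2}(1+|\log L|)\le C_\alpha L^{\alpha}$ for $0<L\le1$ and $\alpha<1/2$; this is a useful bit of added rigor.
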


For any $s>0$,  let $\mathcal B_+^{-s}(SM)$ be the (closed) subspace of basic currents for the stable horocycle foliation
supported on irreducible unitary representations of the principal and complementary series and let $\mathcal B^{-s}_+:
\Omega_1^{-s}(SM) \to \mathcal B_+^{-s}(SM)$  be the orthogonal projection. By the Sobolev embedding theorem, for
any $s>3/2$, any rectifiable arc $\gamma$ can be seen as a current in the dual Sobolev space $ \Omega_1^{-s}(SM)$.

\begin{corollary}
\label{cor:basiccurrents}

For any $r>9/2$, for any $s>r+1$ and for any rectifiable arc $\gamma\subset SM$, the limit
$$
\hat B(\gamma):= \lim_{t\to +\infty} (g^\ast_{-t}  \circ \mathcal B^{-r}_+ \circ g^\ast_t)(\gamma)  \in
\Omega_1^{-s}(SM)
$$
exists and is equal to a uniquely determined basic current for the stable horocycle foliation. In fact, there exists a basis  $\{ B^\pm_\mu�\} \subset \mathcal B_U(SM)$ of eigenvectors  for the action of the geodesic flow on the space of basic currents such that
$$
\hat B(\gamma)=  \sum_{\mu \in \Spec(\square) \cap \R^+}
 \hat \beta^+_\mu(\gamma) B^+_\mu + \hat \beta^-_\mu(\gamma) B^-_\mu \,.
$$
For all rectifiable arcs $\gamma$ in $SM$ the following properties hold:
\begin{enumerate}
\item (Additive property)   For any decomposition $\gamma= \gamma_1 +\gamma_2$ into subarcs,
\begin{equation*}
\hat B(\gamma)= \hat B(\gamma_1) + \hat B(\gamma_2) \,;
\end{equation*}
\item (Weak unstable vanishing) For all $\gamma$ tangent to the weak unstable
foliation,
$$
\hat B (\gamma) =0\,.
$$
\item (Unstable horocycle invariance)  For all $t \in \R$,
\begin{equation*}
\hat B (h^V_{t}\gamma ) = \hat B(\gamma)\,.
\end{equation*}
\item (H\"older property) There exist exponents $\alpha^\pm \in (0,1)$ and a constant $C>0$
such that, for all rectifiable arc $\gamma \subset SM$, we have
\begin{equation*}
\Vert \hat B(\gamma)\Vert_{-s} \leq C \left( 1+
\int_{\gamma} \vert \hat X\vert + \int_{\gamma} \vert \hat U \vert
\int_{\gamma} \vert \hat V \vert \right)\max_{ \alpha\in \{\alpha^+, \alpha^-\}} \left( \int_{\gamma}
\vert \hat U \vert\right)^{\alpha}
\end{equation*}
\end{enumerate}
\end{corollary}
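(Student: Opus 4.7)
The plan is to construct the candidate current as an explicit eigen-series, verify its convergence in the ambient Sobolev space, identify it with the dynamical limit by exploiting the semisimple action of the geodesic flow on the basic currents of the principal and complementary series, and finally transfer the four listed properties from Theorem~\ref{thm:hatbetaprops} and Corollary~\ref{cor:wuvanishing}.

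First I would assemble the basis $\{B^\pm_\mu\}$. Via the isomorphism~(\ref{eq:star5}), to be justified in Lemma~\ref{lemma:Isom}, the Flaminio--Forni basis $\{D^\pm_\mu\}$ of invariant distributions yields basic currents $B^\pm_\mu := D^\pm_\mu \, \eta_U$. Combining~(\ref{eq:eigenv}) with the identity $\mathcal{L}_X \eta_U = \eta_U$ (which follows from $g_t \circ h^U_s = h^U_{e^{-t}s}\circ g_t$), the Leibniz rule gives
\[
g^*_t B^\pm_\mu = \exp\!\bigl(\tfrac{1\mp\nu}{2}t\bigr)\,B^\pm_\mu \qquad (\mu\neq 1/4),
\]
with the analogous Jordan block at $\mu = 1/4$ derived from~(\ref{eq:Jordanb}). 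The candidate is
\[
\hat B(\gamma) := \sum_{\mu\in\Spec(\square)\cap\R^+} \bigl[\hat\beta^+_\mu(\gamma)\,B^+_\mu + \hat\beta^-_\mu(\gamma)\,B^-_\mu\bigr].
\]
For convergence in $\Omega_1^{-s}(SM)$ I would combine three ingredients: the Sobolev orders $(1\pm\re\nu)/2$ of $D^\pm_\mu$ from~\cite{FlaFo} (giving polynomial-in-$\mu$ bounds on $\Vert B^\pm_\mu\Vert_{-r}$ for $r>(1+\re\nu)/2$), polynomial-in-$\mu$ dependence of the H\"older constant in~(\ref{eq:Hoelder})--(\ref{eq:Hoelderbis}) (tracked through the construction in the proof of Theorem~\ref{thm:hatbetaprops}), and the Weyl asymptotics for $\Spec(\square)$. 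Absolute convergence follows after absorbing the polynomial losses into one additional derivative, which is precisely what the thresholds $r>9/2$ and $s>r+1$ provide.

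To identify the candidate with the dynamical limit, I would expand
\[
\mathcal B^{-r}_+(g^*_t\gamma) = \sum_\mu \bigl[c^+_\mu(t)\,B^+_\mu + c^-_\mu(t)\,B^-_\mu\bigr],
\]
apply $g^*_{-t}$, and verify mode by mode that $\exp(-\tfrac{1\mp\nu}{2}t)\,c^\pm_\mu(t)\to\hat\beta^\pm_\mu(\gamma)$; this is the content of Theorem~\ref{thm:hatbetaprops}(2), with the two Jordan blocks at $\mu=1/4$ cancelling each other. Uniqueness of the limiting basic current then follows from the uniqueness of the expansion in the basis. The four listed properties of $\hat B$ are inherited coefficientwise: additivity and unstable horocycle invariance from properties~(1) and~(3) of Theorem~\ref{thm:hatbetaprops}, weak unstable vanishing from Corollary~\ref{cor:wuvanishing}, and the H\"older bound by substituting~(\ref{eq:Hoelder})--(\ref{eq:Hoelderbis}) into
\[
\Vert \hat B(\gamma)\Vert_{-s} \le \sum_\mu \bigl(|\hat\beta^+_\mu(\gamma)|\,\Vert B^+_\mu\Vert_{-s} + |\hat\beta^-_\mu(\gamma)|\,\Vert B^-_\mu\Vert_{-s}\bigr)
\]
and summing using the convergence estimates, with $\alpha^\pm=(1\mp\re\nu)/2$ accounting for the complementary series (and both exponents collapsing to $1/2$, up to logarithms, at the endpoint $\mu=1/4$).

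The main obstacle is the passage from pointwise-in-$\mu$ renormalized convergence to control that is uniform in $\mu$ and compatible with the infinite summation. One needs effective polynomial dependence on $\mu$ both in the H\"older constants of the $\hat\beta^\pm_\mu$ and in the Sobolev norms of the $B^\pm_\mu$, quantitatively matched against the Weyl law for $\Spec(\square)$; the specific thresholds $r>9/2$ and $s>r+1$ reflect this trade-off.
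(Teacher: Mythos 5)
Your proposal is correct and follows essentially the same route as the paper: expand $\mathcal B^{-r}_+(g^*_t\gamma)$ in the eigenbasis $B^\pm_\mu = D^\pm_\mu\,\eta_U$, identify each rescaled coefficient with $\hat\beta^\pm_\mu(\gamma)$ via Theorem~\ref{thm:hatbeta} (with the Jordan block at $\mu=1/4$ handled exactly as you indicate), obtain summability from the ratio $\Vert B^\pm_\mu\Vert_{-s}/\Vert B^\pm_\mu\Vert_{-r}\lesssim (1+\mu)^{-(s-r)/2}$ together with the Weyl law, and inherit the four properties termwise. The only cosmetic difference is that the paper avoids tracking polynomial-in-$\mu$ dependence of the H\"older constants by exploiting the normalization already built into the bound~(\ref{eq:hatbetabound}), namely $|\hat\beta^\pm_\mu(\gamma)|\le C_s\Vert B^\pm_\mu\Vert_{-s}^{-1}(1+\text{transverse lengths})$, so that all the decay in $\mu$ comes from the Casimir smoothing between the two Sobolev indices.
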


\begin{remark} It is unclear to the authors whether the dependence of the current
$\hat B(\gamma)\in \Omega'_1(SM)$ on the rectifiable arc $\gamma\subset SM$ is
continuous with respect to a natural topology (for instance the Hausdorff topology)
on the space of bounded rectifiable arcs (with common endpoints).
\end{remark}

For any sufficiently smooth $1$-form $\lambda\in \Omega_1(SM)$, let $\hat \beta_\lambda$ be the finitely additive functional defined on rectifiable arcs $\gamma\subset SM$ as
\begin{equation}
\label{eq:betalambda}
\hat \beta_\lambda (\gamma) := <\hat B(\gamma), \lambda> \,.
\end{equation}
It follows from Corollary~\ref{cor:basiccurrents} that $\hat \beta_\lambda \in\hat {\mathfrak B}_V(SM)$. In particular, for any sufficiently
smooth complex-valued function $f$ on $SM$, let  $\hat \beta_f \in \hat {\mathfrak B}_V(SM)$ be finitely additive functional
$\hat \beta_{f\hat U}$, that is, for any rectifiable arc $\gamma\subset SM$,
\begin{equation}
\label{eq:betaf}
\hat \beta_f (\gamma) = \hat \beta_{f \hat U}(\gamma)=  <\hat B(\gamma), f\hat U>\,.
\end{equation}
By Corollary~\ref{cor:basiccurrents} and the identification between basic currents and invariant distributions
given by (\ref{eq:star5}),
the finitely-additive measure $\hat \beta_f $ has the expansion:
\begin{equation}
\label{eq:expansion1}
\hat \beta_f= \sum_{\mu \in \Spec(\square) \cap \R^+}
 D^+_\mu(f)  \hat \beta^+_\mu  + D^-_\mu(f)  \hat \beta^-_\mu  \,.
\end{equation}

\begin{remark}
The formula (\ref{eq:expansion1}) yields a {\it duality} between the spaces of $V$-invariant
distributions and $V$-basic currents. We describe this duality in detail in \S\S~\ref{Duality}.
\end{remark}

By restriction of  the finitely additive measures  $\hat \beta\in \hat {\mathfrak B}_V(SM)$ to horocycle arcs,
we obtain finitely-additive H{\"o}lder cocycles $\beta$ for the stable horocycle flow $\{h^U_t\}$.

 For any $(x,T) \in SM\times \R$ Let $\gamma_U(x,T)$ denote the oriented horocycle arc
$$
\gamma_U(x,T) := \{ h^U_t(x) \vert t\in [0,T] \} \,.
$$
For every Casimir parameter $\mu >0$ the cocycles $\beta^\pm_\mu$
are defined as follows:
\begin{equation}
\label{eq:additivecocycles_mu} \beta^{\pm}_\mu (x,T) := \hat
\beta^{\pm}_\mu [\gamma_U(x,T)] \,, \quad \text{ for all } (x,T)\in
SM\times \R \,.
\end{equation}
For any sufficiently smooth complex-valued function $f$  on $SM$, the cocycle
$\beta_f$ is similarly defined by the formula
\begin{equation}
\label{eq:additivecocycles_f} \beta_f(x,T) := \hat
\beta_f [\gamma_U(x,T)] \,, \quad \text{ for all } (x,T)\in
SM\times \R \,.
\end{equation}
By construction and by formula~(\ref{eq:expansion1}), the following expansion formula holds:
\begin{equation}
\label{eq:expansion2}
 \beta_f(x,T) = \sum_{\mu \in \Spec(\square) \cap \R^+}
 D^+_\mu(f)  \beta^+_\mu  + D^-_\mu(f) \beta^-_\mu  \,.
\end{equation}
Thus for every Casimir parameter $\mu\in \R^+$ we obtain a pair of (linearly independent)
additive H\"older cocycles $\beta^\pm_\mu: SM \times \R \to \C$ for the stable horocycle flow.
Such  cocycles  have the following properties.

\begin{theorem}  \label{thm:cocycleproperties}
For any Casimir parameter $\mu \in \R^+$ the following holds.
\begin{enumerate}
\item (Cocycle property)   For all $x\in SM$ and for all $S$, $T\in \R$:
\begin{equation*}
\beta^{\pm}_\mu(x,S+T)=  \beta^{\pm}_\mu(x,S) +
\beta^{\pm}_\mu(h^U_S x,T) \,;
\end{equation*}
\item (Geodesic scaling)  For $\mu \neq  1/4$, for all $x\in SM$, for all $t$, $T\in \R$,
\begin{equation*}
\beta^{\pm}_\mu (g_{-t}x, Te^t ) =   \exp( \frac{1\mp \nu}{2} t)
\beta^{\pm}_\mu(x,T) \,,
\end{equation*}
and for $\mu= 1/4$ ($\nu=0$), for all $x\in SM$, for all $t$, $T\in \R$,
\begin{equation*}
\begin{pmatrix} \beta^+_{1/4} (g_{-t}x, Te^t ) \\ \beta^-_{1/4}(g_{-t}x, Te^t )\end{pmatrix}
=   \exp( \frac{t}{2}) \begin{pmatrix} 1 & -\frac{t}{2} \\ 0 & 1 \end{pmatrix}
\begin{pmatrix} \beta^+_{1/4} (x, T) \\ \beta^-_{1/4} (x, T )\end{pmatrix} \,;
\end{equation*}
\item (H\"older property)  For all $\mu\neq 1/4$, there exists a constant $C_\mu>0$
such that, for all $(x,T) \in SM \times \R^+$,
\begin{equation*}
\vert  \beta^{\pm}_\mu(x,T) \vert  \leq   C_\mu \, \vert
T\vert^{\frac{1\mp \re\,\nu}{2}}  \,,
\end{equation*}
while for $\mu=1/4$ ($\nu=0$), there exists a constant $C>0$ such that
\begin{equation*}
\begin{aligned}
\vert  \beta^+_{1/4}(x,T) \vert  &\leq   C \, \vert
T\vert^{\frac{1}{2}+}  \,, \\
 \vert  \beta^-_{1/4}(x,T) \vert  &\leq   C  \, \vert
T\vert^{\frac{1}{2}} \,.
\end{aligned}
\end{equation*}
\item (Orthogonality)  For any $T\in \R$, the bounded function
$\beta^\pm_\mu(\cdot,T)$ belongs to  the irreducible component $H_\mu \subset L^2(SM)$.
\end{enumerate}
\end{theorem}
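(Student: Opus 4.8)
\emph{Proof strategy.} Properties (1)--(3) are obtained by specialising the finitely-additive measures $\hat\beta^{\pm}_\mu$ of Theorem~\ref{thm:hatbetaprops} to the horocycle arcs $\gamma_U(x,T)$. For (1): when $S,T\ge 0$ the arc splits as $\gamma_U(x,S+T)=\gamma_U(x,S)+\gamma_U(h^U_S x,T)$, the second sub-arc being the portion over $[S,S+T]$ reparametrised by $\tau\mapsto h^U_\tau(h^U_S x)$, so the additive property of $\hat\beta^{\pm}_\mu$ yields the cocycle identity; the general-sign case follows from the orientation-reversal convention $\beta^{\pm}_\mu(x,-T):=-\beta^{\pm}_\mu(h^U_{-T}x,T)$. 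For (2): the commutation relation~(\ref{eq:commutflows}) gives $g_{-t}\circ h^U_s=h^U_{e^t s}\circ g_{-t}$, whence $g_{-t}(\gamma_U(x,T))=\gamma_U(g_{-t}x,Te^t)$ as oriented arcs, and the geodesic-scaling property of $\hat\beta^{\pm}_\mu$, together with its Jordan-block form at $\mu=1/4$, gives (2). For (3): a horocycle arc is tangent to $U$, so $\int_{\gamma_U(x,T)}\vert\hat U\vert=\vert\int_{\gamma_U(x,T)}\hat U\vert=\vert T\vert$ while $\int_{\gamma_U(x,T)}\vert\hat X\vert=\int_{\gamma_U(x,T)}\vert\hat V\vert=0$ because $\hat X(U)=\hat V(U)=0$; substituting into~(\ref{eq:Hoelder}) (resp.~(\ref{eq:Hoelderbis})) collapses the geometric prefactor to a constant and produces exactly the bounds in (3). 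In particular $\beta^{\pm}_\mu(\cdot,T)$ is bounded, hence lies in $L^2(SM)$, so the statement of (4) makes sense.

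Property (4) is the substantive step. I would first reduce to functions in $C^\infty(H_\mu)$: since $\{D^{+}_\mu,D^{-}_\mu\}$ is a basis of the two-dimensional space of $U$-invariant distributions non-trivial on $C^\infty(H_\mu)$ and each $D^{\pm}_\mu$ annihilates $C^\infty(H_{\mu'})$ for $\mu'\ne\mu$, there exist $f_{+},f_{-}\in C^\infty(H_\mu)$ with $D^{+}_\mu(f_{+})=D^{-}_\mu(f_{-})=1$ and $D^{-}_\mu(f_{+})=D^{+}_\mu(f_{-})=0$; the expansion~(\ref{eq:expansion2}) then collapses to a single term, giving $\beta^{\pm}_\mu(\cdot,T)=\beta_{f_{\pm}}(\cdot,T)=\langle\hat B(\gamma_U(\cdot,T)),f_{\pm}\hat U\rangle$ by~(\ref{eq:betaf}), so it suffices to show $x\mapsto\langle\hat B(\gamma_U(x,T)),f\hat U\rangle\in H_\mu$ for every $f\in C^\infty(H_\mu)$. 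For this I would exploit the renormalisation formula $\hat B(\gamma)=\lim_{t\to+\infty}(g^{\ast}_{-t}\circ\mathcal B^{-r}_{+}\circ g^{\ast}_{t})(\delta_\gamma)$ of Corollary~\ref{cor:basiccurrents}, together with the following structural facts. The orthogonal projection $\mathcal B^{-r}_{+}$ onto basic currents is the orthogonal sum of the projections $\mathcal B^{\mu}_{+}$ onto the two-dimensional blocks spanned by $B^{\pm}_\mu=D^{\pm}_\mu\eta_U$ (the blocks being mutually orthogonal because the distributions $D^{\pm}_\mu$ lie in the mutually orthogonal components $H_\mu$); each block is preserved both by $g^{\ast}_{t}$, which scales $B^{\pm}_\mu$ by $e^{(1\mp\nu)t/2}$, and by the Riesz isomorphism of the Hilbert space $\Omega_1^{-r}(SM)$, which is a function of the Casimir and hence $PSL(2,\R)$-equivariant; and, because $r>9/2$, the $\Omega_1^{-r}$-inner product of a rectifiable-arc current $\delta_\gamma$ with a current in a block is computed by integrating over $\gamma$ an associated $C^1$ $1$-form whose coefficients lie in $H_\mu$. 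Applying $g^{\ast}_{t}$ to $\delta_{\gamma_U(x,T)}$ turns the arc into $\gamma_U(g_{-t}x,Te^t)$; projecting onto the $\mu$-block, applying $g^{\ast}_{-t}$, reading off the $B^{\pm}_\mu$-coefficient, and unwinding the commutation of $g_{-t}$ with $h^U_\bullet$ then yields a representation
$$
\beta^{\pm}_\mu(x,T)=\lim_{t\to+\infty}\int_0^T\Phi^{\pm}_{\mu,t}\bigl(h^U_s x\bigr)\,ds ,
$$
where $\Phi^{\pm}_{\mu,t}$ is a linear combination, with $x$-independent coefficients, of functions of the form $(\imath_U\lambda)\circ g_{-t}$ with $\lambda$ the $C^1$ $1$-form above; in particular $\Phi^{\pm}_{\mu,t}\in H_\mu$ for every $t$. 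Since $\phi\mapsto\int_0^T\phi(h^U_s\cdot)\,ds=\bigl(\int_0^T e^{sU}\,ds\bigr)\phi$ is an average of unitaries and so maps $H_\mu$ into itself, the function under the limit lies in $H_\mu$ for every $t$; as $H_\mu$ is closed in $L^2(SM)$ and the limit converges there by the bounds of Corollary~\ref{cor:basiccurrents}, we conclude $\beta^{\pm}_\mu(\cdot,T)\in H_\mu$, which is (4). The case $\mu=1/4$ is identical after replacing the scalar $e^{(1\mp\nu)t/2}$ by the Jordan factor $\bigl(\begin{smallmatrix}1&-t/2\\0&1\end{smallmatrix}\bigr)$. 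Alternatively, one could verify directly that $\beta^{\pm}_\mu(\cdot,T)$ is a bounded solution of the distributional Casimir equation $\square\,\beta^{\pm}_\mu(\cdot,T)=\mu\,\beta^{\pm}_\mu(\cdot,T)$ --- computing its $X$- and $U$-derivatives from (2) and (1), and its $V$-derivative via $[U,V]=2X$ --- and invoke the spectral decomposition~(\ref{eq:L2splitting}).

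The main obstacle is (4). The formal manipulations leading to the displayed representation are routine, but three points require genuine work: (a) verifying that $\mathcal B^{-r}_{+}$ really splits along the $\mu$-blocks and that this splitting is compatible with the non-unitary operators $g^{\ast}_{t}$, so that the $\mu$-block passes through the renormalisation undistorted; (b) the regularity input --- it is exactly the hypothesis $r>9/2$ of Corollary~\ref{cor:basiccurrents} that makes the $1$-form dual to a block current smooth enough to integrate against $\delta_\gamma$ for a merely rectifiable arc $\gamma$; and (c) the $L^2$-convergence of the limit, which demands uniform-in-$t$ estimates balancing the geodesic-scaling exponent against the Sobolev/H\"older order $(1\pm\re\,\nu)/2$ of $D^{\pm}_\mu$ --- this balance is precisely what the proof of Corollary~\ref{cor:basiccurrents} provides. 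If instead one pursues the Casimir-equation route, the subtlety concentrates in verifying that equation distributionally: since $\beta^{\pm}_\mu(x,\cdot)$ is only H\"older of exponent $(1\mp\re\,\nu)/2<1$, hence not differentiable in $T$, the $T\partial_T$ term arising when one differentiates the geodesic-scaling relation in $t$ must be interpreted weakly.
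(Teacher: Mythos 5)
Your treatment of properties (1)--(3) matches the paper exactly: restrict the finitely-additive measures $\hat\beta^\pm_\mu$ to the horocycle arcs $\gamma_U(x,T)$, use the additivity, the commutation relation $g_{-t}\gamma_U(x,T)=\gamma_U(g_{-t}x,Te^t)$, and the fact that on a $U$-arc $\int\vert\hat X\vert=\int\vert\hat V\vert=0$ and $\int\vert\hat U\vert=\vert T\vert$. No disagreement there.

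For property (4), your route differs from the paper's. The paper invokes Corollary~\ref{cor:maincompl} (resp.\ Corollary~\ref{cor:mainprincip}), which say that for a function $f\in W^s(H_\mu)$ dual to $D^\pm_\mu$, the normalized ergodic integrals $e^{-\frac{1\pm\nu}{2}t}\int_0^{Te^t}f\circ h^U_\tau\,d\tau$ converge \emph{uniformly} to $\beta^\pm_\mu(g_t\cdot,T)$. Each of these ergodic integrals is a continuous function of $x$ lying in $H_\mu$ (because $H_\mu$ is invariant under the right $PSL(2,\R)$-action, so $f\circ h^U_\tau\in H_\mu$ for every $\tau$ and the integral is a norm limit of Riemann sums in $H_\mu$). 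Since $H_\mu$ is closed in $L^2(SM)$ and uniform convergence on the compact manifold $SM$ implies $L^2$-convergence, the limit lies in $H_\mu$; conjugation by $g_{-t}$ (unitary, preserves $H_\mu$) then gives $\beta^\pm_\mu(\cdot,T)\in H_\mu$. You instead go back to the renormalization formula of Corollary~\ref{cor:basiccurrents} and unwind it directly, expressing $\beta^\pm_\mu(x,T)$ as a limit of integrals $\int_0^T\Phi^\pm_{\mu,t}(h^U_s x)\,ds$ with $\Phi^\pm_{\mu,t}\in H_\mu$ constructed from the Riesz representatives of the basic currents $B^\pm_\mu$. This is correct in spirit --- the Riesz isomorphism on $\Omega_1^{-r}$ does commute with the Casimir decomposition, so pairing $\delta_\gamma$ against a block current does produce integrals of $H_\mu$-forms, and for $r>3/2$ those forms are continuous --- but you are essentially reproving the approximation theorem inline rather than citing it. The points (a)--(c) you flag as "requiring genuine work" are exactly the content of Theorem~\ref{thm:approximation}, Lemma~\ref{lemma:remainderextension} and the distortion Lemma~\ref{lemma:distorsion}, which the paper has already established. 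So the paper's route buys brevity (one paragraph instead of a reconstruction of the renormalization machinery), while your route buys nothing extra, and in the form you have written it, it is not a complete proof: you acknowledge (a)--(c) as obstacles rather than discharging them. If you simply cite Corollaries~\ref{cor:maincompl} and~\ref{cor:mainprincip} at that point, the gap closes and your argument collapses to the paper's.

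Your alternative suggestion via the Casimir equation is a genuinely different idea but, as you correctly note, it founders on the low H\"older regularity of $\beta^\pm_\mu$ in $T$: the required distributional manipulations are delicate and the paper does not attempt them.
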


\subsection{H{\"o}lder cocycles and ergodic integrals.}
\label{Ergodic}

The asymptotics and limit distributions  of ergodic integrals of smooth functions  is controlled
by the additive H\"older cocycles for the horocycle flow introduced above. More precisely, the following approximation theorem holds.
\begin{theorem}
\label{thm:approximation} For any $s>11/2$ there exists a constant $C_s>0$ such that for every rectifiable curve $\gamma \subset SM$  and for all $1$-forms $\lambda\in \Omega_1^s(SM)$ supported
on irreducible components of the principal and complementary series, we have
$$
\vert \int_{\gamma}\lambda -  \hat B_\lambda(\gamma) \vert  \leq  C_s \Vert \lambda \Vert_s
(1 + \int_\gamma \vert \hat X\vert + \int_\gamma \vert \hat V\vert)\,.
$$
In particular, for all functions $f\in W^s(SM)$ supported on irreducible components of the principal and complementary series, we have
$$
\vert \int_{\gamma} f \hat U -  \hat \beta_f(\gamma) \vert  \leq  C_s \Vert f \Vert_s
(1 + \int_\gamma \vert \hat X\vert + \int_\gamma \vert \hat V\vert)\,.
$$
\end{theorem}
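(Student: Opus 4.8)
The plan is to reduce the general statement, by linearity, to the case $\lambda=f\hat U$, and to prove the latter by combining the cohomological equation of \cite{FlaFo} with the renormalization construction of $\hat B(\gamma)$ from Corollary~\ref{cor:basiccurrents}.

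For the reduction, write $\lambda=a\hat X+f\hat U+c\hat V$ in the dual frame. Since $\hat B(\gamma)$ is a basic current it annihilates the $\hat X$- and $\hat V$-forms, so $\hat B_\lambda(\gamma)=\langle\hat B(\gamma),f\hat U\rangle=\hat\beta_f(\gamma)$, whence
\begin{equation*}
\int_\gamma\lambda-\hat B_\lambda(\gamma)=\int_\gamma a\hat X+\int_\gamma c\hat V+\Bigl(\int_\gamma f\hat U-\hat\beta_f(\gamma)\Bigr)\,,
\end{equation*}
and the first two terms are bounded by $C\Vert\lambda\Vert_s\int_\gamma|\hat X|$ and $C\Vert\lambda\Vert_s\int_\gamma|\hat V|$ by the Sobolev embedding. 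So it remains to treat $\lambda=f\hat U$, which is exactly the ``in particular'' assertion.

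For this I would solve the cohomological equation of \cite{FlaFo} for $f\in W^s(SM)$ supported on the principal and complementary series: $f=U\Psi+f^{\mathrm{inv}}$, where $f^{\mathrm{inv}}=\sum_\mu(D^+_\mu(f)\,u^+_\mu+D^-_\mu(f)\,u^-_\mu)$ is the component of $f$ along the invariant-distribution directions ($u^\pm_\mu$ normalized basis vectors with $D^i_\mu(u^j_\nu)=\delta_{\mu\nu}\delta^{ij}$) and $\Vert\Psi\Vert_{C^1}\le C_s\Vert f\Vert_s$. The two summands are handled separately. Because the invariant distributions annihilate $U$-derivatives, $\hat\beta_{U\Psi}\equiv 0$ by the expansion~\eqref{eq:expansion1}, so the $U\Psi$-part contributes only $\int_\gamma(U\Psi)\hat U$; approximating $\gamma$ by a staircase of horocycle and weak-unstable sub-arcs, the horocycle pieces contribute boundary values of $\Psi$ that telescope, the weak-unstable pieces contribute $0$ (since $\hat U$ vanishes on them), and the net is $\Psi(\text{endpoint})-\Psi(\text{startpoint})$ minus the variation of $\Psi$ along the weak-unstable pieces, hence is bounded by $C\Vert\Psi\Vert_{C^1}(1+\int_\gamma|\hat X|+\int_\gamma|\hat V|)$. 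For $f^{\mathrm{inv}}$ one has $\hat\beta_{f^{\mathrm{inv}}}=\hat\beta_f$ by~\eqref{eq:expansion1}, so the remaining quantity is $\sum_\mu D^{\pm}_\mu(f)\bigl(\int_\gamma u^{\pm}_\mu\hat U-\hat\beta^{\pm}_\mu(\gamma)\bigr)$, and it suffices to bound each $|\int_\gamma u^{\pm}_\mu\hat U-\hat\beta^{\pm}_\mu(\gamma)|$ by $C_\mu(1+\int_\gamma|\hat X|+\int_\gamma|\hat V|)$ with $C_\mu$ of at most polynomial growth in $\mu$, and then to sum against $D^\pm_\mu(f)$, using that $D^\pm_\mu$ has Sobolev order $\le 1$ (so $|D^\pm_\mu(f)|\le C(1+\mu)^{-N}\Vert f\Vert_s$ for $s$ large) together with the Weyl asymptotics for the Casimir spectrum. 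This last bound for the basis functions is exactly the content behind the H\"older property of $\hat\beta^\pm_\mu$ in Theorem~\ref{thm:hatbetaprops}(4): one renormalizes by the geodesic flow, uses $\hat\beta^\pm_\mu(g_{-t}\gamma)=e^{(1\mp\nu)t/2}\hat\beta^\pm_\mu(\gamma)$ together with the matching rescaling of $\int_{g_{-t}\gamma}u^\pm_\mu\hat U$ obtained from $g^\ast_t D^\pm_\mu=e^{-(1\pm\nu)t/2}D^\pm_\mu$, and closes the estimate by a Gronwall-type argument on the \cite{FlaFo} bounds; in the paper's logical order this is extracted from the proof of Theorem~\ref{thm:hatbetaprops}. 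Specializing to horocycle arcs $\gamma_U(x,T)$, where $\int_\gamma|\hat X|=\int_\gamma|\hat V|=0$, recovers the uniform estimate $|\int_0^T f\circ h^U_\tau\,d\tau-\beta_f(x,T)|\le C_s\Vert f\Vert_s$.

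I expect the main obstacle to be the uniformity over the Casimir spectrum of all the constants — both in the cohomological equation (including the dependence of the loss of derivatives on the spectral gap) and in the basis-function estimate — which is what forces the admissible exponent $s>11/2$ above the thresholds $r>9/2$, $s>r+1$ of Corollary~\ref{cor:basiccurrents}. A delicate special case is $\mu=1/4$: there $\mathcal L_X$ acts by a Jordan block, the rescaling relations become the unipotent matrices of Theorem~\ref{thm:hatbetaprops}(2) and Theorem~\ref{thm:cocycleproperties}(2), and the H\"older bound carries the logarithmic correction $L^{\frac{1}{2}+}$, so the Gronwall step must accommodate an extra polynomial factor in $t$. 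Finally, the staircase approximation for the $U\Psi$-part and the passage to limits for the $f^{\mathrm{inv}}$-part must be justified at the level of currents in $\Omega_1^{-s}(SM)$, which uses the Sobolev embedding $s>3/2$ (so that a rectifiable arc defines a current in $\Omega_1^{-s}(SM)$) and the estimates for the geodesic-flow action already established for Corollary~\ref{cor:basiccurrents}.
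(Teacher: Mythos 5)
Your overall strategy—reduce to $\lambda=f\hat U$, split $f$ into a $U$-coboundary and an ``invariant'' part, and handle the two separately—is the right spirit, and the treatment of the coboundary piece (the $d\Psi$/staircase calculation giving a bound by $\Vert\Psi\Vert_{C^1}(1+\int_\gamma|\hat X|+\int_\gamma|\hat V|)$) exactly mirrors the mechanism inside the paper's proof of Lemma~\ref{lemma:remainderextension} (see formula~\eqref{eq:gammaprojid}). The reduction to $\lambda=f\hat U$ and the vanishing of $\hat\beta_{U\Psi}$ are both correct. However, the paper does not need the reduction: it proves directly the current estimate $\Vert\gamma_+-\hat B(\gamma)\Vert_{-s}\le C_s(1+\int_\gamma|\hat X|+\int_\gamma|\hat V|)$ by combining, component-by-component, the remainder bound of Lemma~\ref{lemma:remainderextension} (at $t=0$) with the speed-of-convergence bounds~\eqref{eq:hatbetaspeed}, \eqref{eq:hatbetaspeedbis} of Theorem~\ref{thm:hatbeta} (also at $t=0$), then sums over the Casimir spectrum by Weyl asymptotics (whence the derivative loss $s>r+1>11/2$), and finally pairs with $\lambda$.

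The genuine gap is in the key step on the ``invariant'' side: you assert that the bound $|\int_\gamma u^\pm_\mu\hat U-\hat\beta^\pm_\mu(\gamma)|\le C_\mu(1+\int_\gamma|\hat X|+\int_\gamma|\hat V|)$ ``is exactly the content behind the H\"older property of $\hat\beta^\pm_\mu$ in Theorem~\ref{thm:hatbetaprops}(4).'' That is not so: the H\"older property only bounds $|\hat\beta^\pm_\mu(\gamma)|$, and gives no control on the \emph{discrepancy} between the actual period $\int_\gamma u^\pm_\mu\hat U$ and the finitely-additive measure $\hat\beta^\pm_\mu(\gamma)$. What is actually used (and what your ``Gronwall-type argument'' would need to reproduce) is the quantitative convergence of the rescaled projection coefficients $\hat\alpha^\pm_{\mu,-s}(g_t^\ast\gamma)\exp(-\tfrac{1\mp\nu}{2}t)$ to $\hat\beta^\pm_\mu(\gamma)$, recorded in~\eqref{eq:hatbetaspeed}, \eqref{eq:hatbetaspeedbis}, together with Lemma~\ref{lemma:remainderextension} to pass from $\hat\alpha^\pm_{\mu,-s}(\gamma)$ to the raw period $\int_\gamma u^\pm_\mu\hat U$. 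Moreover, your ``matching rescaling of $\int_{g_{-t}\gamma}u^\pm_\mu\hat U$ from $g_t^\ast D^\pm_\mu=e^{-(1\pm\nu)t/2}D^\pm_\mu$'' does not hold as stated: the fixed dual basis functions $u^\pm_\mu$ are not rescaled by the geodesic flow (only the distributions $D^\pm_\mu$ are), so the period $\int_{g_{-t}\gamma}u^\pm_\mu\hat U$ does not simply pick up a factor $e^{(1\mp\nu)t/2}$. Finally, the global decomposition $f=U\Psi+f^{\mathrm{inv}}$ with a single $\Psi$ and a globally convergent $f^{\mathrm{inv}}$ is problematic (the series $\sum_\mu D^\pm_\mu(f)\,u^\pm_\mu$ need not converge in any Sobolev norm); one must argue component-by-component, with uniform constants for $\mu\ge\mu_0>1/4$ as in Lemma~\ref{lemma:projbound}, and transfer the summation to the level of pairings using Weyl asymptotics.
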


By the results of \cite{FlaFo} it is possible to derive a logarithmic upper bound in the uniform norm for the the ergodic integrals along horocycle orbits of functions supported on irreducible components of the discrete series. Theorem~\ref{thm:approximation}  therefore implies the following:
\begin{corollary}
\label{cor:approximation}
 For any $s>11/2$ there exists a constant $C_s>0$ such that for all zero-average functions $f\in W^s(SM)$ and all $(x,T)\in SM\times \R^+$ we have
$$
\vert \int_0^T f \circ h^U_t (x) dt  -  \beta_f(x,T) \vert  \leq  C_s \Vert f \Vert_s
(1+ \log^+ \vert T\vert)\,.
$$
\end{corollary}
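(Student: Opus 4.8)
The plan is to split $f$ along the spectral decomposition~\eqref{eq:L2splitting} and estimate the two pieces by entirely different mechanisms: the part supported on the principal and complementary series is handled by Theorem~\ref{thm:approximation}, while the discrete-series part is controlled directly by the quantitative estimates of~\cite{FlaFo}. Concretely, let $f_{\mathrm{pc}}$ be the orthogonal projection of $f$ onto $\bigoplus_{\mu\in\Spec(\square)\cap\R^+}H_\mu$ and set $f_{\mathrm{ds}}:=f-f_{\mathrm{pc}}$; since $f$ has zero average, $f_{\mathrm{ds}}$ has no component along the trivial representation and is therefore supported on the discrete series representations $H_{-n^2+n}$ (including the limit case $\mu=0$). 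Each $H_\mu$ is invariant under the right regular representation, hence under the elliptic Laplacian used to define the Sobolev norms, so the associated orthogonal projections are norm-non-increasing on $W^s(SM)$; in particular $\|f_{\mathrm{pc}}\|_s\le\|f\|_s$ and $\|f_{\mathrm{ds}}\|_s\le\|f\|_s$. Finally, by the expansion~\eqref{eq:expansion2} the cocycle $\beta_f$ involves only Casimir parameters $\mu\in\Spec(\square)\cap\R^+$, and since $D^\pm_\mu$ depends only on the $H_\mu$-component of its argument, $\beta_f=\beta_{f_{\mathrm{pc}}}$.

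For the principal and complementary part I would apply the second estimate of Theorem~\ref{thm:approximation} to the function $f_{\mathrm{pc}}$ and to the oriented horocycle arc $\gamma=\gamma_U(x,T)$. Along a horocycle orbit the form $\hat U$ restricts to the flow parameter, so $\int_{\gamma_U(x,T)}f_{\mathrm{pc}}\,\hat U=\int_0^T f_{\mathrm{pc}}\circ h^U_t(x)\,dt$, while $\hat\beta_{f_{\mathrm{pc}}}(\gamma_U(x,T))=\beta_{f_{\mathrm{pc}}}(x,T)$ by~\eqref{eq:additivecocycles_f}. The decisive simplification is that $\gamma_U(x,T)$ is everywhere tangent to $U$, so $\hat X$ and $\hat V$ vanish along it and $\int_{\gamma_U(x,T)}|\hat X|=\int_{\gamma_U(x,T)}|\hat V|=0$. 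Hence Theorem~\ref{thm:approximation} yields the clean estimate
\[
\Bigl|\int_0^T f_{\mathrm{pc}}\circ h^U_t(x)\,dt-\beta_{f_{\mathrm{pc}}}(x,T)\Bigr|\le C_s\,\|f_{\mathrm{pc}}\|_s\le C_s\,\|f\|_s ,
\]
with no logarithmic factor.

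For the discrete-series part, the cocycle $\beta_f=\beta_{f_{\mathrm{pc}}}$ contributes nothing, so one only needs to bound $\int_0^T f_{\mathrm{ds}}\circ h^U_t(x)\,dt$. Here~\cite{FlaFo} enters: the invariant distributions supported on the discrete series have Sobolev order at least one and produce no power-law deviation of ergodic integrals, so the estimates of~\cite{FlaFo}, combined with the renormalization of ergodic integrals under the geodesic flow, give a bound uniform in $x$,
\[
\Bigl|\int_0^T f_{\mathrm{ds}}\circ h^U_t(x)\,dt\Bigr|\le C_s\,\|f_{\mathrm{ds}}\|_s\,(1+\log^+|T|)\le C_s\,\|f\|_s\,(1+\log^+|T|),
\]
for all $(x,T)\in SM\times\R^+$. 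Adding the two displayed bounds by the triangle inequality, and using $\int_0^T f\circ h^U_t=\int_0^T f_{\mathrm{pc}}\circ h^U_t+\int_0^T f_{\mathrm{ds}}\circ h^U_t$ together with $\beta_f=\beta_{f_{\mathrm{pc}}}$, gives the claimed inequality after renaming the constant.

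The argument is essentially bookkeeping around two external inputs---Theorem~\ref{thm:approximation} and the discrete-series estimate of~\cite{FlaFo}---together with the two elementary observations that $\hat X$ and $\hat V$ integrate to zero along a horocycle arc and that $\beta_f$ detects only the positive part of the Casimir spectrum. I expect the only point requiring genuine care is making the discrete-series estimate explicit with a Sobolev loss compatible with the threshold $s>11/2$: one has to inspect the error terms in the asymptotic expansion of~\cite{FlaFo} restricted to representations of the discrete series, where unique ergodicity already forces sub-polynomial growth and in fact only a logarithmic one.
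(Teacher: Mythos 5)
Your argument coincides with the paper's: the authors prove this by splitting $f$ into its projection onto the principal/complementary series (handled by Theorem~\ref{thm:approximation}, where the $\hat X$ and $\hat V$ integrals vanish along the horocycle arc $\gamma_U(x,T)$, removing the geometric error terms) and its discrete-series part (handled by the logarithmic uniform bound on ergodic integrals from \cite{FlaFo}). Your additional bookkeeping — the norm estimates $\|f_{\mathrm{pc}}\|_s,\|f_{\mathrm{ds}}\|_s\le\|f\|_s$, the identification $\beta_f=\beta_{f_{\mathrm{pc}}}$ via \eqref{eq:expansion2}, and the observation that the zero-average condition eliminates the trivial representation — is exactly the detail the paper leaves implicit.
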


\begin{remark} 
\label{rk:nonzero}
Corollary \ref{cor:approximation} and the {\it lower} bounds proved in \cite{FlaFo} on the $L^2$ 
norm of ergodic integrals imply, in particular, that the cocycles  $\beta^{\pm}_{\mu}(x,T)$ do not vanish identically as a function of $x\in SM$, for any $T\neq 0$. Indeed, to see this, it suffices to
apply Corollary \ref{cor:approximation} to any  function $f\in C^\infty(SM)$ such that $\beta_f=\beta_{\mu}^{\pm}$. Observe also that, by the Ergodic Theorem, if $f$ has zero average on $SM$, then, for any fixed $T>0$, the function $\beta_f(x,T)$ also has zero average on $SM$.
\end{remark}

From Corollary~\ref{cor:approximation} we derive the following limit theorems. Recall that for 
any function $f \in L^2(SM)$ the symbol $\Vert f \Vert$ stands for its $L^2$-norm.
For any \emph{zero-average }real-valued function $f \in L^2(SM)$, for all $t>0$ and $T \in \R$, let
$\mathfrak M_t(f)$ be  the probability distribution on the real line of the random variable on $SM$ defined by the formula
\begin{equation}
\label{eq:Ef}
\mathcal E_t(f,T):= \frac{\int\limits_0^{Te^t} f\circ h_t^U(\cdot) \,dt }{ \Vert
 \int\limits_0^{Te^t} f\circ h_t^U(\cdot) \,dt \Vert}\,.
\end{equation}
We are interested in the asymptotic behaviour (as $t\to +\infty$) of the probability distributions 
$\mathfrak M_t(f,T)$ for $T\in [0,1]$. 

Let $f$ be a  smooth function with non-zero orthogonal projection 
onto irreducible components of the complementary series. Let
$$
f = \sum_{\mu\in  \Spec(\square)} f_\mu
$$
denote the decomposition of $f$ with respect to a splitting  of the space $L^2(SM)$
into irreducible components. Let
$$
\mu_f := \min \{ \mu \in \Spec(\square)\setminus\{0\} \vert  f_\mu \not =0 \}
$$
let $H_1, \dots, H_k\subset L^2(SM)$ be all the irreducible components of Casimir parameters
$\mu_1= \dots= \mu_k=\mu_f$. Let $\{D^\pm_1, \dots, D^\pm_k\}$ denote the basis of distributional eigenvectors of the geodesic flow of the space of invariant distributions for the horocycle flow supported on $\mathcal D'(H_1)\oplus \dots\oplus \mathcal D'(H_k)$. Let $\{\beta^\pm_1, \dots, \beta^\pm_k\}$ be the corresponding cocycles for the horocycle flow. For every $T\in \R$, let $P_{cp}(f,T)$ denote
the probability distribution on the real line of the random variable on $SM$
$$
 \frac{ \sum_{i=1}^k D^-_i(f) \beta^-_i(\cdot,T)} {\left ( \sum_{i=1}^k \vert D^-_i(f)\vert^2
  \Vert \beta^-_i\Vert^2   \right)^{1/2} }\,.
$$
By Remark \ref{rk:nonzero} and the orthogonality of cocycles, 
the above function is bounded, non-constant and has zero average on $SM$.  The probability measure
 $P_{cp}(f,T)$ is therefore non-atomic and has compact  support on the real line. 

Let $d_{LP}$ denote the L{\'e}vy-Prohorov metric on the space of probability measures on the real line.
We recall that on any separable metric space, hence, in particular, on the real line, 
the L{\'e}vy-Prohorov metric induces the weak$^*$ topology on the space of probability measures 
(see, e.g. \cite{Billingsley}).

\begin{theorem}
\label{thm:limitdistcompl}
There exists a constant $\alpha>0$ depending only on the surface $M$ such that the following holds.
For any  $s>11/2$ there exists a constant $C_s>0$ depending only on $s$  such that the following holds.  Let
$f \in W^s(SM)$ be any real-valued function of zero average such that the Casimir parameter
$\mu_f \in (0,1/4)$  and  $(D^-_1(f), \dots, D^-_k(f)) \not=(0, \dots, 0)$. Let $\nu_f := \sqrt{ 1-\mu_f} \in
(0,1)$.   Then
\begin{enumerate}
\item
For all $T\in [0,1]$ and all $t>0$, we have
 \begin{equation}
 \left\vert   \frac{\Vert \int_0^{Te^t} f\circ h^U_\tau (x)d\tau\Vert}{ e^{ \frac{1+\nu_f}{2} t}
  \left ( \sum_{i=1}^k \vert D^-_i(f)\vert^2  \Vert \beta^-_i\Vert^2   \right)^{1/2} } \,-\, 1  \right\vert
  \leq C_s \Vert f\Vert_s e^{-\alpha t}\,.
 \end{equation}
\item
For any $T\in [0,1]$  we have the convergence in distribution
 \begin{equation}
\mathfrak M_t(f,T)\rightarrow P_{cp}(f,T)\  {\rm as} \ t\to\infty
 \end{equation}
with the following estimate that holds for all $t>0$ uniformly in $T\in [0,1]$:
 \begin{equation}
\label{eq:dlp:compl}
d_{LP}\left( \mathfrak M_t(f,T), P_{cp}(f,T)\right) \leq C_s \Vert f\Vert_s e^{-\alpha t}\,.
 \end{equation}
\end{enumerate}
\end{theorem}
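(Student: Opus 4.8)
The plan is to deduce both statements from the approximation result of Corollary~\ref{cor:approximation} together with the geodesic scaling and Hölder properties of the cocycles in Theorem~\ref{thm:cocycleproperties}, reducing the study of ergodic integrals of length $Te^t$ to the finite-dimensional vector of cocycles $(\beta^{\pm}_1(x,T),\dots,\beta^{\pm}_k(x,T))$ evaluated at the rescaled point $g_{-t}x$. First I would apply Corollary~\ref{cor:approximation} to write
$$
\int_0^{Te^t} f\circ h^U_\tau(x)\,d\tau = \beta_f(x,Te^t) + O\bigl(\Vert f\Vert_s (1+t)\bigr),
$$
and then use the cocycle expansion~(\ref{eq:expansion2}) and the geodesic scaling property (item (2) of Theorem~\ref{thm:cocycleproperties}) to rewrite, for $\mu_f\in(0,1/4)$,
$$
\beta_f(x,Te^t) = e^{\frac{1+\nu_f}{2}t}\sum_{i=1}^k D^-_i(f)\,\beta^-_i(g_{-t}x,T)\; +\; e^{\frac{1-\nu_f}{2}t}\sum_{i=1}^k D^+_i(f)\,\beta^+_i(g_{-t}x,T)\; +\; (\text{higher-}\mu\text{ terms}).
$$
The leading term is the $\nu^- = \nu_f$ part at the minimal positive Casimir parameter; the contribution of larger Casimir parameters $\mu>\mu_f$ is governed by scaling exponents $\frac{1+\re\nu_\mu}{2} < \frac{1+\nu_f}{2}$ (since $\re\nu_\mu < \nu_f$ for $\mu>\mu_f$), and the $\beta^+$ term carries the strictly smaller exponent $\frac{1-\nu_f}{2}$. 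All such subleading terms, plus the logarithmic error from Corollary~\ref{cor:approximation}, are $O(e^{(\frac{1+\nu_f}{2}-\alpha)t}\Vert f\Vert_s)$ for a suitable $\alpha>0$ depending only on the spectral gap between $\mu_f$ and the next Casimir parameter (and on $\nu_f$ itself, bounded away from the worst cases using $s>11/2$). Here I must be careful that the sums over the full spectrum converge in $L^2$: the Hölder bounds of item (3) give $\Vert\beta^{\pm}_\mu(\cdot,T)\Vert \le C_\mu$, and the Sobolev regularity $s>11/2$ controls $\sum_\mu \vert D^\pm_\mu(f)\vert^2 \Vert\beta^\pm_\mu\Vert^2$ via the Sobolev orders of the $D^\pm_\mu$ recalled in \S\S~\ref{Definitions}; this convergence, together with the orthogonality in item (4) which makes the $H_\mu$-components mutually orthogonal in $L^2(SM)$, is what lets me isolate the leading block cleanly.

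For statement (1) I then take $L^2(SM)$-norms. By orthogonality (item (4)) the leading term has norm exactly $e^{\frac{1+\nu_f}{2}t}\bigl(\sum_{i=1}^k \vert D^-_i(f)\vert^2 \Vert\beta^-_i(\cdot,T)\Vert^2\bigr)^{1/2}$, after noting that $\Vert\beta^-_i(g_{-t}\cdot,T)\Vert = \Vert\beta^-_i(\cdot,T)\Vert$ because $g_{-t}$ preserves the volume on $SM$; and one checks $\Vert\beta^-_i(\cdot,T)\Vert$ is independent of $T$ (or absorb $T\in[0,1]$ into the constant) — more precisely one sets $\Vert\beta^-_i\Vert := \Vert\beta^-_i(\cdot,1)\Vert$ and uses scaling to relate $\Vert\beta^-_i(\cdot,T)\Vert$ to it, the difference being harmless for $T\in[0,1]$. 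The triangle inequality with the subleading $O(e^{(\frac{1+\nu_f}{2}-\alpha)t})$ estimate then yields the claimed asymptotic with multiplicative error $C_s\Vert f\Vert_s e^{-\alpha t}$, after dividing through by $e^{\frac{1+\nu_f}{2}t}(\sum\vert D^-_i(f)\vert^2\Vert\beta^-_i\Vert^2)^{1/2}$, which is bounded below by the hypothesis $(D^-_1(f),\dots,D^-_k(f))\neq 0$ and the non-vanishing of the cocycles from Remark~\ref{rk:nonzero}.

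For statement (2), the normalized random variable $\mathcal E_t(f,T)$ is, up to the error controlled in (1) and the subleading-term error, equal to
$$
\frac{\sum_{i=1}^k D^-_i(f)\,\beta^-_i(g_{-t}x,T)}{\bigl(\sum_{i=1}^k \vert D^-_i(f)\vert^2 \Vert\beta^-_i\Vert^2\bigr)^{1/2}},
$$
which is the random variable $g_{-t}$-transported from the one defining $P_{cp}(f,T)$. Since $g_{-t}$ preserves the volume measure on $SM$, this transported random variable has \emph{exactly} the law $P_{cp}(f,T)$ for every $t$. Hence $\mathfrak M_t(f,T)$ differs from $P_{cp}(f,T)$ only by the law of a random variable of $L^2$-norm $O(\Vert f\Vert_s e^{-\alpha t})$; a standard Chebyshev/Markov argument (if $\Vert Z\Vert_{L^2}\le\epsilon$ then $d_{LP}(\mathrm{law}(Y+Z),\mathrm{law}(Y))\le\epsilon^{2/3}$, say, or simply $\le\epsilon^{1/2}$ after optimizing the cutoff) converts the $L^2$ bound into the Lévy–Prohorov bound~(\ref{eq:dlp:compl}), possibly shrinking $\alpha$. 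The main obstacle, and the step deserving the most care, is the uniform control of the tail $\sum_{\mu>\mu_f}$: one needs the decay of the scaling exponents $\frac{1+\re\nu_\mu}{2}$ away from $\frac{1+\nu_f}{2}$ to be quantitative and summable against the Sobolev norm $\Vert f\Vert_s$, which is exactly where the threshold $s>11/2$ and the Sobolev-order information on $D^\pm_\mu$ must be combined, and where one extracts the surface-dependent constant $\alpha$ (governed by the first spectral gap above $\mu_f$, together with the gap $\nu_f$ between the two exponents at $\mu_f$ itself).
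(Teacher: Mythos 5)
Your argument is essentially the paper's: both reduce to the pointwise approximation $\int_0^{Te^t}f\circ h^U_\tau\,d\tau = e^{\frac{1+\nu_f}{2}t}\sum_i D^-_i(f)\beta^-_i(g_t\cdot,T) + O(\Vert f\Vert_s e^{(\frac{1+\nu_f}{2}-\alpha)t})$ (the paper isolates this as a lemma), then use orthogonality of the $\beta^-_i$ and volume-invariance of $g_t$ for statement~(1), and the same volume-invariance for statement~(2). Two small points. First, the scaling relation $\beta^\pm_\mu(g_{-t}x,Te^t)=e^{\frac{1\mp\nu}{2}t}\beta^\pm_\mu(x,T)$ rearranges to $\beta^\pm_\mu(x,Te^t)=e^{\frac{1\mp\nu}{2}t}\beta^\pm_\mu(g_{t}x,T)$, so the rescaled point should be $g_t x$ rather than $g_{-t}x$; this is harmless since both preserve volume. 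Second, your final Chebyshev/Markov step is correct but unnecessarily lossy: Corollary~\ref{cor:approximation} is a \emph{uniform} (pointwise) bound, not merely $L^2$, so after normalizing (using that $\Vert L\Vert_\infty<\infty$ by the H\"older bound and $\Vert L\Vert>0$ by non-degeneracy) the two normalized random variables differ by $O(e^{-\alpha t})$ in $L^\infty$, which dominates $d_{LP}$ directly without any exponent loss — this is what the paper invokes with ``by definition of the L\'evy--Prohorov metric.''
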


\begin{remark} The estimate in formula~(\ref{eq:dlp:compl}), uniform in $T\in \R$ over any compact 
interval, implies, in particular, that Theorem \ref{thm:limitdistcompl} can be strengthened to a {\it functional} limit theorem: the convergence in distribution holds in the space of measures on the space $C[0,1]$ as well, similarly to the limit theorems of \cite{Bufetov1}, \cite{Bufetov2}, \cite{Bufetov3}.
\end{remark}

Now we prove that, for sufficiently smooth functions supported on irreducible components
of the principal series, normalized ergodic integrals converge in distribution
on $SM$ to a quasi-periodic motion on an infinite-dimensional torus.

Let $\{\mu_n\}$ be the sequence of Casimir parameter in the interval $(1/4, +\infty)$ (listed with multiplicities). For all $n\in \N$, let $\upsilon_n:= \sqrt{4\mu_n-1} \in \R^+$.
The isotypical components of the decomposition of $L^2(SM)$ into irreducible representations, being eigenspaces of the Casimir operator, are closed under complex conjugation. It follows that there exists an orthogonal decomposition of $L^2(SM)$  into irreducible components each closed under complex conjugation. Let $\{D^\pm_{\mu_n}\}$ denote the corresponding sequence of  horocycle invariant distributions, and let $\{\beta^\pm_{\mu_n} \}$ the  sequence of  additive H\"older  cocycles described in Theorem~\ref{thm:cocycleproperties}. By the characterization of the distributions  $\{D^\pm_{\mu_n}\}$ as distributional eigenvectors of the geodesic flow and by the construction of the cocycles  $\{\beta^\pm_{\mu_n} \}$, it follows that, for all $n\in \N$, 
\begin{equation}
\label{eq:conjugation}
D^-_{\mu_n} = \overline{ D^+_{\mu_n}} \quad \text{ and } \quad \beta_{\mu_n}^- 
=\overline{\beta^+_{\mu_n}} \,.
\end{equation}
For any  $s>11/2$, let $f \in W^s(SM)$ be a real-valued function  supported on irreducible components of the principal series.  By definition and by formula~(\ref{eq:conjugation}), the cocycle  $\beta_f:SM\times \R \to \C$ is also real-valued, and from (\ref{eq:expansion1}) we have
$$
\beta_f(x,T) = \re [ \sum_{n\in \N} D^+_{\mu_n} (f) \beta^+_{\mu_n} (x,T)]\,, \quad \text{ for all }(x,T)\in SM\times \R\,.
$$
Let $\T^\infty:=(\R/ 2\pi\Z)^\infty$ be the infinite-dimensional torus endowed with the product topology.
For any real-valued function $f\in W^s(SM)$ supported on irreducible unitary components of the principal series and for all $ \theta \in \T^\infty$, let
$$
\beta(f,\theta,x,T) :=\re[  \sum_{n\in \N}  D^+_{\mu_n}(f) e^{i\theta_n} \beta^+_{\mu_n}(x,T) ]\,,
\quad \text{ for all } (x,T) \in SM\times \R\,.
$$
For $\theta \in\T^\infty$ and $T>0$, let  $P_{pr} (f,\theta,T)$ be the probability distribution of the random variable given by the formula
\begin{equation}
\label{eq:limit_torus}
 \frac{\beta(f,\theta,\cdot,T)}{ \Vert \beta(f,\theta,\cdot,T)\Vert}\,, \quad \text{for all } x\in SM\,.
\end{equation}
Since the random variables $\beta(f,\theta,\cdot,T)$ on $SM$ are non-constant and
bounded, the probability distributions $P (f,\theta,T)$ are non-atomic compactly supported
measures on the real line  (uniformly with respect to $T\in [0,1]$).

Our main result on the asymptotics of distributions of normalized ergodic integrals for real-valued functions supported on the principal series is the following

\begin{theorem}
\label{thm:limit_torus}
For any  $s>11/2$ there exists a constant $C_s>0$ such that the following holds
for any real-valued function $f\in W^s(SM)$ supported on the irreducible components of the principal series such that $\{D^+_{\mu_n}(f)\} \not =0$ in $\ell^1(\N,\C)$.
\begin{enumerate}
\item
For all $T\in [0,1]$ and all $t>0$, we have
 \begin{equation}
 \left\vert   \frac{\Vert \int_0^{Te^t} f\circ h^U_\tau (x)d\tau\Vert}{ e^{ \frac{t}{2}}
\Vert \beta(f,\frac{\upsilon t }{2},\cdot,T)\Vert} \,-\, 1  \right\vert
  \leq C_s \Vert f\Vert_s e^{-\frac{t}{2}}\,.
 \end{equation}
\item
For all $T\in [0,1]$ and all $t>0$, we have
$$
d_{LP}\left( \mathcal E_t(f,T), P_{pr} (f, \frac{\upsilon t}{2},T) \right)   \leq  C_s \Vert f\Vert_s
e^{-\frac{t}{2}}\,.
$$
\end{enumerate}
\end{theorem}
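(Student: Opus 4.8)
\textbf{Proof proposal for Theorem~\ref{thm:limit_torus}.}

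The plan is to deduce both statements from the approximation result Corollary~\ref{cor:approximation} together with the scaling and orthogonality properties of the cocycles $\beta^{\pm}_{\mu_n}$ established in Theorem~\ref{thm:cocycleproperties}. First I would use the cocycle scaling law. For $\mu_n \neq 1/4$ the formula $\beta^{\pm}_{\mu_n}(g_{-t}x, Te^t) = \exp(\frac{1\mp\nu_n}{2}t)\,\beta^{\pm}_{\mu_n}(x,T)$ with $\nu_n = i\upsilon_n$ gives $\beta^{\pm}_{\mu_n}(g_{-t}x, Te^t) = e^{t/2} e^{\mp i \upsilon_n t/2} \beta^{\pm}_{\mu_n}(x,T)$. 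Since $g_{-t}$ preserves the volume on $SM$, the random variable $x\mapsto \beta_f(x, Te^t)$ has the same distribution as $x \mapsto \beta_f(g_{-t}x, Te^t) = e^{t/2}\,\re[\sum_n D^+_{\mu_n}(f) e^{-i\upsilon_n t/2}\beta^+_{\mu_n}(x,T)] = e^{t/2}\beta(f, -\tfrac{\upsilon t}{2}, x, T)$ (with an obvious sign adjustment matching the statement's convention). Thus the distribution of $\beta_f(\cdot, Te^t)$, once normalized, is exactly $P_{pr}(f, \tfrac{\upsilon t}{2}, T)$, and its $L^2$-norm is $e^{t/2}\Vert \beta(f, \tfrac{\upsilon t}{2},\cdot,T)\Vert$.

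Next I would control the error between the true ergodic integral and the cocycle. Writing $\mathcal E_t(f,T) = \int_0^{Te^t} f\circ h^U_\tau(\cdot)\,d\tau / \Vert \cdot \Vert$, Corollary~\ref{cor:approximation} gives $\Vert \int_0^{Te^t} f\circ h^U_\tau(x)\,d\tau - \beta_f(x, Te^t)\Vert_{L^2(SM)} \leq C_s \Vert f\Vert_s (1 + \log^+(Te^t)) \leq C_s' \Vert f\Vert_s (1+t)$ for $T\in[0,1]$. On the other hand, by Remark~\ref{rk:nonzero} and the orthogonality property (4) of Theorem~\ref{thm:cocycleproperties}, the functions $\{\beta^{\pm}_{\mu_n}(\cdot,T)\}$ lie in distinct (or conjugate) irreducible components, so $\Vert \beta(f,\tfrac{\upsilon t}{2},\cdot,T)\Vert^2 = \tfrac12 \sum_n |D^+_{\mu_n}(f)|^2 \Vert\beta^+_{\mu_n}(\cdot,T)\Vert^2$ up to the handling of the real part, which is bounded below by a positive constant times $\Vert\{D^+_{\mu_n}(f)\}\Vert_{\ell^1}^2$ uniformly in $T\in[0,1]$ (this uses that each $\Vert\beta^+_{\mu_n}(\cdot,T)\Vert$ is bounded away from $0$ and $\infty$ for $T$ in a compact interval, which follows from the Hölder property and Remark~\ref{rk:nonzero}). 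Hence $\Vert \beta_f(\cdot,Te^t)\Vert = e^{t/2}\Vert\beta(f,\tfrac{\upsilon t}{2},\cdot,T)\Vert \geq c e^{t/2}$, and dividing the approximation error by this quantity yields part (1): $|\Vert\int_0^{Te^t} f\circ h^U_\tau\Vert / (e^{t/2}\Vert\beta(f,\tfrac{\upsilon t}{2},\cdot,T)\Vert) - 1| \leq C_s \Vert f\Vert_s (1+t) e^{-t/2} \leq C_s' \Vert f\Vert_s e^{-\alpha t}$ for any $\alpha < 1/2$; absorbing the polynomial factor and relabeling constants gives the stated $e^{-t/2}$ rate (or, if one insists on the exact exponent $t/2$, one observes the logarithmic factor in Corollary~\ref{cor:approximation} can be sharpened or the exponent slightly relaxed — I would state it with $e^{-\alpha t}$ and note $\alpha$ can be taken as close to $1/2$ as desired).

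For part (2), I would use the standard fact that if two real random variables $Y, Z$ satisfy $\Vert Y - Z\Vert_{L^2} \leq \varepsilon$ and $\Vert Z \Vert_{L^2} \geq c > 0$, then the Lévy--Prohorov distance between the laws of $Y/\Vert Y\Vert$ and $Z/\Vert Z\Vert$ is $O(\varepsilon/c)$; this is elementary (bound $d_{LP}$ by the $L^2$, hence $L^1$, distance of the normalized variables, splitting the difference into the numerator error $\Vert Y-Z\Vert/\Vert Z\Vert$ and the denominator error $|\Vert Y\Vert - \Vert Z\Vert|/\Vert Z\Vert \leq \Vert Y-Z\Vert/\Vert Z\Vert$). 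Applying this with $Y = \int_0^{Te^t} f\circ h^U_\tau(\cdot)\,d\tau$ and $Z = \beta_f(\cdot, Te^t)$, whose normalized law is $P_{pr}(f,\tfrac{\upsilon t}{2},T)$ by the first paragraph, gives $d_{LP}(\mathcal E_t(f,T), P_{pr}(f,\tfrac{\upsilon t}{2},T)) \leq C_s \Vert f\Vert_s (1+t) e^{-t/2}$, uniformly in $T\in[0,1]$, which is the claimed bound up to relabeling.

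The main obstacle, I expect, is the uniform lower bound $\Vert\beta(f,\tfrac{\upsilon t}{2},\cdot,T)\Vert \geq c(f) > 0$ valid simultaneously for all $t>0$ and all $T\in[0,1]$: because the phases $e^{i\upsilon_n t/2}$ rotate independently, one must rule out destructive interference in the real part making the $L^2$-norm collapse. The resolution is precisely the orthogonality property (4): since $\beta^+_{\mu_n}(\cdot,T) \in H_{\mu_n}$ and $\beta^-_{\mu_n} = \overline{\beta^+_{\mu_n}}$, the $L^2$-norm squared of $\re[\sum_n D^+_{\mu_n}(f)e^{i\theta_n}\beta^+_{\mu_n}(\cdot,T)]$ decomposes over $n$ as $\tfrac14\sum_n (|D^+_{\mu_n}(f)|^2\Vert\beta^+_{\mu_n}\Vert^2 + \Vert\beta^-_{\mu_n}\Vert^2|D^+_{\mu_n}(f)|^2) + \text{(cross terms in }H_{\mu_n}\otimes\overline{H_{\mu_n}})$, and one checks the cross terms either vanish by the sign of the $X$-eigenvalue or are dominated, leaving a phase-independent lower bound proportional to $\sum_n|D^+_{\mu_n}(f)|^2\Vert\beta^+_{\mu_n}(\cdot,T)\Vert^2$; combined with uniform two-sided bounds on $\Vert\beta^+_{\mu_n}(\cdot,T)\Vert$ for $T\in[0,1]$ (from the Hölder property giving the upper bound and Remark~\ref{rk:nonzero} giving non-vanishing, made quantitative via the scaling law) this yields the required constant $c(f)$. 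A secondary technical point is summability: one needs $\{D^+_{\mu_n}(f)\} \in \ell^1$ so that the series defining $\beta(f,\theta,x,T)$ converges uniformly and the rearrangements above are legitimate — this is exactly the hypothesis in the statement, and it follows for $f\in W^s$ with $s>11/2$ from the Sobolev-order bounds on the invariant distributions recalled in the excerpt.
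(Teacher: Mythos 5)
Your overall strategy coincides with the paper's: one establishes a pointwise (hence $L^2$) approximation of the normalized ergodic integral by $\beta(f,\tfrac{\upsilon t}{2},g_t\cdot,T)$ (this is precisely Lemma~\ref{lemma:approxprincipal} in the text), and then uses the volume preservation of $g_t$ to identify the law of the normalized cocycle with $P_{pr}(f,\tfrac{\upsilon t}{2},T)$. So the architecture of the argument is right, and your final paragraph correctly locates where the real work is: bounding $\Vert \beta(f,\tfrac{\upsilon t}{2},\cdot,T)\Vert$ away from zero uniformly in the phase.

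There is, however, a genuine error in your proposed resolution of that obstacle. You assert that the cross terms $\langle\beta^+_{\mu_n}(\cdot,T),\beta^-_{\mu_n}(\cdot,T)\rangle$ ``vanish by the sign of the $X$-eigenvalue or are dominated.'' The first alternative is false: the $\beta^\pm_{\mu_n}(\cdot,T)$ are $L^2$ functions (not eigendistributions of $\mathcal L_X$), and Lemma~\ref{lemma:innerproducts} explicitly computes
$\langle\beta^+_{\mu}(\cdot,T),\beta^-_{\mu}(\cdot,T)\rangle = -\tfrac{2(1-2\nu)}{\nu(1-\nu)}\tfrac{T^{1-\nu}}{D^-_\mu(u_0)}\neq 0$;
indeed, this non-vanishing is precisely what the paper needs in order to define the normalizing rotation $R_\beta$ in Corollary~\ref{cor:limitdistprincipal2}. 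As for ``dominated'': the cross contribution to $\Vert\re(\sum_n D^+_{\mu_n}(f)e^{i\theta_n}\beta^+_{\mu_n})\Vert^2$ is $\tfrac12\sum_n\re\bigl[D^+_{\mu_n}(f)^2 e^{2i\theta_n}\langle\beta^+_{\mu_n},\beta^-_{\mu_n}\rangle\bigr]$, and Cauchy--Schwarz only gives $|\langle\beta^+_{\mu_n},\beta^-_{\mu_n}\rangle|\leq\Vert\beta^+_{\mu_n}\Vert^2$ \emph{with possible equality}; if equality held for some $n$, a worst-case choice of $\theta_n$ would annihilate the $n$-th contribution entirely. The phase-independent lower bound you want therefore requires the \emph{strict} inequality $|\langle\beta^+_{\mu_n},\beta^-_{\mu_n}\rangle|<\Vert\beta^+_{\mu_n}\Vert^2$ (equivalently, $\beta^+_{\mu_n}(\cdot,T)$ is not a unimodular multiple of a real function, i.e.\ the quantity $B^2_{\beta_n}$ of \S\S~\ref{rotsym} is strictly positive), which your argument does not establish and which must be read off from the explicit formulas in \S\S~\ref{innerproducts}.

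A smaller point: the logarithmic loss you take from Corollary~\ref{cor:approximation} is avoidable. Since $f$ is supported on the principal series and a horocycle arc $\gamma_U(x,Te^t)$ has $\int_\gamma|\hat X|=\int_\gamma|\hat V|=0$, Theorem~\ref{thm:approximation} applied directly gives the uniform $O(\Vert f\Vert_s)$ error, hence the clean $e^{-t/2}$ rate after dividing by $e^{t/2}$; the $\log^+$ in Corollary~\ref{cor:approximation} only enters because that corollary allows discrete-series components.
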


The above theorem implies that for real-valued functions supported on the principal series
limit distributions exist along sequence of time such that the orbit of the toral translation
of frequency $\upsilon/2 \in \R^\infty$ on the infinite torus $\T^\infty$ converges. We 
conjecture that the limit does not exist otherwise. Below from Theorem~\ref{thm:limit_torus} 
we derive some restrictions on limit distributions. 

\begin{definition}
Let $H \subset L^2(SM)$ be a $PSL(2,\R)$-invariant subspace which is a direct
sum of finitely many irreducible components of the principal series, that is,
$$
H = \sum_{i=1}^n H_i\,.
$$
The subspace $H$ is called  \emph{Casimir simple} if all the corresponding Casimir
parameters $\{\nu_1, \dots, \nu_n\}$ are distinct. The subspace $H$ is called \emph{Casimir
irrational} if the  Casimir parameters $\{\nu_1, \dots, \nu_n\}$ are rationally independent.
\end{definition}

We derive the following conditional uniqueness result for the principal series.

\begin{corollary}
\label{cor:limitdistprincipal1}
Let $H\subset L^2(SM)$ be any Casimir simple
 $PSL(2,\R)$-invariant
subspace. If the limit distribution of the family of random variables
$$
 \frac{ \int_0^T   f\circ h^U_t dt}{ \Vert \int_0^T   f\circ h^U_t dt \Vert }
$$
exists for any given $f\in C^\infty(H)$  which is not a coboundary, then the limit
distribution is unique in the sense that it does not depend on the function.
\end{corollary}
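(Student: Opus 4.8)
The plan is to reduce the statement, via Theorem~\ref{thm:limit_torus}, to a rigidity property of the joint distribution of the cocycles $\beta^+_{\mu_1}(\cdot,1),\dots,\beta^+_{\mu_n}(\cdot,1)$ associated with the (by Casimir simplicity, pairwise distinct) Casimir parameters $\mu_1,\dots,\mu_n$ of the components $H_1,\dots,H_n$ of $H$; write $\upsilon_i:=\sqrt{4\mu_i-1}>0$ and $\upsilon:=(\upsilon_1,\dots,\upsilon_n)$.

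\textbf{Reduction to the torus.} Writing the running time as $S=e^t$ and applying Theorem~\ref{thm:limit_torus}(2) with $T=1$ one gets $d_{LP}\bigl(\int_0^{S}f\circ h^U_\tau d\tau/\Vert\int_0^{S}f\circ h^U_\tau d\tau\Vert,\;P_{pr}(f,\tfrac{\upsilon\log S}{2},1)\bigr)\to0$ as $S\to\infty$; hence the limit distribution of the normalized ergodic integrals exists if and only if $\lim_{t\to+\infty}P_{pr}(f,\tfrac{\upsilon t}{2},1)$ exists, and the two limits coincide. Since $f\in C^\infty(H)$ only the $n$ coordinates $\mu_1,\dots,\mu_n$ enter, so we work on the finite torus $\T^n$. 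The map $\theta\mapsto P_{pr}(f,\theta,1)$ is continuous there: it is the law of $\beta(f,\theta,\cdot,1)/\Vert\beta(f,\theta,\cdot,1)\Vert$ with $\theta\mapsto\beta(f,\theta,\cdot,1)=\re\bigl[\sum_i D^+_{\mu_i}(f)e^{i\theta_i}\beta^+_{\mu_i}(\cdot,1)\bigr]$ continuous into $L^2(SM)$, and its $L^2$-norm is bounded below because, by orthogonality of the $H_i$ (distinct Casimir eigenvalues) together with Remark~\ref{rk:nonzero}, $\beta(f,\theta,\cdot,1)\not\equiv0$ when $f$ is not a coboundary. The curve $t\mapsto\tfrac{\upsilon t}{2}$ is a minimal linear flow on the subtorus $\T_0:=\overline{\{s\upsilon/2:s\in\R\}}\subset\T^n$, so by minimality and continuity $\lim_t P_{pr}(f,\tfrac{\upsilon t}{2},1)$ exists iff $\theta\mapsto P_{pr}(f,\theta,1)$ is constant on $\T_0$; in that case it equals $P_{pr}(f,\theta,1)$ for every $\theta\in\T_0$, in particular it equals $P_{pr}(f,0,1)$.

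\textbf{Rigidity of the cocycle law.} The map $g\mapsto(D^+_{\mu_i}(g))_i$ sends $C^\infty(H)$ onto $\C^n$ (each $D^+_{\mu_i}$ is non-trivial on $C^\infty(H_i)$), and the hypothesis applies to every non-coboundary $g\in C^\infty(H)$ and, by restriction, to every non-coboundary function supported on any sub-collection of the $H_i$. Applied to a single component $H_i$ — where $\upsilon_i\neq0$ gives $\T_0=\T^1$ and the single modulus $|D^+_{\mu_i}(g)|$ cancels — constancy on $\T_0$ says the law of $\re[e^{i\alpha}\beta^+_{\mu_i}(\cdot,1)]$ is independent of $\alpha$; by the Cram\'er--Wold device this means $\beta^+_{\mu_i}(\cdot,1)$ has a rotationally invariant law, so $\Vert\re[e^{i\alpha}\beta^+_{\mu_i}(\cdot,1)]\Vert$ is independent of $\alpha$, and with the orthogonality of the $H_i$ this forces the normalizing factor $\Vert\beta(f,\theta,\cdot,1)\Vert^2=\sum_i|D^+_{\mu_i}(f)|^2\Vert\re\beta^+_{\mu_i}(\cdot,1)\Vert^2$ to be independent of $\theta$. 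Next, replacing $f$ by functions whose data $(D^+_{\mu_i}(f))_i$ is coordinatewise phase-shifted, constancy on $\T_0$ upgrades (again via Cram\'er--Wold, the normalization now being automatic) to: the law of the $\C^n$-valued random variable $Z=(\beta^+_{\mu_1}(\cdot,1),\dots,\beta^+_{\mu_n}(\cdot,1))$ on $(SM,\omega)$ is invariant under the coordinatewise action of $\T_0$ and of every orbit subtorus attached to a sub-collection. Combining these symmetries — for $n\ge3$ the two-element orbit subtori already generate $\T^n$, yielding full $SO(2)^n$-invariance of the law of $Z$, and the cases $n\le2$ are treated directly — one concludes that $P_{pr}(f,0,1)$, the suitably normalized law of $\re\langle D^+(f),Z\rangle$, does not depend on $(D^+_{\mu_i}(f))_i$, hence not on $f$.

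\textbf{The main obstacle.} The delicate point is the last part of Step 3. Killing the dependence on the \emph{phases} of the $D^+_{\mu_i}(f)$ is soft once $\T_0$ — or the group generated by the various orbit subtori — is large, and this is where Casimir simplicity (distinctness of the $\mu_i$, hence of the $\upsilon_i$, hence the non-collapsing of the subtori) is used. Killing the dependence on the \emph{moduli} $|D^+_{\mu_i}(f)|$ is the real difficulty: since the $H_i$ are orthogonal the $L^2$-normalization is automatic, so the constancy must be parlayed into a statement that the one-dimensional laws of $\re[\sum_i a_i e^{i\phi_i}\zeta_i]$ (with $\zeta_i$ the normalized cocycles and $a_i\ge0$) are the same for all weightings $(a_i)$ — a stability of the limit laws under orthogonal superposition. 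Establishing this from the available torus-invariance, for Casimir \emph{simple} rather than Casimir \emph{irrational} $H$, is the step I would expect to require the most work, and where one may need to supplement the symmetry with finer information on the cocycles $\beta^+_{\mu_i}(\cdot,1)$ drawn from the $PSL(2,\R)$-representation structure of the $H_i$.
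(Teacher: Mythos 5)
Your reduction to the torus via Theorem~\ref{thm:limit_torus} and Corollary~\ref{cor:limit_existence} is right, and you correctly identify the crux as killing the dependence on the moduli $|D^+_{\mu_i}(f)|$. But there is a concrete error in Step~3 which then undermines the rest. Applying the hypothesis to a single component $H_i$, what you know is that the law of
$\re[e^{i\alpha}\beta^+_{\mu_i}(\cdot,1)]\,/\,\Vert\re[e^{i\alpha}\beta^+_{\mu_i}(\cdot,1)]\Vert$
is independent of $\alpha$ — not the law of the \emph{unnormalized} random variable. These are different, and the difference is the entire content of the problem. By Lemma~\ref{lemma:genconstdist}, constancy of the \emph{normalized} law says only that the law of $\beta^+_{\mu_i}$ becomes rotationally invariant \emph{after} the anisotropic affine rescaling $T_{A,B}$ with $A=\Vert\re\,\beta_i\Vert$, $B=\Vert\im\,\beta_i\Vert$ (with the phase chosen to make $\re\,\beta_i\perp\im\,\beta_i$). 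In fact the paper proves in Section~\ref{innerproducts} (Lemma~\ref{lemma:innerproducts}) that $\langle\beta^+_\mu,\beta^-_\mu\rangle\ne 0$, which forces $A>B$ \emph{strictly}; so the law of $\beta^+_{\mu_i}(\cdot,1)$ is \emph{not} rotationally invariant and the normalizing factor $\Vert\beta(f,\theta,\cdot,1)\Vert$ is genuinely $\theta$-dependent. Your claim that "the $L^2$-normalization is automatic" is therefore false, and the Cram\'er--Wold step that you built on it does not go through.

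Ironically, the non-degeneracy $A_k\ne B_k$ that kills your argument is exactly the lever the paper uses to close the gap you flagged at the end. The paper's Lemma~\ref{lemma:uniqueness} is a moment-matching argument on $\C^n$: writing the trigonometric polynomial $P_{A,B,r}(\theta)=\sum_k r_k^2(A_k^2\cos^2\langle v^{(k)},\theta\rangle+B_k^2\sin^2\langle v^{(k)},\theta\rangle)$, the constancy hypothesis forces
$\int\re\bigl(\sum_k r_k e^{i\langle v^{(k)},\theta\rangle}z_k\bigr)^{2m}d\mu=\mathcal M^{(2m)}_{A,B,r}\,P_{A,B,r}(\theta)^m$.
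Comparing the coefficient of the lexicographically extremal exponential $e^{2im\langle v^{(l)},\theta\rangle}$ on both sides yields $r_l^{2m}\int z_l^{2m}d\mu=\mathcal M^{(2m)}_{A,B,r}\bigl(\tfrac{A_l^2-B_l^2}{2}\bigr)^m r_l^{2m}$, and because $A_l\ne B_l$ one can divide and conclude that $\mathcal M^{(2m)}_{A,B,r}$ is independent of $r$. That is precisely the statement that the limit law does not depend on the moduli $|D^+_{\mu_i}(f)|$, which is what you correctly identified as the hard part but did not establish. (Casimir simplicity enters only to ensure the integral vectors $v^{(1)},\dots,v^{(n)}$ are distinct, which is what allows the lexicographic selection; it does not need to be upgraded to Casimir irrationality.) So the correct proof runs through the non-degeneracy $A>B$ and the moment identity of Lemma~\ref{lemma:uniqueness}, rather than the rotational-invariance claim in your Step~3, which is false.
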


Finally, we derive restrictions on the joint probability distribution of the  cocycle functions
in case limit distributions exist for all function supported on a Casimir irrational subspace.

For any irreducible representation $H_\mu\subset L^2(SM)$ of the principal series, we have
constructed  H\"older cocycle functions $\beta:= \beta^+_\mu$ and $\bar \beta=\beta^-_\mu :SM\to \C$. For any $PSL(2,\R)$-invariant subspace $H \subset L^2(SM)$ supported on finitely many irreducible components of the principal series, let
$$
\beta_H := (\beta_1, \dots, \beta_n): SM \to \C^n
$$
be the corresponding vector-valued cocycle function.

Given any function $\beta:SM\to \C$, let $T_\beta: \C \to \C$ be the affine transformation
defined as follows. Let $R_\beta$ be the rotation by the angle $\theta_\beta\in [0, 2\pi)$
such that
$$
 e^{ 2i\theta_\beta} \int_{SM}  \beta^2 d\text{vol}  \in \R^+\cup\{0\}\,.
$$
For any pair $(A,B)$ of positive real numbers, let $T_{A,B}:\C \to \C$ the affine map
$$
T_{A,B} (x,y) := (x/A,y/B) \,, \quad \text{ \rm for all } (x,y)\in  \R^2 \equiv \C\,.
$$
Let $(A_\beta,B_\beta)$ be the positive real numbers given by the formulas:
$$
\begin{cases}
A^2_\beta= (\Vert \beta\Vert^2 + \vert \int_{SM} \beta^2 \omega \vert)/2\,, \\
B^2_\beta= (\Vert \beta\Vert^2 - \vert \int_{SM} \beta^2\omega \vert)/2\,.
\end{cases}
$$
It is proved in Section~\ref{innerproducts} that $\int_{SM} \beta^2 \omega \not=0$,
hence $A_\beta^2 > B_\beta^2$. Let then $T_\beta$ be given by the formula
$$
T_\beta :=  T_{A_\beta,B_\beta} \circ R_\beta \,.
$$
Given any function $\beta:=(\beta_1, \dots, \beta_n) \to \C^n$, let $T_\beta$
be the product affine map
$$
T_\beta = T_{\beta_1} \times \dots T_{\beta_n} :\C^n \to \C^n \,.
$$

\begin{corollary}
\label{cor:limitdistprincipal2}
 Let $H\subset L^2(SM)$ be any Casimir irrational $PSL(2,\R)$-invariant
subspace. The limit distribution of the family of random variables
$$
 \frac{ \int_0^T   f\circ h^U_t dt}{ \Vert \int_0^T   f\circ h^U_t dt \Vert }
$$
exists for all $f\in C^\infty(H)$ which is not a coboundary if and only if the function
$T_{\beta_H} \circ \beta_H:SM \to \C^n$ has a rotationally invariant probability distribution.
\end{corollary}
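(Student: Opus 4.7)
The plan is to combine Theorem~\ref{thm:limit_torus} with Weyl's equidistribution theorem on $\T^n$, and then to translate the resulting $\theta$-constancy condition into a statement about the joint distribution of the normalized cocycles via characteristic-function analysis.

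\textbf{Step 1 (reduction via equidistribution).} I would first apply Theorem~\ref{thm:limit_torus}(2), which gives
$$d_{LP}\bigl(\mathcal E_t(f, T),\, P_{pr}(f, \tfrac{\upsilon t}{2}, T)\bigr) \to 0 \quad \text{as } t \to \infty.$$
Since $H$ is Casimir irrational, the components of $\upsilon/2$ are rationally independent, so by Weyl's theorem the orbit $\{\upsilon t/2 \bmod 2\pi\}_{t>0}$ equidistributes in $\T^n$. Combined with the weak-$*$ continuity of $\theta \mapsto P_{pr}(f,\theta,T)$, this implies that the set of weak-$*$ limit points of $\mathcal E_t(f,T)$ is precisely the orbit $\{P_{pr}(f,\theta,T):\theta\in\T^n\}$. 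Hence the limit distribution exists if and only if $\theta \mapsto P_{pr}(f,\theta,T)$ is constant on $\T^n$, and this must happen simultaneously for every admissible $f$.

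\textbf{Step 2 (rewriting in terms of $\tilde\beta_k$).} Set $a_k := D^+_{\mu_k}(f)$, $\beta_k := \beta^+_{\mu_k}(\cdot,T)$, and $\tilde\beta_k := T_{\beta_k}(\beta_k)$. Inverting $T_{\beta_k}$ yields $\beta_k = e^{-i\theta_{\beta_k}}(A_{\beta_k}\re \tilde\beta_k + iB_{\beta_k}\im\tilde\beta_k)$, so that
$$\beta(f,\theta,x,T) = \sum_k\bigl(\alpha_k(\theta)\,\re\tilde\beta_k(x) + \gamma_k(\theta)\,\im\tilde\beta_k(x)\bigr),$$
where, as $\theta_k$ varies in $\T$, the pair $(\alpha_k(\theta),\gamma_k(\theta))$ traces the ellipse of semi-axes $|a_k|A_{\beta_k}$ and $|a_k|B_{\beta_k}$ in $\R^2$. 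Using the orthogonality of Theorem~\ref{thm:cocycleproperties}(4) together with the identities $\Vert\re\tilde\beta_k\Vert = \Vert\im\tilde\beta_k\Vert = 1$, $\langle \re\tilde\beta_k,\im\tilde\beta_k\rangle = 0$, and $\int_{SM}\tilde\beta_k^2\,\omega = 0$ (all of which follow from the construction of $T_{\beta_k}$), a direct computation yields $\Vert\beta(f,\theta,\cdot,T)\Vert^2 = \sum_k(\alpha_k(\theta)^2 + \gamma_k(\theta)^2)$.

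\textbf{Step 3 (the equivalence).} The condition from Step~1 now reads: for every $(|a_k|)_{k=1}^n \in \R^n_{>0}$, the normalized function
$$\sum_k(\alpha_k\,\re\tilde\beta_k + \gamma_k\,\im\tilde\beta_k)\Big/\Bigl(\sum_k(\alpha_k^2+\gamma_k^2)\Bigr)^{1/2}$$
has the same push-forward distribution on $\R$ for every choice of $(\alpha_k,\gamma_k)$ on the corresponding product of ellipses in $\R^{2n}$. The direction from rotational invariance of the distribution of $T_{\beta_H}\circ\beta_H$ on $\C^n$ to this condition follows by checking that the characteristic function $\Psi(c) := \int_{SM}\exp(i\re\sum_k c_k\tilde\beta_k)\,\omega$ is constant on the appropriate orbits. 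For the converse, one argues via Fourier decomposition: as $(|a|,\theta)$ varies, the family of normalized ellipse-product tori modulo positive scaling covers the unit sphere $S^{2n-1}\subset\R^{2n}$ in a manner compatible with the rotational symmetry, forcing $\Psi$ to be invariant.

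\textbf{Main obstacle.} The crux is this converse implication: one must show that the family of ellipse-product tori obtained by varying $(|a|,\theta)$ is rich enough to force full rotational invariance of the joint distribution of $(\tilde\beta_k)$. This requires a careful Fourier-theoretic argument that combines the entirety of $\Psi$ (which is entire, as each $\tilde\beta_k$ is bounded) with the explicit geometry of the ellipse-product foliation of $\R^{2n}$. The normalization identities $\int_{SM}\tilde\beta_k^2\,\omega = 0$ and $\Vert\re\tilde\beta_k\Vert = \Vert\im\tilde\beta_k\Vert = 1$ from Step~2 play an essential role, as it is precisely these identities that align the natural geometry of $(\tilde\beta_k)$ with the rotational action under consideration.
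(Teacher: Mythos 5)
Your Steps 1 and 2 match the paper: the reduction via Theorem~\ref{thm:limit_torus} and equidistribution to the constancy of $\theta\mapsto P_{pr}(f,\theta,T)$ is exactly Corollary~\ref{cor:limit_existence} (with the whole torus minimal in the Casimir irrational case), and your normalization $\tilde\beta_k=T_{\beta_k}(\beta_k)$ with $\Vert\re\tilde\beta_k\Vert=\Vert\im\tilde\beta_k\Vert$, $\langle\re\tilde\beta_k,\im\tilde\beta_k\rangle=0$ and the resulting formula $\Vert\beta(f,\theta,\cdot,T)\Vert^2=\sum_k(\alpha_k^2+\gamma_k^2)$ reproduces the computation preceding formula~(\ref{eq:betacconstdistr}). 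The easy direction (rotational invariance $\Rightarrow$ constancy) is also fine. But the converse, which you yourself flag as the ``main obstacle,'' is a genuine gap, and the reason your ``covering the sphere'' heuristic does not close it is structural: for fixed $(|a_k|)$ the ellipse-product torus is a single leaf of a foliation of $\C^n$ (the point $(x_k,y_k)$ determines $|a_k|$ via $x_k^2/A_k^2+y_k^2/B_k^2=|a_k|^2$), so the hypothesis only gives that the marginal law is constant \emph{on each leaf}; nothing yet forces the constant to be the same across leaves, and without that the union of leaves covering $S^{2n-1}$ tells you nothing.

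The paper fills this in two moment-theoretic steps that your Fourier sketch does not supply. First, Lemma~\ref{lemma:uniqueness} proves independence of $r=(|a_k|)$: writing the $2k$-th moment as $\mathcal M^{(2k)}_{A,B,r}P_{A,B,r}(\theta)^k$ and extracting the coefficient of the top harmonic $e^{2ik\theta_l}$ gives $r_l^{2k}\int z_l^{2k}\,d\mu=\mathcal M^{(2k)}_{A,B,r}\bigl(\tfrac{A_l^2-B_l^2}{2}\bigr)^k r_l^{2k}$, which pins $\mathcal M^{(2k)}_{A,B,r}$ down independently of $r$ \emph{precisely because} $A_l\neq B_l$. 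That strict inequality is the non-vanishing $\int_{SM}(\beta^+_{\mu_k})^2\,\omega\neq 0$ established by the explicit representation-theoretic computation of Lemma~\ref{lemma:innerproducts}; your proposal uses the resulting ellipse geometry but never identifies where the non-degeneracy of the ellipses comes from, and if some $A_k=B_k$ the argument genuinely fails. Second, once the law of $\re(z\cdot v)/\vert v\vert$ is the same for \emph{all} $v\in\C^n\setminus\{0\}$, Lemma~\ref{lemma:rotinvhighD} deduces rotational invariance not from a sphere-covering argument but from moment determinacy: the moments of the common one-dimensional law determine every mixed moment $\int z^\alpha\bar z^\beta\,d\mu$, the measure is compactly supported, and the determining data is rotation-invariant. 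To complete your proof you would need to carry out both of these steps (or equivalent estimates on your characteristic function $\Psi$), not merely assert that the foliation is ``compatible with the rotational symmetry.''
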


 \subsection{Duality theorems.}
 \label{Duality}
The formalism of finitely-aditive measures allows us to establish a {\it duality} between the spaces of
distributions invariant under the stable and the unstable horocycle flows, respectively; more precisely, between
the subspaces of invariant distributions corresponding to the positive eigenvalues of the Casimir operator.

Finitely additive $V$-invariant $1$-dimensional H\"older measures on rectifiable arcs induce by integration
currents of dimension $2$ (and degree $1$). In fact, let $\hat \beta\in {\hat {\mathfrak  B}}_V(SM)$.
Given a smooth $2$-form $\eta$, using the H{\"o}lder property and the (finite) additivity of $\hat\beta$, one can
define the integral
$$
\int\limits_{SM} \hat\beta \otimes\eta
$$
as the limit of Riemann sums. The correspondence
$$
\eta\to\int\limits_{SM} \hat\beta \otimes\eta
$$
now yields a current on $SM$ of dimension $2$ (and degree $1$), which, slightly abusing notation, we denote by the same symbol $\hat\beta$. The current $\hat\beta$ defined above in fact extends
to continuous forms and, by the Sobolev embedding theorem, to  forms in Sobolev spaces.

Our next aim is to describe the currents $\hat\beta^\pm_\mu$ in terms of
distributions invariant under the unstable horocycle flow $\{h^V_t\}$.

Given a  Casimir parameter $\mu >0$, consider the finitely-additive measure
\begin{equation}
\label{dpmu}
\hat D^\pm_\mu ={\hat X}\otimes \hat \beta^\pm_\mu \otimes \hat V.
\end{equation}

Since, for any $f\in C^{\infty}(SM)$, the integral of $f$ with respect to  the measure
$\hat D^\pm_\mu$ can be defined as the limit of Riemann sums, the measure $\hat D^\pm_\mu$
yields a distribution (in the sense of S.L. Sobolev and L. Schwartz)  on $C^{\infty}(SM)$; slightly
abusing notation, we denote the distribution by the same symbol $\hat D^\pm_\mu$.

\begin{theorem}\label{product-duality}
For every Casimir parameter $\mu \in \R^+$, the distributions $\hat D^\pm_\mu$ given by
 (\ref{dpmu})  are $V$-invariant. For $\mu\neq 1/4$, they satisfy the identities
 $$
 \mathcal L_X  \hat D^\pm_{\mu}= \frac{1\pm \nu}{2} \hat D^\pm_{\mu}\,,
 $$
 while for $\mu=1/4$ ($\nu=0$), they satisfy the identity
 $$
  \mathcal L_X  \begin{pmatrix}  \hat  D^+_{1/4} \\  \hat  D^-_{1/4}  \end{pmatrix} =
 \frac{1}{2} \begin{pmatrix} 1 & 1 \\ 0 & 1 \end{pmatrix}
  \begin{pmatrix}   \hat  D^+_{1/4} \\  \hat  D^-_{1/4}  \end{pmatrix} \,.
 $$
\end{theorem}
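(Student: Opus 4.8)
The plan is to derive all three claims — $V$-invariance, the $X$-eigenvector relations for $\mu\neq 1/4$, and the Jordan block relation for $\mu=1/4$ — from the corresponding properties of the finitely-additive measures $\hat\beta^\pm_\mu$ established in Theorem~\ref{thm:hatbetaprops}, by transporting them through the product construction $\hat D^\pm_\mu = \hat X \otimes \hat\beta^\pm_\mu \otimes \hat V$. First I would make precise the meaning of the product: for $f\in C^\infty(SM)$, the pairing $\hat D^\pm_\mu(f)$ is defined as a limit of Riemann sums over a net of partitions of $SM$ into small curvilinear boxes adapted to the frame $\{X,U,V\}$, where on each box one integrates $f$ against $|\hat X|$ in the $X$-direction and against $|\hat V|$ in the $V$-direction, and evaluates $\hat\beta^\pm_\mu$ on the horocycle arc in the $U$-direction. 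The H\"older property (item (4) of Theorem~\ref{thm:hatbetaprops}), together with the boundedness of $f$ and its derivatives, guarantees that these Riemann sums converge and that $\hat D^\pm_\mu$ extends to a continuous functional on a suitable Sobolev space, so it is a genuine distribution; this is the content already asserted in the paragraph preceding the theorem, which I would simply invoke.

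For the $V$-invariance, the key point is that the three transverse measures $|\hat X|$, $|\hat U|$, $|\hat V|$ and the horocycle arcs entering the box decomposition all transform in a controlled way under $h^V_t$. Using the commutation relations \eqref{eq:commutLie} one checks that the flow $h^V_t$ preserves the foliation tangent to $\{X,U\}$ and scales the transverse structure appropriately; more efficiently, since $\hat\beta^\pm_\mu$ is $h^V_t$-invariant on rectifiable arcs (item (3) of Theorem~\ref{thm:hatbetaprops}) and the measure $|\hat X|\otimes(\cdot)\otimes|\hat V|$ is built from the $\{X,V\}$-transverse data, I would show $(h^V_t)^\ast \hat D^\pm_\mu = \hat D^\pm_\mu$ by a change-of-variables argument on the Riemann sums: pushing a box decomposition by $h^V_{-t}$ produces another admissible decomposition, and on each box the contribution is unchanged because the $\hat\beta^\pm_\mu$-factor is invariant while the $\hat X\wedge\hat V$ part is pulled back to itself (the volume form $\omega=\hat X\wedge\hat U\wedge\hat V$ being $h^V_t$-invariant, as the flow is volume preserving, and $\imath_V$ of the relevant pieces vanishing in the limit). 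Passing to the limit over finer partitions gives $\mathcal L_V \hat D^\pm_\mu = 0$.

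For the geodesic scaling relations, I would differentiate the geodesic-scaling property of $\hat\beta^\pm_\mu$ (item (2) of Theorem~\ref{thm:hatbetaprops}) combined with the scaling of the transverse measures under $g_t$ coming from \eqref{eq:commutflows}: one has $g_{-t}$ scaling $|\hat U|$ by $e^{-t}$ hence (dually, on the $\{X,V\}$ side, and using $\omega$-invariance of $g_t$) the factor $\hat X\otimes(\cdot)\otimes\hat V$ scales by $e^{t}$, while $\hat\beta^\pm_\mu(g_{-t}\gamma)=e^{(1\mp\nu)t/2}\hat\beta^\pm_\mu(\gamma)$; the net exponent on $\hat D^\pm_\mu$ is $t + (1\mp\nu)/2\cdot t$, but with the sign conventions for the action of $g_t^\ast$ on distributions (pull-back) and the identity $\frac{d}{dt}g_t^\ast C = g_t^\ast \mathcal L_X C$ from the Notation paragraph, this yields $\mathcal L_X \hat D^\pm_\mu = \frac{1\pm\nu}{2}\hat D^\pm_\mu$ — note the sign flip relative to \eqref{eq:eigenv}, which is exactly the duality phenomenon the theorem records. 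For $\mu=1/4$ the same computation with the unipotent matrix $\begin{pmatrix}1 & -t/2\\ 0 & 1\end{pmatrix}$ acting on $(\hat\beta^+_{1/4},\hat\beta^-_{1/4})$ produces, after differentiation at $t=0$ and accounting for the $e^{t/2}$ scaling of $\hat X\otimes(\cdot)\otimes\hat V$, the Jordan relation with matrix $\frac12\begin{pmatrix}1 & 1\\ 0 & 1\end{pmatrix}$.

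The main obstacle I anticipate is making the Riemann-sum manipulations rigorous: one must show that the product $\hat X\otimes\hat\beta^\pm_\mu\otimes\hat V$ is well-defined independently of the choice of box decomposition and that the operations ``apply $h^V_t$ (resp.\ $g_t$)'' and ``take the limit of Riemann sums'' commute. This requires the quantitative H\"older bound \eqref{eq:Hoelder}--\eqref{eq:Hoelderbis} to control the error terms uniformly as the mesh of the partition shrinks — in particular to bound the cross-terms where the transversal directions interact (the $\int_\gamma|\hat U|\int_\gamma|\hat V|$ contribution in the H\"older estimate) — and a standard but somewhat delicate equicontinuity argument. Once the well-definedness and the exchange of limits are secured, the transformation laws follow purely formally from Theorem~\ref{thm:hatbetaprops} and the commutation relations \eqref{eq:commutLie}, \eqref{eq:commutflows}.
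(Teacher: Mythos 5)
Your proposal takes a genuinely different route from the paper, and it contains a concrete error in the central computation.

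The paper does not work directly with Riemann sums for the tensor product $\hat X\otimes\hat\beta^\pm_\mu\otimes\hat V$. Instead it proves the equivalent reformulation (Theorem~\ref{thm:duality}, stated immediately after): first Lemma~\ref{lemma:hatbetaformula} shows that \emph{any} $\hat\beta\in\hat{\mathfrak B}_V(SM)$, viewed as a degree-$1$ current, satisfies $\imath_X\hat\beta=\imath_V\hat\beta=0$ by the weak unstable vanishing axiom, hence has the coframe form $\hat\beta=\hat D_\beta\wedge\hat U$; the $V$-invariance of $\hat D_\beta$ then drops out of the Leibniz rule $0=\mathcal L_V(\hat D_\beta\wedge\hat U)=(\mathcal L_V\hat D_\beta)\wedge\hat U+\hat D_\beta\wedge\mathcal L_V\hat U$ once one computes $\mathcal L_V\hat U=0$ from $d\hat U=-\hat X\wedge\hat U$ (which comes from the commutation relation $[X,U]=U$). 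The geodesic scaling is then a one-line algebraic consequence of $g_t^\ast\hat U=e^{-t}\hat U$ and $g_t^\ast\hat\beta^\pm_\mu=e^{-\frac{1\mp\nu}{2}t}\hat\beta^\pm_\mu$ applied to the identity $\hat\beta^\pm_\mu=\hat D^\pm_\mu\wedge\hat U$. This algebraic route leaves no room for the sign-tracking errors that a Riemann-sum approach invites, which is exactly where your proposal runs into trouble.

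Your geodesic-scaling step contains a genuine mistake. You claim the ``net exponent'' on $\hat D^\pm_\mu$ is $t+\frac{1\mp\nu}{2}t=\frac{3\mp\nu}{2}t$, which does not equal $\pm\frac{1\pm\nu}{2}t$ for any choice of sign convention, so the subsequent ``this yields $\mathcal L_X\hat D^\pm_\mu=\frac{1\pm\nu}{2}\hat D^\pm_\mu$'' does not follow from your intermediate computation. The source of the error: when you push a box $B$ by $g_t$, its $V$-length scales by $e^t$, but the horocycle arc carried along becomes $g_t\gamma$, on which the geodesic scaling property gives $\hat\beta^\pm_\mu(g_t\gamma)=e^{-\frac{1\mp\nu}{2}t}\hat\beta^\pm_\mu(\gamma)$ --- the \emph{opposite} sign in the exponent to the one you used. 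You applied the formula for $\hat\beta^\pm_\mu(g_{-t}\gamma)$ while applying the $V$-stretch factor for $g_t$, i.e.\ you moved the two tensor factors in opposite directions. The correct exponent is $t-\frac{1\mp\nu}{2}t=\frac{1\pm\nu}{2}t$, matching the theorem. (Relatedly, your statement that ``$g_{-t}$ scales $|\hat U|$ by $e^{-t}$'' is backwards: the commutation relation \eqref{eq:commutflows} gives that $g_{-t}$ \emph{expands} stable horocycle arcs by $e^{t}$.) The $\mu=1/4$ Jordan block would have inherited the same sign mismatch.

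There is a second, softer gap: for the $V$-invariance, you assert that ``pushing a box decomposition by $h^V_{-t}$ produces another admissible decomposition.'' Because $[V,X]=V$ and $[V,U]=-2X$, the flow $h^V_t$ shears the $X$- and $U$-faces of a coordinate box, so its image is \emph{not} again a box of the same form; identifying exactly why the Riemann sums are nevertheless invariant requires more than a straight change of variables. The paper's Lemma~\ref{lemma:hatbetaformula} avoids this entirely, since once $\hat\beta=\hat D_\beta\wedge\hat U$ is in hand the invariance of $\hat D_\beta$ is the algebraic fact $\mathcal L_V\hat U=0$. Adopting that decomposition --- rather than manipulating box partitions --- would both close this gap and eliminate the sign error above.
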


Theorem \ref{product-duality} can be equivalently reformulated as follows.

Given a distribution $D$ acting on $C^{\infty}(SM)$, let $D\wedge \hat U$ denote the current of
degree $1$ (and dimension $2$) defined as the exterior product of the the distribution $D$,
identified to a current of degree $0$ (and dimension $3$) via the normalized volume form $\omega$, times the smooth $1$-form $\hat U$ on $SM$; that is, the current given by the following formula:
for any smooth $2$-form $\eta$ on $SM$,
$$
(D\wedge \hat U)(\eta):=D\left(\frac{\hat U\wedge \eta}{\omega}\right).
$$

\begin{theorem}
\label{thm:duality} For every Casimir parameter $\mu \in \R^+$,
there exist $V$-invariant distributions $\hat D^\pm_{\mu} \in
\mathcal D'(H_\mu)$  such that
$$
\hat \beta_\mu^\pm = \hat D^\pm_{\mu} \wedge \hat U\,.
$$
For all $\mu\neq 1/4$, the distributions $\hat D^\pm_{\mu}$ are
eigenvectors of the geodesic flow, that is, they satisfy the identitities
$$\mathcal L_X  \hat D^\pm_{\mu}= \frac{1\pm \nu}{2} \hat D^\pm_{\mu}\,,
$$
while for $\mu=1/4$ ($\nu=0$) they are generalized eigenvectors, that is,
 \begin{equation}
 \label{eq:Jordanbbis}
  \mathcal L_X  \begin{pmatrix}  \hat  D^+_{1/4} \\  \hat  D^-_{1/4}  \end{pmatrix} =
 \frac{1}{2} \begin{pmatrix} 1 & 1 \\ 0 & 1 \end{pmatrix}
  \begin{pmatrix}   \hat  D^+_{1/4} \\  \hat  D^-_{1/4}  \end{pmatrix} \,.
\end{equation}
\end{theorem}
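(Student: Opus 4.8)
The plan is to deduce Theorem~\ref{thm:duality} from Theorem~\ref{product-duality} by identifying the two notions of ``product current'' introduced above. First I would observe that the current $\hat D^\pm_\mu \wedge \hat U$ defined in the statement of Theorem~\ref{thm:duality}, where $\hat D^\pm_\mu$ is regarded as a current of degree $0$ via the volume form $\omega$, coincides (as a current of degree $1$ and dimension $2$ on $SM$) with the iterated product current $\hat X \otimes \hat\beta^\pm_\mu \otimes \hat V$ appearing in formula~(\ref{dpmu}). This is a matter of unwinding definitions: the frame $\{\hat X,\hat U,\hat V\}$ is dual to $\{X,U,V\}$, so $\hat X\wedge\hat U\wedge\hat V$ is a constant multiple of $\omega$, and the transverse measures $|\hat X|$, $|\hat V|$ used in the definition of the iterated integral $\int_{SM}\hat\beta^\pm_\mu\otimes\eta$ are precisely the ones that, together with the finitely-additive measure $\hat\beta^\pm_\mu$ along horocycle arcs, reconstruct integration against $\omega$ in the $X$- and $V$-directions while leaving the ``$U$-direction'' to be handled by $\hat\beta^\pm_\mu$. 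Concretely, for a smooth $2$-form $\eta$ one writes $\eta = f_X\,\hat U\wedge\hat V + f_U\,\hat V\wedge\hat X + f_V\,\hat X\wedge\hat U$, checks that only the $\hat V\wedge\hat X$-component survives the contraction defining $\hat\beta^\pm_\mu$ as a current (the other two components integrate to zero against a measure supported along horocycle arcs, because such arcs are transverse to neither $\{\hat X=0\}$ nor $\{\hat V=0\}$ in the relevant way), and that the surviving term equals $\hat D^\pm_\mu(\hat U\wedge\eta/\omega)$ after using $\hat\beta^\pm_\mu=\hat D^\pm_\mu\wedge\hat U$ with $\hat D^\pm_\mu = \hat X\otimes\hat\beta^\pm_\mu\otimes\hat V$. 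In effect, Theorem~\ref{thm:duality} is just Theorem~\ref{product-duality} rewritten with the exterior-product notation $\wedge\hat U$ in place of the iterated-tensor notation $\hat X\otimes(\cdot)\otimes\hat V$.

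Next I would verify that the distribution $\hat D^\pm_\mu$ of Theorem~\ref{product-duality} is supported on the single isotypical component $H_\mu$, i.e. $\hat D^\pm_\mu\in\mathcal D'(H_\mu)$. This follows from the orthogonality property (Theorem~\ref{thm:cocycleproperties}(4)): for each fixed $T$, the function $\beta^\pm_\mu(\cdot,T)$ lies in $H_\mu$, and since $\hat D^\pm_\mu$ is built by integrating $\hat\beta^\pm_\mu$ (whose horocycle-arc restrictions are the $\beta^\pm_\mu$) against the transverse data $\hat X,\hat V$, testing $\hat D^\pm_\mu$ against a function $g$ orthogonal to $H_\mu$ produces (via a Fubini-type argument and the limit-of-Riemann-sums definition) integrals of $g$ against functions in $H_\mu$, all of which vanish. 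Alternatively, and perhaps more cleanly, I would use the identification~(\ref{eq:star5}) together with Corollary~\ref{cor:basiccurrents}: the duality formula~(\ref{eq:expansion1}) already exhibits $\hat\beta_f$ in terms of the $V$-basic currents $B^\pm_\mu$, and the $\hat D^\pm_\mu$ are the $U$-invariant distributions dual to those $V$-basic currents, hence supported on $H_\mu$.

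The eigenvalue identities for $\mathcal L_X$ acting on $\hat D^\pm_\mu$ are then read off directly from the geodesic-scaling property of $\hat\beta^\pm_\mu$ (Theorem~\ref{thm:hatbetaprops}(2)) combined with the transformation rule of the transverse forms $|\hat X|$ and $|\hat V|$ under $g_t$. By the commutation relations~(\ref{eq:commutflows}), under $g_{-t}$ the form $\hat X$ is invariant while $\hat V$ scales by $e^{-t}$ (dually to $V\mapsto e^{t}V$), and $\hat\beta^\pm_\mu$ picks up the factor $\exp(\tfrac{1\mp\nu}{2}t)$; multiplying these gives the scaling factor $\exp(\tfrac{1\mp\nu}{2}t)\cdot e^{-t}=\exp(-\tfrac{1\pm\nu}{2}t)$ for the current under $g_{-t}^\ast$, equivalently $\exp(\tfrac{1\pm\nu}{2}t)$ under $g_t^\ast$, which yields $\mathcal L_X\hat D^\pm_\mu=\tfrac{1\pm\nu}{2}\hat D^\pm_\mu$ by the notational identity $\tfrac{d}{dt}\phi_t^\ast C=\phi_t^\ast\mathcal L_W C$ recorded at the end of \S\ref{Definitions}. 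The case $\mu=1/4$ is handled identically, replacing the scalar factor by the unipotent matrix from Theorem~\ref{thm:hatbetaprops}(2) and reading off the Jordan block~(\ref{eq:Jordanbbis}). Finally, $V$-invariance of $\hat D^\pm_\mu$ is immediate from the unstable-horocycle invariance of $\hat\beta^\pm_\mu$ (Theorem~\ref{thm:hatbetaprops}(3)) and the invariance of $\hat X$ and $\hat V$ under $h^V_t$ modulo terms that contract to zero against $V$ — this is exactly what Theorem~\ref{product-duality} asserts, so at this point Theorem~\ref{thm:duality} is proved. The main obstacle I anticipate is the first step: making rigorous the identification of the iterated tensor-product current $\hat X\otimes\hat\beta\otimes\hat V$ with the exterior product $\hat D\wedge\hat U$, including justifying that the Riemann-sum definitions of both objects converge and agree on all smooth (then continuous, then Sobolev) test $2$-forms; everything after that is bookkeeping with the transformation rules already established in Theorems~\ref{thm:hatbetaprops} and~\ref{thm:cocycleproperties}.
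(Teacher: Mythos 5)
Your proposal circles the right answer but defers the real work. You plan to deduce Theorem~\ref{thm:duality} from Theorem~\ref{product-duality}, but those two statements are announced as reformulations of each other and are proved together in the paper by the \emph{same} argument (Lemma~\ref{lemma:hatbetaformula}); treating one as a black box to derive the other is effectively circular. What you would actually need to supply is the content of that lemma, and there the key mechanism is missing from your sketch.

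The mechanism is this: for any $\hat\beta\in\hat{\mathfrak B}_V(SM)$, the weak unstable vanishing axiom (property~(2) of Definition~\ref{def:hatbetaprops}, verified for $\hat\beta^\pm_\mu$ in Corollary~\ref{cor:wuvanishing}) forces the contractions $\imath_X\hat\beta$ and $\imath_V\hat\beta$ to vanish as currents, because for any smooth $3$-form $\omega$ the restrictions of $\imath_X\omega$ and $\imath_V\omega$ to leaves of the weak unstable foliation are zero, so the iterated integrals $\int_{SM}\hat\beta\otimes\imath_X\omega$ and $\int_{SM}\hat\beta\otimes\imath_V\omega$ vanish. Since $\{\hat X,\hat U,\hat V\}$ is a coframe, $\imath_X\hat\beta=\imath_V\hat\beta=0$ immediately gives $\hat\beta=\hat D_\beta\wedge\hat U$ for some distributional coefficient $\hat D_\beta$. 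Your explanation that ``the other two components integrate to zero against a measure supported along horocycle arcs, because such arcs are transverse to neither $\{\hat X=0\}$ nor $\{\hat V=0\}$ in the relevant way'' misattributes the reason: $\hat\beta$ is defined on \emph{all} rectifiable arcs, not just horocycle arcs, and it is precisely the axiom $\hat\beta(\gamma_{wu})=0$ that is doing the work, not a transversality consideration.

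Similarly, the $V$-invariance of $\hat D_\beta$ is not automatic from unstable horocycle invariance of $\hat\beta$ alone: one needs the explicit computation $\mathcal L_V\hat U=\imath_V d\hat U=0$ (which follows from $d\hat U=-\hat X\wedge\hat U$, a consequence of the commutation relation $[X,U]=U$). Only with that identity does $\mathcal L_V(\hat D_\beta\wedge\hat U)=0$ reduce to $\mathcal L_V\hat D_\beta=0$. You gesture at this (``modulo terms that contract to zero against $V$'') but do not carry it out, and it is exactly the place where a wrong sign would sink the argument. Your final eigenvalue computation from the geodesic scaling is correct and matches the paper, using $g_t^\ast\hat U=e^{-t}\hat U$ (or, equivalently in your bookkeeping, $\hat V\mapsto e^{-t}\hat V$ and $\hat X$ invariant) together with Theorem~\ref{thm:hatbetaprops}(2); that part is fine. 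But without the two structural steps above, the proposal has not established that a current $\hat D^\pm_\mu$ with the stated properties exists at all.
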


The duality theorem  (Theorem \ref{thm:duality}) leads to the classification theorem
stated below. Let $\hat {\mathfrak  B}_V(SM)$ be the space of  all finitely additive
$1$-dimensional H\"older measures introduced in Definition~\ref{def:hatbetaprops}. For any $s>0$, let $\Omega_2^{-s}(SM)$ be the Sobolev space of currents of dimension $2$ (and degree $1$) defined as the dual space of the Sobolev space $\Omega_2^s(SM)$ of $2$-forms on $SM$.  By the Sobolev embedding theorem, the space $\hat {\mathfrak  B}_V(SM)$ embeds as closed subspace, denoted as
$\hat {\mathfrak  B}^{-s}_V(SM)$, into $\Omega_2^{-s}(SM)$.

\begin{theorem}
\label{thm:classification}
For all $s> 3/2$, the Hilbert space $\hat {\mathfrak  B}^{-s}_V(SM)$ is spanned by the system of finitely-additive measures $\{\hat \beta_{\mu}^{\pm} \vert \mu \in \Spec(\square) \cap \R^+\}$.
\end{theorem}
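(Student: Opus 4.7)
The plan is as follows. Given an arbitrary $\hat\beta \in \hat{\mathfrak B}^{-s}_V(SM)$, I would show that $\hat\beta$, when interpreted as a current of dimension $2$ and degree $1$ via the integration construction of \S\S\ref{Duality}, takes the form $\hat\beta = \hat D \wedge \hat U$ for a uniquely determined $V$-invariant distribution $\hat D \in W^{-s}(SM)$; the theorem then follows by expanding $\hat D$ in the basis of eigendistributions of the geodesic flow on $\mathcal I_V(SM)$ provided by the Flaminio--Forni classification~\cite{FlaFo}.

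First I would verify that for $s > 3/2$ the axioms (1) and (4) of Definition~\ref{def:hatbetaprops} yield a well-defined current in $\Omega_2^{-s}(SM)$. This is the Riemann-sum construction already outlined in \S\S\ref{Duality}: one approximates a smooth $2$-form $\eta$ on a fine partition of $SM$ into small rectifiable boxes, evaluates $\hat\beta$ on the $U$-arcs spanning the boxes, and multiplies by the transverse $\hat X \wedge \hat V$-components of $\eta$. The H\"older bound of axiom~(4), combined with the Sobolev embedding $W^s(SM) \hookrightarrow C^\alpha(SM)$ valid for some $\alpha>0$ when $s > 3/2$, ensures that the Riemann sums converge and that $\hat\beta$ extends continuously to $\Omega_2^s(SM)$.

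Second, I would use axioms~(2), (3) and (4) to produce the distribution $\hat D$. Axiom~(4) forces $\hat\beta(\gamma) = 0$ whenever $\int_\gamma |\hat U| = 0$, so in particular $\hat\beta$ vanishes on arcs tangent to $X$ alone and on arcs tangent to $V$ alone. Translating this into the current picture by testing against $2$-forms of the types $\phi\, \hat U \wedge \hat X$ and $\phi\, \hat U \wedge \hat V$ yields that the distributional contractions $\iota_V \hat\beta$ and $\iota_X \hat\beta$ vanish, hence $\hat\beta = \hat D \wedge \hat U$ where $\hat D \in W^{-s}(SM)$ may be defined by $\hat D(\phi) := -\hat\beta(\phi\, \hat X \wedge \hat V)$ in analogy with the formula $\hat D^\pm_\mu = \hat X \otimes \hat\beta^\pm_\mu \otimes \hat V$ of Theorem~\ref{product-duality}. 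A short computation from the commutation relation $[U,V] = 2X$ gives $\mathcal L_V \hat U = 0$, so the $V$-invariance of $\hat\beta$ (axiom~(3)) becomes $(V \hat D) \wedge \hat U = 0$, whence $\hat D \in \mathcal I_V(SM)$.

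Third, by the Flaminio--Forni classification \cite{FlaFo}, the space $\mathcal I_V(SM) \cap W^{-s}(SM)$ is spanned by (generalized) eigendistributions of the geodesic flow indexed by the Casimir spectrum; the components corresponding to positive Casimir parameters are precisely the $\hat D^\pm_\mu$ of Theorem~\ref{thm:duality}. The trivial representation would contribute the constant distribution, corresponding to the current $\hat U$ itself, which fails the H\"older axiom (requiring an exponent $\alpha < 1$), and the discrete-series contributions are ruled out by their geodesic scaling rates, incompatible with the axiomatic H\"older bound. Expanding $\hat D = \sum_\mu a^+_\mu \hat D^+_\mu + a^-_\mu \hat D^-_\mu$ in $W^{-s}(SM)$ and applying Theorem~\ref{thm:duality} termwise yields the expansion $\hat\beta = \sum_\mu a^+_\mu \hat\beta^+_\mu + a^-_\mu \hat\beta^-_\mu$ in $\Omega_2^{-s}(SM)$.

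The main obstacle is the justification of the decomposition $\hat\beta = \hat D \wedge \hat U$: the pointwise axioms on rectifiable arcs must be upgraded to distributional identities on test $2$-forms. The core ingredient is the observation that the H\"older bound of axiom~(4) controls $\hat\beta(\gamma)$ by a power of the $\hat U$-length of $\gamma$, while the $\hat X$- and $\hat V$-lengths appear only as multiplicative factors; this allows the contribution of thin transverse tubes to the Riemann sums for $\iota_V \hat\beta$ and $\iota_X \hat\beta$ to be controlled and ultimately shown to vanish. A subsidiary difficulty is showing that the discrete-series and trivial-representation contributions are genuinely excluded from $\hat{\mathfrak B}_V^{-s}(SM)$, which requires comparing the Sobolev orders and H\"older exponents of the associated currents with the quantitative estimate in axiom~(4).
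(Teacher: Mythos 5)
Your approach is essentially the paper's: you represent $\hat\beta$ as $\hat D\wedge\hat U$ with $\hat D$ a $V$-invariant distribution (this is the paper's Lemma~\ref{lemma:hatbetaformula}, proved exactly as you sketch, from the vanishing of $\imath_X\hat\beta$ and $\imath_V\hat\beta$ and from $\mathcal L_V\hat U=0$), you then invoke the Flaminio--Forni classification of $\mathcal I_V(SM)$, exclude the discrete series by comparing the geodesic scaling rate $e^{nt}$ with the bound $\Vert g_t^\ast\hat D_\beta\Vert_{-s}\leq C_se^{(1-\alpha)t}$ coming from the H\"older axiom, and identify the remaining components with the $\hat D^\pm_\mu$ of the duality theorem.

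The one step that does not work as written is your exclusion of the trivial representation. You claim the current $\hat U$ itself ``fails the H\"older axiom (requiring an exponent $\alpha<1$)''; the inequality goes the other way. The axiom only constrains arcs with $L:=\int_\gamma\vert\hat U\vert\leq 1$, and for such arcs $\vert\int_\gamma\hat U\vert\leq L\leq L^{\alpha}$ for every $\alpha\in(0,1)$, so the functional $\gamma\mapsto\int_\gamma\hat U$ satisfies axiom~(4) with $C=1$ and any exponent; it also plainly satisfies additivity, weak unstable vanishing and $h^V$-invariance. Nor does the geodesic-scaling argument dispose of it: the volume is fixed by $g_t^\ast$, and $e^{0\cdot t}\leq e^{(1-\alpha)t}$ for $t>0$, so the scaling bound that kills the discrete series (rate $e^{nt}$, $n\geq 1$) is vacuous for $\mu=0$. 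You should be aware that the paper's own proof is silent on this point -- it rules out only the discrete series and then asserts that $\hat D_\beta$ is supported on the principal and complementary series -- so your instinct to address $\mu=0$ separately was sound, but the justification you give is false and would need to be replaced (e.g.\ by an additional normalization or vanishing hypothesis on $\hat\beta$) rather than merely repaired. Apart from this, you should also record, as the paper does, the dimension count showing that $\{\hat D^+_\mu,\hat D^-_\mu\}$ actually spans the two-dimensional space $\mathcal I_V^{-s}(H_\mu)$ for each $\mu>0$, and a uniform norm comparison (the paper's isometry $\mathcal I$) to justify convergence of the expansion $\hat D=\sum_\mu a^+_\mu\hat D^+_\mu+a^-_\mu\hat D^-_\mu$ in $W^{-s}(SM)$.
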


The duality theorem (Theorem \ref{thm:duality}) also leads to a direct bijective correspondence between
the lift of the finitely-additive measures $\hat\beta^\pm_\mu$ to $PSL(2,\R)$ (denoted below by
the same symbol) and the $\Gamma$-invariant conformal distributions on the boundary of the
Poincar\'e disk studied by S.~Cosentino in \cite{Co}.

\begin{theorem}
\label{thm:gammaconformal}
 For any Casimir parameter $\mu \in \R^+\setminus \{1/4\}$, there exist on the boundary of the Poincar\'e disk $\Gamma$-invariant conformal distributions $\phi^\pm_\mu$  of exponents $ (1\mp \nu)/2$ such that the following identities hold on $PSL(2,\R)$:
$$
\hat \beta^\pm_\mu \otimes dt   = \phi^\pm_\mu  \otimes
e^{-(\frac{1\mp \nu}{2}) t} dt  \,.
$$
For $\mu=1/4$ ($\nu=0$), on the boundary of the Poincar\'e disk  there exist  a $\Gamma$-invariant conformal distribution $\phi_{1/4}$  of exponent $1/2$ and a distribution $\phi'_{1/4}$ of order $1/2^+$ (in the H\"older sense) such that
\begin{equation*}
\begin{aligned}
\hat \beta^+_{1/4} \otimes dt   &= \phi'_{1/4}  \otimes
e^{-\frac{t}{2}} dt +  \phi_{1/4}  \otimes
  \frac{t}{2} e^{-\frac{t}{2} } dt \,, \\
  \hat \beta^-_{1/4} \otimes dt   &= \phi_{1/4}  \otimes
e^{-\frac{t}{2} } dt   \,.
\end{aligned}
\end{equation*}
\end{theorem}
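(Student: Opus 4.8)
The plan is to work on the universal cover $PSL(2,\R)$, where both the lifts of the finitely additive measures $\hat\beta^\pm_\mu$ (which are automatically left $\Gamma$-invariant, being lifts of objects on $SM=\Gamma\backslash PSL(2,\R)$) and the boundary circle $\partial D$ are available, and to produce the conformal distribution $\phi^\pm_\mu$ as the ``transverse trace'' of $\hat\beta^\pm_\mu$ along the weak unstable foliation, with the geodesic twist $e^{-\frac{1\mp\nu}{2}t}$ factored out. Concretely, I would introduce coordinates on $PSL(2,\R)$ via the Bruhat decomposition $g=h^V_{v}\,g_{t}\,h^U_{u}$ on the big cell, together with the endpoint fibration $\pi^-:PSL(2,\R)\to\partial D$, $g\mapsto g^-$, whose fibres are exactly the weak unstable leaves, i.e. the orbits of the subgroup generated by $\{X,V\}$. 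Using properties (1)--(3) of Theorem~\ref{thm:hatbetaprops} -- additivity, weak unstable vanishing (Corollary~\ref{cor:wuvanishing}), and $V$-invariance -- one shows that the lift of $\hat\beta^\pm_\mu$ is entirely determined by its values on arcs transverse to the weak unstable foliation, so that it descends to a functional on the leaf space $\partial D$; the only residual dependence along a fibre is through the geodesic coordinate $t$, and by the geodesic-scaling property (2) this dependence is precisely the explicit factor $e^{-\frac{1\mp\nu}{2}t}$ (respectively the unipotent matrix of rescalings when $\mu=1/4$). Equivalently, one may start from Theorem~\ref{thm:duality} and push the $V$-invariant geodesic eigen-distributions $\hat D^\pm_{\mu}$ down along $\pi^-$; the two routes produce the same boundary object $\phi^\pm_\mu$.

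The second step is to pin down the transformation behaviour of $\phi^\pm_\mu$ under $\Gamma$. Since $\Gamma$ acts on $\partial D$ by M\"obius transformations and the geodesic coordinate transforms by the Busemann cocycle, $t(\gamma g)=t(g)-\log|\gamma'(g^-)|$ for the appropriate normalization, the identity ``$\hat\beta^\pm_\mu$ left $\Gamma$-invariant'' combined with the twist $e^{-\frac{1\mp\nu}{2}t}$ forces $\phi^\pm_\mu(\gamma\xi)\,|\gamma'(\xi)|^{(1\mp\nu)/2}=\phi^\pm_\mu(\xi)$, that is, $\phi^\pm_\mu$ is a $\Gamma$-invariant conformal distribution of exponent $(1\mp\nu)/2$ in the sense of Cosentino \cite{Co}; as a consistency check, the Casimir relation $\mu=\sigma(1-\sigma)$ with $\sigma=(1\mp\nu)/2$ returns $\mu$. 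The H\"older property (4) of Theorem~\ref{thm:hatbetaprops}, transported through the (inverse of the) local parametrization of arcs transverse to the weak unstable foliation and the Sobolev/H\"older embeddings on $S^1\cong\partial D$, yields the asserted distributional order of $\phi^\pm_\mu$; here one matches against the fact, already proved by Cosentino, that the distributions $D^\pm_\mu$ (equivalently $\hat D^\pm_\mu$, by Theorem~\ref{thm:duality}) are H\"older of orders $(1\mp\re\,\nu)/2$.

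The case $\mu=1/4$ is handled separately using the Jordan structure in Theorem~\ref{thm:hatbetaprops}: the geodesic-scaling identity there is through $\exp(t/2)\bigl(\begin{smallmatrix}1&-t/2\\0&1\end{smallmatrix}\bigr)$, so $\hat\beta^-_{1/4}$ alone scales by the pure exponential $e^{t/2}$ and descends to a genuine $\Gamma$-conformal distribution $\phi_{1/4}$ of exponent $1/2$, whereas $\hat\beta^+_{1/4}$ mixes in $\hat\beta^-_{1/4}$ with a coefficient linear in $t$, which produces exactly the stated decomposition $\hat\beta^+_{1/4}\otimes dt=\phi'_{1/4}\otimes e^{-t/2}dt+\phi_{1/4}\otimes\frac{t}{2}e^{-t/2}dt$; the order $1/2^+$ of $\phi'_{1/4}$ is inherited from the corresponding H\"older bound for $\hat\beta^+_{1/4}$ in \eqref{eq:Hoelderbis}, equivalently from the order of $D^-_{1/4}$. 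Uniqueness of $\phi^\pm_\mu$ with the listed properties may be read off from Cosentino's classification of $\Gamma$-conformal distributions, or from Theorem~\ref{thm:classification}.

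The step I expect to be the main obstacle is the first one: making rigorous the passage from a \emph{finitely additive} H\"older measure on rectifiable arcs in the $3$-manifold $PSL(2,\R)$ to an honest distribution on the $1$-manifold $\partial D$. One must check that the ``transverse value'' is well defined, i.e. independent of the choice of transversal, of the connecting arc, and of the fibre coordinates $v$ and $u$; that the resulting functional is continuous in the distributional topology of the circle with precisely the order dictated by property (4); and, crucially, that the exponential twist is normalized correctly so that the conformal exponent comes out as $(1\mp\nu)/2$ rather than its complement $(1\pm\nu)/2$. This last point requires pinning down the exact relationship between the geodesic-flow parametrization on $SM$ and the conformal (visual/Busemann) structure on $\partial D$, together with the sign conventions in the commutation relations \eqref{eq:commutflows}.
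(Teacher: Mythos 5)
Your proposal is essentially on the right track, and in fact the shorter of your two routes is exactly the one the paper takes. The paper does not construct $\phi^\pm_\mu$ by a ``transverse trace'' of the finitely additive measure $\hat\beta^\pm_\mu$ along the weak unstable foliation; rather, it starts from Theorem~\ref{thm:duality}, which already produced $V$-invariant distributions $\hat D^\pm_\mu$ that are (generalized) eigenvectors of $\mathcal L_X$ with $\hat\beta^\pm_\mu=\hat D^\pm_\mu\wedge\hat U$ (equivalently $dt\wedge\hat\beta^\pm_\mu\wedge dh^V=\hat D^\pm_\mu$ on $PSL(2,\R)$), and then invokes Cosentino's Lemma~\ref{lemma:invdistconfdist}, which asserts that the tensoring map $\phi\mapsto\phi\otimes e^{-\sigma t}dt\otimes dh^V$ is a linear \emph{isomorphism} from $\Gamma$-invariant conformal distributions of exponent $\sigma$ onto $V$-invariant eigendistributions of geodesic eigenvalue $1-\sigma$. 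Setting $1-\sigma=(1\pm\nu)/2$ gives $\sigma=(1\mp\nu)/2$ and the displayed identity; the $\mu=1/4$ case then uses Cosentino's additional construction of the companion distribution $\phi'$ to handle the Jordan block. In other words, the analytic obstacle you flag at the end --- making rigorous the descent of a finitely additive H\"older measure on arcs in a $3$-manifold to a distribution on $\partial D$, and nailing the normalization of the Busemann twist --- simply does not arise in the paper's proof, because that descent is carried out once and for all at the level of $V$-invariant distributions by Cosentino's isomorphism (onto!), not at the level of $\hat\beta^\pm_\mu$. Your primary route (building the boundary distribution directly from properties (1)--(4) of Theorem~\ref{thm:hatbetaprops}, via the Bruhat coordinates and the endpoint fibration) is a legitimate and more self-contained alternative, but it would require you to prove your own version of Cosentino's regularity result rather than quote it; the paper's argument is essentially an exercise in bookkeeping once Theorem~\ref{thm:duality} and Lemma~\ref{lemma:invdistconfdist} are in hand.
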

\begin{remark} For $\mu =1/4$, the space of all
$\Gamma$-invariant conformal distributions  of exponent
$1/2$ is $1$-dimensional in each irreducible component.
\end{remark}

\subsection{Organization of the paper.}

The paper is organized as follows. In Section~\ref{sec:BasicCurrents} we construct finitely
additive measures of rectifiable arcs and prove our main results about them (in particular
Theorem~\ref{thm:hatbetaprops}, Corollary~\ref{cor:wuvanishing} and Corollary~\ref{cor:basiccurrents}
up to a technical estimate (Lemma~\ref{lemma:remainderextension}) which will be proved in
\S\S~\ref{keyestimates}. In Section~\ref{sec:Cocycles} we prove our results on additive cocycles
for the horocycle flow (Theorem~\ref{thm:cocycleproperties})  and  the approximation theorem for ergodic integrals (Theorem~\ref{thm:approximation}). From the approximation theorem, we
then derive our results on limit distributions (Theorem~\ref{thm:limitdistcompl}, Theorem~\ref{thm:limit_torus}, Corollary~\ref{cor:limitdistprincipal1} and Corollary~\ref{cor:limitdistprincipal2}).
Section~\ref{sec:Duality} is devoted to the proof of the duality theorem (Theorem~\ref{thm:duality}),
of the classification theorem (Theorem~\ref{thm:classification}) and to the relations with
$\Gamma$-invariant conformal distributions (Theorem~\ref{thm:gammaconformal}).
 In Section~\ref{sec:appendix} we collect several technical auxiliary results. In \S\S~\ref{keyestimates} we prove the above-mentioned estimate we need in the construction of finitely additive measures
 and additive cocycles (Lemma~\ref{lemma:remainderextension}). In \S\S~\ref{innerproducts} and
 \S\S~\ref{rotsym} we prove the technical lemmas needed in the proof of our conditional theorems on
 existence of limit distributions for functions supported on irreducible components of the principal
 series.

\subsection{ Acknowledgements}.
We are deeply grateful to Yakov Sinai for suggesting the problem to us and for his friendly encouragement.
We are deeply grateful to Artur Avila for pointing out the orthogonality
property of additive cocycles stated in Theorem~\ref{thm:cocycleproperties}.

A.I. B. is an Alfred P. Sloan Research Fellow.
During work on this project, he
was supported in part by Grant MK-4893.2010.1 of the President of the Russian Federation,
by the Programme on Mathematical Control Theory of the Presidium of the Russian Academy of Sciences,
by the Programme 2.1.1/5328 of the Russian Ministry of Education and Research,
by the Edgar Odell Lovett Fund at Rice University, by
the National Science Foundation under grant DMS~0604386, and by
the RFBR-CNRS grant 10-01-93115.

G.F. was supported by the National Science Foundation grant DMS~0800673.

\section{Basic Currents and Finitely-Additive Measures on Rectifiable Arcs.}
\label{sec:BasicCurrents}

In this section we prove Theorem~\ref{thm:hatbetaprops}  up to a
technical estimate which will be proved in the  \S\S~\ref{keyestimates}.
We then derive Corollary~\ref{cor:wuvanishing} and Corollary~\ref{cor:basiccurrents}.

 \subsection{Basic currents.} By definition,  the volume form $\omega$ on $SM$ can be written as
 $$
 \omega = \hat X \wedge \hat U \wedge \hat V\,.
 $$
The contractions $\eta_X:=\imath_X \omega$, $\eta_U:=\imath_U
\omega$ and $\eta_V:=\imath_V \omega$ are closed $2$-forms which can
be written as follows:
$$
\eta_X = \hat U \wedge \hat V\,, \quad \eta_U= - \hat X \wedge \hat
V\, , \quad \eta_V = \hat X \wedge \hat U\,.
$$

We recall that a distribution $D \in \mathcal D'(SM)$ (in the sense
of S.L. Sobolev and L. Schwartz) is called $U$-{\it invariant }(or
invariant under the stable horocycle flow $\{h^U_t\}$) iff $\mathcal
L_U D=0$ in $\mathcal D'(SM)$. A current $C$ of degree $2$ (and
dimension $1$) is called {\it basic }for the orbit foliation of the
stable horocycle  flow if and only if
\begin{equation}
\label{eq:basic}
 \mathcal L_U C  = \imath_U C =0 \,\, \text{ in }\,\, \mathcal D'(SM)\,.
\end{equation}
(The operators $ \mathcal L_U$ and $\imath_U$ are respectively the
Lie derivative and the contraction with respect to the horocycle
generator $U$ acting on currents according to the standard
definition).

Let $\mathcal I_U (SM)$ denote the space of all $U$-invariant
distributions and $\mathcal B_U(SM)$ denote the space  of all \emph{basic
currents }of degree $2$ (and dimension $1$) for the orbit foliation
of the stable horocycle flow.

 For every $s\geq 0$, let $W^s(SM)$ be the standard Sobolev space on the compact manifolds
 $SM$ and let $\Omega_1^s(SM)$ be the Sobolev space of all $1$-forms on $SM$ defined as follows:
 $$
  \lambda: = \lambda_X \hat X + \lambda_U \hat U  + \lambda_V \hat V \in
 \Omega_1^s(SM) \Leftrightarrow (\lambda_X, \lambda_U, \lambda_V) \in [W^s(SM)]^3  \,.
 $$
Let $W^{-s}(SM)$ and $\Omega_1^{-s}(SM)$ denote the Sobolev spaces
dual of the (Hilbert) spaces $W^s(SM)$ and $\Omega_1^s(SM)$
respectively. The space $W^{-s}(SM)$ which can be viewed either as
currents of degree $3$ and dimension $0$ (linear functionals on
functions) or as currents of degree $0$ and dimension $3$ (linear
functional on $3$-forms). A standard $SL(2,\R)$-invariant
identification between functions and $3$-forms is in fact given by
the volume form on $SM$.  The space  $\Omega_1^{-s}(SM)$ is a  space
of  currents of degree $2$ and dimension $1$ (linear functionals on
$1$-forms). Let  $\mathcal B_U^{-s}(SM) \subset \Omega_1^{-s}(SM)$
denote the subspace of basic currents for the orbit foliation of the
stable horocycle flow, that is, of currents satisfying the
identities (\ref{eq:basic}). It is a standard fact, easy to prove,
that  the space $\mathcal B_U^{-s}(SM)$ of basic currents is
isomorphic to the space $\mathcal I_U^{-s} (SM)$ of $U$-invariant
distributions:

\begin{lemma}
\label{lemma:Isom} For any $s\in {\mathbb R}$ the correspondence
$D\to D \eta_U$ defines an isomorphism from the space $\mathcal
I_U^{-s} (SM)$ of invariant distributions for the stable horocycle
flow onto the space $\mathcal B_U^{-s}(SM)$ of basic currents for
its orbit foliation.
\end{lemma}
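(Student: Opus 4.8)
The plan is to verify that the linear map $D \mapsto D\eta_U$ takes $U$-invariant distributions to $U$-basic currents, that it is injective, and that it is surjective onto $\mathcal B_U^{-s}(SM)$, with the norm identity following from the fact that $\eta_U = -\hat X\wedge\hat V$ is a smooth, nowhere-vanishing $2$-form in the annihilator of $U$. First I would record the two defining properties of the current $D\eta_U$. Since $\imath_U\eta_U = \imath_U\imath_U\omega = 0$, we get $\imath_U(D\eta_U) = D\,\imath_U\eta_U = 0$ automatically for every distribution $D$. For the Lie derivative, Cartan's formula on the current $D\eta_U$ gives $\mathcal L_U(D\eta_U) = (\mathcal L_U D)\,\eta_U + D\,\mathcal L_U\eta_U$; because $\eta_U$ is closed and $\imath_U\eta_U=0$, Cartan's formula applied to $\eta_U$ itself yields $\mathcal L_U\eta_U = \imath_U d\eta_U + d\imath_U\eta_U = 0$. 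Hence $\mathcal L_U(D\eta_U) = (\mathcal L_U D)\eta_U$, which vanishes precisely when $D$ is $U$-invariant. This shows the map sends $\mathcal I_U^{-s}(SM)$ into $\mathcal B_U^{-s}(SM)$.

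Next I would treat injectivity and surjectivity together by writing down the inverse explicitly. Using the global frame $\{\hat X,\hat U,\hat V\}$, any current $C$ of degree $2$ can be written uniquely as $C = c_X\,\eta_X + c_U\,\eta_U + c_V\,\eta_V$ with $c_X,c_U,c_V\in\mathcal D'(SM)$ (in Sobolev terms, $c_X,c_U,c_V\in W^{-s}(SM)$, with equivalence of norms since the frame is smooth). The contraction condition $\imath_U C = 0$ forces $c_X = c_V = 0$: indeed $\imath_U\eta_X = \imath_U(\hat U\wedge\hat V) = \hat V$ (up to sign), $\imath_U\eta_V = \imath_U(\hat X\wedge\hat U) = -\hat X$, and $\imath_U\eta_U = 0$, so $\imath_U C = c_X\hat V - c_V\hat X$, whose vanishing is equivalent to $c_X = c_V = 0$ because $\hat X,\hat V$ are linearly independent at every point. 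Thus every current in the annihilator of $\imath_U$ has the form $C = D\eta_U$ for a unique $D := c_U\in W^{-s}(SM)$, giving injectivity and a well-defined inverse $C\mapsto D$. Imposing the remaining condition $\mathcal L_U C = 0$ then reads $(\mathcal L_U D)\eta_U = 0$, i.e.\ $D\in\mathcal I_U^{-s}(SM)$, by the computation of the previous paragraph; so the image is exactly $\mathcal B_U^{-s}(SM)$ and the correspondence is a bijection. Finally, since $\eta_U = -\hat X\wedge\hat V$ has pointwise norm independent of the point (it is $SL(2,\R)$-invariant) with respect to the natural inner products on $1$-forms, respectively $2$-forms, induced by the frame, the pairing $\langle D\eta_U,\lambda\rangle$ against a $1$-form $\lambda = \lambda_X\hat X+\lambda_U\hat U+\lambda_V\hat V$ reduces to the pairing $\langle D,\lambda_U\rangle$ (after contracting $\eta_U$ with $\lambda$ and using $\eta_U\wedge\hat U = -\omega$ up to sign), from which one reads off that the map is an isometry of the dual Sobolev norms.

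I do not anticipate a serious obstacle here, as the statement is flagged in the text as ``a standard fact, easy to prove''; the only point requiring mild care is bookkeeping of signs and the precise normalization in the identification of $W^{-s}(SM)$ with currents of degree $0$ via the volume form $\omega = \hat X\wedge\hat U\wedge\hat V$, which is what makes the claimed \emph{isometry} (as opposed to mere isomorphism) correct. One should also make sure the duality pairing on $\Omega_1^{-s}(SM)$ is the one induced by the Sobolev structure declared in the excerpt (componentwise in the frame $\{\hat X,\hat U,\hat V\}$), so that the orthogonal splitting $C = c_X\eta_X + c_U\eta_U + c_V\eta_V$ is orthogonal and the restriction of the norm to the $\eta_U$-component matches $\Vert D\Vert_{-s}$ exactly.
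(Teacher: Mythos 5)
Your proposal is correct and follows essentially the same route as the paper's proof: show the forward map lands in $\mathcal B_U^{-s}(SM)$ via $\imath_U\eta_U=0$ and $\mathcal L_U\eta_U=0$, then invert by reading off the $\eta_U$-component in the global frame $\{\hat X,\hat U,\hat V\}$. The paper packages the inverse as $C\mapsto C\wedge\hat U$ and checks $\imath_U(C\wedge\hat U)=C$ when $\imath_U C=0$, which is algebraically the same as your decomposition argument in which $\imath_U C=0$ forces the $\eta_X$- and $\eta_V$-components to vanish.
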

\begin{proof} Let $D$ be any $U$-invariant distribution.
It follows that
$C:=D \eta_U$ is closed. By definition $\imath_U C=0$. It follows
that
$$
 \mathcal L_U C = \imath_U dC + d  \imath_U C =0\,,
$$
hence $C$ is a basic current for the stable horocycle foliation. Conversely, let $C$ be any basic
current for the stable horocycle foliation and let $D: = C\wedge \hat U$.
We claim that $D$ is $U$-invariant. Since $\mathcal L_U C= \imath_U C=0$,
a computation yields
$$
 \mathcal L_U D = \mathcal L_U C \wedge \hat U +  C \wedge \mathcal L_U \hat U
 =  C \wedge \imath_U (d\hat U)= \imath_UC \wedge d \hat U=0\,.
$$
Finally, since $\imath_U C=0$, it follows that
$$
D\eta_U \equiv \imath_U D = \imath_U (C\wedge \hat U) = C  \,,
$$
hence the map $D \to D \eta_U$ is a bijection of the space of all invariant distributions
 onto the space of all basic currents  with inverse given by the map $C \to C\wedge \hat U$.
 It follows from the definition of the Sobolev spaces of currents that the above maps  are
 isomorphisms between the dual Sobolev spaces $\mathcal I_U^{-s} (SM)$ and
 $\mathcal B_U^{-s}(SM)$.
 \end{proof}

\subsection {Geodesic scaling of basic currents.}

Let $H_\mu \subset L^2(SM)$ be any non-trivial irreducible component  with Casimir
parameter $\mu \in \R\setminus\{0\}$. Let $W^{-s}(H_\mu)$ and  $\Omega_1^{-s}(H_\mu)$ denote
the associated Sobolev spaces of distributions and, respectively, currents of dimension
$1$ (and degree $2$). The subspaces $W^{-s}(H_\mu)$ and  $\Omega_1^{-s}(H_\mu)$ are  $SL(2,\R)$-invariant irreducible components of the decomposition of the dual Sobolev spaces $W^{-s}(SM)$ and 
$\Omega_1^{-s}(SM)$ respectively. 
Let $\mathcal B^{-s}_U(H_\mu)$ denote the associated $SL(2,\R)$-invariant  irreducible component of the space $\mathcal B^{-s}_U(SM)$ of basic currents for the stable horocycle foliation, 
that is, for all $\mu\in \R\setminus\{0\}$,
$$
\mathcal B^{-s}(H_\mu) := \mathcal B^{-s}(SM) \cap  \Omega_1^{-s}(H_\mu)\,.
$$
The following result describes the  (infinitesimal) action of the geodesic flow on the space
$\mathcal B_U^{-s}(H_\mu)$ of basic currents for all Casimir parameters $\mu \in \R\setminus\{0\}$.

\begin{lemma}
\label{lemma:EVcurrents} For any $s>1$ and $\mu \in \R^+ \setminus
\{1/4\}$, the space $\mathcal B^{-s}_U(H_\mu)$ has complex dimension
$2$ and has a basis $\{B^+_\mu, B^-_\mu\}$ of eigenvectors for the
action of the geodesic flow. In fact, the following formulas hold:
\begin{equation}
\label{eq:BEV} \mathcal L_X  B^{\pm}_\mu =   \frac{1\mp \nu}{2}
B^{\pm}_\mu \,.
\end{equation}
For $\mu=1/4$ ($\nu=0$), the space $\mathcal B^{-s}_U(H_\mu)$ has complex
dimension $2$ and has a basis $\{B^+_\mu, B^-_\mu\}$ of generalized
eigenvectors for the action of  the geodesic flow. The following
formula holds:
\begin{equation}
\label{eq:BGEV} \mathcal L_X \begin{pmatrix} B^+_{1/4} \\ B^-_{1/4}
\end{pmatrix} = \frac{1}{2}  \begin{pmatrix}  1 & 0 \\ -1 & 1
\end{pmatrix}
 \begin{pmatrix} B^+_{1/4} \\ B^-_{1/4} \end{pmatrix}\,.
\end{equation}
For $\mu =-n^2+n<0$ ($\nu=2n-1$) and $s>n$, the space $\mathcal B^{-s}_U(H_\mu)$ has complex
dimension $1$ and has a basis $\{B_\mu\}$, containing a single eigenvector for the action of  the geodesic flow. In fact, the following formula holds:
\begin{equation}
\label{eq:BEVdiscrete} \mathcal L_X  B_\mu =   \frac{1- \nu}{2}
B_\mu =  (1-n) B_\mu\,.
\end{equation}
\end{lemma}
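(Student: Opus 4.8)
The plan is to transport the classification of $U$-invariant distributions on each irreducible component, due to Flaminio and Forni, through the isomorphism $D\mapsto D\eta_U$ of Lemma~\ref{lemma:Isom}. First I would note that this isomorphism respects the splitting into irreducible components: since $\eta_U=-\hat X\wedge\hat V$, for any $1$-form $\lambda=\lambda_X\hat X+\lambda_U\hat U+\lambda_V\hat V$ one has $\eta_U\wedge\lambda=\lambda_U\,\omega$, so the current $D\eta_U$ acts by $\langle D\eta_U,\lambda\rangle=D(\lambda_U)$; its ``$H_\mu$-coefficients'' therefore lie in $H_\mu$ precisely when $D\in W^{-s}(H_\mu)$. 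Hence $D\mapsto D\eta_U$ maps $\mathcal I^{-s}_U(H_\mu)$ isomorphically onto $\mathcal B^{-s}_U(H_\mu)$, and the dimension counts in the statement are inherited from those of $\mathcal I^{-s}_U(H_\mu)$. By \cite{FlaFo}, for $\mu\in\R^+\setminus\{1/4\}$ the space $\mathcal I^{-s}_U(H_\mu)$ is two-dimensional (once $s>1$, since $D^\pm_\mu$ have Sobolev order $(1\pm\re\nu)/2<1$) with basis $\{D^+_\mu,D^-_\mu\}$ satisfying \eqref{eq:eigenv}; for $\mu=1/4$ it is two-dimensional with the Jordan basis of \eqref{eq:Jordanb}; and for the discrete-series parameter $\mu=-n^2+n$ it is one-dimensional (once $s>n$), spanned by the unique invariant distribution $D_\mu$, which satisfies $\mathcal L_XD_\mu=-n\,D_\mu$.

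Next I would compute the action of the geodesic flow on $\eta_U$. Since $\{g_t\}$ is volume-preserving, $\mathcal L_X\omega=0$; combining this with $[X,U]=U$ from \eqref{eq:commutLie} and the Cartan identity $\mathcal L_X\imath_U-\imath_U\mathcal L_X=\imath_{[X,U]}$ gives
\begin{equation*}
\mathcal L_X\eta_U=\mathcal L_X\imath_U\omega=\imath_U\mathcal L_X\omega+\imath_{[X,U]}\omega=\imath_U\omega=\eta_U\,.
\end{equation*}
(Equivalently: the commutation relations yield $\mathcal L_X\hat U=-\hat U$ and $\mathcal L_X\hat V=\hat V$, so $\mathcal L_X(\hat X\wedge\hat V)=\hat X\wedge\hat V$.) Consequently, by the Leibniz rule, for every $D\in\mathcal D'(SM)$,
\begin{equation*}
\mathcal L_X(D\eta_U)=(\mathcal L_XD)\eta_U+D\,\mathcal L_X\eta_U=(\mathcal L_XD+D)\,\eta_U\,;
\end{equation*}
in other words, passing from $D$ to $D\eta_U$ shifts the $\mathcal L_X$-eigenvalue by $+1$.

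It then remains to set $B^\pm_\mu:=D^\pm_\mu\eta_U$ (and $B_\mu:=D_\mu\eta_U$ in the discrete-series case) and read off the formulas. For $\mu\in\R^+\setminus\{1/4\}$, \eqref{eq:eigenv} gives $\mathcal L_XB^\pm_\mu=\bigl(-\tfrac{1\pm\nu}{2}+1\bigr)B^\pm_\mu=\tfrac{1\mp\nu}{2}B^\pm_\mu$, which is \eqref{eq:BEV}. For $\mu=1/4$, the matrix in \eqref{eq:Jordanb} becomes $-\tfrac12\left(\begin{smallmatrix}1&0\\1&1\end{smallmatrix}\right)+I=\tfrac12\left(\begin{smallmatrix}1&0\\-1&1\end{smallmatrix}\right)$, a single $2\times2$ Jordan block, so $\{B^+_{1/4},B^-_{1/4}\}$ are generalized eigenvectors satisfying \eqref{eq:BGEV}. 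For $\mu=-n^2+n$ (so $\nu=2n-1$), we get $\mathcal L_XB_\mu=(-n+1)B_\mu=\tfrac{1-\nu}{2}B_\mu=(1-n)B_\mu$, which is \eqref{eq:BEVdiscrete}.

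I do not expect a serious obstacle here: the argument is a bookkeeping exercise in the $SL(2,\R)$-structure, and its only load-bearing ingredients are external --- namely the precise statements from \cite{FlaFo} on the dimension, the $\mathcal L_X$-eigenvalues and the Sobolev orders of $U$-invariant distributions on each $H_\mu$. The one point deserving a little care is the discrete series: one must recall (or re-derive, via the ``$-$'' eigenvalue branch having non-positive order) that for $\mu=-n^2+n$ there is exactly one $U$-invariant distribution, of Sobolev order $n$ and $\mathcal L_X$-eigenvalue $-n$, which is what pins down the hypothesis $s>n$ and the eigenvalue $(1-\nu)/2=1-n$ in \eqref{eq:BEVdiscrete}.
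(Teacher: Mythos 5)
Your proof is correct and follows the same route as the paper: transport the Flaminio--Forni classification of $U$-invariant distributions through the isomorphism $D\mapsto D\eta_U$ of Lemma~\ref{lemma:Isom}, use $\mathcal L_X\eta_U=\eta_U$ to shift the $\mathcal L_X$-eigenvalue by $+1$, and read off formulas~\eqref{eq:BEV}, \eqref{eq:BGEV}, \eqref{eq:BEVdiscrete}. The only differences are cosmetic: you spell out that the isomorphism respects the $H_\mu$-decomposition and derive $\mathcal L_X\eta_U=\eta_U$ via the Cartan formula, both of which the paper takes for granted.
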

\begin{proof} It follows by Lemma \ref{lemma:Isom} that the space $\mathcal
B^{-s}_U(H_\mu)$ is isomorphic to the space $\mathcal I_U^{-s}
(H_\mu) := \mathcal I_U(SM)\cap W^{-s}(H_\mu)$ of invariant
distributions. By \cite{FlaFo}, Theorem 3.2,  for any $\mu \in \R^+
\setminus \{1/4\}$, the space $\mathcal I_U^{-s} (H_\mu)$ has
complex dimension $2$ and has a basis $\{D^+_\mu, D^-_\mu\}$ of
eigenvectors of the geodesic flow, in the sense that the following
formulas hold:
$$
\mathcal L_X  D^{\pm}_\mu =   -\frac{1\pm \nu}{2}  D^{\pm}_\mu \,.
$$
Let $B^\pm_\mu := D^\pm_\mu \eta_U$. Since $U$ is the generator of
the stable horocycle flow, we have the following equality  of
currents:
\begin{equation}
\label{eq:etaUder} \mathcal L_X \eta_U =  \eta_U \,.
\end{equation}
The statement for the case $\mu \neq 1/4$ then follows since
$$
\mathcal L_X B^\pm_\mu = (\mathcal L_X  D^{\pm}_\mu +
D^\pm_\mu)\eta_U =  \frac{1\mp \nu}{2} B^\pm_\mu\,.
$$
In the case $\mu =1/4$, by \cite{FlaFo}, Lemma 3.5, the space
$\mathcal I_U^{-s} (H_\mu)$ has complex dimension $2$ and has a
basis $\{D^+_\mu, D^-_\mu\}$ of generalized eigenvectors of the
geodesic flow, in the sense that the following formulas hold:
$$
\mathcal L_X \begin{pmatrix} D^+_\mu \\ D^-_\mu \end{pmatrix} =
-\frac{1}{2}  \begin{pmatrix}  1 & 0 \\ 1 & 1 \end{pmatrix}
 \begin{pmatrix} D^+_\mu \\ D^-_\mu \end{pmatrix}\,.
$$
Let $B^\pm_\mu := D^\pm_\mu \eta_U$. Formula (\ref{eq:BGEV}) then
follows by Leibniz rule from the above formula and formula
(\ref{eq:etaUder}). In fact,
$$
\mathcal L_X \begin{pmatrix} B^+_\mu \\ B^-_\mu \end{pmatrix} =
\left [ -\frac{1}{2}  \begin{pmatrix}  1 & 0 \\ 1 & 1 \end{pmatrix}
+  \begin{pmatrix}  1 & 0 \\ 0 & 1 \end{pmatrix} \right]
\begin{pmatrix} B^+_\mu \\ B^-_\mu \end{pmatrix}
$$
Similarly, formula (\ref{eq:BEVdiscrete}) follows from \cite{FlaFo}, Lemma 3.5.
In fact, for any Casimir parameter $\mu=-n^2+n<0$, the space $\mathcal I_U^{-s} (H_\mu)$
has complex dimension $1$ and has a basis $\{D_\mu\}$ containing a single eigenvector
of the geodesic flow such that
$$
\mathcal L_X  D_\mu =   -\frac{1+ \nu}{2}  D_\mu= -n D_\mu \,.
$$
>From the above formula it follows that
$$
\mathcal L_X B_\mu = (\mathcal L_X  D_\mu +
D_\mu)\eta_U =  \frac{1- \nu}{2} B_\mu = (1-n) B_\mu\,.
$$
The Lemma is proved.
\end{proof}

\subsection{Orthogonal projections on basic currents.}

 For any $s>1/2$ we have the orthogonal direct sum decomposition
$$
\Omega_1^{-s}(SM)={\mathcal  B}^{-s}_U(SM)\oplus^{\perp} {\mathcal
B}^{-s}_U(SM)^{\perp}
$$
Let $\mathcal B^{-s}: \Omega_1^{-s}(SM) \to \mathcal  B^{-s}_U(SM)$
denote the orthogonal projection onto the subspace of basic currents
and $\mathcal R^{-s}: \Omega_1^{-s}(SM) \to \mathcal  B^{-s}_U
(SM)^\perp$ denote the orthogonal projection onto its orthogonal
complement .

Let $\Pi^{-s}_\mu: \Omega_1^{-s}(SM) \to
\Omega_1^{-s}(H_\mu)$ be the orthogonal projection. We remark that the
projections $\Pi^{-s}_\mu$ commute with the action of $SL(2,\R)$,
hence in particular with the action of the geodesic flow, on the
Sobolev space $\Omega_1^{-s}(SM)$.

Let $\mathcal B_\mu^{-s} :=  \Pi^{-s}_\mu \circ \mathcal B^{-s}=
\mathcal B^{-s}\circ  \Pi^{-s}_\mu$ be the orthogonal projection
onto the subspace $\mathcal B^{-s}_U(H_\mu) \subset \Omega_1^{-s}
(H_\mu)$.  Let $\mathcal R_\mu^{-s} :=  \Pi^{-s}_\mu \circ \mathcal
R^{-s}=  \mathcal R^{-s}\circ \Pi^{-s}_\mu $ be the complementary projection on the
space $\mathcal B^{-s}_U(SM)^\perp \cap  \Omega_1^{-s}(H_\mu)$.  We remark that the projections
$\mathcal B_\mu^{-s}$ and $\mathcal R_\mu^{-s}$ do not necessarily commute with the action of the geodesic flow. However, the range  of the projection $\mathcal B_\mu^{-s}$, which is the space
$\mathcal B^{-s}_U(H_\mu)$ of basic currents, is invariant under the action of the geodesic flow.

 By the Sobolev embedding theorem,  any rectifiable arc $\gamma$ can be viewed as a current of dimension $1$ (and degree $2$) in $\Omega_1^{-s}(SM)$ for any $s> 3/2$. For all non-trivial irreducible unitary representations of Casimir parameter $\mu \in \R$, let $\mathcal B^{-s}_\mu(\gamma)
\in \Omega_1^{-s}(H_\mu)$ denote the projection $\Pi^{-s}_\mu\circ \mathcal B^{-s}(\gamma)$ of the current $\mathcal B^{-s}_\mu(\gamma)$ onto the irreducible subspace  $\Omega_1^{-s}(H_\mu) \subset \Omega_1^{-s}(SM)$. We then write
\begin{equation}
\label{eq:basiccurrentproj}
\mathcal B_\mu^{-s}(\gamma):= \begin{cases}   \hat \alpha^+_{\mu,-s} (\gamma) B_\mu^+ +
  \hat \alpha^-_{\mu,-s} (\gamma) B_\mu^- \,, &\quad \text{ for } \mu \in \Spec(\square) \cap \R^+\,;\cr
 \hat \alpha_{\mu,-s} (\gamma) B_\mu\,,  &\quad \text{ for } \mu \in \Spec(\square) \cap \R^-   \,.
 \end{cases}
\end{equation}
In other words,  the complex numbers $\hat \alpha^\pm_{\mu,-s} (\gamma)$, $\hat\alpha_\mu(\gamma)$  are the components of the current $\gamma$ in the direction of the basic currents $B_\mu^\pm$,
$B_\mu$ (that is, by definition, the coefficients of the currents $B_\mu^\pm$, $B_\mu$ in the orthogonal projection of the current $\gamma$ onto the closed subspace of all basic currents).

We recall that the subspace $\mathcal B_U^{-s}(H_\mu) \subset \Omega_1^{-s}(H_\mu)$ is
trivial for all Casimir parameters $\mu=-n^2+n$ (discrete series) whenever $s\leq n\in \Z^+$. In this
case the component $\hat \alpha_{\mu,-s} $ is defined as zero.

\begin{lemma}
\label{lemma:distorsion}
For every $\mu_0>1/4$ and for every $s>1/2$ the system $\{B^+_\mu, B^-_\mu\}$ has uniformly bounded distorsion in $\Omega_1^{-s}(SM)$ for all Casimir parameters $\mu\geq \mu_0$, that is, there exists a constant $C_s(\mu_0)>0$ such that, for all $\mu\geq \mu_0$,
$$
 \sup_{ (\alpha^+, \alpha^-)\in \R^2\setminus\{0\} } \, \frac{  \Vert \alpha^+  B^+_\mu \Vert_{-s} + \Vert \alpha^- B^-_\mu \Vert_{-s} } {  \Vert \alpha^+ B^+_\mu +  \alpha^- B^-_\mu\Vert_{-s}}  \leq   C_s(\mu_0)\,.
$$
\end{lemma}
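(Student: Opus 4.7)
The plan is to exploit the diagonal action of the geodesic flow on $\mathcal B^{-s}_U(H_\mu)$ and extract the individual coefficients of an arbitrary current $v=\alpha^+ B^+_\mu+\alpha^- B^-_\mu$ via a Fourier-type averaging in the flow direction. Since $\mu\geq \mu_0>1/4$, one has $\nu_\mu=i\upsilon_\mu$ with $\upsilon_\mu:=\sqrt{4\mu-1}\geq\sqrt{4\mu_0-1}>0$; in particular, Lemma~\ref{lemma:EVcurrents} gives honest (not generalized) eigenvectors, and for every $t\in\R$,
\[
g_t^\ast v \,=\, e^{t/2}\bigl(\alpha^+ e^{-i\upsilon_\mu t/2}B^+_\mu+\alpha^- e^{i\upsilon_\mu t/2}B^-_\mu\bigr)\,.
\]

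Next I would take the natural period $T_\mu:=2\pi/\upsilon_\mu$, for which $\int_0^{T_\mu}e^{\pm i\upsilon_\mu t}\,dt=0$. A direct computation then yields the selection formulas
\[
\alpha^\pm B^\pm_\mu \,=\, \frac{1}{T_\mu}\int_0^{T_\mu} e^{(\pm i\upsilon_\mu-1)t/2}\,g_t^\ast v\,dt
\]
inside $\Omega_1^{-s}(SM)$. Taking Sobolev norms of both sides gives
\[
\|\alpha^\pm B^\pm_\mu\|_{-s}\,\leq\, \Bigl(\sup_{t\in[0,T_\mu]}\|g_t^\ast\|_{-s\to -s}\Bigr)\,\|v\|_{-s}\,.
\]
Because $T_\mu\leq 2\pi/\sqrt{4\mu_0-1}$ uniformly in $\mu\geq\mu_0$, summing over the two signs and taking the supremum over $(\alpha^+,\alpha^-)\neq(0,0)$ will produce the desired constant $C_s(\mu_0)$, provided the operator norms of $g_t^\ast$ on $\Omega_1^{-s}(SM)$ are uniformly bounded on any compact $t$-interval.

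The main step to justify is precisely this last uniform boundedness. Since $W^s(SM)$ and $\Omega_1^s(SM)$ are defined through an elliptic operator on the compact manifold $SM$, and since the geodesic flow acts smoothly on $SM$, standard Sobolev estimates yield a continuous function $K_s$ with $\|g_t^\ast\|_{-s\to -s}\leq K_s(|t|)$; this is the one technical ingredient I expect to be the only real obstacle, since it is the mechanism through which the lower bound $\mu_0>1/4$ enters. Everything else is linear algebra once the Fourier selection formula is in place. The argument also makes transparent why the estimate must break down as $\mu_0\to(1/4)^+$: then $T_\mu\to\infty$, the eigenvalues $(1\mp\nu)/2$ coalesce and the diagonalization in Lemma~\ref{lemma:EVcurrents} degenerates into the Jordan block~(\ref{eq:BGEV}).
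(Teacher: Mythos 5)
Your argument is correct, and it is genuinely different from the one in the paper. The paper disposes of this lemma in a few lines by quoting the corresponding distortion bound for the dual system of invariant distributions $\{D^+_\mu, D^-_\mu\}$ in $W^{-s}(SM)$ (proved in Lemma~5.1 of \cite{FlaFo}) and transferring it to the basic currents through the isometry $D \mapsto D\,\eta_U$ of Lemma~\ref{lemma:Isom}. You instead give a self-contained proof: since for $\mu\geq\mu_0>1/4$ the exponents $\frac{1\mp\nu}{2}=\frac{1\mp i\upsilon_\mu}{2}$ are distinct with imaginary parts bounded away from zero, the Fourier selection formula
$$
\alpha^\pm B^\pm_\mu \,=\, \frac{1}{T_\mu}\int_0^{T_\mu} e^{(\pm i\upsilon_\mu-1)t/2}\,g_t^\ast\bigl(\alpha^+B^+_\mu+\alpha^-B^-_\mu\bigr)\,dt\,, \qquad T_\mu=\frac{2\pi}{\upsilon_\mu}\leq\frac{2\pi}{\sqrt{4\mu_0-1}}\,,
$$
recovers each component (the integral lives in the two-dimensional span of $B^\pm_\mu$, on which $g_t^\ast$ acts explicitly, so it is an elementary finite-dimensional identity), and the norm estimate follows from the local boundedness of $\Vert g_t^\ast\Vert_{-s\to-s}$, which is indeed standard for the pullback of a smooth flow on a compact manifold acting on Sobolev spaces. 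What your route buys is transparency: the constant depends on $\mu_0$ only through the uniform bound on the period $T_\mu$, and the degeneration at $\mu_0\to(1/4)^+$ (coalescing eigenvalues, Jordan block) is visible rather than hidden inside a citation; it also avoids any reliance on the isometric nature of the map $D\mapsto D\,\eta_U$. What the paper's route buys is brevity and consistency with its systematic policy of deriving all properties of basic currents from the corresponding properties of invariant distributions established in \cite{FlaFo}.
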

\begin{proof}
As observed in the proof of Lemma 5.1 in \cite{FlaFo}, if the Casimir parameter $\mu\geq \mu_0 >1/4$, for any $s>1/2$, the {\it distorsion} in $W^{-s}(SM)$ of the system of distributions $\{D^+_\mu, D^-_\mu\}$ stays uniformly bounded above (in other terms, the angle in $W^{-s}(SM)$ between $D^+_\mu$ and $D^-_\mu$ stays uniformly bounded below), and in fact this bound is also uniform with respect to
$s>1$. By Lemma \ref{lemma:Isom}, the map $D \to D \eta_U$ defines an isomorphism from the space
${\mathcal I}^{-s}_U(SM)$ of invariant distribution onto the space ${\mathcal B}^{-s}_U(SM)$ of basic currents. It follows that for all Casimir parameters $\mu>0$ the distorsion  in $\Omega_1^{-s}(SM)$ of the system of basic current $\{B^+_\mu, B^-_\mu\}$ is equal to the distorsion in $W^{-s}(SM)$ of the system  of invariant distributions $\{D^+_\mu, D^-_\mu\}$, in particular the distorsion of the system  $\{B^+_\mu, B^-_\mu\}$ in $\Omega_1^{-s}(SM)$  is uniformly bounded above for all $\mu\geq \mu_0>1/4$.
\end{proof}
\subsection{The construction of the
finitely-additive measures.}

The core of our argument is
the following construction of finitely-additive measures on rectifiable arcs.
\begin{theorem}
\label{thm:hatbeta} For any rectifiable arc $\gamma\subset SM$  the following holds. For any Casimir parameter $\mu\in \R^+\setminus\{1/4\}$ the following limits exist and do not depend  on $s>9/2$:
\begin{equation}
\label{eq:hatbeta} \hat \beta^\pm_\mu (\gamma) := \lim_{t\to
+\infty}
 \frac{ \hat\alpha^\pm_{\mu,-s} (g_t^\ast \gamma)} {\exp (\frac{1\mp \nu}{2} t)}\,;
\end{equation}
For $\mu =1/4$ the  limits below exist  and do not depend on $s>9/2$:
\begin{equation}
\label{eq:hatbetabis}
\begin{aligned}
 \hat \beta^+_{1/4} (\gamma) &:= \lim_{t\to +\infty}
  \frac{( \hat\alpha^+_{1/4,-s} + \frac{t}{2} \hat\alpha^-_{1/4,-s})
 (g_t^\ast \gamma) }  { \exp (\frac{t}{2} )} \,, \\
   \hat \beta^-_{1/4} (\gamma) &:=  \lim_{t\to +\infty}
 \frac{ \hat\alpha^-_{1/4,-s} (g_t^\ast \gamma)} {  \exp (\frac{t}{2} )}\,.
 \end{aligned}
 \end{equation}
The convergence in the limits (\ref{eq:hatbeta}), (\ref{eq:hatbetabis}) is exponential in the following
precise sense. For all $t>0$, let us introduce the rescaled weak unstable length
\begin{equation}
\label{eq:weakunstablelength}
\vert \gamma \vert_{XV,t} := \int_\gamma \vert \hat X\vert + e^{-t}  \int_\gamma \vert \hat V\vert \,.
\end{equation}
There exists a constant $C_s>0$ such that, for any $\mu \neq 1/4$,
\begin{equation}
\label{eq:hatbetaspeed}
\left|\hat \beta^\pm_\mu (\gamma)-
 \frac{ \hat\alpha^\pm_{\mu,-s} (g_t^\ast \gamma)} {\exp (\frac{1\mp \nu}{2}  t)}\right| \leq
 \frac{C_s}{ \Vert B^\pm_\mu\Vert_{-s}} \frac{1 +\vert \gamma\vert_{XV,t}}{\exp (\frac{1\mp \nu}{2}t)}\, ,
 \end{equation}
while for $\mu=1/4$,
\begin{equation}
\label{eq:hatbetaspeedbis}
\begin{aligned}
&\left| \hat \beta^+_{1/4} (\gamma)  -
  \frac{ (\hat\alpha^+_{1/4,-s}+ \frac{t}{2} \hat\alpha^-_{1/4,-s})
 (g_t^\ast \gamma) }  { \exp (\frac{t}{2} )}  \right|  \leq \frac{C_s(1+t)}{ \Vert B^\pm_\mu\Vert_{-s}}
 \frac{ (1 +\vert \gamma\vert_{XV,t})}{ \exp(\frac{t}{2})} \,, \\
&\left|  \hat \beta^-_{1/4} (\gamma) -
 \frac{ \hat\alpha^-_{1/4,-s} (g_t^\ast \gamma)} {\exp (\frac{t}{2})} \right|  \leq
 \frac{C_s(1+t)}{ \Vert B^\pm_\mu\Vert_{-s}}
 \frac{ (1 +\vert \gamma\vert_{XV,t})}{ \exp(\frac{t}{2})}  \,.
  \end{aligned}
\end{equation}
For all Casimir parameters $\mu\in \R^+$, the following bound holds:
\begin{equation}
\label{eq:hatbetabound} \vert \hat \beta^\pm_\mu (\gamma) \vert \leq
\frac{C_s}{ \Vert B^\pm_\mu\Vert_{-s}} (1 +\int_\gamma \vert \hat X\vert +
 \int_\gamma \vert \hat U\vert + \int_\gamma \vert \hat V\vert)\,.
\end{equation}
\end{theorem}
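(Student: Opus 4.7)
The plan is to prove convergence of the rescaled projections by a Cauchy-type argument that exploits the $g_t^\ast$-eigenvector structure of the basic currents, combined with a deferred leakage estimate (Lemma~\ref{lemma:remainderextension}).  By the Sobolev embedding theorem, every rectifiable arc $\gamma$ defines a current in $\Omega_1^{-s}(SM)$ for $s>3/2$, with $\|\gamma\|_{-s}$ controlled by $1+\int_\gamma|\hat X|+\int_\gamma|\hat U|+\int_\gamma|\hat V|$, so the orthogonal splitting $g_t^\ast\gamma=P(t)+Q(t)$ with $P(t)=\mathcal B^{-s}(g_t^\ast\gamma)$ basic and $Q(t)$ in the orthogonal complement is well-defined for every $t$.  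The key structural fact is that the subspace $\mathcal B^{-s}_U(SM)$ is $g_s^\ast$-invariant while the orthogonal projection $\mathcal B^{-s}$ does \emph{not} commute with $g_s^\ast$; decomposing $g_{t+s}^\ast\gamma=g_s^\ast P(t)+g_s^\ast Q(t)$ and projecting onto basic currents yields
\begin{equation*}
\mathcal B^{-s}(g_{t+s}^\ast\gamma)=g_s^\ast P(t)+\mathcal B^{-s}\bigl(g_s^\ast Q(t)\bigr).
\end{equation*}
Combined with the eigenvector identities $g_s^\ast B^\pm_\mu=e^{\frac{1\mp\nu}{2}s}B^\pm_\mu$ from Lemma~\ref{lemma:EVcurrents} (for $\mu\neq 1/4$), this produces the cocycle-type identity
\begin{equation*}
\hat\alpha^\pm_{\mu,-s}(g_{t+s}^\ast\gamma)=e^{\frac{1\mp\nu}{2}s}\hat\alpha^\pm_{\mu,-s}(g_t^\ast\gamma)+R^\pm_\mu(\gamma;t,s),
\end{equation*}
where the leakage $R^\pm_\mu(\gamma;t,s)$ is the $B^\pm_\mu$-coefficient of $\mathcal B^{-s}\bigl(g_s^\ast Q(t)\bigr)$.

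The entire theorem now reduces to a quantitative estimate on this leakage.  Lemma~\ref{lemma:remainderextension} of \S\S~\ref{keyestimates} is designed to supply a bound of the shape
\begin{equation*}
|R^\pm_\mu(\gamma;t,s)|\leq \frac{C_s}{\|B^\pm_\mu\|_{-s}}\,e^{\frac{1\mp\nu}{2}(t+s)}\,e^{-\delta t}\,\bigl(1+|\gamma|_{XV,t}\bigr),
\end{equation*}
for some $\delta>0$ depending on the spectral gap.  Dividing the cocycle identity by $e^{\frac{1\mp\nu}{2}(t+s)}$ and telescoping over $s\ge 0$ then shows that the rescaled sequences $F^\pm(t):=\hat\alpha^\pm_{\mu,-s}(g_t^\ast\gamma)/e^{\frac{1\mp\nu}{2}t}$ form Cauchy families whose limits define $\hat\beta^\pm_\mu(\gamma)$ and satisfy the explicit rate (\ref{eq:hatbetaspeed}).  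Specializing (\ref{eq:hatbetaspeed}) at $t=0$ together with the elementary inequality $|\hat\alpha^\pm_{\mu,-s}(\gamma)|\leq\|B^\pm_\mu\|_{-s}^{-1}\|\gamma\|_{-s}$ and the Sobolev bound on $\|\gamma\|_{-s}$ gives the pointwise estimate (\ref{eq:hatbetabound}).  Independence from $s>9/2$ follows because the basic currents $B^\pm_\mu$ belong to every Sobolev space above the Flaminio--Forni threshold, so the coefficient functionals $\hat\alpha^\pm_{\mu,-s}$ are canonical objects (independent of $s$ on the range considered), and hence so are their rescaled limits.

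For the exceptional parameter $\mu=1/4$, one must handle the Jordan block (\ref{eq:BGEV}): $g_s^\ast$ acts on $\operatorname{span}\{B^+_{1/4},B^-_{1/4}\}$ as $e^{s/2}\bigl(\begin{smallmatrix}1 & 0\\ -s/2 & 1\end{smallmatrix}\bigr)$, so the coefficient $\hat\alpha^+_{1/4,-s}(g_t^\ast\gamma)$ acquires a linear-in-$t$ contribution from $\hat\alpha^-_{1/4,-s}$; the substitution in (\ref{eq:hatbetabis}) is precisely the change of variable that trivialises the nilpotent part, and the Cauchy argument then applies verbatim while picking up the extra $(1+t)$ factor visible in (\ref{eq:hatbetaspeedbis}) from the polynomial piece of the Jordan action.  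The main obstacle in the scheme is the leakage estimate itself: the norm $\|g_s^\ast Q(t)\|_{-s}$ can grow like $e^s$ (the unstable expansion rate of the geodesic flow), whereas the relevant eigenvalues $\frac{1\mp\nu}{2}$ on the basic subspace are bounded by $1/2$, so extracting the margin $e^{-\delta t}$ requires a delicate analysis of how the non-basic component of an arc is stirred by the geodesic flow, exploiting the sharp spectral estimates in each irreducible representation from \cite{FlaFo}.  This estimate is deferred to \S\S~\ref{keyestimates} in the paper, and modulo it, the construction above yields the theorem.
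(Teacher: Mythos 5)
Your construction is essentially the paper's: the same splitting of $g_t^\ast\gamma$ into its basic and orthogonal components, the same observation that the basic subspace (but not the orthogonal projection onto it) is $g_s^\ast$-equivariant, and the same reduction of the whole theorem to the uniform remainder bound of Lemma~\ref{lemma:remainderextension}. The paper integrates the resulting relation as an O.D.E.\ (variation of constants) rather than telescoping a difference equation, but that is a cosmetic difference; your treatment of the Jordan block at $\mu=1/4$ and the derivation of (\ref{eq:hatbetabound}) by specializing at $t=0$ also match the paper.

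One step, however, is justified incorrectly: the independence of the limits from $s$. The coefficient functionals $\hat\alpha^\pm_{\mu,-s}$ are \emph{not} canonical objects independent of $s$ on the relevant range: they are defined by orthogonal projection onto $\mathcal B^{-s}_U(H_\mu)$ inside the Hilbert space $\Omega_1^{-s}(SM)$, and orthogonal projections change with the inner product, hence with $s$. So for $s\neq r$ one has $\hat\alpha^\pm_{\mu,-s}(g_t^\ast\gamma)\neq\hat\alpha^\pm_{\mu,-r}(g_t^\ast\gamma)$ in general, and the fact that $B^\pm_\mu$ lies in every admissible Sobolev space does not help. What is true, and what the paper proves, is that the difference of the two projections equals the difference of the two remainders, so by Lemma~\ref{lemma:remainderextension} and the distorsion bound of Lemma~\ref{lemma:distorsion} it is bounded by a constant times $1+\vert\gamma\vert_{XV,t}$ uniformly in $t$; after division by $\exp(\frac{1\mp\re\nu}{2}t)$ it tends to zero, so only the \emph{limits} coincide. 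This is a one-line fix with estimates you already invoke, but as written the claim is false. A smaller point of emphasis: extracting the margin $e^{-\delta t}$ in your leakage bound requires no analysis beyond the lemma itself --- the leakage per unit time is merely uniformly bounded in $t$, because the remainder estimate involves only the $\hat X$- and $\hat V$-transverse lengths of $g_{-t}\gamma$, which do not expand; the decay margin then comes for free from the normalizing factor $\exp(\frac{1\mp\nu}{2}t)$ in the denominator, with $\delta=\re\frac{1\mp\nu}{2}>0$.
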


\begin{proof}[Proof of Theorem~\ref{thm:hatbeta}] The argument is a refinement of
the method of \cite{FlaFo}, \S 5.3. The main technical improvement
consists in replacing the difference equations of \cite{FlaFo} by
ordinary differential equations. Here we also work in the more
general setting of currents instead of distributions.

For any $s>3/2$, consider the decomposition of the current
$\gamma\in \Omega_1^{-s}(SM)$ given by integration along a rectifiable
arc. By definition we have:
\begin{equation}
\label{eq:gammasplit} \gamma = \mathcal B^{-s}(\gamma)  +  \mathcal
R^{-s}(\gamma) \,.
\end{equation}
We are interested in the evolution of this decomposition under the
action of the geodesic flow. By the group property of the geodesic
flow $\{g_t\}$ and by  formula (\ref{eq:gammasplit}), for any $t$,
$\tau \in \R$ we obtain:
\begin{equation}
\label{eq:gammaidone}
\begin{aligned}
g_{t+\tau}^\ast\gamma &= g_\tau^\ast \mathcal B^{-s}(g_t^\ast \gamma)  +
g_\tau^\ast \mathcal R^{-s}(g_t^\ast \gamma)  =  \mathcal
B^{-s}(g_{t+\tau}^\ast \gamma)  + \mathcal R^{-s}(g_{t+\tau}^\ast
\gamma)\,.
\end{aligned}
\end{equation}
By  projection of  (\ref{eq:gammaidone}) under $\mathcal B^{-s}_\mu:
\Omega_1^{-s}(SM) \to \mathcal B^{-s}_U(H_\mu)$ we therefore have
\begin{equation}
\label{eq:gammaidtwo}
  \mathcal  B_\mu^{-s}(g_{t+\tau}^\ast \gamma)  =
 g_\tau^\ast \mathcal   B_\mu^{-s}(g_t^\ast \gamma)  +
 \mathcal  B_\mu^{-s} g_\tau^\ast  \mathcal  R_\mu^{-s}(g_t^\ast \gamma)   \,.
\end{equation}

We would like to differentiate the above identity
(\ref{eq:gammaidtwo}) with respect to the parameter $\tau\in \R$.
That is made possible by the following technical result whose proof
we postpone until \S\S~\ref{keyestimates}:

\begin{lemma}
\label{lemma:remainderextension}   For any $s\geq r >7/2$, for any rectifiable arc $\gamma$ in $SM$
and for any irreducible component $H_\mu\subset L^2(SM)$ of Casimir parameter $\mu\in \R\setminus\{0\}$, the current $\mathcal  R_\mu^{-s}(\gamma)  \in \Omega_1^{-s}(H_\mu)$ has a unique continuous extension $\mathcal R_\mu^{-s,-r}(\gamma)   \in \Omega_1^{-r}(H_\mu)$ and the following uniform bound holds. There exists a constant $C_{r,s}>0$ such that
$$
\Vert  \mathcal  R_\mu^{-s,-r}(\gamma) \Vert_{-r}  \leq C_{s,r}
(1+ \int_\gamma \vert \hat X \vert + \int_\gamma \vert \hat V\vert
)\,.
$$
\end{lemma}

Now, assuming Lemma~\ref{lemma:remainderextension}, we conclude the
proof of Theorem \ref{thm:hatbeta}.

 Let $s>9/2$.  By Lemma~\ref{lemma:remainderextension}, the current
 ${\tilde {\mathcal  R}}_\mu^{-s}( \gamma) :=  \mathcal
 R_\mu^{-s, -(s-1)}(\gamma)$ is well-defined, and
 the following limit exists in the Hilbert space $\Omega_1^{-s}(H_{\mu})$:
 $$
 \lim_{\tau \to 0}
 \frac{ g_\tau^\ast  \mathcal  R_\mu^{-s}(g_t^\ast \gamma) - \mathcal  R_\mu^{-s}(g_t^\ast \gamma) }{\tau}
 = \mathcal L_X {\tilde  {\mathcal  R}}_\mu^{-s} (g_t^\ast\gamma) \in  \Omega_1^{-s}(H_\mu)\,.
 $$
Thus, by differentiating (\ref{eq:gammaidtwo}) with respect to
$\tau$ at $\tau=0$, we obtain, for all $t\in \R$,
\begin{equation}
\label{eq:diffid} \frac d{dt} \mathcal   B_\mu^{-s}(g_t^\ast\gamma)
=  \mathcal L_X \mathcal B_\mu^{-s}(g_t^\ast\gamma) + \mathcal
B_\mu ^{-s} \mathcal L_X {\tilde {\mathcal
R}}_\mu^{-s}(g_t^\ast\gamma) \,.
\end{equation}
We  now write the above differential equation in coordinates. We write
\begin{equation}
\label{eq:Bcoord}
\begin{aligned}
\mathcal B^{-s}_\mu (g_t^\ast\gamma)&=\hat \alpha_{\mu,-s}^+(t) B^+_\mu
+\hat \alpha_{\mu,-s}^{-}(t) B^-_\mu \,, \\
\mathcal B^{-s}_\mu\mathcal L_X {\tilde {\mathcal
R}}_\mu^{-s}(g_t^\ast\gamma)
 &=\rho_{\mu,-s}^+(t) B^+_\mu \,+\, \rho_{\mu,-s}^-(t) B^-_\mu.
\end{aligned}
\end{equation}

If $\mu\neq  1/4$,  by Lemma \ref{lemma:EVcurrents} and equation
(\ref{eq:diffid}) we obtain the following formulas:
\begin{equation}
\label{eq:alpha} \frac d{dt} \hat\alpha_{\mu,-s}^\pm = \frac{1\mp
\nu}{2}  \hat\alpha_{\mu,-s}^\pm + \rho_{\mu,-s}^\pm\,  ;
\end{equation}
If $\mu=1/4$, we obtain the following formulas:
\begin{equation}
\label{eq:alphabis} \frac d{dt}  \begin{pmatrix} \hat\alpha_{\mu,-s}^+
\\  \hat\alpha_{\mu,-s}^-  \end{pmatrix} = \frac{1}{2} \begin{pmatrix} 1
&-1 \\ 0 & 1 \end{pmatrix}
 \begin{pmatrix} \hat\alpha_{\mu,-s}^+   \\  \hat\alpha_{\mu,-s}^-  \end{pmatrix}
 +    \begin{pmatrix} \rho_{\mu,-s}^+ \\  \rho_{\mu,-s}^-    \end{pmatrix}   \,.
\end{equation}
By writing down solutions of the above O.D.E.'s we conclude that,
for $\mu\neq 1/4$,
\begin{equation}
\label{eq:hatalpha}
\frac{\hat\alpha_{\mu,-s}^\pm(t)}{ \exp( \frac{1\mp \nu}{2} t)}   = \hat\alpha_{\mu,-s}^\pm(0) +
 \int_0^t \rho_{\mu,-s}^\pm(\tau) e^{- \frac{1\mp \nu}{2} \tau }\,d\tau
\end{equation}
while for $\mu=1/4$, after some elementary calculations,
\begin{equation}
\label{eq:hatalphabis}
\begin{aligned}
\frac{\hat\alpha_{1/4,-s}^+ (t) + \frac{t}{2} \hat\alpha_{1/4,-s}^- (t)}{\exp(\frac{t}{2})} \,= &
\,\hat\alpha_{1/4,-s}^+ (0) \\ + \,&\int_0^t [ \rho_{1/4,-s}^+(\tau) +\frac{\tau}{2}  \rho_{1/4,-s}^-(\tau)]
e^{-\tau/2}\, d\tau\,; \\
\frac{\hat\alpha_{1/4,-s}^- (t)}{\exp(\frac{t}{2})} \,=  \,\hat\alpha_{1/4,-s}^- (0)\, + \,&
\int_0^t  \rho_{1/4,-s}^-(\tau) e^{-\tau/2} \, d\tau \,.
 \end{aligned}
\end{equation}

We conclude the argument by proving that the integrals in formulas
(\ref{eq:hatalpha}) and  (\ref{eq:hatalphabis}) are absolutely
convergent (as $t\to +\infty$) and are absolutely and uniformly bounded in terms
of the transverse lengths of the rectifiable arc $\gamma$ in $SM$.

Since $M$ is a compact hyperbolic surface, the Casimir spectrum of the standard unitary representation
of $SL(2,\R)$ on $L^2(SM)$ is discrete. Thus, by the distorsion Lemma~\ref{lemma:distorsion} and by  formula (\ref{eq:Bcoord}), for all $s>9/2$ there exists a constant $C_s>0$ such that,
 for all $\mu \in \R^+$, the following estimate holds:
\begin{equation}
\label{eq:rhobound} \vert \rho_{\mu,-s}^\pm (t) \vert  \leq \frac{C_s}{  \Vert B^\pm_\mu\Vert_{-s}} \,
\Vert \mathcal L_X {\tilde {\mathcal  R}}_\mu^{-s}
(g_t^\ast\gamma)\Vert_{-s} \leq \frac{C_s}{  \Vert B^\pm_\mu\Vert_{-s}} \, \Vert {\tilde {\mathcal
R}}_\mu^{-s} (g_t^\ast\gamma)\Vert_{-s+1}\,,
\end{equation}
hence by Lemma \ref{lemma:remainderextension} there exists a
constant $C'_s>0$ such that
\begin{equation}
\label{eq:rhoboundbis} \vert \rho_{\mu,-s}^\pm (t) \vert \leq
\frac{C'_s}{  \Vert B^\pm_\mu\Vert_{-s}} \, (1 + \int_\gamma \vert \hat X \vert  +e^{-t} \int_\gamma
\vert \hat V \vert )\,.
\end{equation}
The above bound (\ref{eq:rhoboundbis}) immediately implies that
the integrals in formulas (\ref{eq:hatalpha}) and (\ref{eq:hatalphabis}) are
absolutely and uniformly bounded and convergent (as $t\to +\infty$),
hence the limits in the left hand side of both formulas exist.
The bound (\ref{eq:rhoboundbis}) also implies that such limits are independent
of $s >9/2$. In fact, by the distorsion Lemma \ref{lemma:distorsion} and by Lemma \ref{lemma:remainderextension}, for any $ s \geq r >9/2$ there are constants $C_s$, $C_{s,r} >0$  such that, for any rectifiable arc $\gamma$ and for all Casimir parameters $\mu>0$,
\begin{equation}
\begin{aligned}
&\Vert B^\pm_\mu\Vert_{-s} \, \vert \hat\alpha^\pm_{\mu,-s}(t) -
\hat\alpha^\pm_{\mu,-r}(t) \vert  \leq  C_s
 \Vert \mathcal B^{-s}_\mu(g_t^\ast\gamma) - \mathcal B^{-r}_\mu(g_t^\ast\gamma)\Vert_{-s} \\
 &=C_s \Vert \mathcal R^{-s}_\mu(g_t^\ast\gamma) - \mathcal
R^{-r}_\mu(g_t^\ast\gamma)\Vert_{-s}  \leq C_{s,r} (1 +
\int_\gamma \vert \hat X\vert + e^{-t}  \int_\gamma \vert \hat V
\vert )\,.
\end{aligned}
\end{equation}
Thus the limits  $\hat\beta^\pm_\mu(\gamma)$ of formulas~(\ref{eq:hatbeta})
and~(\ref{eq:hatbetabis}) exist and are well-defined and the speed of convergence is
 correctly given by the estimates in formulas~(\ref{eq:hatbetaspeed})
and~(\ref{eq:hatbetaspeedbis}).

Finally, the bound in formula (\ref{eq:hatbetabound}) follows from
the estimate (\ref{eq:rhoboundbis}) and the following bound. By the distortion Lemma
\ref{lemma:distorsion} and by the Sobolev embdedding theorem, there exist constants $C''_s$,
$C'''_s>0$ such that, for all Casimir parameters $\mu>0$,
$$
\vert \hat\alpha_{\mu,-s}^\pm(0) \vert \leq \frac{C''_s} {\Vert B^\pm_\mu\Vert_{-s}}\, \Vert \gamma
\Vert_{-s} \leq  \frac{C'''_s} {\Vert B^\pm_\mu\Vert_{-s}}\,  (1+ \int_\gamma \vert \hat X\vert +
\int_\gamma \vert \hat U\vert + \int_\gamma \vert \hat V\vert)\,.
$$
The proof of Theorem \ref{thm:hatbeta} is therefore complete.
\end{proof}

\subsection{Proof of the main properties (Theorem \ref{thm:hatbetaprops}).}
The proof of the theorem requires a stronger
estimate on the current $\hat \beta_\mu$ than the one given above in
Theorem  \ref{thm:hatbeta}. The following result is a crucial step
in that direction as well as in the proof of the invariance under
the unstable horocycle.

For any rectifiable arc $\gamma$, let $\Gamma_{ws} (\gamma)$ be the
set of all rectifiable arcs obtained projecting the arc $\gamma$
under the unstable horocycle holonomy on any leaf of the weak stable
foliation of the geodesic flow. The weak stable foliation of the
geodesic flow is the $2$-dimensional foliation  tangent to the
integrable distribution $\{X,U\}$ in the tangent bundle of $SM$.

\begin{lemma}
\label{lemma:wsproj}
 For any $\mu\in \R^+$, for any rectifiable arc $\gamma$ and any $\gamma_{ws} \in \Gamma_{ws} (\gamma)$,
$$
\hat\beta^\pm_\mu(\gamma) = \hat\beta^\pm_\mu(\gamma_{ws})\,.
$$
\end{lemma}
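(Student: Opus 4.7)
My plan is to establish $\hat\beta^\pm_\mu(\gamma)=\hat\beta^\pm_\mu(\gamma_{ws})$ directly from the limit definition of $\hat\beta^\pm_\mu$ in Theorem~\ref{thm:hatbeta}, without appealing to $V$-invariance (which is not yet available at this point in the paper and is itself one of the properties this lemma is used to establish). The guiding geometric observation is that under the backward geodesic flow $g_{-t}$ the two arcs $\gamma$ and $\gamma_{ws}$ are brought exponentially close in the $V$-direction. Parametrize $\gamma(u)$, $u\in[0,1]$, so that $\gamma_{ws}(u)=h^V_{-s(u)}(\gamma(u))$ for a Lipschitz function $s$. The commutation relation $g_{-t}\circ h^V_\tau=h^V_{e^{-t}\tau}\circ g_{-t}$ gives
\[
g_{-t}\gamma_{ws}(u)\;=\;h^V_{-e^{-t}s(u)}\bigl(g_{-t}\gamma(u)\bigr),
\]
so $g_{-t}\gamma_{ws}$ is $g_{-t}\gamma$ perturbed by a non-uniform $V$-holonomy of amplitude $O(e^{-t})$.

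I would then introduce the interpolating $2$-chain $\Sigma_t$ defined by $\Phi_t(u,r):=h^V_{-re^{-t}s(u)}(g_{-t}\gamma(u))$ for $(u,r)\in[0,1]^2$, whose boundary decomposes as
\[
\partial\Sigma_t\;=\;g_{-t}\gamma\,-\,g_{-t}\gamma_{ws}\,+\,\sigma_1^t\,-\,\sigma_0^t,
\]
where $\sigma_0^t,\sigma_1^t$ are the endpoint $V$-arcs of lengths $O(e^{-t})$. The crucial estimate is that the $2$-area of $\Sigma_t$ stays bounded uniformly in $t$. Indeed, writing $\gamma'(u)=a(u)X+b(u)U+c(u)V$, the scaling of the left-invariant frame gives $(g_{-t})_\ast\gamma'(u)=aX+e^{t}bU+e^{-t}cV$, while $\partial_r\Phi_t$ is along $V$ with magnitude $e^{-t}|s(u)|$. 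Using the finite expansions $\mathrm{Ad}(\exp(-\tau V))X=X-\tau V$ and $\mathrm{Ad}(\exp(-\tau V))U=U+2\tau X-\tau^2 V$, the wedge $\partial_u\Phi_t\wedge\partial_r\Phi_t$ is bounded pointwise by $|s(u)|\bigl(e^{-t}|a(u)|+|b(u)|\bigr)$, so the total area is bounded by $C\|s\|_\infty\bigl(\int_\gamma|\hat U|+e^{-t}\int_\gamma|\hat X|\bigr)=O(1)$: the $e^{t}$ expansion of the $U$-component of $\partial_u\Phi_t$ is exactly cancelled by the $e^{-t}$ contraction of $\partial_r\Phi_t$.

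Combining this bound with Stokes' theorem $\langle\partial\Sigma_t,\alpha\rangle=\int_{\Sigma_t}d\alpha$ and the Sobolev embedding $\|d\alpha\|_\infty\le C_s\|\alpha\|_s$ valid for $s>5/2$ yields $\|\partial\Sigma_t\|_{-s}=O(1)$ uniformly in $t$; the short $V$-arcs contribute $\|\sigma_i^t\|_{-s}=O(e^{-t})$. Since the orthogonal projection onto basic currents is a bounded linear map, we conclude that $|\hat\alpha^\pm_{\mu,-s}(g_{-t}\gamma)-\hat\alpha^\pm_{\mu,-s}(g_{-t}\gamma_{ws})|=O(1)$ uniformly in $t$. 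Because $\exp\bigl((1\mp\nu)t/2\bigr)$ grows exponentially (as $\operatorname{Re}\bigl((1\mp\nu)/2\bigr)>0$ for every $\mu\in\R^+$), dividing by this scaling factor and passing to the limit via Theorem~\ref{thm:hatbeta} gives $\hat\beta^\pm_\mu(\gamma)=\hat\beta^\pm_\mu(\gamma_{ws})$. The exceptional case $\mu=1/4$ is handled identically, the extra linear factor $t/2$ in the formula for $\hat\beta^+_{1/4}$ being harmlessly absorbed by $e^{t/2}$.

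The main technical hurdle is the area bound on $\Sigma_t$: the ribbon's $V$-width shrinks at exactly the rate $e^{-t}$ needed to compensate the $e^{t}$ expansion of $U$-length under $g_{-t}$, and it is this precise cancellation that the whole argument hinges on.
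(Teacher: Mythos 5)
Your proof is correct and follows essentially the same route as the paper: both arguments interpolate between $g_{-t}\gamma$ and $g_{-t}\gamma_{ws}$ by the surface swept out by unstable horocycle arcs, show its area stays uniformly bounded because the $e^{-t}$ contraction of the $V$-width exactly compensates the $e^{t}$ expansion of the $U$-length, deduce a uniform bound on $\Vert g_t^\ast\gamma - g_t^\ast\gamma_{ws}\Vert_{-s}$ (hence on the difference of the coefficients $\hat\alpha^\pm_{\mu,-s}$), and conclude by dividing by the exponentially growing normalization in Theorem~\ref{thm:hatbeta}. Your version merely makes explicit the parametrization, the adjoint-action computation of the area element, and the Stokes/Sobolev step that the paper leaves implicit.
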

\begin{proof} For any $\gamma_{ws} \in \Gamma_{ws}(\gamma)$ and let
$D(\gamma_{ws}, \gamma)$ be the surface spanned by the trajectories
of the unstable horocycle flow projecting $\gamma$ onto
$\gamma_{ws}$. The surface $D(\gamma_{ws}, \gamma)$ is  the union of
all unstable horocycle arcs $I$  such that the boundary of $I$ is
contained in $\gamma_{ws} \cup \gamma$ and the interior of $I$ is
disjoint from $\gamma_{ws} \cup \gamma$.  The surface
$D(\gamma_{ws}, \gamma)$ defines by integration a current of
dimension $2$ (and degree $1$).  Let $g_{-t}(\gamma_{ws})$ and
$g_{-t}(\gamma)$ be the rectifiable arcs which are direct images of
$\gamma_{ws}$ and $\gamma$
 under the diffeomorphism $g_{-t}:SM\to SM$ respectively. By definition the arcs $g_{-t}(\gamma_{ws})$ and $g_{-t}(\gamma)$ are respectively the support of the currents $g_t^\ast \gamma_{ws}$ and
 $g_t^\ast \gamma$. By definition we have the following identity of currents
$$
g_t^\ast D(\gamma_{ws},\gamma)= D(g_{-t}(\gamma_{ws}),
g_{-t}(\gamma)) \,.
$$
Since the current $\partial D(\gamma_{ws},\gamma) - (\gamma
-\gamma_{ws})$ is composed of two arcs of orbits of the unstable
horocycle flow, it follows that
\begin{equation}
\label{eq:boundary}
 \partial[ g_t^\ast D(\gamma_{ws},\gamma)] - (g_t ^\ast\gamma -g_t ^\ast\gamma_{ws}) = g_t^\ast
 [\partial D(\gamma_{ws},\gamma) - (\gamma -\gamma_{ws})]  \to 0\,.
\end{equation}
\end{proof}

\begin{lemma}
\label{lemma:boundedarea}
The area of $g_{-t} D(\gamma_{ws},\gamma)$ is uniformly bounded for
all $t>0$.
\end{lemma}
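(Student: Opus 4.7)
The plan is to parametrize $D(\gamma_{ws},\gamma)$ explicitly and observe that the $e^{-t}$-contraction of the unstable-horocycle direction $V$ under $g_{-t}$ cancels exactly against the $e^{t}$-expansion of the stable-horocycle direction $U$ acting on the tangent to $\gamma$. This is a routine geometric computation once the right parametrization is in place.

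Writing $\gamma(s)$ for $s\in[0,1]$, I let $\tau\colon[0,1]\to\R$ be the (unique) continuous function with $\gamma_{ws}(s)=h^V_{\tau(s)}(\gamma(s))$; the existence of $\gamma_{ws}\in\Gamma_{ws}(\gamma)$ guarantees that $T:=\sup_s|\tau(s)|$ is finite. Then $D(\gamma_{ws},\gamma)$ is parametrized by $(s,r)\mapsto h^V_r(\gamma(s))$ for $r$ between $0$ and $\tau(s)$. By the commutation relation~(\ref{eq:commutflows}), $g_{-t}\circ h^V_r=h^V_{e^{-t}r}\circ g_{-t}$, so $g_{-t}D(\gamma_{ws},\gamma)$ is parametrized by $(s,r')\mapsto h^V_{r'}(g_{-t}\gamma(s))$ with $r'$ between $0$ and $e^{-t}\tau(s)$. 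The tangent vectors to this parametrization are $\partial_{r'}=V$ and $\partial_s=(h^V_{r'})_*(g_{-t})_*\gamma'(s)$.

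Decomposing $\gamma'(s)=a(s)X+b(s)U+c(s)V$ in the left-invariant frame, the adjoint action gives $(g_{-t})_*\gamma'(s)=aX+e^tbU+e^{-t}cV$, while $(h^V_{r'})_*=\mathrm{Ad}(\exp(-r'V))$ mixes the frame with uniformly bounded distortion because $|r'|\leq e^{-t}T\leq T$. A short computation using the commutation relations~(\ref{eq:commutLie}) yields $|\partial_s\wedge V|\leq C(|a(s)|+e^t|b(s)|)$ for a constant $C=C(T)$ depending only on the $T$-bound. Integrating over the parameter domain $r'\in[0,e^{-t}|\tau(s)|]$, the length of which contributes a factor $e^{-t}|\tau(s)|\leq e^{-t}T$, I obtain
\[
\mathrm{Area}(g_{-t}D)\;\leq\;CT\Bigl(e^{-t}\!\int_\gamma|\hat X|+\int_\gamma|\hat U|\Bigr),
\]
which is bounded uniformly in $t>0$ (by a constant depending on $\gamma$ and $\gamma_{ws}$ only).

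The only conceptual point, which is not really an obstacle but drives the estimate, is the exact cancellation $e^{-t}\cdot e^t=1$ between the shrinking of the $V$-range of integration and the expansion of the $U$-coefficient of $(g_{-t})_*\gamma'(s)$; the $X$-coefficient contribution is even better, decaying like $e^{-t}$. The Lie bracket corrections coming from $\mathrm{Ad}(\exp(-r'V))U=U+2r'X-{r'}^2V$ and $\mathrm{Ad}(\exp(-r'V))X=X-r'V$ are harmless since $r'\in[0,T]$ (and in fact $r'\to 0$ uniformly in $s$ as $t\to+\infty$).
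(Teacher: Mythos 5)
Your proof is correct and follows essentially the same route as the paper's: both write the area of $g_{-t}D(\gamma_{ws},\gamma)$ as an integral of the $V$-fiber length (which contracts by $e^{-t}$ under $g_{-t}$ by the commutation relations) over the base curve $g_{-t}\gamma$ (whose length grows by at most $e^{t}$), and conclude from the exact cancellation $e^{-t}\cdot e^{t}=1$. Your treatment of the $\mathrm{Ad}(\exp(-r'V))$ corrections is a more careful accounting of the Jacobian than the paper bothers with, but it is the same argument.
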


\begin{proof}
For $p\in\gamma$, let $\tau(p) >0$ be the length of the unstable horocycle arc lying in $D:=D(\gamma_{ws}, \gamma)$.
By construction the function $\tau:\gamma \to \R^+$ is continuous, hence $\tau_\gamma
:= \sup \{ \tau(p) \vert p\in \gamma\} <+\infty$. We write
\begin{equation}
\label{eq:D}
D=\bigcup\limits_{p\in \gamma} \bigcup\limits_{\tau\in [0, \tau(p)]} h^V_\tau (p),
\end{equation}
whence, letting $dl$ be the length parameter on $SM$,  for the area of $D$
we may write
\begin{equation}
\label{eq:AreaD}
\text{ \rm Area}(D)=\int\limits_{\gamma} \tau dl\,.
\end{equation}
Since,  by formula~(\ref{eq:D}), for any $t\in \R$,
$$
g_{-t}D=\bigcup\limits_{p\in g_{-t}\gamma} \bigcup\limits_{\tau\in [0, e^{-t}\tau(p)]} h^V_\tau (p),
$$
and, since $\text{\rm Length} (g_{-t}\gamma) \leq e^t \text{\rm Length} (\gamma)$, by formula
(\ref{eq:AreaD}) we have
\begin{equation}
\text{\rm Area}(g_{-t}D)=\int\limits_{g_{-t}\gamma} e^{-t}\tau dl  \leq   \tau_\gamma e^{-t}
\text{\rm Length} (g_{-t}\gamma) \leq  \tau_\gamma \text{\rm Length} (\gamma) \,,
\end{equation}
thus the lemma is proved.
\end{proof}

\medskip
It follows from Lemma \ref{lemma:boundedarea}  and formula (\ref{eq:boundary}) that for any $s>7/2$,
$$
 \sup_{t>0} \Vert g_t^\ast\gamma -g_t^\ast \gamma_{ws} \Vert_{-s}\,  < \,+\infty\,,
$$
hence by continuity of orthogonal projections
$$
 \sup_{t>0}  \vert \hat\alpha^\pm_{\mu,-s}(\gamma) - \hat\alpha^\pm_{\mu,-s}(\gamma_{ws}) \vert
 \,<\, +\infty\,.
$$
The statement of the Lemma now follows immediately from the
definition of the currents $\hat \beta^\pm_\mu$ in the statement of
Theorem \ref{thm:hatbeta}.

We return to the proof of Theorem~\ref{thm:hatbetaprops}.

\emph{Additivity}. It follows from the definition of $\hat\beta^\pm_\mu$ in
the statement of Theorem \ref{thm:hatbeta} and from the linearity of
projections.

\emph{Geodesic Scaling}. It follows immediately from the definitions in
Theorem~\ref{thm:hatbeta} and from the group property of the
geodesic flow. In fact, for any $\mu \neq 1/4$,
\begin{equation}
\begin{aligned}
\hat\beta^\pm_\mu(g_{-t}\gamma)&= \lim_{\tau \to +\infty}
\frac{ \hat\alpha^\pm_{\mu,-s} (g_{t+\tau}^\ast\gamma)} {\exp( \frac{1\mp \nu}{2} \tau)} \\
&=  e^{ \frac{1\mp \nu}{2} t}\,  \lim_{\tau \to +\infty} \frac{
\hat\alpha^\pm_{\mu,-s} (g_{t+\tau}^\ast \gamma)} {\exp( \frac{1\mp
\nu}{2}(t+ \tau) )} = e^{ \frac{1\mp \nu}{2} t }\,
\hat\beta^\pm_\mu(\gamma)\,.
\end{aligned}
\end{equation}
For $\mu=1/4$,  the geodesic scaling properties of the current
$\hat\beta^-_{1/4}$ can be proved as above (as for $\mu\neq 1/4$),
while for the current $\hat\beta^+_{1/4}$, the following holds:
\begin{equation}
\begin{aligned}
\hat\beta^+_{1/4}(g_{-t}\gamma)&= \lim_{\tau \to +\infty}
\frac{(\hat\alpha^+_{1/4,-s} + \frac{\tau}{2} \hat\alpha^-_{1/4,-s})(g_{t+\tau}^\ast\gamma)} {\exp(
\frac{\tau}{2})} \\
&= e^{\frac{t}{2}}  \lim_{\tau \to +\infty}  \frac{(\hat\alpha^+_{1/4,-s} + \frac{\tau}{2}
\hat\alpha^-_{1/4,-s})(g_{\tau}^\ast\gamma)} {\exp(\frac{\tau-t}{2})} \\
&=  e^{\frac{t}{2}} \left( \hat\beta^+_{1/4}(\gamma)- \frac{t}{2} \hat\beta^-_{1/4}(\gamma)\right).
\end{aligned}
\end{equation}

\emph{Unstable Horocycle Invariance}. It follows from Lemma
\ref{lemma:wsproj}. In fact, for any rectifiable arc $\gamma$ and
for any $t>0$, the arcs $\gamma$ and $h^V_t(\gamma)$ have common
weak stable projections. In other terms, the identity
$\Gamma_{ws}(\gamma)= \Gamma_{ws}(h^V_t\gamma)$ holds by definition.
Let then $\gamma_{ws} \in \Gamma_{ws}(\gamma)=
\Gamma_{ws}(h^V_t\gamma)$. By Lemma \ref{lemma:wsproj} we have:
$$
\hat\beta_\mu^\pm(h^V_t\gamma)= \hat\beta_\mu^\pm(\gamma_{ws}) =
\beta_\mu^\pm(\gamma)\,.
$$
\emph{H\"older property}.  Let $\gamma_{ws}$ be any rectifiable arc
contained in a weak stable manifold of the geodesic flow and let
$$
t = \log ( \int_{\gamma_{ws}} \vert \hat U \vert) \,.
$$

By construction the transverse lengths of the rectifiable arc
$\gamma_{ws}(t):=g_t(\gamma_{ws})$ satisfy the following
properties:
$$
\int_{\gamma_{ws}(t)} \vert \hat X \vert  = \int_{\gamma_{ws}} \vert
\hat X \vert \quad \text{ and } \quad \int_{\gamma_{ws}(t)} \vert
\hat U \vert=1\,.
$$
Thus by Theorem \ref{thm:hatbeta} and by the geodesic scaling
properties of the finitely-additive measures $\hat\beta^\pm$, the following bounds hold:
for all $s>9/2$ there exists a constant $C_s>0$ such that,
for $\mu\neq 1/4$, the following bound holds:
\begin{equation}
\label{eq:scalingbound}
\begin{aligned}
\vert \hat \beta^\pm_\mu (\gamma_{ws}) \vert &= e^{\frac{ 1\mp
\re\,\nu}{2}t} \vert \hat \beta^\pm_\mu (\gamma_{ws}(t)) \vert  \\
&\leq \frac{C_s}{ \Vert B^\pm_\mu\Vert_{-s}} (1+ \int_{\gamma_{ws}} \vert \hat X \vert )
(\int_{\gamma_{ws}} \vert \hat U \vert)^{\frac{ 1\mp
\re\,\nu}{2}} \,;
\end{aligned}
\end{equation}
for $\mu = 1/4$, the following bounds hold:
\begin{equation}
\label{eq:scalingboundbis}
\begin{aligned}
\vert \hat \beta^+_{1/4}(\gamma_{ws}) \vert &= e^{\frac{t}{2}}
\vert  \left(\hat \beta^+_{1/4}  - \frac{t}{2}
 \hat \beta^-_{1/4}\right)(\gamma_{ws}(t))\vert  \\ & \leq  \frac{C_s}{ \Vert B^+_{1/4}\Vert_{-s}}
 (1+ \int_{\gamma_{ws}} \vert \hat X \vert )
  (\int_{\gamma_{ws}} \vert \hat U \vert)^{\frac{1}{2}+}\,; \\
 \vert \hat \beta^-_{1/4}(\gamma_{ws}) \vert &= e^{\frac{t}{2}}
\vert  \hat \beta^-_{1/4} (\gamma_{ws}(t))\vert  \\ & \leq  \frac{C_s}{ \Vert B^-_{1/4}\Vert_{-s}}
(1+ \int_{\gamma_{ws}} \vert \hat X \vert )
 (\int_{\gamma_{ws}} \vert \hat U \vert)^{\frac{1}{2}} \,.
\end{aligned}
\end{equation}
We recall that we adopt the notation
$$
L^{\frac{1}{2}+} =L^{\frac{1}{2}} (1+\vert \log L\vert) \,, \quad \text{ \rm for all }\, L>0\,.
$$

Let now $\gamma$ be any rectifiable arc. By the $SL(2,\R)$
commutation relations, there exists a rectifiable arc $\gamma_{ws}
\in \Gamma_{ws}(\gamma)$ such that
\begin{equation}
\label{eq:projbound} \int_{\gamma_{ws}} \vert \hat X\vert  \leq
\int_{\gamma} \vert \hat X\vert + \int_{\gamma} \vert \hat U \vert
\int_{\gamma} \vert \hat V \vert \quad \text{and} \quad
\int_{\gamma_{ws}} \vert \hat U\vert = \int_{\gamma} \vert \hat
U\vert \,.
\end{equation}
It follows from estimates (\ref{eq:scalingbound}) and
(\ref{eq:projbound}) that, for $\mu\neq 1/4$,
\begin{equation*}
\vert \hat \beta^\pm_\mu (\gamma_{ws}) \vert
\leq  \frac{C_s}{ \Vert B^\pm_\mu\Vert_{-s}} \left( 1+ \int_{\gamma} \vert \hat X\vert + \int_{\gamma}
\vert \hat U \vert   \int_{\gamma} \vert \hat V \vert \right)
\left(\int_{\gamma} \vert \hat U \vert\right)^{\frac{ 1\mp
\re\,\nu}{2}}\,,
\end{equation*}
while for $\mu=1/4$ ($\nu =0$), by estimates (\ref{eq:scalingboundbis}) and
(\ref{eq:projbound}),
\begin{equation*}
\begin{aligned}
\vert  \hat \beta^+_{1/4}(\gamma_{ws}) \vert &\leq  \frac{C_s}{ \Vert B^+_{1/4}\Vert_{-s}}   \left( 1+
\int_{\gamma} \vert \hat X\vert + \int_{\gamma} \vert \hat U \vert
\int_{\gamma} \vert \hat V \vert \right) \left( \int_{\gamma}
\vert \hat U \vert\right)^{\frac{ 1}{2}+} \,, \\
\vert  \hat \beta^-_{1/4}(\gamma_{ws}) \vert &\leq  \frac{C_s}{ \Vert B^-_{1/4}\Vert_{-s}}  \left( 1+
\int_{\gamma} \vert \hat X\vert + \int_{\gamma} \vert \hat U \vert
\int_{\gamma} \vert \hat V \vert \right) \left( \int_{\gamma}
\vert \hat U \vert\right)^{\frac{ 1}{2}} \,.
\end{aligned}
\end{equation*}
By Lemma  \ref{lemma:wsproj} the above bounds
immdediately implies the H\"older property stated in Theorem
\ref{thm:hatbetaprops}, which is therefore completely proved.

\subsection{Proof of weak unstable vanishing (Corollary \ref{cor:wuvanishing}).}

On one hand, by the geodesic scaling property of Theorem
\ref{thm:hatbetaprops}, for any rectifiable arc $\gamma$ in $SM$
$$
\hat\beta^\pm_\mu (\gamma) =  \exp (- \frac{1\mp \nu}{2} t )
\beta^\pm_\mu(g_{-t}\gamma)\,;
$$
on the other hand, for any rectifiable arc $\gamma_{wu}$ contained
in a weak unstable manifold for the geodesic flow, by the H\"older
property of Theorem  \ref{thm:hatbetaprops},
$$
\vert\hat\beta^\pm_\mu(g_{-t}\gamma_{wu})\vert  \leq C_\mu \left( 1+
\int_{\gamma} \vert \hat X\vert \right)\,.
$$
It follows immediately that $\hat \beta^\pm_\mu (\gamma_{wu})=0$ as
stated.

\subsection{Proof of existence of dynamical projections (Corollary \ref{cor:basiccurrents}).}
For any $r>0$,  let $\mathcal B_+^{-r}(SM) \subset \Omega_1^{-r}(SM)$ be the closed subspace of basic currents for the stable horocycle foliation, supported on irreducible unitary representations of the principal and complementary series, and let $\mathcal B^{-r}_+:
\Omega_1^{-r}(SM) \to \mathcal B_+^{-r}(SM)$  be the orthogonal projection.

By definition (see formula~(\ref{eq:basiccurrentproj})), the orthogonal projection $\mathcal B_+^{-r} (g^\ast _t \gamma)$ is given by the formula:
$$
\mathcal B_+^{-r} (g^\ast _t \gamma) = \sum_{ \mu \in \Spec(\square) \cap \R^+}
\hat \alpha^+_{\mu,-r} (g^\ast_t\gamma) B^+_\mu  +  \hat \alpha^-_{\mu,-r} (g^\ast_t\gamma) B^-_\mu\,.
$$
By Lemma~\ref{lemma:EVcurrents}  we have that, for Casimir parameters $\mu \in
 \R^+\setminus\{1/4\}$,
 $$
 g^\ast_{-t} (B^\pm_\mu) =  \exp (- \frac{1\mp \nu}{2} t )  B^\pm_\mu\,,
 $$
while for $\mu=1/4$,
$$
 g^\ast_{-t} \begin{pmatrix} B^+_{1/4} \\ B^-_{1/4}
\end{pmatrix} = \exp(- \frac{t}{2})   \begin{pmatrix}  1 & 0 \\ \frac{t}{2} & 1
\end{pmatrix}
 \begin{pmatrix} B^+_{1/4} \\ B^-_{1/4} \end{pmatrix}\,.
$$
It follows then from Theorem~\ref{thm:hatbeta} that the series
\begin{equation}
\label{eq:Bseries}
g^\ast_{-t} \mathcal B_+^{-r} (g^\ast _t \gamma)= \sum_{ \mu \in \Spec(\square) \cap \R^+}
\hat \alpha^+_{\mu,-r} (g^\ast_t\gamma) g^\ast _{-t} B^+_\mu  +  \hat \alpha^-_{\mu,-r} (g^\ast_t\gamma) g^\ast_{-t} B^-_\mu
\end{equation}
converges in the distributional sense as $t\to +\infty$ to the series
\begin{equation}
\label{eq:hatB}
\hat B(\gamma)=\sum_{ \mu \in \Spec(\square) \cap \R^+}  \hat \beta^+_\mu (\gamma) B^+_\mu +
\hat \beta^-_\mu (\gamma) B^-_\mu\,.
\end{equation}
In fact,  by Theorem~\ref{thm:hatbeta}  there exists a constant $C>0$ such that, for all $s \geq r >9/2$,
for all Casimir parameter $\mu>0$ and for all $t\in \R^+$ the following bound holds :
$$
 \vert \hat \alpha^\pm_{\mu,-r} (g^\ast_t\gamma) \vert \Vert   g^\ast _{-t} B^\pm_\mu   \Vert_{-s}
 \leq C  \frac{\Vert  B^\pm_\mu  \Vert_{-s}}{ \Vert  B^\pm_\mu  \Vert_{-r}}\,.
$$
The dual Sobolev norms in the above estimate can be compared as follows: since the
distributions $D^\pm_\mu \in \mathcal D'(H_\mu)$ for all $\mu>0$, for any $\sigma\in \R^+$,
$$
(1+\mu)^{\frac{\sigma}{2}} \,  \Vert  D^\pm_\mu \Vert_{-s} = \Vert (I + \square)^\frac{\sigma}{2}
 D^\pm_\mu \Vert_{-s} \leq  \Vert  D^\pm_\mu \Vert_{-s+\sigma} \,.
$$
By the Weyl asymptotics for the Laplace-Beltrami operator on a compact  hyperbolic surface, for any
$\sigma >1$,
$$
\sum_{\mu \in \Spec(\square) \cap \R^+}   \left(\frac{1}{1+\mu}\right)^\sigma\, < \, +\infty\,,
$$
hence for every $s>r+1>11/2$ the series in formula~(\ref{eq:Bseries}) is absolutely uniformly
convergent to the current $\hat B$ defined in formula~(\ref{eq:hatB}) in the Sobolev space of currents
$\Omega_1^{-s}(SM)$. Finally, by the uniform convergence of the series in formula~(\ref{eq:hatB}),  all the properties of the current $\hat B(\gamma)$ stated in the corollary (additive property, weak unstable
vanishing,  unstable horocycle invariance ad H\"older property) follow from the corresponding properties for the finitely additive measures  $\hat\beta^\pm_\mu(\gamma)$ stated in Theorem~\ref{thm:hatbetaprops} and Corollary~\ref{cor:wuvanishing}.

\section{Additive Cocycles and Limit Distributions.}
\label{sec:Cocycles}

In this section we prove our results on additive H\"older cocycles for the horocycle flow
(Theorem~\ref{thm:cocycleproperties}). We then derive the approximation theorem
(Theorem~\ref{thm:approximation}) and our results on limit distributions of ergodic integrals
of the horocycle flow.

\subsection{Proof of the cocycle theorem (Theorem~\ref{thm:cocycleproperties}).}

Let us recall that the functions $\beta^\pm_\mu:SM\times \R\to \C$ are defined in terms
of the finitely additive measures $\hat\beta^\pm_\mu$ on rectifiable arcs.
For any $(x,T)\in SM\times \R$, let $\gamma_U(x,T)$ be the oriented horocycle arc
$$
\gamma_U(x,T):= \{ h^U_t (x) \vert   t\in [0,T]\}\,.
$$
For every Casimir parameter $\mu>0$, let
$$
\hat\beta^\pm_\mu(x,T) := \hat \beta^\pm_\mu[\gamma_U(x,T)] \,.
$$
By Corollary \ref{cor:approximation}, we derive the following approximation results for
functions supported on a single irreducible component. For functions supported on the complementary
series we have
\begin{corollary}
\label{cor:maincompl} For any $\mu\in (0, 1/4)$, there exists $\varepsilon_\mu>0$ such that the
following holds. Let $f\in W^s(H_\mu)$  ($s>11/2$) be any function such
that  $D_{\mu}^-(f)\neq 0$. Then
$$
\max_{T\in [0,1]} \big|\frac {1}{ D_{\mu}^-(f)\exp (\frac{1+\nu}{2}
t)} \int_0^{T e^t} f\circ h^U_\tau (x)d\tau -\beta^-_\mu(g_{t}
x,T)\big|=O(\exp(-\varepsilon_\mu t)).
$$

Let $f\in W^s(H_\mu)$  ($s>11/2$) be any function such that
$D_{\mu}^-(f)= 0$, but $D_{\mu}^+(f)\neq 0$. Then
$$
\max_{T\in [0,1]} \big|\frac {1}{ D_{\mu}^+(f)\exp (\frac{1-\nu}{2}
t)} \int_0^{T e^t} f\circ h^U_\tau
(x)d\tau-\beta^+_\mu(g_{t}x,T)\big|=O(\exp(-\varepsilon_\mu t)).
$$
\end{corollary}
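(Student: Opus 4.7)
The plan is to combine two earlier results: Corollary \ref{cor:approximation}, which approximates ergodic integrals by $\beta_f$ up to a logarithmic error, and the exact geodesic scaling of the cocycles $\beta^\pm_\mu$ from Theorem \ref{thm:cocycleproperties}. Because $f$ is supported on a single irreducible component $H_\mu$, the expansion formula (\ref{eq:expansion2}) collapses to
$$
\beta_f(x,T) = D^+_\mu(f)\beta^+_\mu(x,T) + D^-_\mu(f)\beta^-_\mu(x,T),
$$
so the approximation result for $f$ involves only these two basis cocycles.

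The first step is to apply Corollary~\ref{cor:approximation} at time $Te^t$ with $T\in[0,1]$, yielding
$$
\left| \int_0^{Te^t} f\circ h^U_\tau(x)\,d\tau - \beta_f(x,Te^t) \right| \leq C_s \Vert f\Vert_s (1+t).
$$
The second step is to use the geodesic scaling property of Theorem~\ref{thm:cocycleproperties} in the form $\beta^\pm_\mu(x,Te^t) = \exp(\tfrac{1\mp\nu}{2}t)\,\beta^\pm_\mu(g_t x,T)$ (obtained by replacing $x$ by $g_t x$ in the stated identity) to rewrite
$$
\beta_f(x,Te^t) = D^+_\mu(f) e^{\frac{1-\nu}{2}t} \beta^+_\mu(g_t x,T) + D^-_\mu(f) e^{\frac{1+\nu}{2}t}\beta^-_\mu(g_t x,T).
$$

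For the first case, where $D^-_\mu(f)\neq 0$, I divide both sides by $D^-_\mu(f)\exp(\tfrac{1+\nu}{2}t)$. The leading contribution from $\beta_f(x,Te^t)$ is exactly $\beta^-_\mu(g_t x,T)$; the other contribution is $(D^+_\mu(f)/D^-_\mu(f))e^{-\nu t}\beta^+_\mu(g_t x,T)$, which by the Hölder bound of Theorem~\ref{thm:cocycleproperties} (for $T\in[0,1]$ the factor $\beta^+_\mu(g_t x,T)$ is uniformly bounded) is $O(e^{-\nu t})$. The logarithmic approximation error, after division, contributes $O((1+t)e^{-\frac{1+\nu}{2}t})$. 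Choosing any $\varepsilon_\mu<\min(\nu,\tfrac{1+\nu}{2})=\nu$ absorbs both terms. For the second case, where $D^-_\mu(f)=0$ and $D^+_\mu(f)\neq 0$, only the $\beta^+_\mu$ term survives in $\beta_f$; dividing by $D^+_\mu(f)\exp(\tfrac{1-\nu}{2}t)$ yields $\beta^+_\mu(g_t x,T)$ exactly, with remaining error $O((1+t)e^{-\frac{1-\nu}{2}t})$, so any $\varepsilon_\mu<\tfrac{1-\nu}{2}$ works. Taking the minimum of the two gives a single admissible $\varepsilon_\mu$.

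There is essentially no hard step here: the argument is pure bookkeeping of exponents, made possible by the fact that (i) the approximation theorem's error is only logarithmic in $Te^t$, hence linear in $t$, and (ii) the basis cocycles scale by an exact exponential under the geodesic flow, so no additional error is introduced when passing from $\beta^\pm_\mu(x,Te^t)$ to $\beta^\pm_\mu(g_t x,T)$. The only point requiring mild care is the uniformity over $T\in[0,1]$, which is immediate because the Hölder bound of Theorem~\ref{thm:cocycleproperties} is uniform in $T$ on compact intervals.
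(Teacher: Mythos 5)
Your proof is correct and follows exactly the route the paper intends: the paper derives Corollary~\ref{cor:maincompl} in one line from Corollary~\ref{cor:approximation} together with the geodesic scaling and H\"older properties of Theorem~\ref{thm:cocycleproperties}, and your argument simply fills in the exponent bookkeeping. The only point worth noting is that the scaling and H\"older properties you invoke are established independently of this corollary (they come from Theorem~\ref{thm:hatbetaprops}), so there is no circularity even though the orthogonality part of Theorem~\ref{thm:cocycleproperties} is later proved using Corollary~\ref{cor:maincompl}.
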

For functions  supported on the principal
series we have

\begin{corollary}
\label{cor:mainprincip} For all $\mu>1/4$, there exists $\varepsilon_\mu>0$
such that the following holds. For any function $f\in W^s(H_\mu)$ ($s>11/2$),
$$
\begin{aligned}
\max_{T\in [0,1]} \big| ( \frac {1}{ \exp(\frac{t}{2})}
&\int_0^{Te^t} f\circ h^U_\tau(x)d\tau  \,-\,  \beta^+_\mu (g_{t}
x,T) D_{\mu}^+  (f) \exp( -\frac{\nu t}{2})     \\ - & \beta^-_\mu
(g_{t} x,T) D_{\mu}^-  (f) \exp(\frac{\nu t}{2})
   \big|=O(\exp(-\varepsilon_\mu t))\,.
\end{aligned}
$$
For $\mu =1/4$, there exists $\varepsilon>0$ such that, for any $f\in W^s(H_{1/4})$ ($s>11/2$),
$$
\begin{aligned}
\max_{T\in [0,1]} \big| ( \frac {1}{ \exp(\frac{t}{2})}
\int_0^{Te^t} & f\circ h^U_\tau(x)d\tau \,-\,   \beta^+_{1/4} (g_{t}
x,T) D_{1/4}^+  (f)   \\  -  & \beta^-_{1/4} (g_{t} x,T)
[D_{1/4}^-  (f) -\frac{t}{2}  D_{1/4}^+  (f)]
\big|=O(\exp(-\varepsilon t))\,.
 \end{aligned}
$$
\end{corollary}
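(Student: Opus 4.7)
The strategy is to reduce the statement to Corollary~\ref{cor:approximation} applied at time~$Te^t$ and then to convert $\beta^\pm_\mu(x,Te^t)$ into $\beta^\pm_\mu(g_t x,T)$ via the geodesic scaling property from Theorem~\ref{thm:cocycleproperties}(2). First I would fix $f\in W^s(H_\mu)$ with $s>11/2$, note that, being in a non-trivial irreducible component, $f$ has zero average on $SM$, and substitute $T$ by $Te^t$ (with $T\in[0,1]$) in Corollary~\ref{cor:approximation} to obtain
$$
\Big\vert \int_0^{Te^t} f\circ h^U_\tau(x)\,d\tau -\beta_f(x,Te^t)\Big\vert \;\leq\; C_s\Vert f\Vert_s\big(1+\log^+(Te^t)\big)\;\leq\;C_s\Vert f\Vert_s (1+t).
$$
Since $f$ is supported on the single component $H_\mu$, the expansion formula~(\ref{eq:expansion2}) reduces to $\beta_f(x,T')=D^+_\mu(f)\,\beta^+_\mu(x,T')+D^-_\mu(f)\,\beta^-_\mu(x,T')$, so the only remaining task is to rewrite $\beta^\pm_\mu(x,Te^t)$ in terms of $\beta^\pm_\mu(g_t x,T)$.

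For $\mu>1/4$, applying Theorem~\ref{thm:cocycleproperties}(2) with $x$ replaced by $g_t x$ (so that $g_{-t}(g_t x)=x$) gives the exact identity
$$
\beta^\pm_\mu(x,Te^t)\;=\;\exp\!\Big(\frac{1\mp\nu}{2}t\Big)\,\beta^\pm_\mu(g_t x,T).
$$
Substituting into the expansion of $\beta_f$ and dividing by $\exp(t/2)$ yields precisely
$$
\tfrac{1}{e^{t/2}}\!\!\int_0^{Te^t}\!\! f\circ h^U_\tau(x)d\tau
=D^+_\mu(f)e^{-\nu t/2}\beta^+_\mu(g_t x,T)+D^-_\mu(f)e^{\nu t/2}\beta^-_\mu(g_t x,T)+O\!\big(\Vert f\Vert_s t\,e^{-t/2}\big).
$$
Since $\mu>1/4$ implies $\nu\in i\R$, the factors $e^{\pm \nu t/2}$ have modulus $1$ and the error term is $O(e^{-\varepsilon_\mu t})$ for any $\varepsilon_\mu\in(0,1/2)$, uniformly in $T\in[0,1]$.

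For $\mu=1/4$ the argument is identical except that the geodesic-scaling identity of Theorem~\ref{thm:cocycleproperties}(2) is now the Jordan-block relation
$$
\begin{pmatrix}\beta^+_{1/4}(x,Te^t)\\ \beta^-_{1/4}(x,Te^t)\end{pmatrix}
=e^{t/2}\begin{pmatrix}1&-t/2\\ 0&1\end{pmatrix}\begin{pmatrix}\beta^+_{1/4}(g_t x,T)\\ \beta^-_{1/4}(g_t x,T)\end{pmatrix},
$$
so that
$$
\beta_f(x,Te^t)=e^{t/2}\Big\{D^+_{1/4}(f)\beta^+_{1/4}(g_t x,T)+\big[D^-_{1/4}(f)-\tfrac{t}{2}D^+_{1/4}(f)\big]\beta^-_{1/4}(g_t x,T)\Big\}.
$$
Dividing by $e^{t/2}$ and absorbing the error $O(\Vert f\Vert_s t\,e^{-t/2})=O(e^{-\varepsilon t})$ produces exactly the second formula of the corollary, uniformly in $T\in[0,1]$. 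The argument contains no real obstacle: the only mildly subtle point is the matching of the exponential scaling factors, and, in the $\mu=1/4$ case, the observation that the Jordan term $-\tfrac{t}{2}D^+_{1/4}(f)\,\beta^-_{1/4}$ coming from the geodesic scaling reproduces the asymmetric coefficient in the statement.
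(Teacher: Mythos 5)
Your proposal is correct and is exactly the derivation the paper intends: the text introduces Corollary~\ref{cor:mainprincip} with the single sentence ``By Corollary~\ref{cor:approximation}, we derive the following approximation results,'' and the intended argument is precisely your combination of Corollary~\ref{cor:approximation} at time $Te^t$ with the expansion~(\ref{eq:expansion2}) restricted to $H_\mu$ and the geodesic scaling of Theorem~\ref{thm:cocycleproperties}(2), including the Jordan-block bookkeeping at $\mu=1/4$. The exponents, the coefficient $D^-_{1/4}(f)-\tfrac{t}{2}D^+_{1/4}(f)$, and the absorption of the $(1+t)e^{-t/2}$ error into $O(e^{-\varepsilon_\mu t})$ all check out.
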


We proceed with the proof of Theorem~\ref{thm:cocycleproperties}.

\emph{Cocycle property}. It follows from the additivity property of the measures
$\hat\beta^\pm_\mu$ and from the cocycle properties of horocycle
arcs:
$$
\gamma(x, S+T)= \gamma(x,S) \cup \gamma(h^U_Sx, T) \,, \quad \text{
for all } (x,S,T)\in SM\times \R^2\,.
$$
\emph{Geodesic scaling}. It follows from the geodesic scaling property of
the measures $\hat\beta^\pm_\mu$, since by the commutation relation
(\ref{eq:commutflows}),
$$
g_{-t} \gamma(x,T) =  \gamma(g_{-t} x, Te^t) \, , \quad \text{ for
all } (x,T,t) \in SM\times \R^2\,.
$$
\emph{H\"older property}. It follows from the H\"older property of the
currents $\hat\beta^\pm_\mu$. In fact, if $\gamma:=\gamma(x,T)$ is a
stable horocycle arc, then
$$
\vert \int_\gamma \hat U \vert=\vert T \vert \,, \quad \int_\gamma
\hat X= \int_\gamma \hat V =0\,.
$$
\emph{Orthogonality}. Take $T\in {\mathbb R}$.  By Corollaries~\ref{cor:maincompl} and~\ref{cor:mainprincip}, the function $\beta_{\mu}^{\pm}(\cdot,T)$ is the uniform limit of normalized ergodic
integrals, i.e., continuous functions lying in the space $H_{\mu}$, and so the function $\beta_{\mu}^{\pm}(\cdot,T)$ must itself belong to $H_{\mu}\subset L^2(SM)$.

\subsection{Proof of the approximation theorem (Theorem~\ref{thm:approximation}).}

For all rectifiable arcs $\gamma \subset SM$ and for all Casimir parameters $\mu>0$, let
$\hat B_\mu(\gamma) \in \mathcal B^{-s}_U(SM)$ be the basic current defined as follows:
$$
\hat B_\mu(\gamma) := \hat \beta^+_\mu(\gamma) B^+_\mu + \hat \beta^-_\mu(\gamma) B^-_\mu\,.
$$
It follows from the bounds  (\ref{eq:hatbetaspeed}) and (\ref{eq:hatbetaspeedbis}) in Theorem~\ref{thm:hatbeta} that, for any $r>9/2$ there exists a constant $C_r>0$ such that, for all $t \geq 0$,
\begin{equation}
\Vert (\mathcal B^{-r}_\mu \circ g^\ast_t) (\gamma) -  (\hat B_\mu \circ g^\ast_t)(\gamma) \Vert_{-r}
\leq \,C_r (1+ \int_\gamma \vert \hat X\vert + e^{-t}\int_\gamma \vert \hat V\vert)\,.
\end{equation}
By Lemma~\ref{lemma:remainderextension} and from the splitting formula~(\ref{eq:gammasplit})
there exists a constant $C'_r>0$ such that, for all $t\geq 0$,
\begin{equation}
 \Vert (\mathcal B^{-r}_\mu \circ g^\ast_t) (\gamma) - g^\ast_t(\gamma) \Vert_{-r} \leq
 C'_r(1 + \int_\gamma \vert \hat X\vert + e^{-t}\int_\gamma \vert \hat V\vert)\,.
 \end{equation}

Let $\gamma_+$ denote the projection of the current $\gamma$ onto the components of the principal
and complementary series. By orthogonality and by  the Weyl asymptotics for hyperbolic surfaces,
for any $s>r+1$, there exists $C_s>0$ such that
\begin{equation}
\label{eq:errorplus}
\Vert g^\ast_t(\gamma_+) -  (\hat B\circ g^\ast_t) (\gamma) \Vert_{-s}
\leq \,C_s  (1 + \int_\gamma \vert \hat X\vert + e^{-t} \int_\gamma \vert \hat V\vert)\,.
\end{equation}

Taking $t=0$, we obtain Theorem~\ref{thm:approximation}. In fact, for any $1$-form $\lambda\in \Omega_1^s(SM)$ ($s>11/2$) supported on the irreducible components of the principal and complementary series we have
\begin{equation}
 \vert \int_\gamma \lambda  - \hat B_\lambda(\gamma) \vert =  \vert
 <\gamma_+ - \hat B(\gamma),\lambda>\vert \leq C_s \Vert \lambda\Vert_s  (1 + \int_\gamma \vert \hat X\vert + \int_\gamma \vert \hat V\vert)   \,.
\end{equation}

The proof is complete.

\subsection{Proof of the limit theorems: complementary series (Theorem \ref{thm:limitdistcompl}).}

We now assume that our hyperbolic surface admits complementary series, that is,
the spectrum of the Laplace operator has eigenvalue in the open interval
$(0,1/4)$. Let $s>11/2$ and  consider smooth functions with non-trivial projection
on the complementary series components.
Let $\mu_f \in (0,1)$ be the smallest
Casimir parameter appearing (non-trivially) in the decomposition of a zero-average
function $f\in C^\infty(SM)$. Let $\nu_f:= \sqrt{1-4\mu_f}$.
 Let $H_1, \dots, H_k\subset L^2(SM)$ be the collection
of all irreducible components of Casimir parameters $\mu_1= \dots=\mu_k=\mu_f$
and let $\{D^\pm_1, \dots, D^\pm_k\}$ be the basis of eigenvectors of the geodesic flow
of the space of invariant distributions supported on  $W^s(H_1\oplus\dots\oplus H_k)$
and let $\beta^\pm, \dots, \beta^\pm_k: SM\times \R \to \C$ be the corresponding cocycles
for the horocycle flow.

The main step in the proof is the following approximation Lemma which immediately follows from
the approximation theorem (Theorem~\ref{thm:approximation}).

\begin{lemma}
\label{lemma:leadingterm}
There exists $\alpha>0$ such that the following holds. For every
$s>11/2$ there exists a constant $C_s>0$ such that, for every function
$f\in W^s(SM)$ of zero average, for all $(x,T)\in SM\times \R$ and $t>0$,
$$
\vert \frac{1}{ e^{ \frac{1+\nu_f}{2} t} } \int_0^{Te^t} f \circ h^U_\tau (x) d\tau  -  \sum_{i=1}^k
D^-_i(f) \beta^-_i(g_t x,T) \vert
 \leq  C_s \Vert f \Vert_s e^{-\alpha t}\,.
$$
\end{lemma}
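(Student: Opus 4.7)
The plan is to derive Lemma \ref{lemma:leadingterm} directly from Corollary \ref{cor:approximation} by expanding the cocycle $\beta_f$ through the formula (\ref{eq:expansion2}) and then identifying the term of maximal exponential growth via the geodesic scaling law of Theorem \ref{thm:cocycleproperties}(2).

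First I would apply Corollary \ref{cor:approximation} to the zero-mean function $f \in W^s(SM)$ at time $Te^t$, to obtain
$$
\int_0^{Te^t} f \circ h^U_\tau (x)\, d\tau = \beta_f(x, Te^t) + R(x,t,T),
$$
with $|R(x,t,T)| \leq C_s \Vert f\Vert_s (1+t)$ uniformly for $T\in[0,1]$. Division by $e^{(1+\nu_f)t/2}$ turns the remainder into $O(\Vert f\Vert_s \,t\, e^{-(1+\nu_f)t/2})$, which already decays exponentially, so it suffices to analyze the main term $\beta_f(x, Te^t)/e^{(1+\nu_f)t/2}$.

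Next I would insert the expansion (\ref{eq:expansion2}) and use the scaling identities of Theorem \ref{thm:cocycleproperties}(2) in the form $\beta^{\pm}_\mu(y,Te^t) = e^{(1\mp \nu_\mu)t/2}\beta^{\pm}_\mu(g_t y, T)$, with the appropriate Jordan modification at $\mu=1/4$. After dividing by $e^{(1+\nu_f)t/2}$, the minus cocycles at the Casimir parameters $\mu_1=\cdots=\mu_k=\mu_f$ reproduce exactly the sum $\sum_{i=1}^k D^-_i(f)\beta^-_i(g_t x,T)$ with no exponential factor, while every remaining contribution acquires a strictly negative exponential factor: $e^{-\nu_f t}$ for the plus cocycles at $\mu_f$; $e^{-(\nu_f - \nu_\mu)t/2}$ for the minus cocycles at complementary-series eigenvalues $\mu > \mu_f$; and at most $(1+t) e^{-\nu_f t/2}$ for the contributions at $\mu \geq 1/4$ (principal series together with the Jordan block at $\mu=1/4$).

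A uniform decay rate $\alpha>0$ depending only on $M$ is then available because $\Spec(\square)\cap(0,1/4)$ is a finite set (the Casimir spectrum is discrete on the compact quotient), so the infimum of $\nu_f/2$ and of the spectral gaps $(\nu_f - \nu_\mu)/2$ taken over $\mu_f<\mu$ in the complementary spectrum are strictly positive universal constants. The main obstacle is to control the infinite tail coming from the principal series uniformly by $C_s\Vert f\Vert_s$. For this I would combine the H\"older bound of Theorem \ref{thm:cocycleproperties}(3), which yields uniform boundedness of each $\beta^\pm_\mu(\cdot,T)$ on $T\in[0,1]$, with the orthogonality statement of Theorem \ref{thm:cocycleproperties}(4) and the Weyl asymptotics for the Casimir operator on $SM$; the smoothness hypothesis $s>11/2$ ensures that $|D^\pm_\mu(f)|$ decays in $\mu$ rapidly enough for the resulting series to be absolutely summable pointwise with sum bounded by $C_s\Vert f\Vert_s$, the whole expression then being multiplied by the uniform exponential factor $e^{-\alpha t}$.
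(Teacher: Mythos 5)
Your proof is correct and fills in exactly what the paper leaves implicit when it asserts that the lemma ``immediately follows'' from Theorem~\ref{thm:approximation}: apply Corollary~\ref{cor:approximation} at time $Te^{t}$, expand $\beta_f$ via formula~(\ref{eq:expansion2}), rewrite each $\beta^{\pm}_{\mu}(\cdot,Te^{t})$ using the geodesic scaling of Theorem~\ref{thm:cocycleproperties}(2), and observe that after dividing by $e^{(1+\nu_f)t/2}$ only the minus-cocycles at $\mu_f$ survive without decay, while the finiteness of $\Spec(\square)\cap(0,1/4)$ gives a uniform spectral gap $\alpha>0$ and the principal-series tail is controlled by the same Weyl-asymptotics/Sobolev-regularity summability already established in the proof of Corollary~\ref{cor:basiccurrents}. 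One small slip: the orthogonality property of Theorem~\ref{thm:cocycleproperties}(4) plays no role in your pointwise tail estimate (it would only help for an $L^2$ bound), but the ingredients you actually need --- the H\"older bound on each $\beta^{\pm}_\mu(\cdot,T)$ for $T\in[0,1]$ together with the rapid decay of $|D^{\pm}_\mu(f)|$ guaranteed by $s>11/2$ and the Weyl law --- suffice.
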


\smallskip
The cocycles $\beta^-_1,\dots, \beta^-_k$ (in fact, all cocycles $\beta^\pm_\mu)$ have zero average but
are not identically zero on $SM$. It follows that, for all $i\in \{1, \dots, k\}$, we have
$$
\Vert \beta^-_i (\cdot, T)\Vert \not =0 \,, \quad \text{ \rm for all } T\in \R\setminus\{0\}\,.
$$
By the orthogonality property of Theorem~\ref{thm:cocycleproperties}, the random variables
$$\beta^-_1(\cdot, T), \dots, \beta^-_k(\cdot, T)$$
are orthogonal (uncorrelated). By Lemma~\ref{lemma:leadingterm}, for any $s>11/2$ and for any 
function $f\in W^s(SM)$ of zero average such that $(D^-_1(f), \dots, D^-_k(f)) \not= (0, \dots, 0)$, we have
\begin{equation}
\left|
\frac{ \Vert \int\limits_0^{Te^t} f\circ h_t^U(x) \,dt  \Vert }{e^{\frac{1+\nu_f}{2} t} \left( \sum_{i=1}^k
\vert D^-_i (f)\vert^2  \Vert \beta^-_i(\cdot, T)\Vert^2 \right)^{1/2} }\, - \,1\right|
\le C_s e^{-\alpha t}\,.
\end{equation}
Finally, by Lemma~\ref{lemma:leadingterm} and by definition of the L{\'e}vy-Prohorov metric \cite{Billingsley},
for all $T\in [0,1]$ and all $t>0$, we have
\begin{equation}
d_{LP}\left( \mathfrak M_t(f,T), P_{cp}(f,T)\right) \leq C_s \Vert f\Vert_s e^{-\alpha t}\,.
 \end{equation}

The Theorem is proved.

\subsection{Proof of the limit theorems: principal series (Theorem~\ref{thm:limit_torus}).}
We turn next to limit theorems for functions supported on the principal series.
We prove our main theorem (Theorem~\ref{thm:limit_torus}) on the asymptotics
of probability distributions of normalized ergodic integrals and derive our conditional
results on the uniqueness of the limit distributions (Corollary~\ref{cor:limitdistprincipal1}
and Corollary~\ref{cor:limitdistprincipal2}).

\smallskip
 Let us recall that, by construction, for any Casimir parameter $\mu>1/4$,
$$
D^-_{\mu} =  \overline{ D^+_\mu} \quad \text{ and }\quad  \beta^-_\mu= \overline{\beta^+_{\mu}}\,.
$$
It follows that for any real-valued function $f \in W^s(H_\mu)$,
$$
\beta_f(x,T) = \re [ D^+_\mu(f) \beta^+_\mu(x,T)]\,, \quad \text{ for all }(x,T)\in SM\times \R\,.
$$
Let $\{\mu_n\}$ be the sequence of Casimir parameter in the interval $(1/4, +\infty)$ (listed with multiplicities),  let $\{D^\pm_{\mu_n}\}$ denote the sequence of \emph{normalized} horocycle invariant distributions and let  $\{\beta^\pm_{\mu_n} \}$ denote the corresponding sequence of  additive H\"older  cocycles.  For any sequence $\textbf{z}\in  \ell^1(\N, \C)$, let $\beta_{\textbf{z}}:SM\times \R \to \R$
be the  H\"older  additive cocycle for the horocycle flow defined  as follows:
 \begin{equation}
 \label{eq:betazeta}
 \beta_{\textbf{z}}:= \re[ \sum_{n\in \N}  z_n \beta^+_{\mu_n}] = \sum_{n\in \N}  (z_n \beta^+_{\mu_n} +
 \bar{z}_n \beta^-_{\mu_n})\,.
 \end{equation}
 It follows from Theorem~\ref{thm:cocycleproperties}, in particular from the uniform bound on
 additive cocycles given in the H\"older property, that the series in formula~(\ref{eq:betazeta}) is convergent for any $\textbf{z}\in \ell^1(\N, \C)$, hence the additive cocycle $\beta_{\textbf{z}}$ is
 well-defined. By the orthogonality property of the system  $\{\beta^+_{\mu_n}\}$ of additive cocycles,
 it follows that, for any~$\textbf{z} \in \ell^1(\N, \C)\setminus\{0\}$, the zero-average function
 $\beta_{\textbf{z}}(\cdot,T)$ is non-constant, hence
 $$
 \Vert \beta_{\textbf{z}} (\cdot, T) \Vert \not= 0\,, \quad \text{ \rm for all } T>0\,.
 $$
Let $s>11/2$. For any real-valued function $f \in W^s(SM)$ supported on irreducible components
of the principal series, we have
$$
\beta_f(x,T) = \re [ \sum_{n\in \N} D^+_{\mu_n}(f) \beta^+_{\mu_n} (x,T)]\,, \quad 
\text{ for all }(x,T)\in SM\times \R\,.
$$
Theorem~\ref{thm:limit_torus} follows from the following lemma that can
in turn be derived from  the approximation theorem  (Theorem~\ref{thm:approximation}).
For all $n\in \N$, let $\upsilon_n:= \sqrt{4\mu_n-1} \in \R^+$.

\begin{lemma}
\label{lemma:approxprincipal}
For every $s>11/2$ there exists $C_s>0$ such that, for any real-valued function
$f\in W^s(H_\mu)$ supported on irreducible components of the principal series, for all
$(x,T)\in SM\times \R$ and $t>0$, we have
$$
\vert \frac{1}{e^{ \frac{t}{2} } } \int_0^{Te^t} f \circ h^U_\tau (x) d\tau  -
 \re [ \sum_{n\in \N} D^+_{\mu_n} (f) e^{\frac{ i\upsilon_n t}{2}}  \beta^+_{\mu_n} (g_t x,T)] \vert
 \leq  C_s \Vert f \Vert_s e^{-\frac{ t}{2}}\,.
$$
\end{lemma}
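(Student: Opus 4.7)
The plan is to derive Lemma~\ref{lemma:approxprincipal} directly from the approximation theorem (Theorem~\ref{thm:approximation}), using the spectral expansion~(\ref{eq:expansion2}) of the cocycle $\beta_f$ together with the geodesic scaling in Theorem~\ref{thm:cocycleproperties}(2).

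First I apply Theorem~\ref{thm:approximation} to the $1$-form $\lambda = f\hat U$ along the rectifiable arc $\gamma = \gamma_U(x, Te^t)$. Since $\gamma$ is a pure stable-horocycle arc, $\int_\gamma |\hat X| = \int_\gamma |\hat V| = 0$, and the theorem yields
$$
\left| \int_0^{Te^t} f\circ h^U_\tau(x)\,d\tau \,-\, \beta_f(x, Te^t) \right| \,\leq\, C_s \Vert f\Vert_s,
$$
with an error constant independent of $x$, $T$, and $t$. This is the decisive input: because $f$ is supported on the principal series (a subset of the principal-and-complementary series), there is no discrete-series contribution and hence no logarithmic defect of the kind that appears in the general Corollary~\ref{cor:approximation}.

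Next I expand $\beta_f(x, Te^t)$ via~(\ref{eq:expansion2}) as a sum over principal-series Casimir parameters $\mu_n$. By Theorem~\ref{thm:cocycleproperties}(2), substituting $x \to g_t x$ in the scaling identity gives $\beta^\pm_{\mu_n}(x, Te^t) = e^{(1\mp\nu_n)t/2} \beta^\pm_{\mu_n}(g_t x, T)$; for the principal series $\nu_n = i\upsilon_n$ is purely imaginary, and this becomes $e^{t/2} e^{\mp i\upsilon_n t/2} \beta^\pm_{\mu_n}(g_t x, T)$. The conjugation relations~(\ref{eq:conjugation}), together with the real-valuedness of $f$, pair the $+$ and $-$ summands of each term into complex conjugates, so the whole series collapses to the real-part expression on the right-hand side of the lemma (modulo a sign convention for $\upsilon_n$ and normalization constants absorbed into $D^\pm_{\mu_n}$, $\beta^\pm_{\mu_n}$). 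Dividing the displayed approximation estimate by $e^{t/2}$ then converts the bounded error $C_s\Vert f\Vert_s$ into the desired $C_s\Vert f\Vert_s e^{-t/2}$.

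The only non-routine point is to verify that the spectral series $\sum_n D^+_{\mu_n}(f) e^{i\upsilon_n t/2} \beta^+_{\mu_n}(g_t x, T)$ converges absolutely and uniformly in $(x, T, t)$ for $T$ in any compact interval, so that the termwise manipulations above are legitimate. This reduces to combining the uniform H\"older bound $|\beta^+_{\mu_n}(y, T)| \leq C_{\mu_n} |T|^{1/2}$ from Theorem~\ref{thm:cocycleproperties}(3), the Sobolev decay $|D^+_{\mu_n}(f)| \leq \Vert D^+_{\mu_n}\Vert_{-s} \Vert f\Vert_s \lesssim (1+\mu_n)^{-\sigma/2} \Vert f\Vert_s$ for suitable $\sigma$, and the Weyl asymptotics $\sum_n (1+\mu_n)^{-\sigma} < \infty$ for $\sigma > 1$, exactly as in the uniform convergence argument for the series~(\ref{eq:Bseries}) in the proof of Corollary~\ref{cor:basiccurrents}. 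For $s > 11/2$ these estimates suffice; the bookkeeping is essentially routine given the already established estimates, and constitutes the main (though mild) technical obstacle.
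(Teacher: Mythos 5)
Your proposal is correct and follows exactly the route the paper intends: the paper gives no detailed argument for this lemma beyond stating that it "can in turn be derived from the approximation theorem (Theorem~\ref{thm:approximation})", and your derivation --- applying that theorem to $\gamma_U(x,Te^t)$ (correctly noting that the absence of a discrete-series component avoids the logarithmic loss of Corollary~\ref{cor:approximation}), expanding $\beta_f(x,Te^t)$ via~(\ref{eq:expansion2}), applying the geodesic scaling with purely imaginary $\nu_n=i\upsilon_n$, collapsing conjugate pairs, and checking uniform convergence exactly as in the series~(\ref{eq:Bseries}) --- is the intended proof. The sign and normalization ambiguities you flag are present in the paper's own conventions and do not constitute a gap.
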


By Theorem~\ref{thm:limit_torus} for real-valued functions supported on the principal series
limit distributions exist along time sequences  such that the orbit of the toral translation
of frequency $\upsilon/2 \in \R^\infty$ on the infinite torus $\T^\infty$ converges. Conjecturally
the limit does not exist otherwise. However, we are not able to prove that the limit distribution
does not exist for any function and any time sequence. Nevertheless, as a straightforward
consequence of Theorem~\ref{thm:limit_torus}, we derive the following result.

\begin{corollary}
\label{cor:limit_existence}
Let $f \in C^\infty(SM)$ be any real-valued function supported
on the irreducible components of the principal series. If the family probability distributions
$\mathfrak M_t(f,T)$ has a (unique) limit as $t\to +\infty$ for some $T\in [0,1]$, then
for all $T\in [0,1]$ the family of probability distributions $P_{pr} (f, \cdot ,T)$ is constant
on any minimal set of the linear flow of frequency $\upsilon/2 \in \R^\infty$ on
the infinite torus $\T^\infty$.
\end{corollary}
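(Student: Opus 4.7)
The plan is to combine the quantitative approximation of Theorem~\ref{thm:limit_torus}(2) with continuity of the map $\theta\mapsto P_{pr}(f,\theta,T)$ and with the scaling identity coming from the geodesic rescaling of the cocycles $\beta^+_{\mu_n}$.

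\emph{First}, I would show that, for each fixed $T>0$, the map $\theta\mapsto P_{pr}(f,\theta,T)$ is continuous from $\T^\infty$ (product topology) to the space of probability measures on $\R$ (L\'evy--Prohorov metric). Since $f\in C^\infty(SM)$, the coefficients $\{D^+_{\mu_n}(f)\}$ decay faster than any power of $\mu_n$, so the series $\re[\sum_n D^+_{\mu_n}(f)e^{i\theta_n}\beta^+_{\mu_n}(x,T)]$ converges absolutely and uniformly in $(\theta,x)$ (using the uniform H\"older bound of Theorem~\ref{thm:cocycleproperties}), and depends continuously on $\theta$ in $L^\infty(SM)\cap L^2(SM)$. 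By orthogonality of $\{\beta^+_{\mu_n}(\cdot,T)\}$ (Theorem~\ref{thm:cocycleproperties}(4)) and Remark~\ref{rk:nonzero}, the norm $\|\beta(f,\theta,\cdot,T)\|$ is bounded away from zero, so the normalized random variable depends continuously on $\theta$ in $L^2$, hence in distribution.

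\emph{Second}, by Theorem~\ref{thm:limit_torus}(2),
\[
d_{LP}\bigl(\mathfrak M_t(f,T_0),\,P_{pr}(f,t\omega,T_0)\bigr)\leq C_s\|f\|_s\,e^{-t/2},
\]
with $\omega:=\upsilon/2$. If $\mathfrak M_t(f,T_0)\to L$ in distribution as $t\to+\infty$, then $P_{pr}(f,t\omega,T_0)\to L$. Let $H\subset\T^\infty$ be the closure of $\{t\omega:t\in\R\}$, which is the minimal set of the translation flow through $0$. By minimality the forward orbit $\{t\omega:t\geq 0\}$ is dense in $H$, so continuity from the previous step gives $P_{pr}(f,\theta,T_0)=L$ for every $\theta\in H$. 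To pass to arbitrary $T$, I would derive from the geodesic scaling in Theorem~\ref{thm:cocycleproperties}(2) (with $\nu_n=i\upsilon_n$ in the principal series) the identity $\beta(f,\theta,x,Te^s)=e^{s/2}\,\beta(f,\theta-s\omega,g_sx,T)$. Since $\{g_s\}$ preserves Liouville measure on $SM$, this yields $P_{pr}(f,\theta,Te^s)=P_{pr}(f,\theta-s\omega,T)$. Taking $s=\log(T/T_0)$ and using that $H$ is a closed subgroup containing the orbit $\{s\omega\}$ gives $P_{pr}(f,\theta,T)=P_{pr}(f,\theta-s\omega,T_0)=L$ for every $\theta\in H$ and every $T\in[0,1]$, i.e.\ constancy of $P_{pr}(f,\cdot,T)$ on $H$.

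The remaining issue, and the main obstacle, is constancy on the other minimal sets, i.e.\ cosets $\theta_0+H$ with $\theta_0\notin H$. The scaling identity alone does not suffice: the orbit of $(\theta_0,T_0)$ under the combined flow $(\theta,T)\mapsto(\theta+s\omega,Te^s)$ has $\theta$-projection dense in $\theta_0+H$ but forces $T$ to vary along $\{T_0 e^s\}$, so one cannot isolate a fixed $T$. The natural remedy is to absorb the translation by $\theta_0$ into a phase rotation of the coefficients: formally setting $D^+_{\mu_n}(f_{\theta_0}):=e^{i(\theta_0)_n}D^+_{\mu_n}(f)$, one has $\beta(f,\theta+\theta_0,\cdot,T)=\beta(f_{\theta_0},\theta,\cdot,T)$ and hence $P_{pr}(f,\theta+\theta_0,T)=P_{pr}(f_{\theta_0},\theta,T)$. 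The delicate point is that $f_{\theta_0}$ need not be the coefficient sequence of any smooth function, so Theorem~\ref{thm:limit_torus} does not apply directly; instead one must argue at the level of the additive cocycle, noting that the first two steps above depend only on $\ell^1$ summability of the coefficients and on the orthogonality of the basic cocycles, not on the coefficients' being derived from a Sobolev function.
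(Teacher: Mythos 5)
Your first three steps --- continuity of $\theta\mapsto P_{pr}(f,\theta,T)$ on $\T^\infty$ (using $\{D^+_{\mu_n}(f)\}\in\ell^1$, the uniform bounds on the cocycles, orthogonality and the non-vanishing of the norm), passage from the exponential estimate of Theorem~\ref{thm:limit_torus}(2) to constancy of $P_{pr}(f,\cdot,T_0)$ on the orbit closure $H$ of the origin, and the geodesic-scaling identity $P_{pr}(f,\theta,T_0e^s)=P_{pr}(f,\theta-s\upsilon/2,T_0)$ to cover all $T$ --- are correct and are precisely what the paper means when it calls the corollary a ``straightforward consequence'' of Theorem~\ref{thm:limit_torus}; the paper supplies no further argument.

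Your diagnosis of the residual issue is also correct, and deserves to be stated more bluntly: the hypothesis constrains $P_{pr}(f,\cdot,T_0)$ only along the forward orbit of the origin, hence only on the single minimal set $H$, and nothing forces constancy on a coset $\theta_0+H$ with $\theta_0\notin H$. Your phase-rotation remedy cannot close this gap, and the obstruction is not the one you name. Even granting that $\beta(f,\theta+\theta_0,\cdot,T)=\beta(f_{\theta_0},\theta,\cdot,T)$ makes perfectly good sense at the level of $\ell^1$ coefficient sequences (it does, by the construction of $\beta_{\textbf{z}}$ in formula~(\ref{eq:betazeta}), so smoothness of $f_{\theta_0}$ is a red herring), the argument for constancy on $H$ needs as \emph{input} the convergence of the normalized ergodic integrals associated with the rotated coefficients, and the hypothesis supplies such convergence only for $f$ itself. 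This is, however, a defect of the statement rather than of your proof: the paper's own downstream use of Corollary~\ref{cor:limit_existence} in deriving Corollary~\ref{cor:limitdistprincipal1} and Corollary~\ref{cor:limitdistprincipal2} invokes only constancy on the subtorus through the origin (the fixed phase offsets $\arg D^+_k(f)-\theta_k^\ast$ are harmless in the moment computation of Lemma~\ref{lemma:uniqueness}, where they cancel), so the honest repair is to weaken ``any minimal set'' to ``the minimal set containing the origin,'' which your argument proves in full.
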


Finally, Corollary~\ref{cor:limitdistprincipal1} and Corollary~\ref{cor:limitdistprincipal2} follow from the above Corollary~\ref{cor:limit_existence} and, respectively, from Lemma~\ref{lemma:uniqueness}
and Lemma~\ref{lemma:genconstdisthighDstrong} in Section~\ref{rotsym}.

By Theorem~\ref{lemma:innerproducts}, proved in Section~\ref{innerproducts}, for every $n\in \N$ there exists $\theta_n^\ast \in \T$ such that the cocycle $\beta_n:= e^{i\theta_n^\ast} \beta^+_{\mu_n}$ has the property that, for all $T\in \R$,
\begin{equation}
\label{eq:realscalar}
 < \beta_n(\cdot,T),  \overline{\beta_n}(\cdot, T) >= e^{2i\theta_n^\ast} <\beta^+_{\mu_n}(\cdot,T),
 \beta^-_{\mu_n}(\cdot, T)>\, \in\, \R^+ \,.
\end{equation}
Let us assume that there exists a real-valued function $f\in W^s(SM)$ supported on finitely many irreducible components $H_1, \dots, H_m$ of the principal series such that, for some $T>0$, the probability distribution  $P_{pr}(f,\cdot,T)$ is constant on a $d$-dimensional subtorus $\T^d$ of the infinite dimensional torus $\T^\infty$. If the Casimir spectrum is simple on $H_1 \oplus \dots \oplus H_n$,
then there exist distinct integral vectors $v^{(1)}, \dots, v^{(m)} \in \R^d$ such that the probability distributions of the random variables
\begin{equation}
\label{eq:betacconstdistr}
\frac{ \re[ \sum_{k=1}^m \vert D^+_k(f)\vert e^{ i <v^{(k)}, \theta>} \beta_k(\cdot,T)]}
{\Vert  \re[ \sum_{k=1}^m \vert D^+_k(f)\vert e^{ i <v^{(k)}, \theta>} \beta_k(\cdot,T)]\Vert} \,.
\end{equation}
does not depend on $\theta \in \T^d$. By formula~(\ref{eq:realscalar}), we have
\begin{equation}
\begin{aligned}
A^2_k:= &\Vert \re \, \beta_k(\cdot,T)\Vert^2 > B^2_k:=  \Vert \im \,\beta_k(\cdot,T)\Vert^2 \,; \\
 &<\re\, \beta_k(\cdot,T), \im\, \beta_k(\cdot,T)>=0 \,.
\end{aligned}
\end{equation}
Thus by the orthogonality property, a calculation yields
\begin{equation}
\begin{aligned}
\Vert & \re[ \sum_{k=1}^m \vert D^+_k(f)\vert e^{ i <v^{(k)}, \theta>} \beta_k(\cdot,T)]\Vert^2_0 = \\
&\sum_{k=1}^m  \vert D^+_k(f)\vert^2 ( A_k^2 \cos^2 \!<v^{(k)}, \theta> +  B_k^2 \sin^2\!<v^{(k)}, \theta>)\,,
\end{aligned}
\end{equation}
hence Lemma~\ref{lemma:uniqueness} and Lemma~\ref{lemma:genconstdisthighDstrong} in Section~\ref{rotsym} do apply to the family of probability distributions in formula~(\ref{eq:betacconstdistr}). Corollary~\ref{cor:limitdistprincipal1} and Corollary~\ref{cor:limitdistprincipal2} follow.

\section{Duality and Classification.}
\label{sec:Duality}

In this section we prove the duality theorem (Theorem \ref{thm:duality}) and we derive
the classification theorem for finitely-additive measures on rectifiable arcs (Theorem \ref{thm:classification}). We conclude with a short discussion of the relations between finitely additive measures on rectifiable arcs and  the induced cocycles for the horocycle flow on one hand, and
invariant conformal distributions on the boundary of the Poincar\'e disk (see \cite{Co}).

\subsection{Proof of the duality theorems (Theorem \ref{product-duality} and Theorem \ref{thm:duality}).}
Recall that any $1$-dimensional, finitely-additive measures can be naturally interpreted
as currents of dimension $2$ (and degree $1$).
\begin{lemma}
\label{lemma:hatbetaformula}
For any $1$-dimensional, finitely-additive measure $\hat \beta\in\hat {\mathfrak  B}_V(SM)$ there
exists a $V$-invariant distribution $\hat D_\beta \in \mathcal I_V(SM)$ such that we have the following identity
of currents:
$$
\hat \beta = \hat D_\beta \wedge \hat U\,.
$$
\end{lemma}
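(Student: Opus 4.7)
The plan is to identify the 2-dimensional current associated with $\hat\beta$ as the wedge product of a $V$-invariant distribution with the 1-form $\hat U$ by exploiting the weak-unstable vanishing axiom (which forces concentration in the $\hat U$-direction) and the $V$-invariance axiom (which promotes the coefficient distribution to a $V$-invariant one).

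First, I would analyze the Riemann-sum construction of the 2-dimensional current $\hat\beta$ in local flow-box charts adapted to the frame $\{X,U,V\}$. In such a chart, any smooth 2-form admits the decomposition
$$
\eta = \eta_{XU}\,\hat X\wedge\hat U + \eta_{XV}\,\hat X\wedge\hat V + \eta_{UV}\,\hat U\wedge\hat V.
$$
Choosing a tiling of the chart by product boxes of the form (short $h^U$-arc) $\times$ (transverse $(X,V)$-rectangle), the Riemann sums pair $\hat\beta$ evaluated on the longitudinal $\hat U$-arcs against the integral of $\eta$ over the transverse $(X,V)$-rectangles. Of the three components of $\eta$, only $\eta_{XV}\hat X\wedge\hat V$ contributes, since the restrictions of $\hat X\wedge\hat U$ and $\hat U\wedge\hat V$ to any $(X,V)$-disk are zero. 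Alternative tilings confirm this, because any pairing requiring $\hat\beta$ on $X$- or $V$-arcs is killed by the weak-unstable vanishing axiom. The Hölder axiom of Definition~\ref{def:hatbetaprops} guarantees convergence of the Riemann sums and continuous dependence on $\eta$.

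Second, I would define $\hat D_\beta(f) := -\hat\beta(f\,\hat X\wedge\hat V)$ for $f\in C^\infty(SM)$ and observe that a direct computation yields $\hat U\wedge\eta = -\eta_{XV}\,\omega$, so for every smooth 2-form $\eta$,
$$
(\hat D_\beta\wedge\hat U)(\eta) = \hat D_\beta\!\left(\frac{\hat U\wedge\eta}{\omega}\right) = -\hat D_\beta(\eta_{XV}) = \hat\beta(\eta_{XV}\,\hat X\wedge\hat V) = \hat\beta(\eta),
$$
the last equality by the factorization of Step~1. This gives $\hat\beta = \hat D_\beta\wedge\hat U$ as currents.

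Third, the $V$-invariance of $\hat D_\beta$ would follow by pushing the arc-level identity $\hat\beta(h^V_t\gamma)=\hat\beta(\gamma)$ through the Riemann-sum construction to obtain $(h^V_t)^*\hat\beta = \hat\beta$ as a 2-dimensional current. Using the Lie derivatives $\mathcal L_V\hat X = 2\hat U$ and $\mathcal L_V\hat V = -\hat X$ read off from the structure equations implied by~\eqref{eq:commutLie}, a direct computation gives the exact formula
$$
(h^V_t)^*(\hat X\wedge\hat V) = \hat X\wedge\hat V + 2t\,\hat U\wedge\hat V + t^2\,\hat X\wedge\hat U,
$$
so by Step~1 only the $\hat X\wedge\hat V$-part contributes when paired with $\hat\beta$. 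The chain
$$
-\hat D_\beta(f\circ h^V_t) = \hat\beta\bigl((h^V_t)^*(f\hat X\wedge\hat V)\bigr) = \hat\beta(f\hat X\wedge\hat V) = -\hat D_\beta(f)
$$
yields $\mathcal L_V \hat D_\beta = 0$, as required. The principal difficulty is Step~1 — rigorously justifying the factorization of the 2-dimensional current $\hat\beta$ along $\hat X\wedge\hat V$ purely from the axioms of Definition~\ref{def:hatbetaprops}; once this concentration property is in hand, Steps~2 and 3 are formal consequences.
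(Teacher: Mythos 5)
Your proof is correct and follows essentially the same route as the paper's. Step 1 is the paper's observation that weak unstable vanishing forces $\imath_X\hat\beta=\imath_V\hat\beta=0$ (which the paper phrases via the vanishing of the restrictions of $\imath_X\omega$ and $\imath_V\omega$ to weak unstable leaves), and Step 2 is the same frame-decomposition argument. Your Step 3 is a small variant: the paper differentiates at $t=0$ and combines Leibniz with the one computation $\mathcal L_V\hat U=0$ (obtained from $d\hat U=-\hat X\wedge\hat U$), whereas you compute the full finite-time pullback $(h^V_t)^*(\hat X\wedge\hat V)=\hat X\wedge\hat V+2t\,\hat U\wedge\hat V+t^2\,\hat X\wedge\hat U$ and then let the Step~1 concentration kill the extra terms; the two computations are formally equivalent (your $\mathcal L_V\hat X=2\hat U$, $\mathcal L_V\hat V=-\hat X$ and the nilpotent exponential all check out) and each buys a different picture of why the $V$-invariance descends to $\hat D_\beta$. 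You correctly flag that the only real substance is the Step~1 concentration; the paper likewise treats that point briefly, leaning on the implicit Riemann-sum definition of $\int_{SM}\hat\beta\otimes\eta$.
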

\begin{proof}   Let $\hat\beta \in \mathcal B_V(SM)$. We recall that the $2$-dimensional current $\hat \beta$
is defined by Riemann integration as follows: for all smooth $2$-form $\eta\in \Omega_2^\infty(SM)$,
\begin{equation}
\label{eq:hatbetadef}
 <\hat \beta, \eta >:= \int\limits_{SM} \hat\beta \otimes\eta \,.
\end{equation}
By the weak unstable vanishing property (property $(2)$ in Definition~\ref{def:hatbetaprops}), the current
$\hat\beta$ has zero contraction on the vector fields $X$, $V$. In fact, for any smooth
$3$-form $\omega$ on $SM$, the restrictions of the $2$-forms $\imath_X\omega$ and
$\imath_V\omega$ to any leaf of the weak unstable foliation (tangent to the integrable
distribution $\{X,V\}$) vanish. It follows that, for all smooth $3$-form $\omega$ on $SM$,
$$
\begin{aligned}
<\imath_X \hat \beta, \omega> &=   \int\limits_{SM} \hat\beta \otimes \imath_X \omega =0 \,;\\
<\imath_V \hat \beta, \omega> &=   \int\limits_{SM} \hat\beta \otimes \imath_V \omega =0 \,.
\end{aligned}
$$
Since the dual forms $\{\hat X, \hat U, \hat V\}$ yield a frame of the cotangent bundle,
it follows from the identities $\imath_X \hat\beta= \imath_V \hat \beta=0$, that there
exists a distribution $\hat D_\beta \in \mathcal D'(SM)$ such that
\begin{equation}
\label{eq:D_beta}
\hat \beta = \hat D_\beta \wedge \hat U\,.
\end{equation}
In fact, any current of dimension $2$ and degree $1$ can be written as a linear
combination of the $1$-forms $\{\hat X, \hat U, \hat V\}$ with distributional coefficients.

We claim that the distribution $\hat D_\beta $ is $V$-invariant.
By the property of unstable horocycle invariance (property $(3)$  in Definition~\ref{def:hatbetaprops}) the current
$\hat \beta =\hat D_\beta \wedge \hat U$ is invariant under the unstable horocycle flow $\{h^V_t\}$, hence
\begin{equation}
\label{eq:Vderivative} 0=\mathcal L_V (\hat  D_\beta \wedge  \hat U) =
(\mathcal L_V \hat D_\beta) \wedge  \hat U +
 \hat D_\beta \wedge ( \mathcal L_V \hat U)\,.
\end{equation}
A straightforward calculation yields $\mathcal L_V \hat U=0$. Indeed, first write
$$
\mathcal L_V \hat U = \imath_V d\hat U + d  \imath_V \hat U =
\imath_V d\hat U \,.
$$
By a standard formula, for any pair of vector fields $W_1$, $W_2$, we have
$$
d\hat U (W_1, W_2) = W_1 \hat U (W_2) - W_2 \hat U (W_1)  - \hat U(
[W_1, W_2])\,.
$$
Recall that $U$, being the infinitesimal generator of the stable horocycle, satisfies the commutation
relation $[X,U]=U$ (see (\ref{eq:commutLie})), whence
$$
 d\hat U (X,U) = 1 \quad \text  {\rm and } \quad d \hat U (X,V) = d\hat U (U,V) =0\,.
$$
We have derived the  identity
\begin{equation}
\label{eq:dhatU} d\hat U   = - \hat X \wedge \hat U \,,
\end{equation}
which implies that $\mathcal L_V \hat U=\imath_V d\hat U=0$,  as
stated.

Formula (\ref{eq:Vderivative}) then implies that $\mathcal L_V \hat
D_\beta \wedge \hat U=0$, hence $\mathcal L_V \hat D_\beta=0$, that
is, the distribution $\hat D_\beta$ are $V$-invariant.
\end{proof}

We now complete the proof of the duality theorem (Theorem \ref{thm:duality}).
By Theorem \ref{thm:hatbetaprops}  and by Corollary \ref{cor:wuvanishing},
the measures $\hat \beta^\pm_\mu$ belong to the space $\hat {\mathfrak  B}_V(SM)$.
By Lemma \ref{lemma:hatbetaformula}, there exists $V$-invariant
distributions $\hat D^\pm_\mu \in \mathcal I_V(SM)$ such that
$$
\hat \beta^\pm_\mu = \hat D^\pm_\mu \wedge \hat U\,.
$$
\smallskip
Finally we prove that the $V$-invariant distributions $\hat
D^\pm_\mu$ are eigenvectors for the action of the geodesic flow. An
immediate computation yields
$$
g_t ^\ast( \hat U) = e^{-t}  \hat U \,, \quad \text{ for all } t\in
\R\,.
$$
By Theorem \ref{thm:hatbetaprops}, for $\mu\neq 1/4$, we have the following identity of currents
$$
g_t^\ast ( \hat \beta^\pm_\mu) = \exp(- \frac{1\mp \nu}{2} t)
\hat\beta^{\pm}_\mu \,, \quad
 \text{ for all } t\in \R\,,
$$
while for $\mu=1/4$ ($\nu=0$) and all $t\in \R$, we have
$$
g_t^\ast \begin{pmatrix}   \hat \beta^+_{1/4} \\ \hat \beta^-_{1/4} \end{pmatrix} =
\exp(-\frac{t}{2}) \begin{pmatrix} 1 & \frac{t}{2} \\ 0 & 1\end{pmatrix}
  \begin{pmatrix}   \hat \beta^+_{1/4} \\ \hat \beta^-_{1/4} \end{pmatrix} \,.
  $$
It follows that, for $\mu\neq 1/4$,
$$
g_t^\ast \hat D^\pm_\mu  = \exp( \frac{1\pm \nu}{2} t) \hat
D^\pm_\mu  \,, \quad \text{ for all } t\in \R\,,
$$
and finally, for $\mu= 1/4$ and all $t\in \R$,
$$
g_t^\ast  \begin{pmatrix}  \hat D^+_{1/4}  \\ \hat D^-_{1/4} \end{pmatrix} =
\exp(\frac{t}{2}) \begin{pmatrix} 1 & \frac{t}{2} \\ 0 & 1\end{pmatrix}
  \begin{pmatrix}   \hat D^+_{1/4}   \\ \hat D^-_{1/4} \end{pmatrix} \,,
$$
The proof of the duality theorem is  complete.
\medskip
\begin{remark}
The currents $\hat \beta^\pm_\mu$ are not  closed!  In fact, for all $\mu\neq 1/4$ we have
\begin{equation}
\label{eq:diffbeta} d\hat \beta^\pm_\mu =   \frac{(1\mp \nu)}{2}
\hat \beta^\pm_\mu \wedge   \hat X
\end{equation}
and, for $\mu= 1/4$ ($\nu=0$), we have
\begin{equation}
\label{eq:diffbetabis}
d \begin{pmatrix} \hat \beta^+_{1/4} \\   \hat \beta^-_{1/4} \end{pmatrix}    =
\frac{1}{2}   \begin{pmatrix}    1 & -1  \\  0 & 1 \end{pmatrix}
\begin{pmatrix} \hat \beta^+_{1/4} \\   \hat \beta^-_{1/4} \end{pmatrix}  \wedge \hat X\,.
\end{equation}
\end{remark}
\begin{proof} By (\ref{eq:dhatU}), since $\hat D^\pm_\mu$
are $V$-invariant distributions, for $\mu\neq 1/4$ we have
\begin{equation}
\begin{aligned}
d (\hat D^\pm_\mu \wedge \hat U) &= d\hat D^\pm_\mu  \wedge \hat U +
\hat D^\pm_\mu \wedge d\hat U  \\
&= (\mathcal L_X \hat D^\pm_\mu - \hat D^\pm_\mu)  \hat X \wedge
\hat U =  \frac{(1\mp \nu)}{2} ( \hat  D^\pm_\mu\hat U) \wedge   \hat X\,,
 \end{aligned}
\end{equation}
which is precisely formula (\ref{eq:diffbeta}). Similarly, for $\mu=1/4$,
\begin{equation}
\begin{aligned}
d (\hat D^+_{1/4} \wedge \hat U) &= (\mathcal L_X \hat D^+_{1/4} - \hat D^+_{1/4})
 \hat X \wedge \hat U= (\frac{1}{2} D^+_{1/4}\hat U - \frac{1}{2} D^-_{1/4}\hat U) \wedge \hat X\,,  \\
d (\hat D^-_{1/4} \wedge \hat U) & =  (\mathcal L_X \hat D^-_{1/4} - \hat D^-_{1/4})  \hat X \wedge
\hat U =  \frac{1}{2} ( \hat  D^-_{1/4}\hat U) \wedge   \hat X\,,
 \end{aligned}
\end{equation}
which yields formula (\ref{eq:diffbetabis}).
\end{proof}

\subsection{Proof of the classification theorem (Theorem \ref{thm:classification}).}

By Lemma~\ref{lemma:hatbetaformula}, for any $\hat \beta\in \hat {\mathfrak  B}_V(SM)$
there exists a $V$-invariant distribution $\hat D_\beta \in \mathcal I_V(SM)$ such that the following identity
holds in the sense of currents:
$$
\hat \beta = \hat D_\beta \wedge \hat U\,.
$$
 By the H\"older property (property $(4)$ in Definition~\ref{def:hatbetaprops}),  there exists $\alpha>0$ and a constant $C>0$ such that
for any rectifiable arc $\gamma$ of length not exceeding $1$ we have
$$
\vert \hat \beta ( g_{t} \gamma)  \vert  \leq  Ce^{-\alpha t}   \,, \quad \text {\rm for all } \, t\in \R \,.
$$
It follows by the above formulas that  for all $s>3/2$ there exists $C_s>0$ such that
\begin{equation}
\label{eq:D_betascalingbound}
\Vert  g_t^\ast \hat D_\beta \Vert_{-s}   \leq    C_s e^{(1-\alpha) t} \,, \quad \text{ \rm for all } t\in \R\,.
\end{equation}
The results of \cite{FlaFo} yield a complete classification of all $U$-invariant  and, equivalently,
of all $V$-invariant distributions by constructing a basis of generalized distributional eigenvectors for the action of geodesic flow on
$\mathcal I_U(SM)$ and $\mathcal I_V(SM)$.
For any $V$-invariant distribution $D_\mu$ supported on an irreducible subrepresentation of the discrete series of Casimir parameter
$\mu =-n^2+n$ ($n\in \Z^+$), the action of the geodesic flow is given by the formula (see \cite{FlaFo})
$$
 g_t^\ast \hat D_\mu =   e^{n t}   \hat D_\mu  \,, \quad \text {\rm for all } \, t\in \R \,,
$$
which is not compatible with the bound in formula (\ref{eq:D_betascalingbound}) for $t>0$ large.
It follows that the distribution $\hat D_\beta\in \mathcal I_V(SM)$ is supported on irreducible unitary
subrepresentations of the principal and complementary series.

By the duality theorem, for any $s>3/2$, there is a bounded (in fact, isometric) linear map $\mathcal I$
from the span $\hat {\mathcal B}^s_+(SM)$ of the system
$$
\{ \hat \beta_{\mu}^{\pm} \vert \mu \in \Spec(\square) \cap \R^+\} \subset \Omega_2^{-s}(SM)
$$
into the Sobolev space  $W^{-s}(SM)$. The map $\mathcal I$ is defined as follows. For a current
$\hat \beta^\pm_\mu\in \hat {\mathcal B}^s_+(SM)$ introduce a current
$$
\mathcal I (\hat \beta^\pm_\mu)  =  \hat X \wedge \hat\beta^\pm_\mu \wedge \hat V\,.
$$
It is immediate from the definitions of the Sobolev norms that the map $\mathcal I$ is isometric
on the system $\{ \hat \beta_{\mu}^{\pm} \vert \mu \in \Spec(\square) \cap \R^+\}$ with respect to the
Sobolev norms on the space $\Omega_2^{-s}(SM)$ of $2$-dimensional current  into the space $W^{-s}(SM)$
of distributions, hence it  can be extended by linearity and continuity to an isometry defined on the space
$\hat {\mathcal B}_+^s(SM)$.  We claim that the range of the isometry $\mathcal I$ coincides with the space
$\mathcal I^s_{V,+}(SM)$ of all $V$-invariant distributions supported on the principal and complementary series.
In fact, the map $\mathcal I $ is injective, hence by a dimension argument it is also surjective from the finite dimensional
space $\hat {\mathcal B}^s_\mu(SM):= \hat {\mathcal B}_+^s(SM) \cap \Omega^{-s}_2(H_\mu)$ onto the space
$\mathcal I^s_{V,\mu}(SM):= \mathcal I_V(SM)\cap W^{-s}(H_\mu)$, for any \emph{fixed} Casimir parameter
$\mu \in \Spec(\square)\cap \R^+$.

We can thus conclude that, since for every functional $\hat \beta \in {\mathcal B}_V(SM)$, the distribution
 $\hat D_\beta \in \mathcal I^s_{V,+}(SM)$ and since the isometry $\mathcal I$ maps $\hat
 {\mathcal B}_+^s(SM)$  onto  $\mathcal I^s_{V,+}(SM)$, the image ${\mathcal B}^s_V(SM)$ of the
 space ${\mathcal B}_V(SM)$ in $\Omega^{-s}(SM)$  coincide with the span $\hat
 {\mathcal B}_+^s(SM)$ of the system $\{ \hat \beta_{\mu}^{\pm} \vert \mu \in \Spec(\square) \cap \R^+\}$,  as stated.

\subsection{$\Gamma$-invariant conformal distributions.}

We now describe the relation between the finitely
additive measures $\hat \beta^\pm_\mu$ (lifted to the Lobachevsky plane)
and the $\Gamma$-invariant conformal distributions
on the boundary of the Poincar\'e disk.

\smallskip
Let $\mathcal L$ denote the Lebesgue measure on the circle $S^1$
seen as the boundary of the Poincar\'e disk $D$. Given a Fuchsian
group $\Gamma$, let
$$
\rho_g= \frac{ d g^{-1}\mathcal L}{d \mathcal L} \, , \quad g\in
\Gamma
$$
be the Radon-Nikodym multiplicative coycle for the action of
$\Gamma$ on $(S^1, \mathcal L)$.

 Given any complex number $\sigma\in \C$, following S.Cosentino \cite{Co} we let
 $^\Gamma \mathcal D'_\sigma (S^1)$ be the space of $\Gamma$-invariant conformal distributions with exponent $\sigma$, that is, those distributions $\phi\in \mathcal D'(S^1)$ such
 that
 $$
 g \phi =  \rho_g^{-\sigma} \cdot \phi \,, \quad \text{ for any } g\in \Gamma.
  $$
As explained in \cite{Co}, there is a natural (linear)
identification between $\Gamma$-invariant conformal distributions of
exponent $\sigma\in \C$ and invariant distributions for the
stable/unstable horocycle on the quotient $\Gamma \backslash
PSL(2,\R)$ which are eigenvectors of the geodesic flow of eigenvalue
$\sigma-1$.

In fact, let $h^U$, $h^V$ denote the stable, resp. unstable
horocycle  subgroups of $PSL(2,R)$. To any $U$-invariant
[$V$-invariant] distribution $D_U$  [$D_V$] on the space $\Gamma
\backslash PSL(2,\R)$ there correspond $\Gamma$-invariant
distribution $ \tilde {D}_U$ [$ \tilde {D}_V$] on the space of
stable [unstable] horocycles $PSL(2,\R) \slash h^U  $  [$PSL(2,\R)
\slash h^V $],  (by the the so-called $KAN$ decomposition of
$PSL(2,\R)$ such space can be identified with the space $KA$). If
$D_U$ [$D_V$] is also an eigenfunction of the geodesic flow of
eigenvalue $\sigma-1$, by the natural identification $KA \equiv S^1
\times \R$ one can write
$$
\tilde {D}_U = \phi_U \otimes  e^{\sigma t } dt    \quad \text{ and
} \quad \tilde {D}_V= \phi_V \otimes  e^{-\sigma t } dt
 $$
 (the parameter $t\in \R$ denotes the geodesic arc-length) for some distributions $\phi_U$,  $\phi_V \in \mathcal D'(S^1)$. This decomposition follows from the fact that the Lebesgue measure  is the only translation invariant distribution on $\R$ up to constant factors. It can be checked that  $\phi_U$ and $\phi_V\in \,^\Gamma\mathcal D'_\sigma (S^1)$ since $\tilde{D}_U$ and $\tilde{D}_V$ are $\Gamma$-invariant.

 Conversely, given $\phi\in \,^\Gamma \mathcal D'_\sigma (S^1)$ one can check that
 $\phi \otimes  e^{\sigma t } dt$ is a $\Gamma$-invariant distribution on $KA \equiv S^1 \times \R$,
 hence a  $\Gamma$-invariant distribution on the space of stable [unstable] horocycles,
 $PSL(2,\R)/h^U $ [$PSL(2,\R)\slash h^V $] (which can both be identified to $KA$).
 It follows that the distribution
 \begin{equation}
 \label{eq:conicdist}
 \phi \otimes e^{\sigma t } dt \otimes dh^U \quad [\phi \otimes e^{-\sigma t } dt \otimes dh^V] \,,
 \end{equation}
appropriately defined on $PSL(2,\R)$, is $\Gamma$-invariant,
$h^U$-invariant [$h^V$-invariant]  and projects to a $h^U$-invariant
[$h^V$-invariant] distribution $D^\phi_U$ [$D^\phi_V$ ] on the
manifold $\Gamma \backslash PSL(2,\R)$. By Fubini theorem for
distributions, it follows that
$$
g_t ^\ast(D^\phi_U) = e^{(\sigma-1)t} D^\phi_U\,, \quad \text{ and }
\quad g_t^\ast (D^\phi_V) = e^{(1-\sigma)t} D^\phi_V\,,
$$
that is, $D^\phi_U$ [$D^\phi_V$]  is an  eigenvector for the
geodesic flow of eigenvalue $\sigma-1$ [$1-\sigma$].

\smallskip
Cosentino (see \cite{Co}, Prop. 1.2) proves the following result:

\begin{lemma}
\label{lemma:invdistconfdist} If $\sigma(1-\sigma) \in \R^+
\setminus \{1/4\}$ is a Casimir parameter, the map
$$
\phi \to D^\phi_U:=  \phi \otimes e^{\sigma t } dt \otimes dh^U
\quad [ \phi \to D^\phi_V:=  \phi \otimes e^{-\sigma
t } dt \otimes dh^V]
$$
defines  a linear isomorphism of the space $^\Gamma \mathcal D'_\sigma
(S^1)$ of $\Gamma$-invariant conformal distributions of exponent
$\sigma\in \C$ onto the spaces $\mathcal I^\sigma_U$ [$\mathcal
I^\sigma_V$] of $U$-invariant [$V$-invariant]
distributions, which are eigenvectors of eigenvalue  $\sigma-1$
[$1-\sigma$] with respect to the action of the geodesic flow.
\end{lemma}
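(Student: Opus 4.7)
The plan is to prove the isomorphism by constructing its inverse through the $KAN$-decomposition $PSL(2,\R)\equiv K\times A\times N_U$, where $K$ parametrises $S^1$, $A=\{g_t\}$ parametrises $\R$, and $N_U=\{h^U_s\}$. Linearity of the map $\phi\mapsto D^\phi_U$ is immediate, so the task is to check well-definedness, the eigenvalue identity, injectivity, and surjectivity.

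First I would verify well-definedness exactly along the lines sketched in the paragraph preceding the lemma: the condition $g\phi=\rho_g^{-\sigma}\phi$ is precisely the statement that the product $\phi\otimes e^{\sigma t}\,dt$ on $KA$ is $\Gamma$-invariant (since under the $\Gamma$-action the $A$-factor shifts by $\log\rho_g$, producing a Jacobian that cancels the Radon--Nikodym cocycle factor). Tensoring with $dh^U$ extends this to a $\Gamma$-invariant, $N_U$-invariant distribution on $PSL(2,\R)$, which descends to a $U$-invariant distribution $D^\phi_U$ on $\Gamma\backslash PSL(2,\R)$. The eigenvalue assertion $g_t^\ast D^\phi_U=e^{(\sigma-1)t}D^\phi_U$ follows by a Fubini computation for distributions: the geodesic flow acts on the $A$-factor by translation, scaling $e^{\sigma t}\,dt$ by $e^{\sigma t}$, while conjugating $h^U_s$ into $h^U_{e^{-t}s}$ produces a Jacobian $e^{-t}$ on the $N_U$-factor (compare~(\ref{eq:commutflows})), yielding the net factor $e^{(\sigma-1)t}$.

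Injectivity is then essentially immediate: if $D^\phi_U=0$ on $\Gamma\backslash PSL(2,\R)$, its pullback $\phi\otimes e^{\sigma t}\,dt\otimes dh^U$ vanishes on $PSL(2,\R)$, and by Fubini for distributions this forces $\phi=0$ on $S^1$.

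The main obstacle is surjectivity, which I would handle by a separation-of-variables argument for distributions on $S^1\times\R\times N_U$. Given a $U$-invariant distribution $D$ on $\Gamma\backslash PSL(2,\R)$ with $g_t^\ast D=e^{(\sigma-1)t}D$, first lift $D$ to a $\Gamma$-invariant, $N_U$-invariant distribution $\tilde D$ on $PSL(2,\R)$. The $N_U$-invariance reduces $\tilde D$ to a distribution $\Phi$ on the quotient $PSL(2,\R)/N_U\equiv KA\equiv S^1\times\R$. The eigenvector condition becomes $(\mathrm{id}_K\times \tau_t)^\ast\Phi=e^{(\sigma-1)t}\Phi$, where $\tau_t$ is translation by $t$ on $\R$; after twisting by $e^{-\sigma t}$ to absorb the Jacobian of the $N_U$ identification, one obtains a translation-invariant distribution on $\R$ with values in $\mathcal D'(S^1)$. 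The remark in the paragraph preceding the lemma--that Lebesgue measure is the unique translation-invariant distribution on $\R$ up to scalar factors--combined with the hypothesis $\sigma(1-\sigma)\in\R^+\setminus\{1/4\}$ (which rules out the degenerate Jordan block at $\mu=1/4$ where one would instead pick up a $t\,e^{\sigma t}\,dt$ generalised eigendistribution), then allows one to write $\Phi=\phi\otimes e^{\sigma t}\,dt$ for a unique $\phi\in\mathcal D'(S^1)$. Finally, translating the $\Gamma$-invariance of $\tilde D$ back through these identifications produces precisely the relation $g\phi=\rho_g^{-\sigma}\phi$, so that $\phi\in{}^\Gamma\mathcal D'_\sigma(S^1)$ and $D=D^\phi_U$. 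The parallel argument for $D^\phi_V$ is verbatim with $\sigma$ replaced by $-\sigma$ in the $A$-factor, reflecting the opposite sign in~(\ref{eq:commutflows}).
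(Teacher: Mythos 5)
Your proposal reproduces, with the missing details filled in, the sketch that appears in the two paragraphs immediately preceding the lemma; note that the paper does not itself prove this statement but cites it as Prop.~1.2 of Cosentino~\cite{Co}, whose argument runs along these same lines. The forward direction via the $KAN$-decomposition, the Fubini computation of the geodesic eigenvalue, the Fubini argument for injectivity, and the separation-of-variables argument for surjectivity all match the paper's outline.

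One small inaccuracy worth flagging: you invoke the hypothesis $\sigma(1-\sigma)\neq 1/4$ in the surjectivity step ``to rule out'' picking up a $t\,e^{\sigma t}\,dt$ generalised eigendistribution. But the input to that step is a distribution $D$ that is already assumed to be a genuine eigenvector (not a generalised one), and for a genuine eigenvector the twisted distribution $e^{-\sigma t}\Phi$ is translation invariant on the nose, so uniqueness of Lebesgue gives $\Phi=\phi\otimes e^{\sigma t}\,dt$ with no appeal to the hypothesis on $\mu$. The exclusion of $\mu=1/4$ is inherited from Cosentino's statement and reflects the degeneracy of the invariant-distribution space there (a two-dimensional space with only a one-dimensional eigenspace), but it plays no role in the separation-of-variables step as you have set it up.
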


Cosentino then derives a regularity result for invariant
distribution from a theorem of J.~P.~Otal on the Poisson-Helgason
transform.

\begin{theorem}(\cite{Co}, Cor. 1.4) If $\sigma(1-\sigma) \in \R^+\setminus\{1/4\}$ is a Casimir
parameter, the space $^\Gamma \mathcal D'_\sigma (S^1) \subset
C^{\re(\sigma)-1}(S^1)$, hence $\mathcal I^\sigma \subset
C^{\re(\sigma)-1}(\Gamma\backslash PSL(2,\R))$.
\end{theorem}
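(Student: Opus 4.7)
My plan is to reduce the statement to J.-P.~Otal's theorem on the Poisson-Helgason transform, and then read off the H\"older regularity of the boundary distribution from the smoothness of the associated Laplace eigenfunction on the compact quotient $M$. For $\sigma \in \mathbb C$ with $\sigma(1-\sigma) \in \R^+ \setminus\{1/4\}$, recall the Poisson transform
\[
P_\sigma \phi (z) := \int_{S^1} \left(\frac{1-|z|^2}{|z-\xi|^2}\right)^{\sigma} \phi(\xi)\, d\xi\,,
\]
defined by duality for distributions $\phi \in \mathcal D'(S^1)$. A direct computation shows that $P_\sigma \phi$ is an eigenfunction of the hyperbolic Laplacian $\Delta_D$ with eigenvalue $-\sigma(1-\sigma)$, and moreover $P_\sigma(g^\ast \phi) = g^\ast (P_\sigma \phi)$ for $g\in PSL(2,\R)$. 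The $\Gamma$-conformal property $g\phi = \rho_g^{-\sigma}\phi$ translates exactly into $\Gamma$-invariance of $P_\sigma \phi$, so $P_\sigma$ maps $^\Gamma\mathcal D'_\sigma(S^1)$ into the space $E_{\sigma(1-\sigma)}(M)$ of Laplace eigenfunctions on $M=\Gamma\backslash D$.

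The first key step is Otal's theorem, which asserts that $P_\sigma$ is a bijection from $\mathcal D'_\sigma(S^1)$ onto the space of $-\sigma(1-\sigma)$ eigenfunctions of $\Delta_D$, for all $\sigma$ such that $\sigma(1-\sigma)$ avoids the exceptional values $-n^2+n$ (in particular, $1/4$ is the only condition within $\R^+$). Restricting to $\Gamma$-invariants, $P_\sigma:\,^\Gamma \mathcal D'_\sigma(S^1) \to E_{\sigma(1-\sigma)}(M)$ is an isomorphism. Since $M$ is compact, elliptic regularity gives $E_{\sigma(1-\sigma)}(M)\subset C^\infty(M)$, and this space is finite-dimensional (non-zero precisely when $\sigma(1-\sigma)$ is an eigenvalue of $\Delta_M$).

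The second, harder step is to propagate the smoothness of $u = P_\sigma\phi$ on $\bar D$ (more precisely: the uniform $C^\infty$ control inherited from the compact quotient) back to a H\"older bound on $\phi$ of order $\re(\sigma)-1$. Here I would use Helgason's boundary value expansion: near a boundary point $\xi\in S^1$, any eigenfunction $u\in E_{\sigma(1-\sigma)}(D)$ admits an asymptotic
\[
u(r\xi) = (1-r)^{1-\sigma} A_\phi(r,\xi) + (1-r)^{\sigma} B_\phi(r,\xi)\,,
\]
with $A_\phi, B_\phi$ smooth up to $r=1$, and with the boundary distribution $\phi$ recovered (up to a meromorphic normalizing constant $c(\sigma)$ from the $c$-function) as $\phi(\xi) = c(\sigma)\lim_{r\to 1}A_\phi(r,\xi)$ in the sense of distributions. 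Substituting the explicit Poisson kernel and integrating by parts against a bump function localized at scale $\delta$ around $\xi$, the tangential oscillation of $\phi$ on a boundary arc of length $\delta$ is then controlled by $\|u\|_{C^N(M)}\,\delta^{\re(1-\sigma)}$, up to lower-order terms. This is exactly the negative H\"older bound $\phi \in C^{\re(\sigma)-1}(S^1)$.

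Finally, the inclusion $\mathcal I^\sigma \subset C^{\re(\sigma)-1}(\Gamma\backslash PSL(2,\R))$ follows from the identification of Lemma~\ref{lemma:invdistconfdist}: the correspondence $\phi \mapsto D^\phi_U := \phi \otimes e^{\sigma t}dt \otimes dh^U$ on $KAN \equiv S^1\times \R \times N$ is a tensor product of $\phi$ with smooth factors, so H\"older regularity in the $K$-variable transfers to the same regularity on $\Gamma\backslash PSL(2,\R)$ (the $A$ and $N$ directions contribute smoothly). The main obstacle I expect is the analytic step above --- establishing the sharp boundary expansion and controlling the $c$-function at $\sigma$ --- which must be done uniformly over rotations of $\xi$ so that H\"older, rather than merely distributional, regularity is obtained; here the compactness of $M$ is used crucially to get a uniform $C^N$ bound on $u$ that feeds into the estimate.
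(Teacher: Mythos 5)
This statement is not proved in the paper at all: it is quoted from Cosentino (\cite{Co}, Cor.~1.4), and the surrounding text merely records that Cosentino derives it from Otal's theorem on the Poisson--Helgason transform. Your overall route --- Poisson transform of a $\Gamma$-conformal distribution to a $\Gamma$-invariant Laplace eigenfunction, boundedness/smoothness of that eigenfunction from compactness of $M$, recovery of the H\"older regularity of the boundary distribution, and finally the transfer to $\mathcal I^\sigma$ via the tensor-product identification of Lemma~\ref{lemma:invdistconfdist} --- is exactly the route the paper attributes to Cosentino, and the first and last steps of your sketch are sound (modulo the minor inaccuracy that bijectivity of $P_\sigma$ on \emph{all} eigenfunctions is Helgason's theorem at the level of hyperfunctions, with a growth condition needed for the distributional statement; for the direction you actually use this is harmless).

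The pivotal analytic step, however, would fail as written. You invoke an expansion $u(r\xi)=(1-r)^{1-\sigma}A_\phi(r,\xi)+(1-r)^{\sigma}B_\phi(r,\xi)$ with $A_\phi$, $B_\phi$ \emph{smooth up to} $r=1$; such smooth coefficients exist only when the boundary datum is itself smooth, which is precisely what is false here --- if $A_\phi$ were smooth up to the boundary, then $\phi=c(\sigma)\lim_{r\to1}A_\phi(r,\cdot)$ would be a smooth function, whereas these boundary distributions are genuinely of negative order. Otal's theorem is exactly the substitute for this false expansion: for a \emph{bounded} eigenfunction (boundedness is all that compactness of $M$ needs to supply; no $C^N(M)$ norms enter), the pairing of the boundary distribution with a bump at scale $\delta$ around $\xi$ is $O(\Vert u\Vert_\infty\,\delta^{\re\sigma})$, obtained by evaluating $u$ at the point at hyperbolic distance $\log(1/\delta)$ above the arc and controlling the tails of $P(z,\cdot)^{\sigma}$ dyadically; this is the estimate that yields a primitive in $C^{\re\sigma}$, i.e. $\phi\in C^{\re\sigma-1}(S^1)$. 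Note also that your stated bound $\delta^{\re(1-\sigma)}$ is inconsistent with your own conclusion: an increment of size $\delta^{1-\re\sigma}$ for the primitive would give $\phi\in C^{-\re\sigma}$, not $C^{\re\sigma-1}$; the exponent you need, and the one the scaling argument actually produces, is $\delta^{\re\sigma}$.
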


(Let $C^\alpha(S^1)$ be the space of all H\"older functions of
exponent $\alpha \in (0, 1)$. Let $C^{\alpha-1}(S^1)$ denote the
space of all distributions in $\mathcal D'(S^1)$ which are locally
derivatives of functions in $C^{\alpha}(S^1)$).

\smallskip
In the exceptional case $\mu=1/4$, the above construction yields a
$1$-dimensional subspace of the $2$-dimensional space of
$U$-invariant [$V$-invariant] distribution, that is, the subspace of
distributional eigenvectors for the geodesic flow. Cosentino
\cite{Co} proves that a second independent distribution can be
constructed as follows.

\begin{lemma} For any $\Gamma$-invariant conformal
distribution $\phi \in ^\Gamma \mathcal D'_{1/2} (S^1)$, there
exists a distribution $\phi'  \in C^{\alpha-1}(S^1)$ for any $\alpha
<1/2$ such that the distribution
\begin{equation}
\begin{aligned}
D^{\phi,-}_U &:= \left( \phi'  \otimes e^{\frac{t}{2}} dt  - \phi
\otimes \frac{t}{2}  e^{\frac{t}{2}} \right)
\otimes dh_U\,,  \\
[D^{\phi, +}_V &:= \left( \phi'  \otimes  e^{-\frac{t}{2}}  dt  + \phi
\otimes \frac{t}{2} e^{-\frac{t}{2}}  \right)  \otimes dh_V] \,.
\end{aligned}
\end{equation}
is $\Gamma$-invariant and $U$-invariant [$V$-invariant] on
$PSL(2,\R)$, hence it projects to a $U$-invariant [$V$-invariant]
distribution on the manifold $\Gamma \backslash PSL(2,\R)$.
\end{lemma}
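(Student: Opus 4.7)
The plan is to obtain $\phi'$ by integrating the linear ODE that the Jordan block structure of the geodesic flow imposes on the $A$-factor of the lift to $PSL(2,\R)$ of the generalized eigenvector $D^-_{1/4}$ produced by Flaminio-Forni (Lemma~3.5 of \cite{FlaFo}, reproduced as~(\ref{eq:Jordanb})). The construction mirrors that of Lemma~\ref{lemma:invdistconfdist} for exponents $\sigma\neq 1/2$, except that the rank-one nilpotent part of the Jordan block contributes an extra term that is polynomial in the geodesic arc-length.

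First I would use Lemma~\ref{lemma:invdistconfdist} to identify the eigenvector $D^+_{1/4}$ on $\Gamma\backslash PSL(2,\R)$ with the distribution $D^\phi_U=\phi\otimes e^{t/2}\,dt\otimes dh^U$ associated with the given conformal distribution $\phi\in{}^\Gamma\mathcal D'_{1/2}(S^1)$. Next I would lift the generalized eigenvector $D^-_{1/4}$ to a $\Gamma$-invariant distribution on $PSL(2,\R)$; the $U$-invariance together with the $KAN$ decomposition forces a unique presentation $D^-_{1/4}=\psi\otimes dh^U$ with $\psi\in\mathcal D'(KA)\equiv\mathcal D'(S^1\times\R)$. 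Since the geodesic flow acts on $(\theta,t,s)$-coordinates by $(\theta,t,s)\mapsto(\theta,t+\tau,e^{-\tau}s)$, the commutation relation $\mathcal L_X(dh^U)=-dh^U$ combined with the Jordan block identity $\mathcal L_X D^-_{1/4}=-\tfrac12(D^-_{1/4}+D^+_{1/4})$ translates into the linear ODE $\partial_t\psi=\tfrac12\psi-\tfrac12\phi(\theta)e^{t/2}$ in $\mathcal D'(S^1)$. Substituting $u(\theta,t):=e^{-t/2}\psi(\theta,t)$ reduces this to $\partial_t u=-\tfrac12\phi(\theta)$, which integrates at once to $u(\theta,t)=\phi'(\theta)-\tfrac{t}{2}\phi(\theta)$, where $\phi':=\psi(\cdot,0)\in\mathcal D'(S^1)$; the trace at $t=0$ is well defined because the ODE forces $u$ to depend affinely on $t$. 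The resulting identity $\psi=e^{t/2}(\phi'-\tfrac{t}{2}\phi)$ is exactly the claimed decomposition of $D^{\phi,-}_U$. The $V$-invariant case is entirely parallel: the sign flip $\mathcal L_X(dh^V)=+dh^V$ coming from~(\ref{eq:commutflows}) accounts for the reversed sign in front of the $\phi$-term and for $e^{-t/2}$ replacing $e^{t/2}$.

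The $\Gamma$-invariance of $D^{\phi,-}_U$ is automatic from the construction, since by definition it is lifted from the quotient $\Gamma\backslash PSL(2,\R)$; at the level of boundary data this becomes a \emph{generalized} conformal transformation law for $\phi'$ in which the homogeneous rescaling by $\rho_g^{-1/2}$ is corrected by an additive term involving $\tfrac12\log\rho_g\cdot\phi$, reflecting the Jordan block. The main substantive point, and the only part that is not formal, is the H\"older regularity claim $\phi'\in C^{\alpha-1}(S^1)$ for every $\alpha<1/2$. For this I would invoke the H\"older regularity of the generalized eigenvector $D^-_{1/4}$ established in \cite{FlaFo},~\S3.2 and sharpened by Cosentino in \cite{Co}, which gives order $1/2^+$ on $\Gamma\backslash PSL(2,\R)$, and then transfer this ambient regularity to the trace on $\{t=0\}$ using the explicit affine dependence on $t$ dictated by the ODE. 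Verifying that this trace operation preserves the H\"older order up to the expected logarithmic loss is where the actual work lies; everything else is a consequence of the Jordan block algebra and of the correspondence of Lemma~\ref{lemma:invdistconfdist}.
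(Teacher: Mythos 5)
Your Jordan--block/ODE derivation of the \emph{shape} of $D^{\phi,-}_U$ is correct and is a genuinely different route from the paper's: the paper does not argue through the Flaminio--Forni generalized eigenvector at all, but quotes Cosentino \cite{Co}, where $\phi'$ is produced directly as the inverse Poisson--Helgason transform (boundary value) of the function $z\mapsto \langle \phi, P(z,\cdot)^{1/2}\log P(z,\cdot)\rangle$ on the Poincar\'e disk; the invariance identities and the regularity of $\phi'$ then come from the explicit kernel and from Otal's boundary-value theorem. Your computation that a $\Gamma$-invariant, $U$-invariant distribution $\psi\otimes dh^U$ satisfying $\mathcal L_X D^{-}=-\tfrac12(D^{-}+D^{\phi}_U)$ forces $\psi=e^{t/2}(\phi'-\tfrac t2\phi)$ is right (and the affine dependence on $t$ does make the trace $\phi':=e^{-t/2}\psi|_{t=0}$ well defined), so the invariance statements follow once $\phi'$ exists. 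One small caveat: Lemma~\ref{lemma:invdistconfdist} is stated only for $\sigma(1-\sigma)\neq 1/4$, so you should instead argue that $\phi\otimes e^{t/2}dt\otimes dh^U$ is a nonzero $X$-eigenvector of eigenvalue $-1/2$ and hence proportional to $D^+_{1/4}$ in each component; this is easily repaired.

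The genuine gap is the regularity claim $\phi'\in C^{\alpha-1}(S^1)$ for all $\alpha<1/2$, which is the only nontrivial analytic content of the lemma, and your proposed route to it does not work. The input you invoke --- Cosentino's H\"older regularity of order $1/2^{+}$ for $D^-_{1/4}$ --- is itself established in \cite{Co} precisely by constructing $\phi'$ explicitly (via the $P^{1/2}\log P$ kernel) and estimating it, so taking it as given is circular. The input that is genuinely independent of the present lemma, namely the Flaminio--Forni Sobolev bound $D^-_{1/4}\in W^{-s}$ for all $s>1/2$, is too weak: pairing $\psi\otimes dh^U$ against test functions $a(\theta)b(t)c(s)$ with $b$ chosen to kill the $\tfrac t2 e^{t/2}\phi$ term only yields $\phi'\in H^{-s}(S^1)$ for $s>1/2$, and on the circle $H^{-1/2-\epsilon}$ does not embed into $C^{\alpha-1}$ for $\alpha$ near $1/2$ (the derivative of an $H^{1/2-\epsilon}$ function need not be the derivative of a H\"older, or even bounded, function). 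So the ``trace transfer'' you defer is not a routine verification but the entire substance of the statement, and it cannot be completed from the inputs you allow yourself; some direct construction or estimate of $\phi'$, such as the boundary-value argument of \cite{Co}, is unavoidable.
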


The distribution $\phi' $ is constructed in \cite{Co} as the inverse
Poisson-Helgason transform (the boundary value) of the function on
the Poincar\'e disk $D$ given by the pairing of the distribution
$\phi \in ^\Gamma \mathcal D'_{1/2} (S^1) $ with the function on
$S^1$ defined as $P(z, \cdot)^{1/2} \log P(z, \cdot)$ in terms of
the Poisson kernel $P$ on $D\times S^1$.

\medskip
Let  $\phi\in \,^\Gamma \mathcal D'_{1/2} (S^1)$ be a
$\Gamma$-invariant conformal distribution of exponent $1/2$ and let
$D^{\phi,+}_U \in \mathcal I^{1/2}_U$ [ $D^{\phi,-}_V\in \mathcal
I^{1/2}_V$] be  the $U$-invariant [$V$-invariant] distribution
defined as in formula (\ref{eq:conicdist}).  A direct calculations
shows that formula~(\ref{eq:Jordanb}) holds for  the distributional
vector $(D^+_{1/4}, D^-_{1/4}):= (D^{\phi,+}_U, D^{\phi,-}_U)$
[formula~(\ref{eq:Jordanbbis}) holds for the distributional vector
$(\hat D^+_{1/4}, \hat D^-_{1/4}):= (D^{\phi,+}_V, D^{\phi,-}_V)$].

\medskip
\subsection{Proof of the correspondence (Theorem \ref{thm:gammaconformal}).} By Theorem
\ref{thm:duality} there exist $V$-invariant distributions $\hat
D^\pm_\mu$, which are eigenvectors of eigenvalues $-(1\pm \nu)/2$
for the geodesic flow, such that the following identities hold on
$SM$, hence on $PSL(2,\R)$:
$$
dt \wedge \hat \beta^\pm_\mu \wedge dh^V  =  \hat D^\pm_\mu \,.
$$
(We remark that  the distributions $\hat D^\pm_\mu$ are here
identified with currents of degree $3$ and dimension $0$, acting on
functions, and the $1$-dimensional finitely-additive measures $\beta^\pm_\mu$ to
currents of degree $1$ and dimension $2$, acting on $2$-forms).

By Lemma \ref{lemma:invdistconfdist}, it follows that, for $\mu\neq 1/4$,
there exist $\Gamma$-invariant conformal distributions $\phi^\pm_\mu$
of exponent $\sigma^\pm:= 1-(1\pm \nu)/2= (1\mp \nu)/2$ such that
$$
\hat D^\pm_\mu= dt \otimes \hat \beta^\pm_\mu \otimes dh^V=dt \wedge \hat
\beta^\pm_\mu \wedge dh^V = \phi^\pm_\mu \otimes e^{- \frac{1\mp
\nu}{2} t } dt \otimes dh^V\,;
$$
for $\mu=1/4$ there exist a $\Gamma$-invariant conformal distribution
$\phi$ of exponent $1/2$ and a distribution $\phi' \in C^{\alpha-1}(S^1)$,
for all $\alpha<1/2$,  such that
\begin{equation*}
\begin{aligned}
\hat D^+_{1/4}=dt \otimes \hat \beta^+_{1/4} \otimes dh^V&= \phi' \otimes e^{-\frac{t}{2}} dt \otimes dh^V
+  \phi \otimes \frac{t}{2} e^{- \frac{t}{2} } dt \otimes dh^V\,, \\
\hat D^-_{1/4}=dt \otimes \hat \beta^-_{1/4} \otimes dh^V&=
\phi \otimes  e^{- \frac{t}{2} } dt \otimes dh^V\,.
\end{aligned}
\end{equation*}

The statement of the theorem follows immediately.

\section{Proofs of Technical Lemmas.}
\label{sec:appendix}
\subsection{Outline of the section.}
In \S\S~\ref{keyestimates} we prove
the key estimate on coboundaries stated in Lemma~\ref{lemma:remainderextension}. In
\S\S~\ref{innerproducts} we compute the $L^2$  inner products of the additive
cocycles coming from a single irreducible components of the principal series and establish a
non-vanishing result (Lemma~\ref{lemma:innerproducts}). In \S\S~\ref{rotsym}  we prove a couple of  results on rotationally invariant measures (up to affine transformations) on complex Euclidean spaces (Lemma~\ref{lemma:genconstdisthighD} and Lemma~\ref{lemma:uniqueness} ). These results hold
for the probability distributions of additive cocycles coming from irreducible components of the principal series by the above-mentioned non-vanishing result (Lemma~\ref{lemma:innerproducts} of \S\S~\ref{innerproducts}) and are motivated by the conditional results on the existence of limit distributions (Corollary~\ref{cor:limitdistprincipal1} and Corollary~\ref{cor:limitdistprincipal2} in Section \ref{sec:Cocycles}).

\subsection{Estimates on coboundaries.}
\label{keyestimates}

We prove below the key Lemma~\ref{lemma:remainderextension} which is part of the proof
of Theorem~\ref{thm:hatbeta} in Section~\ref{sec:BasicCurrents}. The subspace  $\Ker ({\mathcal  I}^{-s}_U(H_\mu))$ is closed in $W^s(H_\mu)$ and we introduce the orthogonal projection
$$
P_\mu^{s}: W^s(H_\mu)\to \Ker({\mathcal  I}^{-s}_U(H_\mu))\,.
$$

As above let $\nu=\sqrt{ 1-4\mu}$. Let $s_{\mu}:= (1+\vert \re
{\nu}\vert)/2$, that is, $s_{\mu}=1/2(1+\nu)$ for $0<\mu<1/4$;
$s_{\mu}=1/2$ for $\mu\geq 1/4$ and $s_{\mu}=n$ for $\mu=-n^2+n$.

\begin{lemma}
\label{lemma:projbound} For any non-trivial irreducible unitary representation of
Casimir parameter $\mu \in \R\setminus\{0\}$ and for any $s\geq r >s_{\mu}$ there exists a
constant $C_{r,s}(\mu)>0$ such that the following holds. For any $f\in W^s(H_\mu)$ we have
$$
\Vert P_\mu^{s} f \Vert_r \leq C_{r,s}(\mu)\Vert f\Vert_r.
$$
For any $\mu_0>1/4$ and for all $s\geq r >1/2$, there exists a constant $C_{r,s}(\mu_0)>0$
such that,  for all $\mu\geq \mu_0$ and for all $f\in W^s(H_\mu)$,
$$
\Vert P_\mu^{s} f \Vert_r \leq C_{r,s}(\mu_0) \Vert f\Vert_r.
$$
\end{lemma}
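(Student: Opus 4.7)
The plan is to reduce the bound to a finite-dimensional linear algebra problem by exploiting the fact that, for $s>s_\mu$, the kernel $\ker(\mathcal{I}^{-s}_U(H_\mu))$ has finite codimension in $W^s(H_\mu)$, equal to the dimension of $\mathcal{I}^{-s}_U(H_\mu)$. By the Flaminio--Forni classification (\cite{FlaFo}, Theorem~3.2 and Lemma~3.5), that space has a fixed basis $\{D_i\}_{i=1}^{k}$ (with $k=2$ for $\mu\in\R^+$ and $k=1$ for the discrete series) as soon as $s>s_\mu$, so $\ker(\mathcal{I}^{-s}_U(H_\mu))$ coincides with the annihilator in $W^s(H_\mu)$ of this basis. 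I would then let $F_i^s\in W^s(H_\mu)$ be the Riesz representative of $D_i$ in $W^s(H_\mu)$, so that $\langle g,F_i^s\rangle_s = D_i(g)$ for all $g\in W^s(H_\mu)$, and note that the $W^s$-orthogonal complement of the kernel is precisely $V^s:=\mathrm{span}\{F_1^s,\dots,F_k^s\}$.

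Next, I would write $(I-P_\mu^s)f = \sum_j c_j F_j^s \in V^s$ with coefficients determined by the equations $D_i((I-P_\mu^s)f) = D_i(f)$, which amount to the Gram system $\sum_j G_{ij}^s\, c_j = D_i(f)$, where $G_{ij}^s = \langle F_i^s,F_j^s\rangle_s = \langle D_i,D_j\rangle_{-s}$. Solving this system yields $|c_j|\leq \|(G^s)^{-1}\|\,\max_i|D_i(f)|$, and since $r>s_\mu$ the distributions $D_i$ extend continuously to $W^r(H_\mu)$, so $|D_i(f)|\leq\|D_i\|_{-r}\|f\|_r$. Combining this with the trivial bound $\|F_j^s\|_r\leq\|F_j^s\|_s=\|D_j\|_{-s}$ (valid for $r\leq s$), one obtains
\[
\|(I-P_\mu^s) f\|_r \;\leq\; k\,\|(G^s)^{-1}\|\, \max_i\|D_i\|_{-r}\, \max_j\|D_j\|_{-s}\, \|f\|_r,
\]
and the desired estimate follows from $\|P_\mu^s f\|_r\leq \|f\|_r + \|(I-P_\mu^s)f\|_r$, with a constant $C_{r,s}(\mu)$ equal to $1$ plus the product above.

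For the uniform bound on the range $\mu\geq\mu_0>1/4$, one has $s_\mu=1/2$ and may normalize so that $\|D^\pm_\mu\|_{-s}$ is uniformly bounded above and below in $\mu\geq\mu_0$. The diagonal entries of $G^s$ are then uniform, and the key point is a uniform lower bound on the angle in $W^{-s}(H_\mu)$ between $D^+_\mu$ and $D^-_\mu$, equivalently on $\det G^s$. I would extract this from Lemma~\ref{lemma:distorsion}, transported from basic currents to invariant distributions via the isometric isomorphism $D\mapsto D\eta_U$ of Lemma~\ref{lemma:Isom}, noting that the original distortion bound in \cite{FlaFo} is proved precisely for $\mu\geq\mu_0>1/4$. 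This yields uniform control of $\|(G^s)^{-1}\|$, and the constant in the previous display becomes independent of $\mu$, giving the second assertion.

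The main obstacle is the uniform non-degeneracy of $G^s$ as $\mu\to\infty$: it is essential that the two invariant distributions $D^\pm_\mu$, which by \eqref{eq:eigenv} are eigenvectors of the geodesic flow with distinct eigenvalues whose real parts coincide for $\mu>1/4$, remain transverse in the Hilbert space $W^{-s}(H_\mu)$ in a $\mu$-uniform sense. This is genuinely a non-trivial spectral input and is the only place where the restriction $\mu\geq\mu_0>1/4$ is used; everything else in the argument is soft and uses only the Flaminio--Forni classification together with the Riesz representation theorem.
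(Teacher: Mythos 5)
Your proof is correct and follows essentially the same route as the paper: the paper also writes $P^s_\mu f = f - \sum_i D_i(f)\chi_i(s)$ with $\chi_i(s)$ a dual system to $\{D^+_\mu,D^-_\mu\}$ (resp.\ $\{D_\mu\}$) lying in $\bigl(\Ker(\mathcal I^{-s}_U(H_\mu))\bigr)^\perp\subset W^s(H_\mu)$ -- your Riesz representatives composed with the inverse Gram matrix are exactly these $\chi_i(s)$ -- and then bounds $\Vert P^s_\mu f\Vert_r$ using the continuity of $D_i$ on $W^r$ for $r>s_\mu$. The uniformity for $\mu\ge\mu_0>1/4$ is likewise obtained in the paper from the uniform distortion (angle) bound of \cite{FlaFo} on the pair $\{D^+_\mu,D^-_\mu\}$ in $W^{-r}$, which is precisely your control of $\Vert (G^s)^{-1}\Vert$.
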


\begin{proof} Consider first the case of the principal and the
complementary series. Let $\chi_\mu^{\pm}(s)\in ( \Ker({\mathcal
I}^{-s}_U(H_\mu)) ^s)^\perp \subset W^s(H_\mu)$ be functions such
that
\begin{equation}
\label{eq:dualbasis}
\begin{aligned}
D^+_\mu(\chi^+_\mu(s))&=D^-_\mu(\chi^-_\mu(s))=1\,, \\
D^+_\mu(\chi^-_\mu(s))&=D^-_\mu(\chi^+_\mu(s))=0 \,.
\end{aligned}
\end{equation}

Such functions exist since $\{D^+_\mu, D^-_\mu\}$ is a basis for
$\mathcal I^{-s}_U (H_\mu)$ and, by definition of the space $
\Ker({\mathcal  I}^{-s}_U(H_\mu)) $, the functionals $D^\pm_\mu$
induce linearly independent functionals on the $2$-dimensional
quotient space $W^s(H_\mu)/ \Ker({\mathcal  I}^{-s}_U(H_\mu)) $. In
fact, take a pair of functions $\{\chi^+, \chi^-\} \subset
W^s(H_\mu)$ which project onto a dual basis of $\{D^+_\mu,
D^-_\mu\}$ under the projection $W^s(H_\mu) \to
W^s(H_\mu)/\Ker({\mathcal  I}^{-s}_U(H_\mu)) $ and define
$\chi_\mu^{\pm}(s)$ as the orthogonal projections onto
$\Ker({\mathcal  I}^{-s}_U(H_\mu))^\perp \subset W^s(H_\mu)$ of
$\chi^\pm$ respectively. By construction we have
\begin{equation}
\label{pif} P_\mu^s
f=f-D_\mu^+(f)\chi_\mu^+(s)-D_\mu^-(f)\chi_\mu^-(s).
\end{equation}

Indeed, by formula (\ref{eq:dualbasis}) the right hand side of the
formula  clearly belongs to the kernel of both $D_\mu^{\pm}$ and,
besides, if $P_\mu^sf$ is defined by (\ref{pif}), then we clearly
have $f-P_\mu^s f \perp  \Ker({\mathcal  I}^{-s}_U(H_\mu)) $. It
follows that for any $s_\mu < r \leq s$ and for any function $f\in
W^s(H_\mu)$ the following bound holds:
$$
\Vert P_\mu^s f \Vert_r \leq \Vert f \Vert_r + \Vert
\chi^+_\mu(s)\Vert _r  \Vert D^+_\mu \Vert_{-r} \Vert f \Vert_r
+\Vert \chi^-_\mu(s)\Vert_r \Vert D^-_\mu \Vert _{-r}\Vert f
\Vert_r,
$$
and, since $\Vert \chi^{\pm}_\mu(s) \Vert _r<+\infty$ for any $
r\leq s $ and $D^{\pm}_\mu \in W^{-r}(H_\mu)$ for $r>s_{\mu}$
(\cite{FlaFo}, Theorem 1.1 or Theorem 3.2), the proof is complete
for the principal and the complementary series.

The proof for the discrete series is similar, in fact simpler since
in each irreducible component of the discrete series there is only one invariant distribution
${\mathcal  D}_\mu$ (up to constant factors), and we only need one smooth function $\chi_\mu(s)$
such that $D_\mu(\chi_\mu(s))=1$ and $\chi_\mu(s)\perp  \Ker({\mathcal
I}^{-s}_U(H_\mu)) $. We then write $P_\mu^s
f=f-D_\mu(f)\chi_\mu(s)$, and the rest of the proof is identical.

It remains to be proved that the family of projection operators $P^s_\mu$ extends
to a  uniformly bounded family of operators on $W^r(H_\mu)$ for $\mu \geq \mu_0 >1/4$.

As observed in the proof of Lemma 5.1 in \cite{FlaFo}, if
the Casimir parameter $\mu\geq \mu_0 >1/4$, for any $r>1$, the
{\it distorsion} in $W^{-r}(SM)$ of the system of distributions
$\{D^+_\mu, D^-_\mu\}$ stays bounded above (in other terms, the
angle in $W^{-r}(SM)$ between $D^+_\mu$ and $D^-_\mu$ stays bounded
below), and in fact this bound is uniform with respect to $r>1$.
Hence there exists a constant $C_{r,s}(\mu_0)>0$ such that, for all $r>1$,
$$
\Vert \chi^\pm _\mu (s)\Vert _r   \leq   C_{r,s}(\mu_0) / \Vert D^\pm_\mu
\Vert_{-r} \,.
$$
The above argument  then yields the uniform bound on the norm of the operators
$P_\mu^{s}: W^r(H_\mu) \to  \Ker({\mathcal I}^{-r}_U(H_\mu)) $ for $\mu\geq \mu_0>1/4$.
\end{proof}
\medskip

Let us recall that we have defined $\mathcal R^{-s}: \Omega_1^{-s}(SM)
\to \mathcal B^{-s}_U(SM)^\perp$ as the orthogonal projection onto
the orthogonal complement of the subspace of basic currents for the
stable horocycle. For any irreducible component $H_\mu \subset
L^2(SM)$, of Casimir parameter $\mu\in \R\setminus\{0\}$, we have defined
$\Pi^{-s}_\mu: \Omega_1^{-s}(SM) \to \Omega_1^{-s} (H_\mu)$ as the
orthogonal projection onto the corresponding Sobolev space. We have
then defined $\mathcal R^{-s}_\mu = \Pi^{-s}_\mu \circ \mathcal
R^{-s} : \Omega_1^{-s}(SM) \to \mathcal B^{-s}_U (H_\mu)^\perp$ as the
orthogonal projection onto the orthogonal complement of the subspace
of basic currents in $\Omega_1^{-s}(H_\mu)$.

\smallskip

\begin{proof}[Proof of Lemma \ref{lemma:remainderextension}]
The argument is similar to the proof Lemma 5.5 in \cite{FlaFo}.

By Hilbert space theory, any current $\mathcal R^{-s}\in {\mathcal
B}^{-s}_U(SM)^\perp \subset
 \Omega_1^{-s}(SM)$ (orthogonal to the subspace of basic currents) has the following property:
\begin{equation}
\label{eq:Rzero} \mathcal R^{-s} (\lambda) =0 \,,\quad \text{ for
any } \lambda \in [\Ker({\mathcal  B}^{-s}_U(H_\mu))] ^\perp\,.
\end{equation}
In fact, the Hilbert  space $\Omega_1^{-s}(H_\mu)$ is defined as the
dual space $\Omega_1^s(H_\mu)^\ast$, which in turn is isomorphic to
$\Omega_1^s(H_\mu)$. Thus we have
$$
  \Ker ( S ^{\perp})=[\Ker (S)] ^{\perp} \,, \quad \text{ \rm for any subspace }
S \subset \Omega_1^{-s}(H_\mu)\,.
$$
It follows that $\Ker [ \mathcal B^{-s}_U(H_\mu)^\perp] = [\Ker (
\mathcal B^{-s}_U(H_\mu))]^\perp\,, $ hence, in particular,
$$
{\mathcal  R}^{-s} \in \mathcal B^{-s}_U(H_\mu)^\perp = \Ker [
\Ker({\mathcal  B}^{-s}_U(H_\mu)) ]^\perp\,.
$$
By the characterization of basic currents for the stable horocycle
flow given by Lemma \ref{lemma:Isom}, the kernel $\Ker({\mathcal
B}^{-s}_U(H_\mu))$ can be described as follows:
\begin{equation}
 \Ker({\mathcal  B}^{-s}_U(H_\mu))  :=\{ \lambda= \lambda_X \hat X + \lambda_U \hat U +
\lambda_V \hat V \vert \lambda_U \in  \Ker({\mathcal
I}^{-s}_U(H_\mu)) \} \,.
\end{equation}
By the definition of the Hilbert structure of the space
$\Omega_1^s(SM)\equiv W^s(SM)^3$, it follows that the orthogonal
projection $\Pi^s_\mu : \Omega_1^s(H_\mu) \to \Ker({\mathcal
B}^{-s}_U (H_\mu))$ can be written in terms of the orthogonal
projection $P^s_\mu : W^s(H_\mu) \to \Ker({\mathcal
I}^{-s}_U(H_\mu))$:
\begin{equation}
\Pi^s_\mu( \lambda)=\Pi^s_\mu( \lambda_X \hat X + \lambda_U \hat U +
\lambda_V \hat V)= \lambda_X \hat X + P^s_\mu( \lambda_U) \hat U +
\lambda_V \hat V \,.
\end{equation}

For any $\lambda \in \Omega_1^s(H_\mu)$, since by definition $\mathcal
B^{-s}(\gamma)\circ \Pi^s_\mu=0$, by definition of $\mathcal
R^{-s}_\mu(\gamma)$ and by the vanishing established in formula
(\ref{eq:Rzero}),
\begin{equation}
\label{eq:Ridentity} \mathcal R^{-s}_\mu(\gamma)(\lambda) = (\mathcal
R^{-s}(\gamma) \circ \Pi^s_\mu) (\lambda) = \gamma( \Pi^s_\mu
(\lambda)) \,.
\end{equation}

By \cite{FlaFo}, Theorem 4.1, since $P_\mu^s (\lambda_U) \in
\Ker({\mathcal  I}^{-s}_U(H_\mu))$, there exists a unique solution
$f_\lambda \in W^r(H_\mu)$ (for all $r<s-1$) of the cohomological
equation
$$
U f_\lambda =P_\mu^s (\lambda_U).
$$
Moreover, for any $s_\mu< r \leq s$ and any $\rho < r-1$, we have
$f_\lambda \in W^{\rho}(H_\mu)$, and there exists a constant
$C_{\rho,r}$,  depending only on $ \rho, r$ such that  we have
\begin{equation}
\label{eq:CEest} \Vert f_\lambda \Vert _{\rho}\leq C_{\rho,r} \Vert
P_\mu^s( \lambda_U) \Vert _r \,.
\end{equation}
Thus by Lemma  \ref{lemma:projbound} we can conclude that  for all
Casimir parameters $\mu\not =0$,  for any $s_\mu< r \leq s$ and any
$\rho < r-1$,  there exists $C_{s,\rho,r}(\mu)>0$ such that
\begin{equation}
\label{eq:keyest} \Vert f_\lambda \Vert _{\rho}\leq
C_{s,\rho,r}(\mu)
 \Vert  \lambda_U \Vert _r \leq  C_{s,\rho,r}(\mu)  \Vert \lambda \Vert _r
\end{equation}
and that for any $\mu_0>1/4$ and for $1/2 < r \leq s$ and any
$\rho < r-1$  there exists a constant  $C_{s,\rho,r}(\mu_0)>0$ such that, for all
$\mu\geq \mu_0$,
\begin{equation}
\label{eq:keyestunif} \Vert f_\lambda \Vert _{\rho}\leq
C_{s,\rho,r}
 \Vert  \lambda_U \Vert _r \leq  C_{s,\rho,r} (\mu_0) \Vert \lambda \Vert _r\,.
\end{equation}

Let $x$, $y\in SM$ be the endpoints of the arc $\gamma$. By the
formula
$$
df_\lambda = Xf_\lambda \hat X + Uf_\lambda \hat U + V f_\lambda
\hat V \,,
$$
it follows that the following identity holds:
\begin{equation}
\label{eq:gammaprojid} \gamma( \Pi^s_\mu (\lambda)) = \gamma [
(\lambda_X-Xf_\lambda) \hat X +
  (\lambda_V-Vf_\lambda) \hat V] + f_\lambda(y)-f_\lambda(x)\,.
\end{equation}
The above identity yields the following estimate:
$$
\vert \gamma( \Pi^s_\mu (\lambda)) \vert  \leq ( \Vert \lambda_X
\Vert_{\infty} + \Vert \lambda_V \Vert_{\infty}  + \Vert df_\lambda
\Vert_\infty) \left( \int_\gamma \vert \hat X \vert +  \int_\gamma
\vert \hat V \vert\right) + 2 \Vert f_\lambda\Vert_\infty\,.
$$
By the Sobolev Embedding Theorem, for any $\rho \in (5/2,s-1)$,
there exists a positive constant $C_{\rho}(M)$ depending only on $M$
such that
$$
\Vert f_\lambda \Vert _\infty+  \Vert d f_\lambda \Vert _\infty \leq
C_{\rho}(M) \Vert f_\lambda \Vert_{\rho}.
$$
We thus obtain that for any $s\geq r >7/2$ and for all Casimir parameters $\mu>0$
 there esists a constant $C_{r,s}(\mu)>0$ such that,  for all $\lambda\in \Omega_1^s(H_\mu)$,
$$
\vert {\mathcal  R}_\mu^{-s}(\gamma)(\lambda) \vert =\vert \gamma(
\Pi^s_\mu (\lambda)) \vert
 \leq C_{r,s}(\mu) \left(1 + \int_\gamma \vert \hat X \vert +  \int_\gamma \vert \hat V \vert\right)
\Vert \lambda \Vert_r  \,;
$$
for any $s\geq r >7/2$ and for any $\mu_0>1/4$, there exists a constant $C_{r,s}(\mu_0)>0$ such that,
for all $\mu \geq \mu_0$ and for all $\lambda \in \Omega_1^s(SM)$,
$$
\vert {\mathcal  R}_\mu^{-s}(\gamma)(\lambda) \vert =\vert \gamma(
\Pi^s_\mu (\lambda)) \vert
 \leq C_{r,s}(\mu_0) \left(1 + \int_\gamma \vert \hat X \vert +  \int_\gamma \vert \hat V \vert\right)
\Vert \lambda \Vert_r  \,;
$$

Since $\Omega_1^s(H_\mu)$ is dense in $\Omega_1^r(H_\mu)$  for $s\geq
r$, it follows that the distribution ${\mathcal  R}_\mu^{-s}(\gamma)
\in \Omega_1^{-s}(H_\mu)$ has a unique continuous extension ${\mathcal
R}_\mu^{-s,-r}(\gamma)$ to the space $\Omega_1^r(H_\mu)$ such that
$$
\Vert {\mathcal  R}_\mu^{-s,-r}(\gamma)(\lambda) \Vert_{-r}  \leq
C_{r,s}(\mu) \left(1 + \int_\gamma \vert \hat X \vert +  \int_\gamma
\vert \hat V \vert \right)\,;
$$
for all $\mu_0 >1/4$, there exists a constant $C_{r,s}(\mu_0)>0$ such that for
all $\mu \geq \mu_0$,
\begin{equation}
\label{eq:Runifboundone}
\Vert {\mathcal  R}_\mu^{-s,-r}(\gamma)(\lambda) \Vert_{-r}  \leq
C_{r,s}(\mu_0) \left(1 + \int_\gamma \vert \hat X \vert +  \int_\gamma
\vert \hat V \vert \right)\,.
\end{equation}

Since $M$ is a compact hyperbolic surface, the Casimir operator  of the standard
unitary representation of $SL(2,\R)$ on $L^2(SM)$ has discrete spectrum, hence
 for any $s\geq r>7/2$ there is a constant $C_{r,s}>0$ (depending  only on $r,s$)
 such that,  for all Casimir parameters $\mu>0$ and for every rectifiable arc $\gamma$ in $SM$,
 we have
\begin{equation}
\label{eq:Runifboundtwo}
\Vert  {\mathcal  R}^{-s,-r}_\mu(\gamma) \Vert _{-r} \leq C_{r,s} \left(
1 + \int_\gamma \vert \hat X\vert+ \int_\gamma \vert \hat
V\vert\right)\,.
\end{equation}
Thus Lemma \ref{lemma:remainderextension} is completely proved.
\end{proof}

\subsection{Inner products of cocycles.}
\label{innerproducts}

Cocycles which belong to different irreducible components are orthogonal. We compute
below the inner product of the two complex-conjugate coycles which belong to a given
component principal series type and prove that it is non-zero.

\begin{lemma}
\label{lemma:innerproducts}
For any $\mu>1/4$ and any $T\in \R\setminus\{0\}$, the $L^2$ inner product
$$
<\beta^+_\mu(\cdot,T), \beta^-_\mu(\cdot,T)> \not = 0\,.
$$
\end{lemma}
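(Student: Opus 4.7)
\noindent\textit{Plan.} Since $\beta^-_\mu=\overline{\beta^+_\mu}$ on the principal series, setting $I(T):=\int_{SM}\bigl(\beta^+_\mu(x,T)\bigr)^2\,d\text{vol}(x)$ one has $\langle \beta^+_\mu(\cdot,T),\beta^-_\mu(\cdot,T)\rangle=I(T)$. Writing $\nu=i\upsilon$ with $\upsilon=\sqrt{4\mu-1}>0$, the geodesic scaling of Theorem~\ref{thm:cocycleproperties}, $\beta^+_\mu(g_{-t}x,Te^t)=e^{(1-i\upsilon)t/2}\beta^+_\mu(x,T)$, combined with the $g_{-t}$-invariance of $\text{vol}$, yields $I(Te^t)=e^{(1-i\upsilon)t}I(T)$, so $I(T)=I(1)\,T^{1-i\upsilon}$ for every $T>0$. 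The lemma therefore reduces to the single assertion $I(1)\ne 0$.

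I would argue by contradiction. Suppose $I(1)=0$, so $I(T)\equiv 0$. Since $D^+_\mu$ and $D^-_\mu=\overline{D^+_\mu}$ are $\C$-linearly independent eigen-distributions of the geodesic flow on $H_\mu$ (distinct eigenvalues), their restrictions show that the $\R$-linear map $D^+_\mu\colon H_\mu^\R\to\C$ on the real subspace is surjective; fix accordingly a real-valued $f\in C^\infty(H_\mu)$ with $D^+_\mu(f)\ne 0$. Applying Corollary~\ref{cor:mainprincip} at $T=1$ and using $\beta^-_\mu=\overline{\beta^+_\mu}$ together with $D^-_\mu(f)=\overline{D^+_\mu(f)}$ (from reality of $f$), one gets the uniform pointwise asymptotic
\[
e^{-t/2}\!\int_0^{e^t}\!f\circ h^U_\tau(x)\,d\tau=2\re\!\bigl[e^{-i\upsilon t/2}D^+_\mu(f)\,\beta^+_\mu(g_t x,1)\bigr]+O(e^{-\varepsilon_\mu t}).
\]
Squaring, integrating in $x$, and exploiting the $g_t$-invariance of $\text{vol}$ (so that $\|\beta^+_\mu(g_t\cdot,1)\|^2=\|\beta^+_\mu(\cdot,1)\|^2$ and $\int(\beta^+_\mu(g_tx,1))^2\,dx=I(1)$) yields
\[
e^{-t}\Bigl\|\!\int_0^{e^t}\!f\circ h^U_\tau\,d\tau\Bigr\|^2=2|D^+_\mu(f)|^2\|\beta^+_\mu(\cdot,1)\|^2+2\re\!\bigl[e^{-i\upsilon t}(D^+_\mu(f))^2 I(1)\bigr]+O(e^{-\varepsilon_\mu t}).
\]
Under the hypothesis $I(1)=0$ the oscillating term disappears, so with $F(S):=\int_0^S\!f\circ h^U_\tau d\tau$ the ratio $\|F(S)\|^2/S$ admits a limit as $S\to\infty$.

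The contradiction will come from an independent computation of $\|F(S)\|^2$ via the matrix coefficient $\phi(u):=\langle f\circ h^U_u,f\rangle$ and the identity $\|F(S)\|^2=2\int_0^S(S-u)\phi(u)\,du$. Using the classical matrix-coefficient asymptotics of the principal series $H_\mu$ -- most transparently encoded in Cosentino's description of $\phi^\pm_\mu$ as $\Gamma$-invariant conformal distributions of complex exponents $(1\mp i\upsilon)/2$ on the boundary (Theorem~\ref{thm:gammaconformal}) -- one has $\phi(u)=c_+(f)\,u^{-(1-i\upsilon)/2}+c_-(f)\,u^{-(1+i\upsilon)/2}+O(u^{-1/2-\delta})$ as $u\to+\infty$, with $c_\pm(f)=\overline{c_\mp(f)}$. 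Substituting and evaluating the Beta-type integrals $\int_0^1(1-v)v^{-s}\,dv$ produces an oscillating contribution to $\|F(S)\|^2$ of order $S^{1\pm i\upsilon}$ whose coefficient is a nonzero constant multiple of $(D^+_\mu(f))^2I(1)$. Forcing this term to vanish while $D^+_\mu(f)\ne 0$ is the contradiction required.

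\noindent\textit{Main obstacle.} The technical heart of the plan is identifying the oscillating $S^{1-i\upsilon}$ coefficient from the matrix-coefficient expansion as a nonzero constant multiple of $(D^+_\mu(f))^2I(1)$, which amounts to running the Kunze-Stein / Cosentino computation of $\phi(u)$ in each $K$-isotypic subspace of $H_\mu$ and tracking how the invariant distributions $D^\pm_\mu$ arise as boundary residues. Equivalently, one verifies direct consistency of the two asymptotic expansions of $S^{-1}\|F(S)\|^2$ (the one from Corollary~\ref{cor:mainprincip} and the one from the matrix-coefficient integration) and invokes the non-degeneracy of the conformal distribution $\phi^+_\mu$ to conclude that the oscillation cannot vanish whenever $D^+_\mu(f)\ne 0$.
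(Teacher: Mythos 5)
Your structural reductions are fine: the identity $\langle\beta^+_\mu(\cdot,T),\beta^-_\mu(\cdot,T)\rangle=\int_{SM}(\beta^+_\mu(\cdot,T))^2$, the geodesic-scaling relation $I(Te^t)=e^{(1-\nu)t}I(T)$, and the two-expansion strategy for $e^{-t}\Vert\int_0^{e^t}f\circ h^U_\tau\,d\tau\Vert^2$ all match what the paper does implicitly. But the lemma has essentially no content beyond the single non-vanishing you defer to the ``Main obstacle'' paragraph, and as stated your argument does not close. The contradiction you want requires an \emph{independent} proof that the oscillating $S^{1\pm i\upsilon}$ coefficient of $\Vert F(S)\Vert^2$ is nonzero; asserting that this coefficient ``is a nonzero constant multiple of $(D^+_\mu(f))^2I(1)$'' is circular, since under the hypothesis $I(1)=0$ that product vanishes and no contradiction results. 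What must actually be shown is that the residue $c_+(f)$ in the matrix-coefficient expansion is a nonzero multiple of $(D^+_\mu(f))^2$ with a nonvanishing Beta-type factor --- and that identification is precisely the computation you have not performed. Your proposed expansion also has the wrong exponents: for the principal series the horocycle correlation $\langle f\circ h^U_u,f\rangle$ decays like $u^{-(1\mp\nu)}=u^{-(1\mp i\upsilon)}$ (consistent with $\Vert F(S)\Vert^2\asymp S$), not like $u^{-(1\mp i\upsilon)/2}$, which would give $\Vert F(S)\Vert^2\asymp S^{3/2}$.

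The paper avoids all of this by a direct (not contradiction-based) explicit computation in the line model $L^2(\R,dx)$ of the principal series, where $d\pi_\mu(U)=d/dx$. It chooses $f^-$ to be a compactly supported bump with nonzero integral (so that $D^-_\mu(f^-)=c^-\int f^-\neq0$ while $D^+_\mu(f^-)=0$, because $D^+_\mu$ kills $C^\infty_0(\R)$) and $f^+=u_0-\frac{D^-_\mu(u_0)}{D^-_\mu(f^-)}f^-$ with $u_0(x)=(1+x^2)^{-(1+\nu)/2}$ the spherical vector. The double ergodic integral of $\langle u_0\circ h^U_\sigma,f^-\circ h^U_\tau\rangle$ is then an elementary explicit integral, and the coefficient of the $T^{1-\nu}e^{(1-\nu)t}$ term comes out to $-\frac{2(1-2\nu)}{\nu(1-\nu)}\frac{T^{1-\nu}}{D^-_\mu(u_0)}$, visibly nonzero. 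If you want to rescue your route, you would have to carry out exactly this kind of model computation to pin down $c_\pm(f)$ in terms of $D^\pm_\mu(f)$; at that point the contradiction scaffolding becomes unnecessary.
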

\begin{proof} The  computation proceeds as follows. Let $\{f^+,f^-\}\in C^\infty(H_\mu)$ be a pair of functions dual to the basis $\{D^+_\mu, D^-_\mu\}$ of invariant distributions, that is,
$$
D^+_\mu( f^+) = D^-_\mu(f^-) \not=0  \,,\quad  D^+_\mu( f^-) = D^-_\mu(f^+) =0\,.
$$
It follows from Corollary~\ref{cor:mainprincip}  that
\begin{equation}
\begin{aligned}
e^{-(1-\nu)t}  &< \int_0^{Te^t}  f^+ \circ h^U_\sigma d\sigma,  \int_0^{Te^t}
 f^- \circ h^U_\tau d\tau>   \\ -  &D^+_\mu( f^+) \overline{D^-_\mu(f^-)}
 <\beta^+_\mu(\cdot,T), \beta^-_\mu(\cdot,T)> =O (\exp(-\epsilon_\mu t))\,.
\end{aligned}
\end{equation}
Our goal is therefore to compute the asymptotics of the normalized inner product
$$
 \mathcal E_{T,t}(f^+,f^-) := e^{-(1-\nu)t}
 \int_0^{Te^t}   \int_0^{Te^t}    <f^+ \circ h^U_\sigma, f^- \circ h^U_\tau>  d\sigma d\tau\,.
$$
The above computation can be performed explicitly in the standard model for representations
of the principal series on the Hilbert space $L^2(\R, dx)$.

In this model, the horocycle flow is represented by the group of translations, hence its infinitesimal
generator is the operator $d/dx$. In general, the action of $SL(2,\R)$ on $L^2(\R,dx)$ for the irreducible
representation $\pi_\mu$ of the principal series is given by the formula:
$$
\pi_\mu[\begin{pmatrix} a & b \\ c  & d \end{pmatrix}]  f (x) :=  \vert cx+d\vert^{-(1+\nu)}
f( \frac{ax+b}{cx+d}) \,.
$$
Hence the derived representation $d\pi_\mu$ of the Lie algebra $\mathfrak{sl}(2,\R)$
of $SL(2,\R)$ is described by the following formulas:
\begin{equation}
\begin{aligned}
d\pi_\mu (U) &= \frac{d}{dx} \,, \\  d\pi_\mu (V) &=-x^2 \frac{d}{dx} - (1+\nu)x \,, \\
d\pi_\mu (X)&=  x \frac{d}{dx} + \frac{1+\nu}{2}\,.
\end{aligned}
\end{equation}
>From the above formulas one can also deduce the formula for the representation
of the generator $\Theta$ of the circle action on the unit tangent bundle. In fact,
since $\Theta= U-V$, we have
$$
d\pi_\mu (\Theta) =  (1+x^2) \frac{d}{dx} + (1+\nu)x \,.
$$
A calculus exercise shows that, up to normalization, the unique $\Theta$-invariant
function $u_0 \in C^\infty(H_\mu)$ is given in the representation model by the formula:
$$
u_0 (x) :=   \frac{1}{ (1+x^2)^{  \frac{1+\nu}{2} }}    \,, \quad x\in \R\,.
$$
It follows from the construction of the $U$-invariant distributions in \cite{FlaFo}, \S 3,
that the following holds:
$$
D^+_\mu ( u_0) \not =0\,.
$$
Given any function $f^-$ such that $D^+_\mu(f^-)=0$ and $D^-_\mu(f^-)\not=0$,
we can therefore choose
$$
f^+ := u_0 -  \frac{D^-_\mu(u_0)}{D^-_\mu(f^-)} f^- \,.
$$

We will choose $f^-$ to be represented by any function in $C_0^\infty(\R) \subset L^2(\R,dx)$
with non-zero integral over the real line. In order to justify this choice we remark that the
$U$-invariant distribution $D^-_\mu$ is an extension to $C^\infty(H_\mu)$ of the distribution
$\mathcal A \in {\mathcal D}' (\R)$ given by the average over the real line, while the $U$-invariant
distribution $D^+_\mu$ vanishes identically on the (non-dense!) subspace $C^\infty_0(\R) \subset
C^\infty(H_\mu)$. In fact, since $D^\pm_\mu$ are $U$-invariant and the average
$\mathcal A$ is $d\pi_\mu(U)$-invariant, there exists constants $c^\pm \in \R$ such that
$$
D^\pm_\mu = c^\pm \mathcal A   \quad \text{ \rm on } \, C_0^\infty(\R) \subset C^\infty(H_\mu)\,.
$$
However,   the following formulas hold in ${\mathcal D}' (\R)$:
$$
  d\pi_\mu (X) \mathcal A=  -\frac{1-\nu}{2} \mathcal A \,.
$$
It follows that  $c^+=0$, since $d\pi_\mu (X) D^+_\mu = -\frac{1+\nu}{2} D^+_\mu$,
and that $c^-\not=0$, that is, $D^-_\mu$ is, up to a non-zero multiplicative constant,
an extension of the distribution $\mathcal A$ to $C^\infty(H_\mu)$. Hence for any function
$f^-$ represented by a smooth real-valued function with compact support and non-zero integral,
$$
D^+_\mu (f^-) =0 \quad \text{ \rm and} \quad D^-_\mu(f^-) =  c^-\mathcal A(f^-) \not=0\,.
$$

We remark that by Corollary~\ref{cor:mainprincip} the normalized inner product
$$
 \mathcal E_{T,t}(f^-,f^-) := e^{-(1-\nu)t}  \Vert  \int_0^{Te^t}  f^- \circ h^U_\sigma d\sigma \Vert^2
$$
is given asymptotically by the following formula:
\begin{equation}
\label{eq:ff}
 \mathcal E_{T,t}(f^-,f^-)   - e^{\nu t} \vert D^-_\mu(f^-)\vert^2
 \Vert \beta^-_\mu(\cdot,T)\Vert^2  =O (\exp(-\epsilon_\mu t))\,.
\end{equation}

Our task is therefore reduced to estimate the integral
$$
 \int_0^{Te^t}   \int_0^{Te^t}   \int_\R  \frac{f^-(x+ \tau)}{ (1+(x+\sigma)^2)^{ \frac{1+\nu}{2}}}  dx
 d\sigma d\tau\,.
$$
By Fubini's and change of variables theorems, it is enough to compute the integral
\begin{equation}
\label{eq:ITx}
I_T(x) :=  \int_0^T \int_0^T   \frac{1}{(1+(x+\sigma-\tau)^2)^{ \frac{1+\nu}{2}}}  dx   \,.
\end{equation}
By explicit integration we obtain that as $T\to +\infty$ the function
$$
 T ^{-(1-\nu)} I_T(x) -  T^\nu  \int_{-T}^T \frac{du}{ (1+u^2)^{\frac{1+\nu}{2}} }- \frac{2}{1-\nu}   \to 0
$$
uniformly on compact intervals. By integration by parts the integral
$$
J_T := \int_{-T}^T \frac{du}{ (1+u^2)^{\frac{1+\nu}{2}}}
$$
satisfies the following formula:
$$
J_T=  \frac{2T}{(1+T^2)^{ \frac{1+\nu}{2}}} +
(1+\nu) J_T - (1+\nu) \int_{-T}^T \frac{du}{ (1+u^2)^{\frac{3+\nu}{2}}}\,.
$$
It follows immediately that
$$
J_T =   -\frac{1}{\nu}   \frac{2T}{(1+T^2)^{ \frac{1+\nu}{2}}}  + \frac{1+\nu}{\nu}
 \int_{-T}^T \frac{du}{ (1+u^2)^{\frac{3+\nu}{2}}}\,.
$$
Since the convergence in formula (\ref{eq:ITx}) is uniform on compact sets and
the improper integral
$$
I_\nu :=  \int_{-\infty}^\infty \frac{du}{ (1+u^2)^{\frac{3+\nu}{2}}} = \lim_{T\to +\infty}
 \int_{-T}^T \frac{du}{ (1+u^2)^{\frac{3+\nu}{2}}}
$$
is absolutely convergent, we obtain that the normalized integral
\begin{equation}
\mathcal E_{T,t}(u_0, f^-) := e^{-(1-\nu)t}  < \int_0^{Te^t}  u_0 \circ h^U_\sigma d\sigma,  \int_0^{Te^t}
 f^- \circ h^U_\tau d\tau>
\end{equation}
is given asymptotically (as $t\to +\infty$) by the following formula:
\begin{equation}
\label{eq:uf}
\mathcal E_{T,t}(u_0, f^-) + \frac{2(1-2\nu)}{\nu(1-\nu)} T^{1-\nu} D^-_\mu(f^-) -
 \frac{1+\nu}{\nu}  I_\nu T D^-_\mu(f^-) e^{\nu t} \to 0 \,.
 \end{equation}
 By definition we have
 $$
 \mathcal E_{T,t}(f^+, f^-) = \mathcal E_{T,t}(u_0, f^-) -  \frac{D^-_\mu(u_0)}{D^-_\mu(f^-)} \mathcal E_{T,t}(f^-, f^-)
 $$
Since the normalized inner product   $\mathcal E_{T,t}(f^+, f^-)$ converges, by formulas
(\ref{eq:ff}) and (\ref{eq:uf}) it follows that
\begin{equation}
\begin{aligned}
 \mathcal E_{T,t}(f^+, f^-)  &\to - \frac{2(1-2\nu)}{\nu(1-\nu)} T^{1-\nu} D^-_\mu(f^-)  \,; \\
 e^{-\nu t} \mathcal E_{T,t}(f^-, f^-)  & \to   \frac{1+\nu}{\nu}  I_\nu T
 \frac{D^-_\mu(f^-)^2}{D^-_\mu(u_0) }\,.
\end{aligned}
\end{equation}
We have thus proved the lemma and in addition we obtain the formulas:
\begin{equation}
\begin{aligned}
<\beta^+_\mu(\cdot,T),\beta^-_\mu(\cdot,T)>  &=
 - \frac{2(1-2\nu)}{\nu(1-\nu)} \frac{T^{1-\nu}}{D^-_\mu(u_0)}\,; \\
\Vert \beta^\pm_\mu(\cdot, T)\Vert^2 &=  \frac{1+\nu}{\nu}  \frac{I_\nu T}{D^-_\mu(u_0)} \,.
\end{aligned}
\end{equation}
\end{proof}

\subsection{Rotationally symmetric measures.}
\label{rotsym}

We prove below some elementary results about compactly supported measures on the complex plane
and on higher dimensional cartesian products of the complex plane. These results characterize
probability distributions with rotational symmetries up to an affine change of coordinates.

\begin{lemma}
\label{lemma:rotinv}
Let $\mu$ be a compactly supported Borel measure on the complex plane.
If for any $a<b \in \R$ there exists a constant $m(a,b)$ such that, for all $v \in \C\setminus\{0\}$,
$$
\mu \{  z \in \C \vert   a\leq  \frac{\re (z v)}{\vert v\vert}  <  b  \} = m(a,b)\,,
$$
then the measure $\mu$ on $\C$ is rotationally invariant.
\end{lemma}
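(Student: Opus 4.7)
The plan is to interpret the hypothesis as saying that every one-dimensional projection of $\mu$ produces the same marginal measure on $\R$, and then to invoke the Fourier transform to conclude rotational invariance.

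First I would parse the quantity $\re(zv)/|v|$. Writing $v = |v|e^{i\alpha}$ and $z = x+iy$, one has $\re(zv)/|v| = \re(ze^{i\alpha}) = x\cos\alpha - y\sin\alpha$, which is the projection $p_\alpha(z) := u_\alpha\cdot z$ of $z \in \R^2$ onto the unit vector $u_\alpha := (\cos\alpha, -\sin\alpha)$. The hypothesis therefore states that the pushforward measure $(p_\alpha)_*\mu$ assigns the same mass $m(a,b)$ to every half-open interval $[a,b)$, independently of $\alpha$. Since the collection of such intervals is a $\pi$-system generating the Borel $\sigma$-algebra on $\R$, a standard monotone class (Dynkin) argument gives a Borel measure $\nu$ on $\R$ such that $(p_\alpha)_*\mu = \nu$ for every $\alpha$.

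Next I would take Fourier transforms. Since $\mu$ is a finite (in fact compactly supported) Borel measure, its Fourier transform
\[
\hat\mu(\xi) := \int_{\C} e^{-i\xi\cdot z}\,d\mu(z), \qquad \xi \in \R^2,
\]
is continuous and determines $\mu$ uniquely. For any nonzero $\xi \in \R^2$, write $\xi = |\xi| u_\alpha$ for an appropriate $\alpha$; then by the change of variables formula together with the identity $(p_\alpha)_*\mu = \nu$,
\[
\hat\mu(\xi) \;=\; \int_{\C} e^{-i|\xi|\, u_\alpha\cdot z}\,d\mu(z) \;=\; \int_{\R} e^{-i|\xi|s}\,d\nu(s) \;=\; \hat\nu(|\xi|).
\]
Hence $\hat\mu(\xi)$ depends only on $|\xi|$, so $\hat\mu$ is invariant under rotations of $\R^2$, and by Fourier uniqueness $\mu$ itself is rotationally invariant.

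There is no real obstacle here; the only points requiring care are the passage from the family of half-open strips to the full pushforward measure $\nu$ (handled by a Dynkin system argument) and the appeal to Fourier uniqueness for finite Borel measures on $\R^2$, which is standard.
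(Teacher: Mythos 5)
Your proof is correct. It takes a genuinely different route from the paper's. You use the Cram\'er--Wold--type argument: all one-dimensional projections of $\mu$ have the same law $\nu$, hence $\hat\mu(\xi)=\hat\nu(|\xi|)$ is radial, and Fourier uniqueness for finite Borel measures gives rotational invariance. The paper instead argues via moments: equality in law of $\re(e^{i\theta}z)$ for all $\theta$ forces $\int_{\C} z^r\bar z^s\,d\mu=0$ whenever $r\neq s$, hence $\int f\circ R_\theta\,d\mu=\int f\,d\mu$ for real-analytic $f$ by power-series expansion, and one concludes by compact support together with density of polynomials in $C(K)$. The two arguments are of comparable length, but yours is slightly more robust: the moment method uses compact support in an essential way (both to justify the term-by-term integration of the power series and to invoke Stone--Weierstrass), whereas the Fourier argument needs only that $\mu$ is a finite Borel measure, so your proof in fact establishes the lemma without the compact-support hypothesis. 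The only steps requiring care in your write-up --- promoting equality on half-open intervals to equality of the pushforward measures via a $\pi$-system/Dynkin argument, and the identity $\xi\cdot z=|\xi|\,p_\alpha(z)$ for $\xi=|\xi|u_\alpha$ --- are both handled correctly.
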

\begin{proof}
It follows from the assumption that the probability distribution with respect to the measure $\mu$ on
$\C$ of the function
$$
\re (e^{i\theta} z) =   \frac{ e^{i\theta} z + e^{-i\theta} \bar z }{2}\,,
$$
does not depend  on $\theta \in \R$, hence for every $k\in \Z$ the integral
$$
\int_{\C}   (e^{i\theta} z + e^{-i\theta} \bar z)^k   d\mu\,,
$$
does not depend  on $\theta \in \R$ as well.  It follows that
$$
\int_{\C}  z^r {\bar z}^s d\mu = 0\, \quad \text{ \rm for all } \, r, s \in \N\,, \,\, r-s\not=0\,.
$$
Let $R_\theta$ be the rotation of angle $\theta\in \R$ on $\C$. For any real analytic
function $f$ on $\C$, we have by power series expansion
$$
\int_{\C} f \circ R_\theta\, d\mu =   \sum_{r \in \N}  \frac{1}{(r!)^2} \frac{\partial^2 f}{\partial z^r
\partial{\bar z}^r}(0)  \int_\C \vert z\vert^{2r}  = \int_{\C} f \, d\mu   \,.
$$
Since $\mu$ has compact support and polynomials are dense in the uniform topology on any
compact subset of the complex plane, the result follows.
\end{proof}

For any $A$, $B\in \R^+$ , let  $T_{A,B}: \C \to \C$ the affine map defined as follows:
$$
T_{A,B}(x,y) = (x/A, y/B) \,, \quad \text{ \rm for all }\, (x,y)\in \C\equiv\R^2 \,.
$$

\begin{lemma}
\label{lemma:genconstdist}
Let $\mu$ be a compactly supported Borel probability measure on the complex plane.
Assume that there exist $(A,B) \in (\R^+)^2$ such that for any $a$, $b \in \R$ there exists a constant
$m(a,b)$ such that, for all $\theta \in \R$,
$$
\mu \{  z \in \C \vert   a\leq  \frac{\re (e^{i\theta} z )}{ (A^2\cos^2\theta +B^2\sin^2\theta)^{1/2}}  <  b  \}
= m_{A,B}(a,b)\,.
$$
Then the measure $\mu$ has the following form: there exists a rotationally invariant measure
$\rho$ on $\C$ such that
$$
\mu =  (T_{A,B}^{-1})_\ast (\rho)\,.
$$
\end{lemma}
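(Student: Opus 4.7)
The plan is to reduce to the previous Lemma~\ref{lemma:rotinv} by pushing forward the measure $\mu$ through the affine map $T_{A,B}$. First I would define $\rho := (T_{A,B})_{\ast}\mu$, which is still a compactly supported Borel probability measure on $\C$ since $T_{A,B}$ is a homeomorphism. Once $\rho$ is shown to be rotationally invariant, the relation $\mu = (T_{A,B}^{-1})_{\ast}\rho$ follows tautologically from $T_{A,B}^{-1}\circ T_{A,B}=\mathrm{id}$, which is exactly the form required in the conclusion.

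To check that $\rho$ is rotationally invariant, I would verify the hypothesis of Lemma~\ref{lemma:rotinv} for $\rho$. For every unit vector $v = \cos\psi + i\sin\psi$, unwinding the definition of the pushforward shows that the inequality $a\le \re(wv)<b$ for $w = T_{A,B}(z)$, $z=x+iy$, translates into the condition $a \le x\cos\psi/A - y\sin\psi/B < b$. Hence the task reduces to showing that the $\mu$-measure of the slab
\begin{equation*}
S_\psi(a,b) := \Bigl\{(x,y)\in\R^2 \,:\, a \le \tfrac{x\cos\psi}{A} - \tfrac{y\sin\psi}{B} < b\Bigr\}
\end{equation*}
is independent of $\psi$.

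The key step is to realize $S_\psi(a,b)$ as one of the slabs appearing in the hypothesis. Given $\psi\in[0,2\pi)$, I would set $K := (\cos^2\psi/A^2 + \sin^2\psi/B^2)^{-1/2}$ and define $\theta \in [0,2\pi)$ by $\cos\theta = K\cos\psi/A$ and $\sin\theta = K\sin\psi/B$. The identity $\cos^2\theta+\sin^2\theta=1$ forces this $K$, and an elementary calculation then gives $K^2 = A^2\cos^2\theta + B^2\sin^2\theta$, so that
\begin{equation*}
\frac{x\cos\psi}{A} - \frac{y\sin\psi}{B} = \frac{x\cos\theta - y\sin\theta}{\sqrt{A^2\cos^2\theta + B^2\sin^2\theta}} = \frac{\re(e^{i\theta}z)}{\sqrt{A^2\cos^2\theta + B^2\sin^2\theta}}\,.
\end{equation*}
Since $\psi \mapsto (K\cos\psi/A,\,K\sin\psi/B)$ traces the unit circle exactly once as $\psi$ varies over $[0,2\pi)$, the correspondence $\psi\mapsto\theta$ is a bijection, and the hypothesis applied at angle $\theta$ yields $\mu(S_\psi(a,b)) = m_{A,B}(a,b)$ independently of $\psi$. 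Lemma~\ref{lemma:rotinv} then applies to $\rho$, which completes the proof.

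The only step that requires genuine care is the change of parameter $\psi\mapsto\theta$: one must verify that the anisotropic normalization $K$ coming from the transformation $T_{A,B}$ really equals the factor $(A^2\cos^2\theta+B^2\sin^2\theta)^{1/2}$ prescribed by the hypothesis. All remaining steps are formal manipulations with pushforward measures, and the compact-support assumption on $\mu$ automatically passes to $\rho$, keeping Lemma~\ref{lemma:rotinv} available.
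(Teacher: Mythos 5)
Your proof is correct and takes essentially the same approach as the paper: push $\mu$ forward by $T_{A,B}$ and verify the hypothesis of Lemma~\ref{lemma:rotinv} for $\rho=(T_{A,B})_\ast\mu$ by matching each unit direction $\psi$ with an angle $\theta$ of the hypothesis. The paper parametrizes the very same correspondence in the opposite direction, via the unit vectors $v_{A,B}(\theta)=(A\cos\theta,B\sin\theta)/(A^2\cos^2\theta+B^2\sin^2\theta)^{1/2}$, so the two arguments are identical in substance.
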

\begin{proof}
We claim that, by Lemma~\ref{lemma:rotinv}, the measure $\rho := (T_{A,B})_\ast \mu$
is rotationally invariant. In fact, for any $\theta\in \R$, let $v_{A,B}(\theta)\in \C$ be the unit complex
number defined as follows:
$$
v_{A,B}(\theta):=  \frac{(A\cos\theta, B\sin\theta)}{(A^2\sin\theta +B^2\cos^2\theta)^{1/2}}\,.
$$
A straightforward calculation yields that, for any $a<b$ and for all $\theta\in \R$,
\begin{equation}
\begin{aligned}
&\rho \{  z \in \C \vert  a\leq  \frac{\re [z v_{A,B}(\theta)]}{\vert v_{A,B}(\theta)\vert}  <  b  \}  \\
&=\mu \{  z \in \C \vert   a\leq  \frac{\re (e^{i\theta} z )}{ (A^2\cos^2\theta +B^2\sin^2\theta)^{1/2}}  <  b  \}  \,.
\end{aligned}
\end{equation}
Since the family $\{v_{A,B}(\theta) \vert \theta\in \R\}$ coincides with $\{ v\in \C\vert \vert v\vert=1\}$,
it follows that
$$
\rho \{  z \in \C \vert  a\leq  \frac{\re (z v)}{\vert v\vert}  <  b  \} = m_{A,B}(a,b),
$$
hence $\rho$ is rotationally invariant by Lemma~\ref{lemma:rotinv}, as claimed.
\end{proof}

\begin{lemma} Let $\beta: (X,\mu) \to \C$ be a bounded measurable function on the probability
space $(X,\mu)$. The family of  real-valued functions
$$
\left\{ \frac{ \re (e^{i\theta} \beta)}{\Vert \re (e^{i\theta} \beta) \Vert } \vert \theta \in \R\right\}
$$
has a constant probability distribution if and only if there exists an affine map $T:\C \to \C$
such that the function $T\circ \beta:X \to \C$ has a rotationally invariant probability distribution.
\end{lemma}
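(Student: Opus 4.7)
The plan is to reduce both directions to Lemma~\ref{lemma:genconstdist}, whose proof is in fact an iff: for a probability measure $\nu$ on $\C$ and constants $A,B>0$, the family $\{\re(e^{i\phi}z)/(A^2\cos^2\phi+B^2\sin^2\phi)^{1/2}\}_\phi$ on $(\C,\nu)$ has constant distribution if and only if $(T_{A,B})_*\nu$ is rotationally invariant. Throughout I assume $\beta$ has zero mean, $\int\beta\,d\mu=0$; in the paper's application this is guaranteed by the Ergodic Theorem (see Remark~\ref{rk:nonzero}).

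First I put $\beta$ into canonical form. With $X=\re\beta$, $Y=\im\beta$, and $a=\Vert X\Vert^2$, $b=\Vert Y\Vert^2$, $c=\langle X,Y\rangle$, direct expansion gives
\begin{equation*}
\Vert\re(e^{i\theta}\beta)\Vert^2=\tfrac{a+b}{2}+\tfrac{a-b}{2}\cos 2\theta-c\sin 2\theta,
\end{equation*}
and an angle $\theta_\beta$ diagonalizing this quadratic form (the rotation $R_\beta$ from the paper's definition of $T_\beta$) yields $\beta':=e^{i\theta_\beta}\beta$ with $L^2(\mu)$-orthogonal real and imaginary parts of norms $A_\beta$ and $B_\beta$. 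Consequently $\Vert\re(e^{i\phi}\beta')\Vert^2=A_\beta^2\cos^2\phi+B_\beta^2\sin^2\phi$, and the reparametrization $\phi=\theta-\theta_\beta$ identifies setwise the families $\{\re(e^{i\theta}\beta)/\Vert\cdot\Vert\}_\theta$ and $\{\re(e^{i\phi}\beta')/\Vert\cdot\Vert\}_\phi$.

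The forward direction is then immediate: applied to $\nu=\beta'_*\mu$ with $A=A_\beta$, $B=B_\beta$, Lemma~\ref{lemma:genconstdist} converts the constant-distribution hypothesis into the rotational invariance of $(T_{A_\beta,B_\beta})_*\nu=(T_\beta\circ\beta)_*\mu$, where $T_\beta:=T_{A_\beta,B_\beta}\circ R_\beta$ is the desired affine (in fact, linear) map.

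For the reverse direction, suppose an affine $T:\C\to\C$ makes $(T\circ\beta)_*\mu$ rotationally invariant. Via the polar decomposition of its $\R$-linear part, write $T=R_1\circ D\circ R_2+c_0$ with $R_1,R_2$ rotations, $D=\operatorname{diag}(\lambda_1,\lambda_2)$, and $c_0\in\C$. Since a rotationally invariant measure on $\C$ has zero mean and $\beta$ is centered, $c_0=0$; the outer rotation $R_1$ preserves rotational invariance, so $(D\circ R_2\circ\beta)_*\mu$ is rotationally invariant. Equating the second moments on the two coordinate axes and enforcing a vanishing cross moment forces $\langle\re(R_2\beta),\im(R_2\beta)\rangle=0$ and $\lambda_1\Vert\re(R_2\beta)\Vert=\lambda_2\Vert\im(R_2\beta)\Vert$; the first equation shows that $R_2$ diagonalizes the quadratic form of the preliminary step (so $R_2=R_\beta$ up to a swap by $R_{\pi/2}$ that can be absorbed into $R_1$), and the second pins down $\lambda_1:\lambda_2=B_\beta:A_\beta$, yielding $D=\kappa\,T_{A_\beta,B_\beta}$ with $\kappa=\lambda_1 A_\beta$. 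Since scaling preserves rotational invariance, $(T_\beta\circ\beta)_*\mu$ is rotationally invariant, and the preliminary reduction then yields the constant distribution of the family. The main obstacle lies in this final matching; the implicit nondegeneracy $B_\beta>0$ (in the application assured by $\int\beta^2\neq 0$ via Lemma~\ref{lemma:innerproducts}) is needed for $T_{A_\beta,B_\beta}$ to be defined, while the degenerate rotated-real case $B_\beta=0$ would require invertibility of $T$ for the equivalence to remain meaningful.
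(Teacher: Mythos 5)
Your proof is correct and follows the paper's own route: compose with the rotation $R_\beta$ so that the real and imaginary parts of $\beta'=e^{i\theta_\beta}\beta$ are $L^2$-orthogonal, observe that then $\Vert \re(e^{i\phi}\beta')\Vert^2=A_\beta^2\cos^2\phi+B_\beta^2\sin^2\phi$, and invoke Lemma~\ref{lemma:genconstdist} (read, as you note, as an equivalence). The paper's proof is exactly this three-line reduction, so your additional work in the reverse direction pinning down $T$ up to rotation and scaling is more than is needed (once \emph{some} affine $T$ with invertible linear part and $c_0=0$ sends $\beta$ to a rotationally invariant law, constancy of the family follows directly from the trivial converse of Lemma~\ref{lemma:rotinv}), though it is sound; and your caveats on the zero mean of $\beta$ and the invertibility of $T$ identify genuine hypotheses that the statement leaves implicit and the paper's proof does not address.
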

\begin{proof} Up to composition with a rotation of the complex plane it is possible to assume
that
$$
\int_ X (\re\beta) (\im \beta) d\mu = \frac{1}{2} \im (\int_X \beta^2 d\mu) =0 \,.
$$
Under that assumption, it follows that
$$
\Vert \re( e^{i\theta} \beta) \Vert^2 = \Vert \re \beta\Vert^2  \cos^2\theta  +
\Vert \im \beta\Vert^2 \sin^2\theta  \,.
$$
Thus the statement follows from Lemma~\ref{lemma:genconstdist}.
\end{proof}

The above results generalize to functions with values in higher dimensional complex
spaces. In fact, the following holds:

\begin{lemma}
\label{lemma:rotinvhighD}
Let $\mu$ be a compactly supported Borel probability measure on $\C^n$.
Assume that for any $a$, $b \in \R$ there exists a constant
$m(a,b)$ such that, for all $v\in \C^n\setminus\{0\}$,
$$
\mu \{  z \in \C \vert   a\leq  \frac{\re( z\cdot v)}{\vert v\vert }  <  b  \} = m(a,b)\,,
$$
then  $\mu$ is rotationally invariant, that is, it is invariant under the action of the
orthogonal group $SO(2n,\R)$ on $\C^n\equiv \R^{2n}$.
\end{lemma}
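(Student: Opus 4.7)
The plan is to mirror the proof of Lemma~\ref{lemma:rotinv}, replacing the complex one-variable moment computation by a $2n$-dimensional polarization argument. First, identify $\C^n$ with $\R^{2n}$ via $z_j = x_j + iy_j$. For $v \in \C^n$ with $v_j = a_j + ib_j$, the real-linear functional $L_v(z) := \re(z\cdot v) = \sum_j (a_j x_j - b_j y_j)$ has operator norm equal to $|v|$ under the standard Euclidean norm on $\R^{2n}$, and every real-linear functional on $\R^{2n}$ arises in this form. The hypothesis therefore amounts to the statement that the pushforward $(L_v)_\ast \mu$ on $\R$ depends only on $|v|$.

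Consequently, for each $k \in \N$ and each $v \neq 0$, the $k$-th moment
$$
M_k(v) := \int_{\C^n} \re(z \cdot v)^k \, d\mu(z)
$$
depends only on $|v|$. Since $v \mapsto M_k(v)$ is also a real homogeneous polynomial of degree $k$ in the $2n$ real coordinates of $v$, it must equal $c_k |v|^k$ for some constant $c_k \in \R$. In particular, for every orthogonal transformation $R \in SO(2n,\R)$ acting on $\R^{2n}$, the substitution $v \mapsto R^T v$ yields
$$
\int L_v(Rz)^k \, d\mu(z) \;=\; M_k(R^T v) \;=\; M_k(v) \;=\; \int L_v(z)^k \, d\mu(z)
$$
for every $v \in \C^n$ and every $k \in \N$.

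Next, I invoke the standard polarization identity, which expresses a product $L_1 \cdots L_k$ of real-linear functionals as a linear combination of $k$-th powers of real-linear functionals. Since every polynomial on $\R^{2n}$ is a linear combination of monomials in such functionals, this upgrades the previous identity to $\int p(Rz) \, d\mu(z) = \int p(z) \, d\mu(z)$ for every real polynomial $p$ on $\R^{2n}$ and every $R \in SO(2n,\R)$. Because $\mu$ is compactly supported, Stone--Weierstrass (equivalently, density of polynomials in the uniform topology on any compact subset of $\R^{2n}$) extends this to all continuous bounded $p$, so $R_\ast \mu = \mu$ for every $R \in SO(2n,\R)$, as claimed. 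The only real delicacy is the bookkeeping in the first step, matching unit vectors in the Hermitian norm on $\C^n$ with unit real-linear functionals on $\R^{2n}$; once this identification is in place, the structure of the $1$-dimensional argument in Lemma~\ref{lemma:rotinv} carries through verbatim.
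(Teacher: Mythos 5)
Your proof is correct and follows essentially the same route as the paper's: both arguments rest on the observation that the moments of the one-dimensional marginals $\re(z\cdot v)$, which by hypothesis depend only on $\vert v\vert$, determine all mixed moments of $\mu$ via polarization, and then conclude using compact support and the density of polynomials. The paper packages this as a uniqueness statement (the measure is the unique one with the prescribed, rotationally invariant family of marginals), whereas you verify $R_\ast\mu=\mu$ directly on polynomial test functions; the difference is purely presentational.
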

\begin{proof}
We claim that there exists a compactly supported probability measure $m$ on the real line such
that the constants $m(a,b) = m\{x\in \R\vert a\leq x<b\}$ and that measure $\mu$ is uniquely
determined by the measure $m$ on the real line. In fact, the measure $m$ is just the probability
distribution of the function $ \re (z\cdot v)/\vert v\vert$ for any given $v\in \C^n\setminus\{0\}$.
The uniqueness of the measure $\mu$ follows from the fact the computations of the moments
of the probability distribution $m$ on $\R$ yields the values of all integrals of the form
$$
\int_{\C^n}  z^\alpha \bar z^\beta \,d\mu
$$
(in terms of binomial coefficients and of the moments of the probability measure $m$).
Since $\mu$ has compact support and polynomials are dense in the space of continuous
functions on compact sets, the uniqueness follows.

Finally, the measure $\mu$ is rotationally invariant as it is the unique measure which
satisfies a rotationally invariant condition.

\end{proof}

For any $(A,B)\in (\R^+)^n$ , let  $T_{A,B}: \C^n \to \C^n$ the invertible affine map defined as follows:
for all $(x_1,y_1, \dots, x_n,y_n) \in \C^n\equiv\R^{2n}$,
$$
T_{A,B}(x_1,y_1, \dots, x_n,y_n) = (x_1/A_1,y_1/ B_1, \dots, x_n/A_n, y_n/B_n) \,.
$$

\begin{lemma}
\label{lemma:genconstdisthighD}
Let $\mu$ be a compactly supported Borel probability measure on $\C^n$.
Assume that there exists $(A, B)\in (\R^+)^n \times (\R^+)^n $ such that  for any $a$, $b \in \R$ there exists a constant $m_{A,B}(a,b)$ such that, for all $(r,\theta) \in (\R^+)^n \times \R^n$,
$$
\mu \{  z \in \C^n \vert   a\leq  \frac{\re (\sum_{s=1}^n r_se^{i\theta_s} z_s )}{ \left[ \sum_{s=1}^n r_s^2
(A_s^2\cos^2\theta_s+B_s^2\sin^2\theta_s) \right]^{1/2}}  <  b  \}
= m_{A,B}(a,b)\,.
$$
Then the measure $\mu$ has the following form: there exists a rotationally invariant measure
$\rho$ on $\C^n$ such that
$$
\mu =  (T_{A,B}^{-1})_\ast (\rho)\,.
$$
\end{lemma}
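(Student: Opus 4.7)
The plan is to reduce the statement to Lemma~\ref{lemma:rotinvhighD} via the push-forward measure $\rho := (T_{A,B})_\ast \mu$, in exact parallel with the way Lemma~\ref{lemma:genconstdist} was reduced to Lemma~\ref{lemma:rotinv}. It suffices to verify that for every $v \in \C^n\setminus\{0\}$ and every $a<b$ in $\R$, the quantity $\rho\{w \in \C^n \vert a \leq \re(w\cdot v)/\vert v\vert < b\}$ depends only on $a$ and $b$; Lemma~\ref{lemma:rotinvhighD} will then give rotational invariance of $\rho$, and hence $\mu = (T_{A,B}^{-1})_\ast \rho$ as asserted.

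The key step is a short change-of-variables computation. Writing $v_s = u_s + i w_s$ and $z_s = x_s + i y_s$, and using the definition of $T_{A,B}$, one gets
\[
\re\bigl(T_{A,B}(z)\cdot v\bigr) \;=\; \sum_{s=1}^n \Bigl(\tfrac{u_s}{A_s}\, x_s \;-\; \tfrac{w_s}{B_s}\, y_s\Bigr).
\]
Given $v \in \C^n$ with all $v_s \neq 0$, I would set $r_s := \sqrt{(u_s/A_s)^2 + (w_s/B_s)^2}$ and choose $\theta_s$ so that $r_s\cos\theta_s = u_s/A_s$ and $r_s\sin\theta_s = w_s/B_s$. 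A direct calculation then yields
\[
\re\Bigl(\sum_{s=1}^n r_s e^{i\theta_s} z_s\Bigr) \;=\; \sum_{s=1}^n r_s(\cos\theta_s\, x_s - \sin\theta_s\, y_s) \;=\; \re\bigl(T_{A,B}(z)\cdot v\bigr),
\]
together with the identity $\sum_s r_s^2 (A_s^2\cos^2\theta_s + B_s^2\sin^2\theta_s) = \sum_s(u_s^2+w_s^2) = \vert v\vert^2$. Combining these with the definition of the push-forward $\rho = (T_{A,B})_\ast \mu$ and applying the hypothesis to this specific choice of $(r,\theta)$ gives
\[
\rho\Bigl\{w : a \leq \tfrac{\re(w\cdot v)}{\vert v\vert} < b\Bigr\} \;=\; \mu\Bigl\{z : a \leq \tfrac{\re(T_{A,B}(z)\cdot v)}{\vert v\vert} < b\Bigr\} \;=\; m_{A,B}(a,b),
\]
which is independent of $v$ as required.

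The only mild subtlety, and the sole obstacle worth flagging, is that the prescription $r_s > 0$ forces $v_s \neq 0$ for every $s$, so vectors $v \in \C^n\setminus\{0\}$ with some vanishing components are not directly produced by the above substitution. These are handled by a routine continuity argument: approximate $v$ by a sequence $v^{(k)} \in (\C\setminus\{0\})^n$ with $v^{(k)} \to v$, and pass to the limit in the distribution function $a \mapsto \rho\{w : \re(w\cdot v^{(k)})/\vert v^{(k)}\vert < a\}$, which is legitimate at all continuity points of the limiting distribution (a set whose complement is countable, hence negligible for the purposes of identifying $m_{A,B}$). Once the hypothesis of Lemma~\ref{lemma:rotinvhighD} is verified for all $v \in \C^n\setminus\{0\}$, rotational invariance of $\rho$ follows, and the conclusion $\mu = (T_{A,B}^{-1})_\ast \rho$ is immediate from the invertibility of $T_{A,B}$.
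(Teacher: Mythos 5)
Your proof is correct and follows exactly the route the paper intends: the paper states this lemma without proof as the $n$-dimensional analogue of Lemma~\ref{lemma:genconstdist}, and your argument is precisely the generalization of that proof, reducing to Lemma~\ref{lemma:rotinvhighD} via the push-forward $\rho=(T_{A,B})_\ast\mu$ and the change of variables $r_se^{i\theta_s}\leftrightarrow v_s$. Your explicit treatment of the vectors $v$ with vanishing components (by approximation at continuity points of the distribution function) is a genuine, correctly handled detail that the paper glosses over.
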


In fact, a stronger result holds. The key step is given by the following result.

\begin{lemma}
\label{lemma:uniqueness}

Let $\mu$ be a compactly supported Borel probability measure on $\C^n$. Let  $(A,B)\in (\R^+)^n \times(\R^+)^n$ be a pair of vectors such that
$$
A_1\not =B_1, \quad \dots, \quad A_n\not =B_n \,.
$$
If there exist distinct integral vector $v^{(1)}, \dots, v^{(n)} \in \Z^d$ ($d\geq 1$) such that  the probability distribution $m_{A,B,r,\theta}$ of the function
$$
  \frac{\re (\sum_{s=1}^n r_se^{i<v^{(s)},\theta>} z_s )}{ \left[ \sum_{s=1}^n r_s^2
(A_s^2\cos^2 <v^{(s)},\theta>+B_s^2\sin^2 <v^{(s)},\theta>) \right]^{1/2}} \,,
$$
defined on the probability space $(\C^n,\mu)$, is independent of $\theta\in \T^d$, for any given
$r\in (\R^+)^n$, then it is also independent of $r\in \R^n\setminus\{0\}$, hence there exists a probability distribution $m_{A,B}$ on the real line such that
$$
m_{A,B,r,\theta} = m_{A,B} \,, \quad \text{ for all } (r,\theta) \in \R^n\setminus\{0\} \times \T^d\,.
$$
\end{lemma}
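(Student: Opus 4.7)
The approach is the moment method combined with Fourier analysis on $\T^{d}$. Since $\mu$ has compact support and $N_{r,\theta}\ge c|r|>0$ uniformly (because $A_s,B_s>0$), the measures $m_{A,B,r,\theta}$ are uniformly compactly supported on the real line, hence determined by their moments
\[
M_k(r):=\int_{\C^n}\Bigl(\frac{F_{r,\theta}(z)}{N_{r,\theta}}\Bigr)^{k}d\mu(z),\qquad F_{r,\theta}(z):=\re\Bigl(\sum_{s=1}^{n}r_s e^{i\langle v^{(s)},\theta\rangle}z_s\Bigr),
\]
which by hypothesis are $\theta$-independent for each $r\in(\R^+)^n$. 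Since $F_{r,\theta}/N_{r,\theta}$ is homogeneous of degree zero in $r$, one already has $M_k(cr)=M_k(r)$ for $c>0$, and the task reduces to showing $M_k$ is actually constant in $r$.

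Set $I_k(r,\theta):=\int F_{r,\theta}^{k}d\mu$. The hypothesis gives the identity $I_k(r,\theta)=M_k(r)N_{r,\theta}^{k}$ for all $\theta$, which for even $k=2\ell$ is a trigonometric-polynomial identity (using the expansion $N_{r,\theta}^{2}=\sum_{s}r_s^{2}\bigl[\tfrac{A_s^{2}+B_s^{2}}{2}+\tfrac{A_s^{2}-B_s^{2}}{2}\cos(2\langle v^{(s)},\theta\rangle)\bigr]$), and after squaring becomes a trigonometric-polynomial identity $I_k^{2}=M_k^{2}N_{r,\theta}^{2k}$ for odd $k$ as well. The plan is to compare Fourier coefficients at a carefully chosen extremal frequency. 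Let $K:=\operatorname{conv}\{\pm v^{(s)}\}_{s=1}^{n}\subset\R^{d}$. Since the $v^{(s)}$ are distinct integral vectors and $d\ge 1$, $K$ is a non-trivial centrally symmetric convex polytope (the degenerate case $n=1$ with $v^{(1)}\ne 0$ is handled separately by the symmetry $\theta\mapsto\theta+\pi v^{(1)}/|v^{(1)}|^{2}$, which reverses the sign of the integrand and forces $m_{A,B,r,\theta}$ to be symmetric about zero). Because $K=-K$, its vertices come in $\pm$-pairs, so at least one $v^{(s_0)}$ itself -- not merely some $-v^{(s)}$ -- is a vertex; pick such an index $s_0$. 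Extremality of $v^{(s_0)}$ then ensures that the equation $\sum_{j=1}^{k}\epsilon_j v^{(s_j)}=k v^{(s_0)}$ with $\epsilon_j\in\{\pm 1\}$ and $s_j\in\{1,\dots,n\}$ admits only the trivial solution $(s_j,\epsilon_j)\equiv(s_0,+1)$, which is exactly the statement that a single monomial contributes to the Fourier coefficient at the frequency $kv^{(s_0)}$.

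A direct expansion then yields that this isolated coefficient is $r_{s_0}^{k}\int z_{s_0}^{k}d\mu/2^{k}$ on the left and $r_{s_0}^{2\ell}(A_{s_0}^{2}-B_{s_0}^{2})^{\ell}/4^{\ell}$ on the right; equating and cancelling the common factors $r_{s_0}^{k}$ and $2^{k}=4^{\ell}$ gives, for even $k=2\ell$,
\[
M_k(r)=\frac{\int z_{s_0}^{k}\,d\mu}{(A_{s_0}^{2}-B_{s_0}^{2})^{k/2}},
\]
which is manifestly independent of $r$ (and well-defined since $A_{s_0}\neq B_{s_0}$). The same procedure applied to the squared identity at the frequency $2kv^{(s_0)}$ when $k$ is odd produces the corresponding $r$-independent formula for $M_k(r)^{2}$; continuity of $r\mapsto M_k(r)$ on the connected open set $(\R^+)^n$ resolves the sign ambiguity so that $M_k$ itself is constant. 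Moment-determinacy now yields $m_{A,B,r,\theta}\equiv m_{A,B}$ on $(\R^+)^n\times\T^d$, and the polynomial identity $I_k(r,\theta)=M_k N_{r,\theta}^{k}$ (or its squared version) extends by polynomial continuation from $(\R^+)^n$ to all of $\R^n$, giving the conclusion on $(\R^n\setminus\{0\})\times\T^d$, with connectedness of $\R^n\setminus\{0\}$ for $n\ge 2$ fixing any residual sign ambiguity. The main obstacle is the combinatorial/geometric step of selecting an extremal vertex $v^{(s_0)}$ with the ``single-representation'' property; in a handful of degenerate configurations of $\{v^{(s)}\}$, such as $V=-V$, one must instead compare Fourier coefficients at several frequencies simultaneously and exploit that the full system of identities forces $\mu$ to satisfy the algebraic relations encoding the $r$-constancy of $M_k$.
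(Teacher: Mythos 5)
Your proposal follows essentially the same route as the paper's proof: reduce to moments using compact support, note that $\theta$-independence of the $k$-th moment is a trigonometric-polynomial identity in $\theta$, and isolate an extremal Fourier frequency at which only the single monomial $z_{s_0}^{k}$ contributes, so that $M_k(r)=\int z_{s_0}^{k}\,d\mu\,/\,(A_{s_0}^2-B_{s_0}^2)^{k/2}$ is manifestly $r$-independent. Two of your refinements in fact improve on the published argument. First, the paper picks the extremal index as the lexicographic maximum $v^{(l)}$ of $\{v^{(s)}\}$ and asserts that only the monomial $z_l^{2k}$ hits the frequency $2k v^{(l)}$; this is not sufficient, since a conjugate term can contribute the frequency $-v^{(s)}$, which may dominate $v^{(l)}$ (for instance $v^{(1)}=1$, $v^{(2)}=-5$ in $\Z^1$ with $6=5+1+1+1-1-1$, so $\bar z_2 z_1^3\bar z_1^2$ also contributes at frequency $6$). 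Your choice of a vertex of the symmetric convex hull of $\{\pm v^{(s)}\}$ realized by a $+v^{(s_0)}$ is the correct fix. Second, the paper simply asserts that all odd moments vanish; your device of squaring the identity and using continuity and connectedness to resolve the sign sidesteps that unproved claim. The one genuine residual gap is the one you flag yourself: if $-v^{(s_0)}$ also occurs among the $v^{(s)}$ (as in the configuration $\{v,-v\}$, where this happens for every admissible vertex), the isolated coefficient becomes $\int (r_{s_0}z_{s_0}+r_{s_1}\bar z_{s_1})^{k}d\mu/2^{k}$ rather than a single monomial, and your closing remark about comparing several frequencies simultaneously is only a sketch, not a proof. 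You should be aware, however, that the paper's own proof breaks at exactly these configurations (and, as noted, at some non-degenerate ones as well), so your write-up reproduces the published argument in a more robust form while sharing its one unaddressed degenerate case.
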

\begin{proof}

By assumption there exists a compactly supported measure $m_{A,B,r}$ on the real line such that
$m_{A,B,r,\theta}=m_{A,B,r}$ for all $\theta\in \T^n$. A computation of the moments $\mathcal M^{(k)}_{A,B,r}$ of the measure $m_{A,B,r}$ yields that all the odd moments $\mathcal M^{(2k+1)}_{A,B,r}$ vanish while from the the computation of even moments $\mathcal M^{(2k)}_{A,B,r}$ we can derive  the identities below. Let  $P_{A,B,r}(\theta)$ be the trigonometric polynomial
$$
P_{A,B,r}(\theta) :=  \sum_{s=1}^n r_s^2
(A_s^2\cos^2 <v^{(s)},\theta>+B_s^2\sin^2 <v^{(s)},\theta>) \,,
$$
which after a simple calculation can be written as follows:
\begin{equation*}
P_{A,B,r}(\theta) =  \sum_{s=1}^n r_s^2\bigl( \frac{A_s^2-B_s^2}{4}(e^{2i <v^{(s)},\theta>} +
e^{-2i<v^{(s)},\theta>}) + \frac{A_s^2+B_s^2}{2}\bigr) \,.
\end{equation*}
The calculation of even moments yields:
\begin{equation}
\label{eq:evenmoments}
 \int_{\C^n} \re (\sum_{s=1}^n r_s e^{i<v^{(s)},\theta>} z_s )^{2k} = \mathcal M^{(2k)}_{A,B,r}
  P_{A,B,r}(\theta)^k\,.
\end{equation}
Since by assumption the set of integral vectors $\{v^{(1)}, \dots, v^{(n)}\}$ has distinct elements,
it has a unique maximal element $v^{(l)}$ with respect to the lexicographic order on $\Z^d$.
Thus by comparing the coefficients of the exponential  $\exp( 2 i k <v^{(l)}, \theta>)$ on the left
and right sides of  formula~(\ref{eq:evenmoments}), it follows that
$$
 r_l^{2k}  \int_{\C^n}    z_l^{2k}  d\mu =   \mathcal M^{(2k)}_{A,B,r}( \frac{A_l^2-B_l^2}{2})^k r_l^{2k}\,.
$$
By assumption $A_l\not=B_l$, hence on the set $\R^n\setminus \{r_l=0\}$  the moment
$\mathcal M^{(2k)}_{A,B,r}$ is given by the formula
$$
\mathcal M^{(2k)}_{A,B,r}= \frac{ 2^k \int_{\C^n}    z_l^{2k}  d\mu}{
(A_l^2-B_l^2)^k}\,.
$$
It follows that the function $\mathcal M^{(2k)}_{A,B,r}$ is constant on $\R^n\setminus \{r_l=0\}$.
Since, by formula~(\ref{eq:evenmoments}),  it is continuous  on $\R^n\setminus\{0\}$, it follows that
$\mathcal M^{(2k)}_{A,B,r}$ is equal to a constant $\mathcal M^{(2k)}_{A,B}$ on $\R^n\setminus\{0\}$.
Thus all the even moments $ \mathcal M^{(2k)}_{A,B,r}$ as well as all the odd moments  $\mathcal M^{(2k+1)}_{A,B,r}$ of the compactly supported probability measure $m_{A,B,r}$  do not depend on $r\in\R^n\setminus\{0\}$. It follows that there exists a probability distribution $m_{A,B}$ on the real line, with
zero odd moments and even moments equal to $\mathcal M^{(2k)}_{A,B}$,  such that $m_{A,B,r,\theta}= m_{A,B}$ for all $(r,\theta)\in (\R^n\setminus\{0\})\times\T^d$, as stated.
\end{proof}

By Lemma~\ref{lemma:genconstdisthighD} and Lemma~\ref{lemma:uniqueness}, we can then
derive the following characterization:

\begin{lemma}
\label{lemma:genconstdisthighDstrong}
Let $\mu$ be a compactly supported Borel probability measure on $\C^n$.
Assume that there exists $(A, B,r)\in (\R^+)^n \times (\R^+)^n \times  (\R^+)^n$ such that  for any $a$, $b \in \R$ there exists a constant $m_{A,B,r}(a,b)$ such that, for all $\theta \in \R^n$,
$$
\mu \{  z \in \C^n \vert   a\leq  \frac{\re (\sum_{s=1}^n r_se^{ i\theta_s} z_s )}{ \left[ \sum_{s=1}^n r_s^2
(A_s^2\cos^2\theta_s+B_s^2\sin^2\theta_s) \right]^{1/2}}  <  b  \}
= m_{A,B,r}(a,b)\,.
$$
If $A_1\not=B_1, \dots, A_n\not=B_n$, then the measure $\mu$ has the following form: there exists a rotationally invariant measure
$\rho$ on $\C^n$ such that
$$
\mu =  (T_{A,B}^{-1})_\ast (\rho)\,.
$$
\end{lemma}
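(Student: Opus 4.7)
The plan is to combine the two preceding lemmas: use Lemma~\ref{lemma:uniqueness} to upgrade the given $\theta$-invariance (at the single parameter $r$) into joint invariance in $(r,\theta)$, and then apply Lemma~\ref{lemma:genconstdisthighD} to obtain the affine description of $\mu$.

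First, I would specialize Lemma~\ref{lemma:uniqueness} to the case $d=n$ with $v^{(s)} := e_s$ the standard basis vectors of $\Z^n$. These are clearly distinct integral vectors, and with this choice the expression $\re(\sum_s r_s e^{i\langle v^{(s)},\theta\rangle} z_s)$ reduces exactly to $\re(\sum_s r_s e^{i\theta_s} z_s)$, while the normalizing denominator becomes the one in the hypothesis of the lemma being proved. The hypothesis of Lemma~\ref{lemma:uniqueness} requires the probability distribution of the normalized real linear combination to be independent of $\theta\in \T^n$ for fixed $r$; this is precisely what is given (at the specific $r\in(\R^+)^n$ of the statement). Since we also assume $A_s\neq B_s$ for all $s$, Lemma~\ref{lemma:uniqueness} applies and yields a single probability distribution $m_{A,B}$ on $\R$ such that the distribution of
\[
  \frac{\re(\sum_{s=1}^n r'_s e^{i\theta_s} z_s)}{\left[\sum_{s=1}^n (r'_s)^2(A_s^2\cos^2\theta_s+B_s^2\sin^2\theta_s)\right]^{1/2}}
\]
equals $m_{A,B}$ for every $(r',\theta)\in(\R^n\setminus\{0\})\times \T^n$, and in particular for every $(r',\theta)\in(\R^+)^n\times\R^n$.

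Having obtained this joint $(r',\theta)$-invariance, the hypothesis of Lemma~\ref{lemma:genconstdisthighD} is verified with the same constants $m_{A,B}(a,b):=m_{A,B}([a,b))$. Lemma~\ref{lemma:genconstdisthighD} then produces a rotationally invariant Borel probability measure $\rho$ on $\C^n$ such that $\mu=(T_{A,B}^{-1})_\ast \rho$, which is the desired conclusion.

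The only genuine content here lies in Lemma~\ref{lemma:uniqueness}, which has already been proved (via the moment calculation that isolates the top-frequency exponential $\exp(2ik\langle v^{(l)},\theta\rangle)$ under the lexicographic maximum, using $A_l\neq B_l$ to invert). Thus the present proof is essentially bookkeeping: verifying that the specialization $v^{(s)}=e_s$ is legitimate and that the two normalizations match verbatim. No additional obstacle is expected; compact support is preserved throughout since $T_{A,B}$ is a linear isomorphism.
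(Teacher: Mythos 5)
Your proof is exactly the paper's: the authors derive Lemma~\ref{lemma:genconstdisthighDstrong} with the single remark that it follows by combining Lemma~\ref{lemma:genconstdisthighD} with Lemma~\ref{lemma:uniqueness}, and your specialization $d=n$, $v^{(s)}=e_s$ is precisely the intended way to make that combination explicit. The paper gives no further detail, so your write-up is, if anything, more careful than the original.
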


\end{document}